\newif\ifHideFoot
\newif\ifHideApp
\numberwithin{equation}{section}
\newtheorem{teo}{Theorem}[section]
\newtheorem{pro}[teo]{Proposition}
\newtheorem{lem}[teo]{Lemma}
\newtheorem{cor}[teo]{Corollary}
\newtheorem{teoalpha}{Theorem}
\newtheorem{proalpha}[teoalpha]{Proposition}
\newtheorem{coralpha}[teoalpha]{Corollary}
\newtheorem{quealpha}[teoalpha]{Question}
\newtheorem*{teoalpha*}{Theorem}
\newtheorem*{projannsenbis}{Proposition~\ref{P:Jannsen}(bis)}
\theoremstyle{definition}
\newtheorem{dfn}[teo]{Definition}
\newtheorem{dfnalpha}[teoalpha]{Definition}
\newtheorem{exa}[teo]{Example}
\theoremstyle{remark}
\newtheorem{rem}[teo]{Remark}
\newtheorem{nota}[teo]{Notation}
\newcommand{\Yano}[1]{}
\newcommand{\Jeff}[1]{}
\newcommand{\Charles}[1]{}
\newcommand{\marg}[1]{\normalsize{{
   \color{red}\footnote{{\color{blue}#1}}}{\marginpar[\vskip
   -.25cm{\color{red}\hfill$\implies$\tiny\thefootnote}]{\vskip
    -.2cm{\color{red}$\impliedby$\tiny\thefootnote}}}}}
\newcommand{\Yano}[1]{\marg{(Yano) #1}}
\newcommand{\Jeff}[1]{\marg{(Jeff) #1}}
\newcommand{\Charles}[1]{\marg{(Charles) #1}}
\DeclareMathOperator{\coniveau}{N}
\def\mmu{{\pmb\mu}}
\def\cx{{\mathbb C}}
\def\ff{{\mathbb F}}
\def\rat{{\mathbb Q}}
\def\integ{{\mathbb Z}}
\def\kk{{\mathbb K}}
\def\ww{{\mathbb W}}
\def\iso{\simeq}
\renewcommand{\bar}[1]{{\overline{#1}}}
\DeclareMathOperator{\Hom}{Hom}
\DeclareMathOperator{\gal}{Gal}
\DeclareMathOperator{\spec}{Spec}
\DeclareMathOperator{\pic}{Pic}
\DeclareMathOperator{\Ab}{Ab}
\DeclareMathOperator{\A}{A}
\DeclareMathOperator{\chow}{CH}
\DeclareMathOperator{\characteristic}{char}
\def\ra{\rightarrow}
\def\tensor{\otimes}
\title{On the image of the second $l$-adic Bloch map}
\author{Jeffrey D. Achter}
\address{Colorado State University, Department of Mathematics,
 Fort Collins, CO 80523,
 USA}
\email{j.achter@colostate.edu}
\author{Sebastian Casalaina-Martin }
\address{University of Colorado, Department of Mathematics,
 Boulder, CO 80309, USA }
\email{casa@math.colorado.edu}
\author{Charles Vial}
\address{Universit\"at Bielefeld, Germany}
\email{vial@math.uni-bielefeld.de}
\thanks{The first- and second-named authors were partially supported
  by grants 637075 and 581058, respectively, from the Simons
  Foundation.}
	\def\MR#1{}
\begin{document}

 \begin{abstract} 
      For a smooth projective geometrically uniruled threefold defined over a perfect field we show that there exists a canonical abelian variety over the field, namely the second algebraic representative, whose rational Tate modules model canonically the third $l$-adic cohomology groups of the variety for all primes $l$. In addition, there exists a rational correspondence inducing these identifications. In the case of a geometrically  rationally chain connected variety, one obtains canonical identifications between the \emph{integral} Tate modules of the second algebraic representative and the third $l$-adic cohomology groups of the variety, and if the variety is a geometrically stably rational threefold, these identifications are  induced by an integral correspondence. 
 Our overall strategy consists in studying -- for arbitrary smooth projective varieties -- the image of the
  second $l$-adic Bloch map restricted to the Tate module of algebraically trivial
  cycle classes in terms of the ``correspondence (co)niveau
  filtration''.  This complements results with rational coefficients
  due to Suwa.
  In the appendix, we review the construction of the Bloch map and its basic properties.
    \end{abstract}

  \maketitle

 \section*{Introduction}

\subsection{Mazur's question with $\mathbb Q_\ell$-coefficients}\label{SS:MazurQ}

In the context of the generalized Hodge conjecture, it is natural to ask the following question \cite[Que.~2.43]{voisinDiag}\,: \emph{Let $X$ be a smooth 
complex projective manifold, and let $\nu,i$ be natural numbers.  Given a 
weight-$i$ Hodge structure $L\subseteq H^i(X,\mathbb Q)$ such that the Tate twist $L(\nu)$ is effective (\emph{i.e.}, $L^{p,q}=0$ for $p,q>\nu$), does there exist a complex projective manifold $Y$ and an inclusion of Hodge structures $L(\nu)\hookrightarrow H^{i-2\nu}(Y,\mathbb Q)$\,?}  
Essentially by definition, one can rephrase this question, via the Hodge coniveau filtration $\coniveau^\bullet_H H^i(X,\mathbb Q)$,  as asking whether for a given $\nu$, there exists a smooth projective manifold $Y$ such that $\coniveau^\nu_H H^i(X,\mathbb Q(\nu))\subseteq H^{i-2\nu}(Y,\mathbb Q)$.  The generalized Hodge conjecture predicts that the Hodge coniveau filtration coincides with the so-called geometric coniveau filtration, $\coniveau^\bullet H^i(X,\mathbb Q)$\,; in this sense one can rephrase the question in terms of the geometric coniveau filtration, and it is this version we will focus on in this paper.

 As a motivating example for this work, consider the case where $i=2n-1$ is odd and where $\nu=n$. 
Setting  $J^{2n-1}_a(X)$ to be the algebraic intermediate Jacobian, \emph{i.e.}, the image of the Abel--Jacobi map $AJ: \operatorname{A}^n(X)\to J^{2n-1}(X)$ restricted to algebraically trivial cycles, it is well-known that $\coniveau^{n-1}H^{2n-1}(X,\mathbb Q)= H^1(\widehat J^{2n-1}_a(X),\mathbb Q)\simeq H^1( J^{2n-1}_a(X),\mathbb Q)$, answering the question in the case of the geometric coniveau filtration.  \medskip

For smooth projective varieties over arbitrary fields, one can rephrase the Hodge theoretic question above by replacing Betti cohomology with $\ell$-adic cohomology.  
 Mazur \cite{mazurprobICCM,mazurprob} asked the following\,:

\begin{quealpha}[Mazur's question with $\mathbb Q_\ell$-coefficients] \label{Q:MazurQ}
	Let $X$ be a smooth projective variety over a
	field~$K$ with separable closure $\bar{K}$. Given a natural number $n$,  
	does there exist an abelian variety $A/K$ such that for all primes $\ell\neq \characteristic(K)$ there is an isomorphism of Galois modules
	\begin{equation}\label{E:model-QQ2}
	\begin{CD}
	V_\ell A @>\simeq>>   \coniveau^{n-1}H^{2n-1}(X_{\bar K},\mathbb Q_\ell(n)) \ \text{?}
	\end{CD}
	\end{equation}
\end{quealpha}
\noindent In fact, one can pose an analogue of Mazur's question for any Weil
cohomology $\mathcal H(\cdot)$.  In positive characteristic $p$, much of our
work also extends to the case of cohomology $H(-,\rat_p)$, which can
be recovered as the $F$-invariants of the crystalline cohomology.  In
this introduction, where possible,  we phrase statements uniformly in a
prime $l$, although we remind the reader that, for example, $\dim
H^i(X,\rat_p)$ is typically smaller than $\dim H^i(X,\rat_\ell)$\,; we
reserve $\ell$ for primes distinct from the characteristic of the base
field. \medskip

From the motivic perspective, it is natural to ask that the isomorphism~\eqref{E:model-QQ2} be induced by a correspondence.  
Note that given the isomorphism~\eqref{E:model-QQ2}, the Tate conjecture
provides for each $\ell$ a correspondence $\Gamma_\ell \in \operatorname{CH}^n(A \times_{ K} X)\otimes \rat_\ell$ inducing the isomorphism for that $\ell$.  One might expect to find a correspondence $\Gamma$ with integral coefficients, and that it be  independent of $l$\,:

\begin{quealpha}[Mazur's motivic question with $\mathbb Q_l$-coefficients] \label{Q:MazurQMot}
Does there exist a correspondence $\Gamma \in \operatorname{CH}^n(A \times_{ K} X)$ inducing for all primes $l$ the above isomorphisms~\eqref{E:model-QQ2}\,?
\end{quealpha}

Our first observation is that our results in \cite{ACMVdmij} provide an affirmative answer to Questions \ref{Q:MazurQ} and  \ref{Q:MazurQMot} for any field $K$ of characteristic zero, and for any $n$. Recall that in \cite{ACMVdmij}, it is shown that the algebraic intermediate Jacobian $J_a^{2n-1}(X_{\mathbb C})$ attached to a smooth projective variety $X$ defined over $K\subseteq \cx$ admits a \emph{distinguished model} $J^{2n-1}_{a,X/K}$ over $K$ in the sense that this model makes the Abel--Jacobi map $AJ : \operatorname{A}^n(X_\cx) \to J_a^{2n-1}(X_{\mathbb C})$ restricted to algebraically trivial cycles an $\operatorname{Aut}(\cx/K)$-equivariant map.

\begin{teoalpha}[{\cite[Thm.~2.1]{ACMVdmij}}]\label{T:MazurQ}
		Let $X$ be a smooth projective variety over a
	field~$K\subseteq \cx$. Given a natural number $n$,  let  $J^{2n-1}_{a,X/K}$ denote the distinguished model of the intermediate Jacobian $J^{2n-1}(X_{\mathbb C})$.
Then there exists a correspondence $\Gamma \in \operatorname{CH}^n(J^{2n-1}_{a,X/K} \times_{ K} X)$ inducing for all primes $\ell$ an inclusion  of Galois modules
	\begin{equation*}
	\xymatrix{\Gamma_* : \ V_\ell J^{2n-1}_{a,X/K} \ \ar@{^(->}[r] & H^{2n-1}(X_{\bar K},\mathbb Q_\ell(n))}
	\end{equation*}
with image $\coniveau^{n-1}H^{2n-1}(X_{\bar K},\mathbb Q_\ell(n))$.
\end{teoalpha}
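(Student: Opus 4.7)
The plan is to construct $\Gamma$ from the universal property of the algebraic representative over $\bar K$, to descend it to $K$ via the distinguished-model property of $J^{2n-1}_{a,X/K}$, and to identify the image of $\Gamma_*$ with the geometric coniveau by reducing to the classical statement over $\cx$.

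For the construction, I would invoke Murre's theorem: over $\bar K$, the algebraic representative $J^{2n-1}_a(X_{\bar K})$ of $\A^n(X_{\bar K})$ comes with a regular surjection $\phi : \A^n(X_{\bar K}) \twoheadrightarrow J^{2n-1}_a(X_{\bar K})(\bar K)$. Regularity yields a smooth projective variety $T$ and an algebraically trivial cycle $Z \in \chow^n(T \times_{\bar K} X_{\bar K})$ whose associated morphism $w_Z : T \to J^{2n-1}_a(X_{\bar K})$ is surjective. Restricting $Z$ to a sufficiently general complete intersection in $T$ of dimension $g = \dim J^{2n-1}_a$, pushing forward by $w_Z \times \mathrm{id}_X$, and dividing by the degree of the restricted morphism produces a class $\til\Gamma \in \chow^n(J^{2n-1}_a(X_{\bar K}) \times X_{\bar K}) \otimes \rat$ whose action on algebraically trivial $0$-cycles recovers $\phi$.

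For the descent, the distinguished model of \cite{ACMVdmij} is characterized by the property that $\phi$ is $\aut(\bar K/K)$-equivariant when its target is identified with $J^{2n-1}_{a,X/K}(\bar K)$. Averaging $\til\Gamma$ over a finite Galois orbit produces a $\gal(\bar K/K)$-invariant rational class, which by Galois descent for rational Chow groups comes from a class in $\chow^n(J^{2n-1}_{a,X/K} \times_K X) \otimes \rat$; clearing denominators gives an integral $\Gamma$. Multiplying by a nonzero integer does not affect injectivity or the image of the induced map on rational Tate modules, so the integrality is harmless.

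For the image, the inclusion $\operatorname{Im}(\Gamma_*) \subseteq \coniveau^{n-1}H^{2n-1}(X_{\bar K}, \rat_\ell(n))$ is automatic: the map factors, up to isogeny, through the pushforward $w_{Z,*} : H^1(T_{\bar K}, \rat_\ell) \to H^{2n-1}(X_{\bar K}, \rat_\ell(n))$ along a correspondence supported in codimension $n-1$, and thus lands in the geometric coniveau by definition. Injectivity and the reverse containment follow by base-changing to $\cx$, applying the Artin comparison $H^{2n-1}(X_{\bar K}, \rat_\ell) \iso H^{2n-1}(X_\cx, \rat) \otimes \rat_\ell$, and invoking the classical Suwa-style identification $H^1(J^{2n-1}_a(X_\cx), \rat) \iso \coniveau^{n-1}H^{2n-1}(X_\cx, \rat(n))$; the transfer is Galois-equivariant because $\coniveau^\bullet$ is stable under $\gal(\bar K/K)$. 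The main obstacle is the Galois descent: Murre's and Suwa's inputs live over algebraically closed fields, and producing a genuinely $K$-rational correspondence $\Gamma$ rests essentially on the distinguished-model construction of \cite{ACMVdmij}, which supplies the missing Galois-equivariance of $\phi$.
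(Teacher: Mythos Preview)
The paper does not give its own proof of this statement: Theorem~\ref{T:MazurQ} is quoted verbatim from \cite[Thm.~2.1]{ACMVdmij} in the introduction as motivation, and no argument is supplied in the body. So there is nothing in this paper to compare against; the proof lives in the cited reference.

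That said, your sketch has a genuine gap. You open by invoking ``Murre's theorem'' to produce an algebraic representative $J^{2n-1}_a(X_{\bar K})$ of $\A^n(X_{\bar K})$ for an arbitrary $n$. Murre's theorem, as the paper itself recalls in \S\ref{SS:AlgRep}, establishes the existence of an algebraic representative only for $n=1,2,d_X$; for other $n$ no such result is available, and in fact one does not expect it to hold in general. The object $J^{2n-1}_a(X_{\cx})$ is defined transcendentally as the image of the Abel--Jacobi map on $\A^n(X_{\cx})$, and the distinguished model of \cite{ACMVdmij} is built from the $\aut(\cx/K)$-equivariance of that analytic map, not from any universal property of an algebraic representative. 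Consequently your construction of $\til\Gamma$ via a miniversal cycle for a regular surjection $\phi$ does not get off the ground for general $n$: there is no $\phi$ to be regular, and no Murre-style $T$ and $Z$ to restrict.

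The remaining steps (Galois averaging to descend a rational correspondence, Artin comparison to identify the image with coniveau over $\cx$) are reasonable once a suitable $\bar K$-correspondence exists, and for $n=2$ your outline is close to what the machinery of \cite{ACMVdmij} actually provides. But for arbitrary $n$ the starting point has to be the Abel--Jacobi map over $\cx$ together with the classical identification $\coniveau^{n-1}H^{2n-1}(X_{\cx},\rat)=H^1(\widehat{J}^{2n-1}_a(X_{\cx}),\rat)$, and the correspondence $\Gamma$ is produced directly from a curve (or family of cycles) realizing that coniveau condition, descended via the distinguished-model property. Your sketch conflates this with the algebraic-representative framework, which is only known to coincide with it in the special cases $n=1,2,d_X$.
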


Thus we turn next to Questions~\ref{Q:MazurQ} and \ref{Q:MazurQMot} in the case where $K$ has positive characteristic. 
As a first partial result, we establish in Proposition~\ref{P:easy} a positive answer to Question~\ref{Q:MazurQ} and~\ref{Q:MazurQMot} under the further assumptions that $K$ is perfect, $2n-1 \leq d_X:= \dim X$, and $H^{2n-1}(X_{\bar K}, \mathbb Q_l(n))$ has geometric coniveau~$n-1$ (this was established in \cite[Thm.~2.1(d)]{ACMVdcg} for $K\subseteq \cx$).
While the condition that $H^{2n-1}(X_{\bar K}, \mathbb Q_l(n))$ have
geometric coniveau~$n-1$ is in general quite restrictive, it does not
impose a condition for $n=1$ or $n=d_X$, and therefore
Proposition~\ref{P:easy} establishes an affirmative answer to  Mazur's
Questions \ref{Q:MazurQ} and \ref{Q:MazurQMot} for $n=1,d_X$ and $K$
perfect (see Remark~\ref{R:easy} for the case $n=d_X$).   Thus, moving
forward, we will essentially be focusing on the case $n=2$ in positive
characteristic.

\subsection{Mazur's question with $\mathbb Q_\ell$-coefficients in positive characteristic}

We focus now on Question~\ref{Q:MazurQ} in positive characteristic, and set aside the issue of the correspondence in Question~\ref{Q:MazurQMot}.   
We  attempt to use algebraic representatives and the Bloch map as replacements for intermediate Jacobians and Abel--Jacobi maps.

More precisely,  recall that over an arbitrary field a replacement for $J^{2n-1}_{a,X/K}$  is the \emph{algebraic representative} $\operatorname{Ab}^n_{X/K}$.  If it exists (as in the case $n=1,2,d_X$), it comes with a $\operatorname{Gal}(K)$-equivariant morphism
$$\phi^n_{X_{\bar K}/\bar K} : \operatorname{A}^n(X_{\bar K}) \longrightarrow \operatorname{Ab}^n_{X/K}(\bar K),$$
where as in the case $\bar K = \cx$ we set
$$\operatorname{A}^n(X_{\bar K}) := \{\alpha \in \operatorname{CH}^n(X_{\bar K}) \ \big\vert \ \alpha \mbox{ is algebraically trivial} \}.$$
 For $n=1,d_X$, the algebraic representative is the reduced Picard variety with the Abel--Jacobi map,  and the Albanese variety with the Albanese map,  respectively.  
 For the case $n=2$, the existence
was proved for smooth projective varieties over an algebraically closed field in \cite{murre83}.  This was extended to smooth projective varieties defined over a perfect field (\emph{e.g.}, a finite field) in \cite{ACMVdcg}, and to  smooth projective varieties defined over any field in \cite{ACMVfunctor}. 
In characteristic~$0$, this agrees with the distinguished model of $J^3_a(X_{\mathbb C})$ of \cite{ACMVdmij}.  We refer to \S \ref{SS:AlgRep} for more details.

At the same time, recall  that for a smooth projective variety $X$ over a field~$K$   
with separable closure $\bar{K}$, and a prime $l$,
	Bloch \cite{bloch79} in the case $l\neq \characteristic(K)$ and later Gros--Suwa~\cite{grossuwaAJ} in the case $l=\characteristic(K)$ defined a map
   \begin{equation}
  \label{E:introbloch}
\lambda^n :\ \operatorname{CH}^n(X_{\bar K})[l^\infty] \longrightarrow
H^{2n-1}(X_{\bar K},\rat_l/\mathbb Z_l(n))
\end{equation}
on $l$-primary torsion, extending the Abel--Jacobi map on homologically trivial $l$-torsion cycle classes (see~\eqref{E:an-Bloch}) in the case $K=\mathbb C$.  
Suwa \cite{Suwa}  in the case $l\neq \characteristic(K)$ and Gros--Suwa~\cite{grossuwaAJ} in the case $l=\characteristic(K)$ then defined an  \emph{$l$-adic Bloch map}
\begin{equation}
  \label{E:introladicbloch}
T_l \lambda^n :\ T_l \operatorname{CH}^n(X_{\bar K}) \longrightarrow
H^{2n-1}(X_{\bar K},\mathbb Z_l(n))_\tau
\end{equation}
by taking the Tate module of the Bloch map $\lambda^n$.   Here the  subscript $\tau$ indicates the
quotient by the torsion subgroup\,; recall  that a result of Gabber states that for a given $X$, the
cohomology groups are torsion-free for all but finitely many $l$.
Tensoring by $-\otimes_{\mathbb Z_l} \mathbb Q_l$ defines a map 
$$V_l \lambda^n : V_l \operatorname{CH}^n(X_{\bar K}) \longrightarrow
H^{2n-1}(X_{\bar K},\mathbb Q_l(n)).$$  In the appendix, we give a more direct construction of the $\ell$-adic Bloch map for primes $\ell \neq \characteristic(K)$, following Bloch's original construction, but taking an inverse limit rather than a direct limit.  We show these two constructions agree, and that the $\ell$-adic Bloch map agrees with the $\ell$-adic Abel--Jacobi map in the case $K=\mathbb C$, when one restricts to homologically trivial cycle classes.  We then review in~\S \ref{S:properties} a few properties of the $l$-adic Bloch map that we use in the body of the paper.

It is well-known that one can use these maps to model $H^{2n-1}(X_{\bar K},\mathbb Q_l(n))$ for $n=1,d_X$ via the Picard and Albanese\,: in those cases, we have isomorphisms (see Propositions~\ref{P:Kummer} and~\ref{P:Rojtman})
	\begin{equation*}
	\xymatrix@C=2em {
		V_l ({\operatorname{Pic}^0_{X/K}}_{\operatorname{red}}) \ar[rr]^<>(0.5){(V_l\phi^1_{X_{\bar K}/\bar K})^{-1}}_\simeq &&V_l \operatorname{A}^1(X_{\bar K})\ar@{=}[r]&V_l
		\operatorname{CH}^1(X_{\bar K}) \ar@{->}[rr]^<>(0.5){V_l\lambda^1}_{\simeq\quad}&&H^{1}(X_{\bar
			K},\mathbb Q_l(1))\\
		V_l \operatorname{Alb}_{X/K} \ar[rr]^<>(0.5){(V_l\phi^{d_X}_{X_{\bar K}/\bar K})^{-1}}_\simeq &&V_l \operatorname{A}^{d_X}(X_{\bar K})\ar@{=}[r]&V_l
		\operatorname{CH}^{d_X}(X_{\bar K}) \ar@{->}[rr]^<>(0.5){V_l\lambda^{d_X}}_{\simeq\qquad}&&H^{2d_X-1}(X_{\bar
			K},\mathbb Q_l(d_X)).
	}
	\end{equation*}
Therefore, we focus here on the cases $n\ne 1,d_X$, and in particular, the case $n=2$.    
While in positive characteristic,  the relationship between 
the Tate module of the algebraic representative and the coniveau filtration is not known in general, 
a result of Suwa relates the coniveau filtration to the image of the $l$-adic Bloch map (restricted to algebraically trivial cycles).

\begin{proalpha}[{Suwa \cite[Prop.~5.2]{Suwa}}]\label{P:Suwa}
	Let $X$ be a smooth projective variety over a perfect field $K$ and let $n$ be a natural number.    
	For all prime numbers  $l$,
	the image of the composition 
	\begin{equation}
	\xymatrix{
		V_l \operatorname{A}^n(X_{\bar K})\ar@{^(->}[r]&V_l
		\operatorname{CH}^n(X_{\bar K}) \ar@{->}[r]^<>(0.5){V_l\lambda^n}&H^{2n-1}(X_{\bar
			K},\mathbb Q_l(n))\\
	}
	\end{equation}
	is equal to  ${\coniveau}^{n-1} H^{2n-1}(X_{\bar K},\mathbb Q_l(n))$.  
\end{proalpha}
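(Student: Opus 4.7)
The plan is to prove the two inclusions separately. In both directions the key input is the compatibility of the $l$-adic Bloch map with proper pushforward (equivalently, with the action of correspondences), as reviewed in \S\ref{S:properties}. Beyond that, I will use on one side the generation of algebraically trivial cycle classes by correspondences from smooth projective curves, and on the other side de Jong's alteration theorem to resolve the supports of coniveau classes.

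For the inclusion of the image in $\coniveau^{n-1}$, any $\alpha \in V_l \operatorname{A}^n(X_{\bar K})$ lies in the image of $V_l \Gamma_* : V_l \operatorname{Pic}^0(C) = V_l \operatorname{A}^1(C) \to V_l \operatorname{A}^n(X_{\bar K})$ for some smooth projective curve $C$ over $\bar K$ and correspondence $\Gamma \in \operatorname{CH}^n(C \times_{\bar K} X_{\bar K})$, by the usual description of algebraic triviality via curves (using that the algebraic representative $\operatorname{Ab}^n_{X/K}$, or at least its $l$-adic Tate module, is captured by Jacobians of curves via correspondences). Compatibility gives
\[
V_l \lambda^n \circ V_l \Gamma_* \ =\ \Gamma_* \circ V_l \lambda^1,
\]
and via the formula $\Gamma_*(-) = p_{X,*}(p_C^*(-) \cup [\Gamma])$ the image of the cohomological correspondence $\Gamma_*$ factors through $H^{2n-1}_{p_X(|\Gamma|)}(X_{\bar K}, \mathbb Q_l(n)) \to H^{2n-1}(X_{\bar K}, \mathbb Q_l(n))$. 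Since $|\Gamma|$ has dimension at most $\dim X - n + 1$, its image $p_X(|\Gamma|) \subset X_{\bar K}$ has codimension at least $n-1$, so $V_l \lambda^n(\alpha)$ lands in $\coniveau^{n-1} H^{2n-1}(X_{\bar K}, \mathbb Q_l(n))$.

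For the reverse inclusion, begin with $\alpha \in \coniveau^{n-1} H^{2n-1}(X_{\bar K}, \mathbb Q_l(n))$, supported on a closed subvariety $Z \subset X_{\bar K}$ of codimension $\geq n-1$. By de Jong's alteration theorem, choose a proper generically finite surjection $\pi : \tilde Z \to Z$ with $\tilde Z$ smooth projective, and let $f : \tilde Z \to X_{\bar K}$ denote the composition, of codimension $c \geq n-1$. A standard argument using the projection formula and the fact that $\deg \pi$ is invertible with $\mathbb Q_l$-coefficients shows that $\alpha$ lies in the image of the Gysin map $f_* : H^{2n-1-2c}(\tilde Z, \mathbb Q_l(n-c)) \to H^{2n-1}(X_{\bar K}, \mathbb Q_l(n))$; only $c = n-1$ contributes, since otherwise the source vanishes, so
\[
\alpha \in \operatorname{im}\bigl(f_* : H^1(\tilde Z, \mathbb Q_l(1)) \to H^{2n-1}(X_{\bar K}, \mathbb Q_l(n))\bigr).
\]
Since $V_l \lambda^1 : V_l \operatorname{Pic}^0(\tilde Z) \xrightarrow{\sim} H^1(\tilde Z, \mathbb Q_l(1))$ is an isomorphism (as recalled in the diagram in the excerpt), we write $\alpha = f_*(V_l \lambda^1(\beta))$ for some $\beta \in V_l \operatorname{Pic}^0(\tilde Z) = V_l \operatorname{A}^1(\tilde Z)$; compatibility then gives $\alpha = V_l \lambda^n(f_* \beta)$ with $f_* \beta \in V_l \operatorname{A}^n(X_{\bar K})$.

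The main obstacle is the compatibility of $V_l \lambda^n$ with proper pushforward, particularly for $l = \characteristic(K)$, where the logarithmic Hodge--Witt cohomology formalism of Gros--Suwa is needed; modulo that formal property, the argument reduces to a dimension count on the supports of correspondences together with an application of de Jong's alteration theorem.
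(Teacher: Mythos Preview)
Both directions of your argument have genuine gaps.

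For the inclusion of the image in $\coniveau^{n-1}$, you assert that every $\alpha \in V_l \operatorname{A}^n(X_{\bar K})$ lies in the image of $V_l\Gamma_*$ for a \emph{single} curve $C$ and correspondence $\Gamma$. This is not justified. An element of $T_l \operatorname{A}^n$ is a compatible system $(\alpha_\nu)_\nu$ of torsion classes; while each $\alpha_\nu$ is parametrized by some curve $C_\nu$, there is no a priori reason the same curve works for all $\nu$. Your parenthetical appeal to the algebraic representative does not help: for $n \ne 1,2,d_X$ an algebraic representative need not exist, and even when it does, $V_l\phi^n$ is not known to be an isomorphism (this is precisely the ``standard assumption'' the paper isolates). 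The paper's route is different: it invokes Suwa's Lemma~\ref{L:Suwa-3.2}, a genuinely non-trivial input, which says that for $\alpha \in \operatorname{A}^n(X_{\bar K})[l^\nu]$ of the form $\Gamma_*\beta$ with $\beta \in \operatorname{A}_0(C_{\bar K})$, one can find $\gamma \in \operatorname{A}_0(C_{\bar K})[l^\mu]$ with $\lambda^n(\Gamma_*\gamma)=\lambda^n(\alpha)$. Note this gives equality only \emph{after} applying $\lambda^n$, not at the level of cycles. This yields the inclusion with $\mathbb Q_l/\mathbb Z_l$-coefficients into the correspondence niveau filtration $\widetilde{\coniveau}^{n-1}$, and one then passes to Tate modules and tensors with $\mathbb Q_l$.

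For the reverse inclusion, your ``projection formula plus invertibility of $\deg\pi$'' argument does not work when $Z$ is singular: there is no pullback $\pi^*$ on Borel--Moore homology of a singular target, so you cannot conclude that $\pi_* : H^{\mathrm{BM}}_{2d_X-2n+1}(\tilde Z,\mathbb Q_l) \to H^{\mathrm{BM}}_{2d_X-2n+1}(Z,\mathbb Q_l)$ is surjective. Indeed it generally is not (already for a nodal curve and its normalization). What is true, and what the paper proves in Proposition~\ref{P:Jannsen}, is that after pushing forward to the smooth projective $X$ only the pure weight-$(2n-1)$ part of $H^{\mathrm{BM}}_{2d_X-2n+1}(Z,\mathbb Q_l)$ survives, and $\tilde Z$ does surject onto that piece by Deligne's theory of weights (via \cite[Rem.~7.7]{jannsenthesis}). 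This is where the perfectness of $K$ is used, to spread out over a field finitely generated over its prime subfield. For $l=p$ the analogous weight argument runs in rigid cohomology; see \S\ref{SS:pcon}.
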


We refer to Proposition~\ref{P:ImBloch-Pre} for a more precise statement.
As a minor technical point, we mention that Suwa proves this result for a slightly different coniveau filtration, which we call the correspondence coniveau filtration\,; he shows this filtration agrees with the usual coniveau filtration in characteristic $0$, and we extend this result to perfect fields in Proposition~\ref{P:Jannsen}.

Since $T_l \lambda^2$ is an inclusion (see Proposition~\ref{P:M-9.2}), 
as an immediate corollary of our results above, and Suwa's proposition, one obtains\,:

\begin{coralpha}[Modeling $\mathbb Q_l$-cohomology] \label{C:mainQ} 
 	Let $X$ be a smooth projective variety over a perfect field $K$ and let $l$ be a prime number. 
	 If  $V_l \phi^2_{X_{\bar K}/\bar K}: V_l\operatorname{A}^2(X_{\bar K})\to V_l \operatorname{Ab}^2_{X/K}$ is an isomorphism, \emph{e.g.}, if $X$ is a geometrically uniruled threefold (Proposition~\ref{P:phi}(3)), 
	then the composition 
	\begin{equation} \label{E:canQ}
	\xymatrix{
		V_l \operatorname{Ab}^2_{X/K} \ar[rr]^<>(0.5){(V_l\phi^2_{X_{\bar K}/\bar K})^{-1}}_\simeq &&V_l \operatorname{A}^2(X_{\bar K})\ar@{^(->}[r]&V_l
		\operatorname{CH}^2(X_{\bar K}) \ar@{^(->}[rr]^<>(0.5){V_l\lambda^2}&&H^{3}(X_{\bar
			K},\mathbb Q_l(2))
	}
	\end{equation}
	 is an inclusion  of $\operatorname{Gal}(K)$-modules with image ${\coniveau}^{1} H^{3}(X_{\bar K},\mathbb Q_l(2))$. 
   \end{coralpha}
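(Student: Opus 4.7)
The plan is to assemble the results already stated in the excerpt; the corollary is essentially a packaging argument, so I anticipate no serious obstacle. The three key inputs are Proposition~\ref{P:Suwa} (for the image computation), Proposition~\ref{P:M-9.2} (for the injectivity of $T_l\lambda^2$, and hence of $V_l\lambda^2$), and the hypothesis that $V_l\phi^2_{X_{\bar K}/\bar K}$ is an isomorphism --- supplied for a geometrically uniruled threefold by Proposition~\ref{P:phi}(3).

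First I would verify that each arrow in~\eqref{E:canQ} is $\operatorname{Gal}(K)$-equivariant. The inverse of $V_l\phi^2_{X_{\bar K}/\bar K}$ is Galois-equivariant because $\operatorname{Ab}^2_{X/K}$ is defined over $K$ and the universal regular homomorphism $\phi^2_{X_{\bar K}/\bar K}$ is itself $\operatorname{Gal}(K)$-equivariant by construction of the algebraic representative over $K$ (cf.~\S\ref{SS:AlgRep}); the inclusion $V_l\operatorname{A}^2(X_{\bar K})\hookrightarrow V_l\operatorname{CH}^2(X_{\bar K})$ is Galois-equivariant because algebraic triviality is preserved under the $\operatorname{Gal}(K)$-action on $\operatorname{CH}^2(X_{\bar K})$; and $V_l\lambda^2$ is Galois-equivariant by the construction of the Bloch map recalled in the appendix.

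Next, for the image assertion, since by hypothesis the first arrow is an isomorphism, the image of~\eqref{E:canQ} equals the image of $V_l\lambda^2$ restricted to $V_l\operatorname{A}^2(X_{\bar K})$, which by Proposition~\ref{P:Suwa} applied with $n=2$ is precisely $\coniveau^1 H^3(X_{\bar K},\mathbb{Q}_l(2))$. For injectivity, tensoring the conclusion of Proposition~\ref{P:M-9.2} with $\mathbb{Q}_l$ over $\mathbb{Z}_l$ yields injectivity of $V_l\lambda^2$ on all of $V_l\operatorname{CH}^2(X_{\bar K})$; combined with the hypothesized isomorphism and the tautological inclusion $V_l\operatorname{A}^2(X_{\bar K})\hookrightarrow V_l\operatorname{CH}^2(X_{\bar K})$, this shows~\eqref{E:canQ} is an injection of $\operatorname{Gal}(K)$-modules. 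Finally, for the stated example, Proposition~\ref{P:phi}(3) guarantees that the hypothesis on $V_l\phi^2_{X_{\bar K}/\bar K}$ holds for any geometrically uniruled threefold, so the corollary applies in that case.
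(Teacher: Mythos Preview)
Your proposal is correct and takes essentially the same approach as the paper, which treats the corollary as immediate from Proposition~\ref{P:Suwa} (image) and Proposition~\ref{P:M-9.2} (injectivity of $T_l\lambda^2$, hence of $V_l\lambda^2$). Your added remarks on Galois equivariance of each arrow (via \S\ref{SS:AlgRep} for $\phi^2$ and Proposition~\ref{P:BlGal} for $\lambda^2$) are appropriate and make explicit what the paper leaves implicit.
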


The assumption that $ V_l\phi^2_{X_{\bar K}/\bar K}$ be an isomorphism is implied  (see Lemma~\ref{L:SRHom-facts}) by the, possibly vacuous,  
 assumption that
$ \phi^2_{X_{\bar K}/\bar K}: \operatorname{A}^2(X_{\bar K})\to\operatorname{Ab}^2_{X/K}$ be an isomorphism on $l$-primary torsion.  
It turns out that this assumption also plays a crucial role in our 
 work~\cite{ACMVdiag}, so we single it out\,:

\begin{dfnalpha}[Standard assumption at $l$] \label{D:StAs}
 	 Let $X$ be a smooth projective variety over a field $K$ and let $l$ be a prime number. We say that $\phi^2_{X_{\bar K}/\bar K}$ (or by abuse, $X$) satisfies the \emph{standard assumption at the prime $l$} if  
	\begin{equation}
\begin{CD}
\phi^2_{X_{\bar K}/\bar K}[l^\infty]:\ \operatorname{A}^2(X_{\bar K})[l^\infty]  @>\simeq>>   \operatorname{Ab}^2_{X/K}[l^\infty]
\end{CD}
	\end{equation}
is an isomorphism. We say that $\phi^2_{X_{\bar K}/\bar K}$ (or by abuse $X$) satisfies the standard assumption if it satisfies the standard assumption at $l$ for all primes $l$.
\end{dfnalpha}

In Proposition~\ref{P:phi}, we give sufficient conditions for the standard assumption to be satisfied\,; in particular, the standard assumption holds if $\operatorname{char}(K)=0$ \cite[Thm.~10.3]{murre83} or if $X$ is geometrically rationally connected with $K$ perfect. We are unaware of an example of a smooth projective variety for which the  standard assumption at a prime $l$ fails.

We  mention  here that both Proposition~\ref{P:Suwa} and Corollary~\ref{C:mainQ} hold with $\mathbb Q_l/\mathbb Z_l$-coefficients so long as one replaces the coniveau filtration~\eqref{eq:N} with the correspondence niveau filtration~\eqref{eq:Ntilde},
 thus providing an answer to Mazur's Question~\ref{Q:MazurQ} with $\mathbb Q_l/\mathbb Z_l$-coefficients and with the geometric coniveau filtration $\coniveau^\bullet$ replaced with the correspondence niveau filtration $\widetilde{\coniveau}^\bullet$.

\medskip 
As a final note, we mention that in principle, the technique used to prove 
Corollary~\ref{C:mainQ} would work for  any $n$, assuming that there exists an algebraic representative in codimension-$n$,  that $V_l \phi^n_{X_{\bar K}/\bar K}$ is an isomorphism, and that $V_l \lambda^n$ is an inclusion\,; however, unlike the case $n=1,2,d_X$,  
in general, for $n\ne 1,2,d_X$,  one does not expect these conditions to hold.  Nevertheless, in the body of the paper, we explain the general case, and indicate where special assumptions are needed.

\subsection{Mazur's question with $\mathbb Z_\ell$-coefficients}

Next we consider Mazur's question in the case of  $\mathbb
Z_l$-coefficients (while acknowledging that, in positive
characteristic $p$ and with $l=p$, it might be more natural to seek an isomorphism of
$F$-crystals than an isomorphism of cohomology groups
$H^\bullet(-,\integ_p)$). To start with, as in the case of $\mathbb Q_l$-coefficients, it is well-known that one can model  $H^{2n-1}(X_{\bar K},\mathbb Z_l(n))_\tau$ for $n=1,d_X$ via the Picard and Albanese\,: in those cases, we have isomorphisms (see Propositions~\ref{P:Kummer} and~\ref{P:Rojtman})
	\begin{equation*}
	\xymatrix@C=2em {
		T_l (\operatorname{Pic}^0_{X/K})_{\operatorname{red}} \ar[rr]^<>(0.5){(T_l\phi^1_{X_{\bar K}/\bar K})^{-1}}_\simeq &&T_l \operatorname{A}^1(X_{\bar K})\ar@{=}[r]&T_l
		\operatorname{CH}^1(X_{\bar K}) \ar@{->}[rr]^<>(0.5){T_l\lambda^1}_{\simeq\quad}&&H^{1}(X_{\bar
			K},\mathbb Z_l(1))\\
		T_l \operatorname{Alb}_{X/K} \ar[rr]^<>(0.5){(T_l\phi^{d_X}_{X_{\bar K}/\bar K})^{-1}}_\simeq &&T_l \operatorname{A}^{d_X}(X_{\bar K})\ar@{=}[r]&T_l
		\operatorname{CH}^{d_X}(X_{\bar K}) \ar@{->}[rr]^<>(0.5){T_l\lambda^{d_X}}_{\simeq\qquad}&&H^{2d_X-1}(X_{\bar
			K},\mathbb Z_l(d_X))_\tau
	}
	\end{equation*}

Motivated by the discussion in \S \ref{SS:MazurQ} leading to Corollary~\ref{C:mainQ}, we 
proceed in a similar way, focusing on the case $n=2$. 
The starting point is again to 
assume
that  $ \phi^2_{X_{\bar K}/\bar K}[l^\infty]: \operatorname{A}^2(X_{\bar K})[l^\infty]\to\operatorname{Ab}^2_{X/K}[l^\infty]$ is an isomorphism\,; \emph{i.e.}, we assume the so-called standard assumption for $n=2$ (Definition~\ref{D:StAs})\,; \emph{e.g.}, we assume $\characteristic(K)=0$ or $X$ is geometrically rationally chain connected.

By taking Tate modules one has (Lemma~\ref{L:SRHom-facts}) that 
\begin{equation}\label{E:StAs}
T_l \phi^2_{X_{\bar K}/\bar K}:T_l \operatorname{A}^2(X_{\bar K})\longrightarrow T_l \operatorname{Ab}^2_{X/K}
\end{equation}
is an isomorphism as well.  Consequently, one can consider 
the composition  
 \begin{equation}\label{E:AbInc}
\xymatrix{
	T_l \operatorname{Ab}^2_{X/K} \ar[rr]^<>(0.5){(T_l\phi^2_{X_{\bar K}/\bar K})^{-1}}_\simeq &&T_l \operatorname{A}^2(X_{\bar K})\ar@{^(->}[r]&T_l
	\operatorname{CH}^2(X_{\bar K}) \ar@{^(->}[rr]^<>(0.5){T_l\lambda^2}&&H^{3}(X_{\bar
		K},\mathbb Z_l(2))_\tau.
}
\end{equation}
That $T_l \lambda^2$ is an inclusion is reviewed in Proposition~\ref{P:M-9.2}.  
In Proposition~\ref{P:ImBloch-Pre}, we show that the image of the map  $T_l \lambda^2 :   T_l \operatorname{A}^2(X_{\bar K}) \to H^{3}(X_{\bar
	K},\mathbb Z_l(2))_\tau$ contains ${\widetilde \coniveau}^{n-1}H^{2n-1}(X_{\bar K},\mathbb Z_l(n))_\tau$\,; here ${\widetilde \coniveau}^\bullet$ is the \emph{correspondence niveau filtration} defined in~\eqref{eq:Ntilde}. 
Combined with Suwa's Proposition~\ref{P:Suwa} (together with Proposition~\ref{P:Jannsen} comparing ${\widetilde \coniveau}^\bullet$ with $\coniveau^\bullet$), we find that the image contains  ${\widetilde \coniveau}^{n-1}H^{2n-1}(X_{\bar K},\mathbb Z_l(n))_\tau$ as a finite index subgroup. With the expectation that the standard assumption should be true in general, we are thus led to ask\,:

 \begin{quealpha}[Mazur's question with $\mathbb Z_l$-coefficients] \label{quealpha}
 	Let $X$ be a smooth projective variety over a
 	field~$K$. 
 	Does there exist an abelian variety $A/K$ such that for almost all primes $l$ there is an isomorphism of Galois modules
 	\begin{equation}\label{E:model-ZZ}
 	\begin{CD}
 	T_l A @>\simeq>>  {\widetilde \coniveau}^{n-1}H^{2n-1}(X_{\bar K},\mathbb Z_l(n))_\tau \ \text{?}
 	\end{CD}
 	\end{equation}
 	
 \end{quealpha}

   The main technical result of this paper is Theorem~\ref{T:mainBody}, a particular instance of which takes the form below.

 \begin{teoalpha}[Image of the $l$-adic Bloch map]\label{T:main}
  Let $X$ be a smooth projective   variety over a perfect field~$K$.  
   Assume 
that $ \phi^2_{X_{\bar K}/\bar K}[l^\infty]: \operatorname{A}^2(X_{\bar K})[l^\infty]\to\operatorname{Ab}^2_{X/K}[l^\infty]$ is an isomorphism for all but finitely many primes $l$\,; \emph{i.e.},   $X$ satisfies the standard assumption  at all but finitely many primes $l$ (Definition~\ref{D:StAs}).  
  Then, for all but finitely many prime numbers $l$,
   the image of the composition
   \begin{equation*}
   \xymatrix{
    T_l \operatorname{A}^2(X_{\bar K})\ar@{^(->}[r]&T_l
    \operatorname{CH}^2(X_{\bar K}) \ar@{^(->}[r]^<>(0.5){T_l\lambda^2}&H^{3}(X_{\bar
     K},\mathbb Z_l(2))_\tau\\
   }
   \end{equation*}
   is equal to  ${{\widetilde \coniveau}}^{1} H^{3}(X_{\bar K},\mathbb Z_l(2))_\tau$. 
 \end{teoalpha}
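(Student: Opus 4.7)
The lower bound $\widetilde{\coniveau}^1 H^3(X_{\bar K}, \mathbb Z_l(2))_\tau \subseteq \operatorname{Im}(T_l \lambda^2|_{T_l \operatorname{A}^2(X_{\bar K})})$ is already established, without any restriction on $l$, by Proposition~\ref{P:ImBloch-Pre}. The content of the theorem is therefore the reverse inclusion, which I plan to establish at every prime $l$ at which $X$ satisfies the standard assumption; the finite set of exceptions in the conclusion is then absorbed into the one in the hypothesis.

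The strategy is to factor $T_l \operatorname{A}^2(X_{\bar K})$ through the Tate module of a Jacobian. By Matsusaka's theorem, the abelian variety $\operatorname{Ab}^2_{X_{\bar K}/\bar K}$ is a quotient of $\operatorname{Pic}^0(C)$ for some smooth projective curve $C/\bar K$; unraveling the defining property of the regular homomorphism, one obtains a correspondence $\Gamma \in \operatorname{CH}^2(C \times X_{\bar K})$ such that the induced morphism of abelian varieties $\Gamma_* \colon \operatorname{Pic}^0(C) \twoheadrightarrow \operatorname{Ab}^2_{X_{\bar K}/\bar K}$ is surjective. Passing to $l$-adic Tate modules yields a surjection $T_l \operatorname{Pic}^0(C) \twoheadrightarrow T_l \operatorname{Ab}^2_{X/K}$, and the standard assumption at $l$ combined with Lemma~\ref{L:SRHom-facts} transports this, via $(T_l \phi^2_{X_{\bar K}/\bar K})^{-1}$, into a surjection onto $T_l \operatorname{A}^2(X_{\bar K})$.

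The decisive step is the compatibility of the $l$-adic Bloch map with correspondences (reviewed in \S\ref{S:properties}): combined with the isomorphism $T_l \operatorname{Pic}^0(C) \xrightarrow{T_l \lambda^1} H^1(C, \mathbb Z_l(1))$ of Proposition~\ref{P:Kummer}, it identifies the composition
$$
H^1(C, \mathbb Z_l(1)) \twoheadrightarrow T_l \operatorname{A}^2(X_{\bar K}) \xrightarrow{T_l \lambda^2} H^3(X_{\bar K}, \mathbb Z_l(2))_\tau
$$
with the correspondence action $\Gamma_*$ in $l$-adic cohomology. By the very definition~\eqref{eq:Ntilde} of the correspondence niveau filtration, the image of this cohomological action is contained in $\widetilde{\coniveau}^1 H^3(X_{\bar K}, \mathbb Z_l(2))_\tau$, and the surjectivity of the leftmost arrow then gives the desired inclusion. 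The main technical obstacle I foresee is precisely this integral compatibility of the Bloch map with correspondences -- its $\mathbb Q_l$-version already underpins Suwa's Proposition~\ref{P:Suwa} and Corollary~\ref{C:mainQ}, but the $\mathbb Z_l$-refinement is delicate and relies on the careful construction of the Bloch map carried out in the appendix and on the functorial properties assembled in \S\ref{S:properties}.
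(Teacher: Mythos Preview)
Your overall strategy is sound and close in spirit to the paper's, but there is a genuine gap in how you produce the correspondence~$\Gamma$. Matsusaka's theorem hands you a surjection $\pi\colon \operatorname{Pic}^0(C)\twoheadrightarrow \operatorname{Ab}^2_{X/K}$ of abelian varieties, but the defining property of a regular homomorphism runs the other way: given a cycle $Z$ one obtains a morphism $\psi_Z$, not conversely. There is no a~priori reason that an arbitrary $\pi$ arises as $\psi_\Gamma$ for some $\Gamma\in\operatorname{CH}^2(C\times_{\bar K} X_{\bar K})$, so ``unraveling the defining property'' does not yield the cycle you need. What actually supplies such a cycle is the existence of a \emph{miniversal} cycle $Z\in\mathscr A^2_{X/K}(\operatorname{Ab}^2_{X/K})$ with $\psi_Z=[r]$ for some $r\ge 1$ (\S\ref{SS:mini}); this is guaranteed precisely by the surjectivity of $\phi^2$. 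The paper works directly with this miniversal cycle on $\operatorname{Ab}^2_{X/K}\times_K X$ rather than detouring through a curve: Lemma~\ref{L:mainBody} shows that the composition $T_l\operatorname{Ab}^2\cong T_l\operatorname{A}_0(\operatorname{Ab}^2)\xrightarrow{Z_*} T_l\operatorname{A}^2(X_{\bar K})\xrightarrow{T_l\phi^2} T_l\operatorname{Ab}^2$ is multiplication by $r$, so under the standard assumption $l^{v_l(r)}\cdot T_l\operatorname{A}^2(X_{\bar K})\subseteq \operatorname{im}(Z_*)$. Theorem~\ref{T:mainBody} then gives equality of $\operatorname{im}(T_l\lambda^2)$ with $\widetilde{\coniveau}^1$ for every $l\nmid r$, which is a sharper conclusion than yours (the bad primes are those dividing a fixed intrinsic integer $r$). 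If you prefer to route through a curve, you can pull $Z$ back along $\pi$ and the Abel--Jacobi embedding, but this is a cosmetic repackaging of the same miniversal-cycle input.

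Two smaller points. First, a surjection of abelian varieties does \emph{not} induce a surjection on $T_l$ for every $l$ (take multiplication by $\ell$ on an elliptic curve); it does so only for $l$ not dividing the order of the component group of the kernel. This is harmless for an ``almost all $l$'' statement, but your sentence ``Passing to $l$-adic Tate modules yields a surjection'' needs that caveat. Second, the integral compatibility of the Bloch map with correspondences that you flag as ``the main technical obstacle'' is in fact routine and established in the appendix (Proposition~\ref{P:correspondences}, due to Bloch); the real work is securing the surjectivity of $\Gamma_*$ onto $T_l\operatorname{A}^2(X_{\bar K})$, which is exactly where the miniversal cycle and the standard assumption enter.
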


We obtain the following immediate corollary providing a partial answer to Question~\ref{quealpha}\,:

\begin{coralpha}[Modeling $\mathbb Z_l$-cohomology] \label{C:main}
Under the hypotheses of Theorem~\ref{T:main}, 
 the inclusion~\eqref{E:AbInc} induces an isomorphism of Galois modules
$$
\begin{CD}
T_l \operatorname{Ab}^2_{X/K} @>\simeq>> {\widetilde \coniveau}^1H^{3}(X_{\bar K},\mathbb Z_l(2))_\tau
\end{CD}
$$
for all but finitely many  prime numbers $l$.
\end{coralpha}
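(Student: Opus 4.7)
The plan is to observe that Corollary~\ref{C:main} is an almost formal consequence of Theorem~\ref{T:main}, once one tracks Galois equivariance and injectivity through the three arrows of the composition~\eqref{E:AbInc}. I would therefore decompose~\eqref{E:AbInc} and analyze each arrow separately.

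First I would check that the composition~\eqref{E:AbInc} is an injection of $\operatorname{Gal}(K)$-modules. Under the standard assumption at $l$ (Definition~\ref{D:StAs}), the map $\phi^2_{X_{\bar K}/\bar K}[l^\infty]$ is an isomorphism, and Lemma~\ref{L:SRHom-facts} upgrades this to an isomorphism of Galois modules $T_l\phi^2_{X_{\bar K}/\bar K}\colon T_l\operatorname{A}^2(X_{\bar K})\xrightarrow{\sim} T_l\operatorname{Ab}^2_{X/K}$, whose inverse is the first arrow of~\eqref{E:AbInc}. The middle arrow $T_l\operatorname{A}^2(X_{\bar K})\hookrightarrow T_l\operatorname{CH}^2(X_{\bar K})$ is tautologically an inclusion (the Tate module functor is left exact), and $T_l\lambda^2$ is injective by Proposition~\ref{P:M-9.2}. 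Since each arrow is $\operatorname{Gal}(K)$-equivariant, so is the composition.

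Next I would compute the image. By Theorem~\ref{T:main}, for all but finitely many primes $l$ the image of the last two arrows
\[
T_l\operatorname{A}^2(X_{\bar K})\hookrightarrow T_l\operatorname{CH}^2(X_{\bar K})\xrightarrow{T_l\lambda^2} H^{3}(X_{\bar K},\mathbb Z_l(2))_\tau
\]
is precisely $\widetilde{\coniveau}^{1}H^{3}(X_{\bar K},\mathbb Z_l(2))_\tau$. Since the first arrow of~\eqref{E:AbInc} is an isomorphism of Galois modules, precomposition does not alter the image, so the whole composition~\eqref{E:AbInc} maps $T_l\operatorname{Ab}^2_{X/K}$ onto $\widetilde{\coniveau}^{1}H^{3}(X_{\bar K},\mathbb Z_l(2))_\tau$. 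Combined with the injectivity established in the previous step, this yields the desired isomorphism of Galois modules
\[
T_l\operatorname{Ab}^2_{X/K}\xrightarrow{\ \simeq\ }\widetilde{\coniveau}^{1}H^{3}(X_{\bar K},\mathbb Z_l(2))_\tau
\]
for all but finitely many $l$.

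The genuine difficulty lies entirely in the underlying Theorem~\ref{T:main}; the corollary itself is formal. The only minor bookkeeping point is that the phrase "for all but finitely many $l$" must be consistent: the set of primes where the standard assumption fails is finite by hypothesis, and Theorem~\ref{T:main} excludes only finitely many further primes, so the intersection of the two "good" sets is still cofinite in the set of all primes.
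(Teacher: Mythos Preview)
Your proposal is correct and matches the paper's approach: the corollary is stated there as an immediate consequence of Theorem~\ref{T:main}, and you have simply unpacked why, tracking injectivity and image through the three arrows of~\eqref{E:AbInc}. One minor citation point: Lemma~\ref{L:SRHom-facts} gives the isomorphism on Tate modules but does not address Galois equivariance; the $\operatorname{Gal}(K)$-equivariance of $\phi^2_{X_{\bar K}/\bar K}$ (and hence of $T_l\phi^2$) comes from \S\ref{SS:AlgRep} (the descent result of \cite{ACMVdcg}), and that of $T_l\lambda^2$ from Proposition~\ref{P:BlGal}.
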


 Theorem~\ref{T:main} is proved in \S \ref{SS:proofThm1}. In fact, we can control the primes for which Theorem~\ref{T:main} and Corollary~\ref{C:main} might fail\,; this is related to miniversal cycle classes, as well as decomposition of the diagonal (see Theorem~\ref{T:mainBody}).
 Moreover, in a way we make precise in Lemma~\ref{L:StAs-spec} and Proposition~\ref{P:finitefield}, to prove  Theorem~\ref{T:main} and Corollary~\ref{C:main}, if one knows the standard assumption holds for all varieties over finite fields, then one does not need to assume the standard assumption for $X$.

We note again that in principle, the technique used to prove 
the Corollary~\ref{C:main} would work for  any $n$, assuming that there exists an algebraic representative in codimension~$n$,  that $ \phi^n_{X_{\bar K}/\bar K}[l^\infty]$ is an isomorphism, and that $T_l \lambda^n$ is an inclusion\,; but   
in general, for $n\ne 1,2,d_X$,  one does not expect these conditions to hold.  Nevertheless, in the body of the paper, we explain the general case, and indicate where special assumptions are needed.

\subsection{Universal cycles and the image of the second $l$-adic Bloch map}
\label{SS:uni-intro}

Still under the standard assumption that 
$$
\begin{CD}
\phi^2_{X_{\bar K}/\bar K}[l^\infty]: \operatorname{A}^2(X_{\bar K})[l^\infty] @>\simeq>> \operatorname{Ab}^2_{X/K}[l^\infty]
\end{CD}
$$ 
is an isomorphism for all primes $l$, we show that a sufficient condition for the composition~\eqref{E:AbInc} to have image equal to $ {{\widetilde \coniveau}}^{1} H^{3}(X_{\bar K},\mathbb Z_l(2))_\tau$
 for all $l$ is provided by the existence of a so-called \emph{universal cycle} for~$\phi^2_{X_{\bar K}/\bar K}$\,; see \S \ref{SS:mini} for a definition.
 As before, we start by determining the image of the second $l$-adic Bloch map under such conditions\,:

\begin{teoalpha}[Universal cycles and the image of the second $l$-adic Bloch map]\label{T:main-uni}
	Let $X$ be a smooth projective variety over a perfect field~$K$.  
Assume that $X$ satisfies the standard assumption
	for all primes $l$.
	If $\phi^2_{X_{\bar K}/\bar K}: \operatorname{A}^2(X_{\bar K})\to\operatorname{Ab}^2_{X/K}(\bar K)$ admits a universal cycle, then  for all prime numbers 
	$l$
	the image of the composition
	\begin{equation*}
		\xymatrix{
			T_l \operatorname{A}^2(X_{\bar K})\ar@{^(->}[r]&T_l
			\operatorname{CH}^2(X_{\bar K}) \ar@{^(->}[r]^<>(0.5){T_l\lambda^2}&H^{3}(X_{\bar
				K},\mathbb Z_l(2))_\tau\\
		}
	\end{equation*}
	is equal to  ${{\widetilde \coniveau}}^{1} H^{3}(X_{\bar K},\mathbb Z_l(2))_\tau$.
\end{teoalpha}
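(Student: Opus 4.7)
The plan is to combine Proposition~\ref{P:ImBloch-Pre}, which already provides the containment $\widetilde{\coniveau}^1 H^3(X_{\bar K},\mathbb Z_l(2))_\tau \subseteq \operatorname{Im}(T_l\lambda^2|_{T_l\A^2(X_{\bar K})})$, with a reverse containment that I will extract from the existence of a universal cycle.  Under the standard assumption, $T_l\phi^2_{X_{\bar K}/\bar K}:T_l\A^2(X_{\bar K})\to T_l\Ab^2_{X/K}$ is an isomorphism (Lemma~\ref{L:SRHom-facts}); the goal is therefore to re-express the map $T_l\lambda^2|_{T_l\A^2(X_{\bar K})}$ as the cohomological correspondence associated to the universal cycle, starting from the abelian-variety side.

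I first unpack what the universal cycle provides.  Let $Z\in\chow^2(\Ab^2_{X/K}\times_K X)$ be a universal cycle and set $g:=\dim\Ab^2_{X/K}$.  By the defining property of a universal cycle, the correspondence action $Z_*:\Ab^2_{X/K}(\bar K)\to\A^2(X_{\bar K})$ is a section of $\phi^2_{X_{\bar K}/\bar K}$.  Restricting to $l$-primary torsion and passing to inverse limits yields $Z_*:T_l\Ab^2_{X/K}\to T_l\A^2(X_{\bar K})$ satisfying $T_l\phi^2_{X_{\bar K}/\bar K}\circ Z_*=\operatorname{id}$; the standard assumption makes this $Z_*$ the inverse of $T_l\phi^2_{X_{\bar K}/\bar K}$, hence an isomorphism.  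Via Proposition~\ref{P:Rojtman}, one identifies $T_l\Ab^2_{X/K}$ with $T_l\A^g(\Ab^2_{X/K,\bar K})$, so that the preceding $Z_*$ is literally the correspondence action of $Z$ on Tate modules of algebraically trivial cycle classes of codimension $g$.

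Next I will invoke the functoriality of the $l$-adic Bloch map with respect to correspondences (reviewed in \S\ref{S:properties}) to obtain a commutative square
\begin{equation*}
\xymatrix@C=2.5em{
T_l\A^g(\Ab^2_{X/K,\bar K}) \ar[r]^{Z_*} \ar[d]^{T_l\lambda^g}_{\simeq} & T_l\A^2(X_{\bar K}) \ar[d]^{T_l\lambda^2} \\
H^{2g-1}(\Ab^2_{X/K,\bar K},\mathbb Z_l(g)) \ar[r]^{Z^{\mathrm{coh}}_*} & H^3(X_{\bar K},\mathbb Z_l(2))_\tau,
}
\end{equation*}
in which the left vertical is the Bloch map in the Albanese case applied to the abelian variety $\Ab^2_{X/K}$ and is therefore an isomorphism (see the diagram displayed in the introduction for $n=d_X$ applied to $X=\Ab^2_{X/K}$).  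Since the top horizontal is already an isomorphism by the previous paragraph, a diagram chase shows that the image of $T_l\lambda^2$ on $T_l\A^2(X_{\bar K})$ equals the image of the cohomological correspondence $Z^{\mathrm{coh}}_*$ on all of $H^{2g-1}(\Ab^2_{X/K,\bar K},\mathbb Z_l(g))$.  By the very definition of the correspondence niveau filtration~\eqref{eq:Ntilde}, this latter image is contained in $\widetilde{\coniveau}^1 H^3(X_{\bar K},\mathbb Z_l(2))_\tau$, providing the sought reverse inclusion.

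I expect the delicate point to be invoking the functoriality of the Bloch map integrally with $\mathbb Z_l$-coefficients, not merely after inverting $l$, and checking that the left vertical is truly the Bloch map in the Albanese case so that it is an honest isomorphism rather than merely an isomorphism modulo torsion.  Both points should be handled by the general properties of $T_l\lambda^\bullet$ collected in \S\ref{S:properties} and in the appendix; granting these, the argument is a formal combination of the universal cycle identity, the Albanese-case Bloch isomorphism, and the definition of $\widetilde{\coniveau}^1$.
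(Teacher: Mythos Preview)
Your argument is correct and is essentially the same as the paper's: the paper deduces Theorem~\ref{T:main-uni} from Theorem~\ref{T:mainBody}, whose proof combines Proposition~\ref{P:ImBloch-Pre} for one inclusion with the commutative diagram~\eqref{E:con-com-diag2} and Lemma~\ref{L:mainBody} for the other, and your square together with the inverse of $T_l\phi^2$ coming from the universal cycle is exactly that argument specialized to $n=2$ and degree $r=1$. The only cosmetic difference is that the paper phrases the identification $T_l\Ab^2_{X/K}\simeq T_l\A_0(\Ab^2_{X/K,\bar K})$ via Beauville's result on torsion zero-cycles on abelian varieties rather than via Proposition~\ref{P:Rojtman}, but these amount to the same thing.
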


Theorem~\ref{T:main-uni} is a particular instance of our main Theorem~\ref{T:mainBody}.
As an immediate consequence, we obtain\,:

\begin{coralpha}[Universal cycles and modeling $\mathbb Z_l$-cohomology] \label{C:main-uni}
	Under the hypotheses of Theorem~\ref{T:main-uni}, 
	the inclusion~\eqref{E:AbInc} induces for \emph{all} prime numbers $l$  an isomorphism of Galois modules
	$$\begin{CD}
	T_l \operatorname{Ab}^2_{X/K}  @>\simeq>>  {\widetilde \coniveau}^1H^3(X_{\bar K},\mathbb Z_l(2))_\tau.
	\end{CD}
	$$
	
\end{coralpha}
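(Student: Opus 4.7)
The plan is to unwind the composition~\eqref{E:AbInc} and observe that the corollary follows mechanically from Theorem~\ref{T:main-uni} together with the identification of the first arrow as an isomorphism. Concretely, the inclusion in~\eqref{E:AbInc} is, by construction, the composite
\[
T_l \operatorname{Ab}^2_{X/K} \xrightarrow{(T_l\phi^2_{X_{\bar K}/\bar K})^{-1}} T_l \operatorname{A}^2(X_{\bar K}) \hookrightarrow T_l \operatorname{CH}^2(X_{\bar K}) \xrightarrow{T_l \lambda^2} H^3(X_{\bar K},\mathbb Z_l(2))_\tau,
\]
so its image coincides with the image of the last two arrows, once the first arrow is known to be an isomorphism.

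First I would verify this isomorphism: the hypothesis that $X$ satisfies the standard assumption at every prime $l$ means that $\phi^2_{X_{\bar K}/\bar K}[l^\infty]$ is an isomorphism; then Lemma~\ref{L:SRHom-facts} (which translates isomorphism on $l^\infty$-torsion into isomorphism on Tate modules) gives that $T_l\phi^2_{X_{\bar K}/\bar K}$ is an isomorphism of Galois modules for every $l$. Hence the first arrow of~\eqref{E:AbInc} is indeed an isomorphism onto $T_l\operatorname{A}^2(X_{\bar K})$, so the image of~\eqref{E:AbInc} agrees with the image of the natural map $T_l\operatorname{A}^2(X_{\bar K}) \to H^3(X_{\bar K},\mathbb Z_l(2))_\tau$.

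Next I would invoke Theorem~\ref{T:main-uni}, which, under precisely the present hypotheses (standard assumption at every $l$, plus existence of a universal cycle for $\phi^2_{X_{\bar K}/\bar K}$), identifies this image with $\widetilde{\coniveau}^{1} H^3(X_{\bar K},\mathbb Z_l(2))_\tau$ for every prime $l$. Injectivity of the composition is already built into~\eqref{E:AbInc}: $T_l \lambda^2$ is injective by Proposition~\ref{P:M-9.2}, and $T_l\phi^2_{X_{\bar K}/\bar K}$ is an isomorphism as just explained, so the composite is an injection with image $\widetilde{\coniveau}^{1} H^3(X_{\bar K},\mathbb Z_l(2))_\tau$, which is the desired isomorphism of Galois modules.

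Since the proof is a pure formal consequence, there is no real obstacle here; the substantive content has already been absorbed into Theorem~\ref{T:main-uni} and Lemma~\ref{L:SRHom-facts}. The only thing to be careful about is that Galois equivariance is preserved at every step, but this is automatic: $\phi^2_{X_{\bar K}/\bar K}$ is $\operatorname{Gal}(K)$-equivariant by construction of the algebraic representative over a perfect field, and $T_l\lambda^2$ is $\operatorname{Gal}(K)$-equivariant by the functoriality of the Bloch map recalled in \S\ref{S:properties}.
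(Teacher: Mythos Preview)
Your proof is correct and matches the paper's approach: the paper states Corollary~\ref{C:main-uni} as an immediate consequence of Theorem~\ref{T:main-uni}, and your argument is precisely the spelled-out version of that immediacy, using Lemma~\ref{L:SRHom-facts} to invert $T_l\phi^2_{X_{\bar K}/\bar K}$ and Proposition~\ref{P:M-9.2} for injectivity.
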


Due to the connection with universal cycles and decomposition of the diagonal (see Proposition~\ref{P:decmini}), this is connected with the notion of rationality, as we discuss in the next section.

\subsection{Decomposition of the diagonal and the image of the second $l$-adic Bloch map}
\label{SS:stbrat-intro}
We next turn our focus to the case of smooth projective varieties $X$ over a perfect field $K$ with $\operatorname{CH}_0(X_{\bar K})\otimes \rat$  universally supported in dimension~2 (see Definition~\ref{D:unisupport}).
 It is well-known, via a decomposition of the diagonal argument \cite{BS}, that in this case we have ${\coniveau}^{1} H^{3}(X_{\bar K},\mathbb Q_l(2)) =  H^{3}(X_{\bar K},\mathbb Q_l(2))$, but also that $V_l \phi^2_{X_{\bar K}/\bar K}$ is 
 an isomorphism (Proposition~\ref{P:phi}(3)), for all primes $l$. 
 As a consequence, we see that in this case \eqref{E:canQ} induces an  isomorphism $V_l \operatorname{Ab}^2_{X/K}\simeq H^3(X_{\bar K},\mathbb Q_l(2))$.  
We show the following result for cohomology with $\mathbb Z_l$-coefficients\,:

\begin{teoalpha}
	[Decomposition of the diagonal and the image of the second $l$-adic Bloch map]
	\label{T:co=H} 
    Let $X$ be a smooth projective variety over a perfect field $K$ of characteristic exponent $p$. 
  \begin{enumerate}
\item Assume $\operatorname{CH}_0(X_{\bar K})\otimes \integ\big[{\frac{1}{N}}\big]$ is universally supported in dimension 2 for some $N>0$, \emph{e.g.}\ $X$ is a geometrically uniruled threefold. 
 Then,
for all primes $\ell\nmid Np$, the inclusion 
${\widetilde \coniveau}^1	H^3(X_{\bar K},\integ_\ell(2)) \subseteq 	H^3(X_{\bar K},\integ_\ell(2))$ is an equality and the second $\ell$-adic Bloch map restricted to algebraically trivial cycles
\begin{equation*}
\xymatrix{
	T_\ell \operatorname{A}^2(X_{\bar K})\ar@{^(->}[r]&T_\ell
	\operatorname{CH}^2(X_{\bar K}) \ar@{^(->}[r]^<>(0.5){T_\ell\lambda^2}&H^{3}(X_{\bar K},\mathbb Z_\ell(2))_\tau\\
}
\end{equation*} is an isomorphism of
$\operatorname{Gal}(K)$-modules. 

\item Assume $\operatorname{CH}_0(X_{\bar K})\otimes \integ\big[{\frac{1}{N}}\big]$ is universally supported in dimension 1 for some $N>0$, \emph{e.g.}\  $X$ is a geometrically rationally chain connected. 
Then, for all primes $l$,  the second $l$-adic Bloch map restricted to algebraically trivial cycles
 \begin{equation*}
\xymatrix{
	T_l \operatorname{A}^2(X_{\bar K})\ar@{^(->}[r]&T_l
	\operatorname{CH}^2(X_{\bar K}) \ar@{^(->}[r]^<>(0.5){T_l\lambda^2}&H^{3}(X_{\bar K},\mathbb Z_l(2))_\tau\\
}
\end{equation*} is an isomorphism of
$\operatorname{Gal}(K)$-modules. Moreover, for all primes $\ell \nmid Np$, $H^3(X_{\bar K},\integ_\ell(2))$ is torsion-free.

   \end{enumerate}
\end{teoalpha}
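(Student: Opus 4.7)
The plan is to combine a Bloch--Srinivas decomposition of the diagonal with Proposition~\ref{P:ImBloch-Pre} on the image of the $l$-adic Bloch map and the injectivity in Proposition~\ref{P:M-9.2}. The hypothesis that $\operatorname{CH}_0(X_{\bar K})\otimes\integ[1/N]$ is universally supported in dimension~$d$ produces, by the standard Bloch--Srinivas argument, a decomposition
\[
N\Delta_X \;=\; \Gamma_1+\Gamma_2 \;\in\;\operatorname{CH}^{d_X}(X_{\bar K}\times X_{\bar K}),
\]
with $\Gamma_1$ supported on $D\times X$ for a closed $D\subsetneq X$ of codimension~$\ge 1$ and $\Gamma_2$ supported on $X\times W$ for a closed $W\subset X$ with $\dim W\le d$ (here $d=2$ in~(1), $d=1$ in~(2)). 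After choosing alterations $\widetilde D\to D$ and $\widetilde W\to W$, both correspondence actions on $H^3(X_{\bar K},\integ_l(2))$ factor through cohomology of the smooth projective varieties $\widetilde D$ (codimension~$\ge 1$) and $\widetilde W$ (codimension~$\ge d_X-d\ge 1$), so both images lie inside $\widetilde{\coniveau}^1 H^3(X_{\bar K},\integ_l(2))$.

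For part~(1), I fix $\ell\nmid Np$. Since $N$ is a unit in $\integ_\ell$, the identity $N\cdot\mathrm{id}=\Gamma_{1*}+\Gamma_{2*}$ forces $H^3(X_{\bar K},\integ_\ell(2))=\widetilde{\coniveau}^1 H^3(X_{\bar K},\integ_\ell(2))$. Passing to the $\tau$-quotient, the containment $\widetilde{\coniveau}^1 H^3_\tau\subseteq\mathrm{im}\,T_\ell\lambda^2|_{T_\ell\operatorname{A}^2}$ of Proposition~\ref{P:ImBloch-Pre} together with the injectivity of Proposition~\ref{P:M-9.2} yields the desired isomorphism.

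For part~(2), the same argument gives the isomorphism for every $\ell\nmid Np$, and the strict inequality $\dim\widetilde W\le 1$ now forces $\Gamma_{2*}=0$ on $H^3(X_{\bar K},\integ_\ell(2))$ (the relevant Gysin source sits in a negative cohomological degree of $\widetilde W$), so $N\cdot\mathrm{id}=\Gamma_{1*}$ factors through the cohomology of $\widetilde D$ in the degree that is Poincaré-dual to $H^1$ and hence torsion-free. Thus the torsion subgroup $T\subseteq H^3(X_{\bar K},\integ_\ell(2))$ satisfies $\Gamma_{1*}(T)=0$, whence $N\cdot T=0$; as $T$ is $\ell$-primary and $(\ell,N)=1$, we conclude $T=0$, giving torsion-freeness.

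The principal obstacle is the ``all primes~$l$'' isomorphism in part~(2) for $l\mid N$ or $l=p$, where multiplication by $N$ is no longer invertible on $H^3(X_{\bar K},\integ_l(2))$ and the direct argument breaks down. Here I would invoke the standard assumption (which holds in the geometrically rationally chain connected case by Proposition~\ref{P:phi}) and promote the diagonal decomposition, via Proposition~\ref{P:decmini} linking diagonal decompositions to miniversal cycles, to a universal cycle for $\phi^2_{X_{\bar K}/\bar K}$, so that Theorem~\ref{T:main-uni} identifies $\mathrm{im}\,T_l\lambda^2|_{T_l\operatorname{A}^2}$ with $\widetilde{\coniveau}^1 H^3(X_{\bar K},\integ_l(2))_\tau$ uniformly in~$l$. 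The delicate remaining point is the equality $\widetilde{\coniveau}^1 H^3_\tau=H^3_\tau$ at such $l$: on the $\tau$-quotient one still obtains $N\cdot H^3_\tau\subseteq\widetilde{\coniveau}^1 H^3_\tau$ inside a torsion-free $\integ_l$-module, and the residual index is controlled by a mod-$l$ Nakayama argument together with the rational equality already supplied by Corollary~\ref{C:mainQ}.
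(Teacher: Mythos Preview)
Your argument for part~(1) and for the torsion-freeness in part~(2) is essentially the paper's: a Bloch--Srinivas decomposition plus alterations, with the action of $N\Delta$ (really $Np^e\Delta$ after alterations) factoring through $H^1(\widetilde D,\integ_\ell)\oplus H^3(\widetilde W,\integ_\ell)$. The paper packages the surjectivity of $T_\ell\lambda^2$ as a direct diagram chase (diagram~\eqref{E:diagfact2}) rather than passing through Proposition~\ref{P:ImBloch-Pre}, but the content is the same.

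The genuine gap is the ``all primes $l$'' isomorphism in part~(2). Your route through Proposition~\ref{P:decmini} and Theorem~\ref{T:main-uni} does not close: Proposition~\ref{P:decmini} only produces a miniversal cycle of degree $p^eN$, not a universal one, so Theorem~\ref{T:main-uni} does not apply when $N>1$; and even granting that, Theorem~\ref{T:main-uni} only identifies $\operatorname{im}T_l\lambda^2$ with $\widetilde{\coniveau}^1H^3_\tau$, leaving exactly the equality $\widetilde{\coniveau}^1H^3_\tau=H^3_\tau$ you flag as ``delicate''. Your proposed Nakayama step cannot work as stated: from $Np^e\cdot H^3_\tau\subseteq\widetilde{\coniveau}^1H^3_\tau$ and equal $\rat_l$-ranks you only get that the quotient is finite and killed by $Np^e$, which says nothing when $l\mid Np^e$.

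The paper bypasses this entirely via the Benoist--Wittenberg divisibility trick (Proposition~\ref{P:lambdaBW}): one works with the Bloch map on $l$-primary torsion rather than on Tate modules. The restriction $\lambda^2:\A^2(X_{\bar K})[l^\infty]\hookrightarrow H^3(X_{\bar K},\integ_l(2))\otimes\rat_l/\integ_l$ is injective and has \emph{divisible} cokernel (since $\A^2(X_{\bar K})$ is divisible, so is its $l$-primary torsion, hence it splits off as a direct summand of the divisible target). The same diagonal decomposition shows this cokernel is annihilated by $Np^e$; a divisible group of bounded exponent is zero, so $\lambda^2$ is an isomorphism for \emph{every} $l$, and applying $T_l$ gives the claim. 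This is the missing idea.
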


 A slight generalization of Theorem~\ref{T:co=H} that deals with the prime $p$ in case resolution of singularities holds over $K$ in dimensions $< \dim X$ is proved in Proposition~\ref{P:lambda}.
    See \S \ref{SS:dec}, and in particular Remark~\ref{R:stablyrat}, for the classical link between stable rationality, rational connectedness, and decomposition of the diagonal.  
\medskip

 Again, from Theorem~\ref{T:co=H} (combined with Proposition~\ref{P:phi}), we have the following corollary.
 
\begin{coralpha}[Decomposition of the diagonal and modeling $\mathbb Z_\ell$-cohomology] \label{C:main2}
  Let $X$ be a smooth projective variety over a perfect field~$K$ of characteristic exponent~$p$. 
  \begin{enumerate}
  	\item Assume $\operatorname{CH}_0(X_{\bar K})\otimes \integ\big[{\frac{1}{N}}\big]$ is universally supported in dimension 2 for some $N>0$, \emph{e.g.}\ $X$ is a geometrically uniruled threefold. 
  	   	Then,
  	for all primes $\ell\nmid Np$, $T_\ell\phi^2_{X_{\bar K}/\bar K} : T_\ell \operatorname{A}^2(X_{\bar K})\longrightarrow T_\ell \operatorname{Ab}^2_{X/K}$ is an isomorphism and the canonical inclusion~\eqref{E:AbInc} induces an isomorphism of Galois modules
  	$$
  	\begin{CD}
  	T_\ell \operatorname{Ab}^2_{X/K} @>\simeq>>  H^3(X_{\bar K},\mathbb Z_\ell(2))_\tau.
  	\end{CD}
  	$$
  	
  	\item Assume $\operatorname{CH}_0(X_{\bar K})\otimes \rat$ is universally supported in dimension 1, \emph{e.g.}\  $X$ is a geometrically rationally chain connected. Then, for all primes $l$, $T_l\phi^2_{X_{\bar K}/\bar K} : T_l \operatorname{A}^2(X_{\bar K})\longrightarrow T_l \operatorname{Ab}^2_{X/K}$ is an isomorphism and the canonical inclusion~\eqref{E:AbInc} induces an isomorphism of Galois modules
  	$$
  	\begin{CD}
  	T_l \operatorname{Ab}^2_{X/K}@>\simeq>> H^3(X_{\bar K},\mathbb Z_l(2))_\tau.
  	\end{CD}
  	$$
  	   \end{enumerate}
\end{coralpha}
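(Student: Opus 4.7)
The plan is to deduce the corollary immediately from Theorem~\ref{T:co=H} combined with Proposition~\ref{P:phi}, both established earlier in the paper. By construction, the canonical inclusion~\eqref{E:AbInc} factors as $T_l \operatorname{Ab}^2_{X/K} \to T_l \operatorname{A}^2(X_{\bar K}) \hookrightarrow T_l \operatorname{CH}^2(X_{\bar K}) \to H^3(X_{\bar K}, \mathbb Z_l(2))_\tau$, where the first arrow is $(T_l \phi^2_{X_{\bar K}/\bar K})^{-1}$, the middle arrow is the natural inclusion, and the last is $T_l \lambda^2$. Consequently, to conclude it suffices to verify (a) that $T_l \phi^2_{X_{\bar K}/\bar K}$ is an isomorphism, and (b) that the composition of the last two maps is an isomorphism onto $H^3(X_{\bar K}, \mathbb Z_l(2))_\tau$. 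Condition (b) is precisely Theorem~\ref{T:co=H}(1) in case (1), and Theorem~\ref{T:co=H}(2) in case (2), so the entire task reduces to establishing (a).

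For part (1), under the hypothesis that $\operatorname{CH}_0(X_{\bar K}) \otimes \mathbb Z[1/N]$ is universally supported in dimension~2, I would invoke Proposition~\ref{P:phi}, together with a decomposition-of-the-diagonal argument in the spirit of Proposition~\ref{P:decmini}, to conclude that $\phi^2_{X_{\bar K}/\bar K}$ satisfies the standard assumption at every prime $\ell \nmid Np$; that is, $\phi^2[\ell^\infty]$ is an isomorphism on $\ell$-primary torsion. Taking Tate modules via Lemma~\ref{L:SRHom-facts} then yields that $T_\ell \phi^2_{X_{\bar K}/\bar K}$ is an isomorphism for all such $\ell$, establishing (a). For part (2), the same argument applies with the stronger input that $\operatorname{CH}_0 \otimes \rat$ is universally supported in dimension~1; here Proposition~\ref{P:phi} gives the standard assumption at \emph{every} prime $l$ (including $l=p$), so $T_l \phi^2_{X_{\bar K}/\bar K}$ is an isomorphism for all $l$, matching the range of $l$ covered by Theorem~\ref{T:co=H}(2).

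Since the substantive content has already been packaged into Theorem~\ref{T:co=H} and Proposition~\ref{P:phi}, the corollary itself should present no real obstacle beyond confirming that the hypotheses line up across both inputs---in particular, that the set of primes at which Proposition~\ref{P:phi} produces the standard assumption is at least as large as the set of primes at which Theorem~\ref{T:co=H} produces the isomorphy of the Bloch map. Since this compatibility is built into the formulation of both statements (both are phrased with respect to the same universal-support hypothesis and the same arithmetic restrictions on $\ell$), I expect the proof to reduce to essentially one line per part, reading off the conclusion from the composition of the two ingredients.
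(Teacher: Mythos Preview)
Your proposal is correct and matches the paper's approach exactly: the corollary is stated as an immediate consequence of Theorem~\ref{T:co=H} combined with Proposition~\ref{P:phi}. Note that the detours through the standard assumption, Lemma~\ref{L:SRHom-facts}, and Proposition~\ref{P:decmini} are unnecessary---Proposition~\ref{P:phi}(3) gives $T_\ell\phi^2$ an isomorphism for $\ell\nmid Np$ directly in case~(1), and Proposition~\ref{P:phi}(2) gives that $\phi^2$ itself is an isomorphism (hence $T_l\phi^2$ for all $l$) in case~(2).
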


\subsection{Stably rational \emph{vs.}\ geometrically stably rational varieties over finite fields}
We now turn to the motivic question\,:

\begin{quealpha}[Mazur's motivic question with $\mathbb Z_\ell$-coefficients] \label{Q:MazurZ}
	For which smooth projective varieties $X$ over a field $K$ do there exist an abelian variety $A/K$ and  a correspondence $\Gamma \in \operatorname{CH}^2(A \times_{ K} X)$ such that \emph{for all} primes $\ell\neq \characteristic(K)$ 
	\begin{equation*}
	\begin{CD}
	\Gamma_*: T_\ell A @>\simeq>>  {\widetilde \coniveau}^1H^3(X_{\bar K},\mathbb Z_\ell(2))_\tau
	\end{CD}
	\end{equation*}
	is an isomorphism of $\operatorname{Gal}(K)$-modules\,?
\end{quealpha}

In other words, we turn now to the issue of addressing the existence of a correspondence $\Gamma\in \operatorname{CH}^{2}( \operatorname{Ab}^2_{X/K}\times _K X)$ 
inducing the isomorphisms~\eqref{E:model-ZZ}.
 As already mentioned, it is easy to establish  a positive answer under the further assumption that $2n-1 \leq \dim X$ and $H^{2n-1}(X_{\bar K}, \mathbb Q_\ell(n))$ has geometric coniveau~$n-1$\,; see Proposition~\ref{P:easy}.
In case $X$ is a smooth projective geometrically uniruled threefold, then Proposition~\ref{P:mazuruniruled} establishes more precisely
the existence of   a  correspondence
$\Gamma\in \operatorname{CH}^{2}( \operatorname{Ab}^2_{X/K}\times _K X)\otimes \rat$ such that the induced morphism of
Galois modules
$
\Gamma_*:\ V_l \operatorname{Ab}^2_{X/K} \stackrel{\simeq}{\longrightarrow} H^{3}(X_{\bar K}, \mathbb Q_l(2))
$ coincides with the canonical map~\eqref{E:canQ} and
 is an isomorphism for all primes $l$.\medskip

 On the other hand, due to the failure of the integral Tate conjecture over finite fields \cite{Antieau, Kameko, PY},
 an  isomorphism as in~\eqref{E:model-ZZ} might not be induced by some correspondence $\Gamma\in \operatorname{CH}^{2}( \operatorname{Ab}^2_{X/K}\times _K X)$. However, using the $\ell$-adic Bloch map, we provide a positive answer for the third $\ell$-adic cohomology groups of smooth projective  stably rational varieties over finite or algebraically closed fields, thereby addressing Question~\ref{Q:MazurZ}\,:

 \begin{teoalpha}[Modeling $\mathbb Z_\ell$-cohomology via correspondences]\label{T:model}
  Let $X$ be a smooth projective stably rational variety
  over a field $K$ that is either finite or algebraically closed. Then there  exists a correspondence $\Gamma\in \operatorname{CH}^{2}(\operatorname{Ab}^2_{X/K}\times _K
  X)$
  inducing for all primes $\ell \neq \mathrm{char}\, K$  the isomorphisms~\eqref{E:AbInc} 
   \begin{equation}\label{E:model1}
  \begin{CD}
  \Gamma_*:\ T_\ell \operatorname{Ab}^2_{X/K} @>\simeq>> H^{3}(X_{\bar K}, \mathbb Z_\ell(2))
  \end{CD}
  \end{equation}
  of $\operatorname{Gal}(K)$-modules.
  Moreover, if $ \operatorname{char}(K)=0$, the correspondence $\Gamma$ induces an
  isomorphism
  \begin{equation}\label{E:model2}
  \begin{CD}
  \Gamma_*:\ H_1(J^3(X_{\mathbb C}),\integ) @>\simeq>> H^{3}(X_{\cx}, \mathbb Z(2)).
  \end{CD}
  \end{equation}
 \end{teoalpha}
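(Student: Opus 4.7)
The plan is to use the stable rationality of $X$ to produce an integral decomposition of the diagonal defined over $K$, which in turn yields a universal cycle $\Gamma$ inducing the required isomorphism directly, thereby bypassing the integral Tate conjecture (which is known to fail over finite fields in codimension two).

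First, since $X$ is stably rational over $K$, the group $\operatorname{CH}_0(X_L)$ is integrally universally trivial for all field extensions $L/K$, and hence the classical Bloch--Srinivas argument yields an integral decomposition
\[
\Delta_X = X \times z + \Gamma' \quad \text{in } \operatorname{CH}^{\dim X}(X \times_K X),
\]
where $z$ is a zero-cycle of degree one on $X$ and $\Gamma'$ is supported on $D \times X$ for some divisor $D \subset X$ defined over $K$. Invoking the correspondence between such decompositions and universal cycles (Proposition~\ref{P:decmini}), this decomposition produces a universal cycle $\Gamma \in \operatorname{CH}^2(\operatorname{Ab}^2_{X/K} \times_K X)$ representing $\phi^2_{X_{\bar K}/\bar K}$.

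Next, compatibility of the second $\ell$-adic Bloch map with the action of correspondences (reviewed in \S\ref{S:properties} and the appendix) implies that the induced map
\[
\Gamma_* \colon T_\ell \operatorname{Ab}^2_{X/K} \longrightarrow H^3(X_{\bar K}, \mathbb Z_\ell(2))
\]
coincides with the canonical composition~\eqref{E:AbInc}, namely $T_\ell \lambda^2 \circ (T_\ell \phi^2_{X_{\bar K}/\bar K})^{-1}$. Because stable rationality implies rational chain connectedness (so one may take $N = 1$ in Theorem~\ref{T:co=H}), Corollary~\ref{C:main2}(2) shows that this canonical map is an isomorphism for all primes $\ell \neq \operatorname{char} K$, and that moreover $H^3(X_{\bar K}, \mathbb Z_\ell(2))$ is torsion-free, establishing~\eqref{E:model1}.

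For the second assertion, assume $\operatorname{char} K = 0$ and base-change $\Gamma$ to $\mathbb C$. The distinguished model $\operatorname{Ab}^2_{X/K}$ then specialises to the algebraic intermediate Jacobian $J^3_a(X_\mathbb C)$, and since $X$ is in particular rationally connected one has $J^3_a(X_\mathbb C) = J^3(X_\mathbb C)$. Running the singular cohomology analogue of the previous steps (with the Abel--Jacobi map replacing the $\ell$-adic Bloch map, and the integral isomorphism input supplied by Theorem~\ref{T:co=H}(2) applied over $\mathbb C$) shows that $\Gamma_*$ induces~\eqref{E:model2}.

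The main obstacle is the first step: exhibiting a universal cycle that is defined over $K$ itself, not merely over $\bar K$. Over finite fields, the failure of the integral Tate conjecture in codimension two (\cite{Antieau, Kameko, PY}) means that one cannot deduce the existence of a $K$-rational correspondence from an isomorphism of Galois modules alone, so the full strength of an integral decomposition of the diagonal \emph{over} $K$ is essential. Verifying that the produced $\Gamma$ does in fact represent $\phi^2_{X_{\bar K}/\bar K}$, rather than some non-trivial twist, relies on the formalism of miniversal and universal cycles developed in \S\ref{SS:mini}.
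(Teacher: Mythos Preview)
Your approach is the same route as the paper's proof (Theorem~\ref{T:coniFil}): both construct the correspondence by passing through the universal divisor on the Picard variety of an alteration $\widetilde{W}_1$ of the divisor appearing in the decomposition of the diagonal. You package this construction by invoking Proposition~\ref{P:decmini}, whereas the paper unfolds it explicitly and verifies via commutative diagrams that $Z_* = Np^e \cdot T_\ell\lambda^2 \circ (T_\ell\phi^2_X)^{-1}$.

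Two points need correction. First, Proposition~\ref{P:decmini} does \emph{not} in general produce a universal cycle: it yields a miniversal cycle of degree $p^eN$, here $p^e$, and one may take $e=0$ only when $\dim X \le 4$ (via resolution of singularities for $\widetilde{W}_1$). Thus the map you obtain is $p^e$ times the canonical composition~\eqref{E:AbInc}, not equal to it; since $\ell \ne p$ this is still an isomorphism and your conclusion survives, but ``coincides with'' should be weakened accordingly (the paper's proof has the same $p^e$ factor). Second, your claim that compatibility of the Bloch map with correspondences immediately gives $\Gamma_* = T_\ell\lambda^2 \circ (T_\ell\phi^2)^{-1}$ skips a genuine verification: one must identify $T_\ell \operatorname{Ab}^2$ with $T_\ell \operatorname{A}_0(\operatorname{Ab}^2)$ and then with $H_1(\operatorname{Ab}^2,\mathbb Z_\ell)$ via $T_\ell\lambda_0$, and trace through diagram~\eqref{E:con-com-diag2} (or invoke \cite[Cor.~11.8]{ACMVdiag}, as the paper does in the rational-coefficient setting of Proposition~\ref{P:mazuruniruled}). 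The paper sidesteps this by computing $Z_*$ directly from the explicit factorization through $\operatorname{Pic}^0_{\widetilde{W}_1}$, which is why it reproduces the construction rather than citing Proposition~\ref{P:decmini}.
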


The proof of Theorem~\ref{T:model} is given in Theorem~\ref{T:coniFil},  \emph{via} a decomposition of the diagonal argument.
There we also explain how the conclusion of Theorem~\ref{T:model} holds at $l=p$ in case $\dim X \leq 4$, due to the existence resolution of singularities in dimensions $\leq 3$.
There are two reasons for restricting to algebraically closed fields or finite fields in Theorem~\ref{T:model}.  First, in order to use alterations, we restrict to the case of perfect fields.  Second,
 in order to obtain the existence of the universal line-bundle, we use that $K$ is finite or separably closed (see \cite[\S\ref{SS:UnivCodim1}]{ACMVfunctor}).

 \subsection{Notation and conventions}\label{S:NotCon}
 For a field $K$, we will denote by $\bar K$ a separable closure, and by $\bar K^a$ an algebraic closure. 
 If $X$ is a scheme of finite type over a field $K$, we denote by $\operatorname{CH}^i(X)$ its Chow group of codimension-$i$ cycle classes, and by $\operatorname{A}^i(X) \subset \operatorname{CH}^i(X)$ the subgroup consisting of algebraically trivial cycle classes (see \cite[\S 10.3]{fulton}). If $X$ is pure-dimensional, we denote $d_X$ its dimension. In case $X$ is smooth over $K$, we still denote $d_X$ its dimension, which should be thought of as a locally constant function on $X$.
 A \emph{variety} over $K$ is a separated geometrically reduced scheme of finite type over~$K$.   The symbol $l$ is allowed to denote an arbitrary prime, whereas $\ell$
is always assumed invertible in the base field $K$. The phrase ``for almost all'' means ``for all but finitely many''.

Let  $M$ be an abelian group,  let $l$ be a prime, and let $\nu$ be an integer.  We denote\,:
 $$
 \begin{array}{ll}
M_\mathrm{tors}&:=\operatorname{Tor}_1^{\mathbb Z}(\mathbb Q/\mathbb
  Z,M) =\text{ the torsion subgroup of $M$}\,;\\
M_{\operatorname{cotors}} &:= \text{ the quotient of $M$ by its largest
  divisible subgroup\,;} \\
M_\tau &:= M/M_{\operatorname{tors}} = \text{ the quotient of $M$ by its
  torsion subgroup.}\\
M[l^\nu]&:=\operatorname{Tor}_1^{\mathbb Z}(\mathbb Z/l^\nu\mathbb
  Z,M) = \text{the $l^\nu$-torsion subgroup of $M$\,;}\\
M[l^\infty] &:= \varinjlim_{\nu} M[l^\nu] =  \text{the $l$-primary
  torsion subgroup of $M$\,;}\\
T_l M &:= \varprojlim_{\nu} M[l^\nu] =
  \operatorname{Hom}_{\mathbb Z}(\mathbb Q_l/\mathbb Z_l,M) = \text{ the Tate
  module of $M$\,;}\\
V_l M &:=T_l M  \otimes_{\mathbb Z_l}\mathbb Q_{l}.
 \end{array}
$$

 In the definition of $T_l M$, the transition maps are given by the
 multiplication by $l$ morphisms $M[l^{\nu+1}] \stackrel{\cdot l}{\to} M[l^\nu]$ and the
 equality $\varprojlim_{\nu} M[l^\nu] = \operatorname{Hom}_{\mathbb Z}(\mathbb
 Q_l/\mathbb Z_l,M) $ can be found, \emph{e.g.}, in \cite[Prop.~0.19]{milneADT}.
 Note that $T_l M = T_l (M[l^\infty])$.
 Note also that for a $\mathbb Z_l$-module $M$, we have
 $M_{\mathrm{tors}}=\operatorname{Tor}_1^{\mathbb Z_l}(\mathbb
 Q_l/\mathbb Z_l,M)$.

 We denote the $l$-adic valuation by $v_l$, so that for a
 natural number $r$, we have $r = \prod_l l^{v_l(r)}$.

For a smooth projective variety $X$ over an algebraically closed field $K=\bar K^a=\bar K$, we denote by $H_i(X_{\bar K},\mathbb Z_\ell)$ the $\ell$-adic \emph{homology}.   This will  be primarily an indexing convention that is useful from the motivic perspective, since in the case where $X$ is smooth and projective of pure dimension~$d_X$, the cap product with the fundamental class of $X$ induces for all $i$ (\emph{e.g.}, \cite[p.173]{laumon76}) an isomorphism $-\cap[X]: H^{2d_X-i}(X_\bar K,\mathbb Z_\ell(d_X))\stackrel{\simeq}{\to}H_{i}(X_{\bar K},\mathbb Z_\ell)$.

 \section{On various notions of coniveau filtrations}\label{S:CoNi}

 Given a smooth projective variety $X$ over a field $K$, one obtains coniveau
 filtrations on cohomology with various coefficients. In~\eqref{eq:N},
 \eqref{eq:N'} and~\eqref{eq:Ntilde} below we recall the definitions of the (classical) geometric
 coniveau filtration $\coniveau^\bullet$, of the (less classical)
correspondence coniveau filtration $\coniveau'^\bullet$ and of the (still less classical) correspondence niveau filtration $\widetilde{\coniveau}^\bullet$.

 Although the filtrations might not agree in general,
 in Proposition~\ref{P:Jannsen} below we recall that they are related by 
 $$\widetilde \coniveau^\bullet \subseteq \coniveau'^\bullet \subseteq \coniveau^\bullet,$$
 where, over a
 perfect field $K$ and with $\rat_\ell$-coefficients, the second inclusion is an equality while the first is conjecturally an equality.
 
In this section, the ring of coefficients $\Lambda$ denotes either $\mathbb Z$, $\mathbb Q$, $\integ/\ell^r\integ$, $\integ_\ell$, $\rat_\ell$, or $\mathbb Q_\ell/\mathbb Z_\ell$.     Cohomology groups $H^\bullet(-,\Lambda)$ are computed in the corresponding topology\,: \emph{e.g.}, $H^\bullet(-,\mathbb Z)$ is computed in the analytic topology, while $H^\bullet(-,\mathbb Z/\ell^r\mathbb Z)$ is computed in the \'etale topology.

 \subsection{Recalling the geometric coniveau filtrations}\label{S:CoNiDef}
 Let $X$ be a smooth projective variety over a field $K$ with separable closure $\bar K$.
 We recall  the \emph{$\nu$-th piece of the geometric coniveau filtration}
 \begin{align}\label{eq:N}
 \coniveau^\nu H^{i}(X_{\bar K},\Lambda) &:= \sum_{Z\subset X_{\bar
 		K}} \operatorname{im} \Big(
 H^{i}_Z(X_{\bar K},\Lambda) \to H^{i}(X_{\bar K},\Lambda) \Big)\\
 &=\sum_{Z\subset X_{\bar
 		K}} \ker \Big(
 H^{i}(X_{\bar K},\Lambda) \to H^{i}(X_{\bar
 	K}\smallsetminus Z,\Lambda) \Big) \label{eq:Nker}
\end{align}
\noindent where the sum runs through all Zariski closed subsets $Z$ of $X_{\bar K}$ of
codimension $\geq \nu$.  (The equivalence of
\eqref{eq:N} and~\eqref{eq:Nker} comes from the long exact sequence of
a pair.)

 For our purpose, we will have to work with the following variant of the coniveau
 filtration.
 We recall the \emph{$\nu$-th piece of the correspondence coniveau
  filtration}
 \begin{equation}\label{eq:N'}
 {\coniveau'}^\nu H^{i}(X_{\bar K},\Lambda) :=  \sum_{\Gamma :Z\vdash  X_{\bar
 		K}}
 \operatorname{im} \Big( \Gamma_*: H^{i-2\nu}( Z,\Lambda(-\nu)) \to H^{i}(X_{\bar
  K},\Lambda)\Big),
 \end{equation}
 where the sum is over all smooth projective varieties $Z$ over $\bar K$ 
 and all correspondences $\Gamma \in \operatorname{CH}_{d_X-\nu}(Z\times_{\bar K}X_{\bar K})$.

 Let us also introduce a related filtration, which was considered in \cite{FM-filtration} and in \cite{Vial2}.
 The \emph{$\nu$-th piece of the correspondence niveau
 	filtration} is defined as
 \begin{equation}\label{eq:Ntilde}
 {\widetilde{\coniveau}}^\nu H^{i}(X_{\bar K},\Lambda) :=  \sum_{\Gamma :Z\vdash  X_{\bar
 		K}}
 \operatorname{im} \Big( \Gamma_*: H_{i-2\nu}( Z,\Lambda(\nu-i)) \to H^{i}(X_{\bar
 	K},\Lambda)\Big),
 \end{equation}
 where the sum is over all smooth
 projective varieties $Z$ over $\bar K$  and all correspondences $\Gamma \in \operatorname{CH}^{d_X-\nu}(Z\times_{\bar
 	K}X_{\bar
 	K})$. As we will see in the proof of Proposition~\ref{P:Jannsen} below, one may restrict the sum in~\eqref{eq:Ntilde} to smooth projective varieties $Z$ over $\bar K$ of pure dimension $i-2\nu$.
  Here, as outlined in the Notation and Conventions \S \ref{S:NotCon}, we use homology as a convenience\,; for $Z$ of pure dimension $d_Z$, we set $H_{i-2\nu}( Z,\Lambda(\nu-i)) := H^{2d_Z -(i-2\nu)}( Z,\Lambda(d_Z-\nu-i))$.

By considering fields of definitions of $Z$ (and $\Gamma$) that are finite Galois over $K$ and by considering Galois orbits,
we note that in the definitions of all three filtrations above, we could have restricted the sums to those $Z$ (and $\Gamma$) defined over $K$.
 \medskip
 
The above three filtrations admit the following already-known containment relation\,:

 \begin{pro}
  	\label{P:Jannsen}
  Let $X$ be a smooth projective variety over a field  $K$. 
  Suppose that $\Lambda$ is one of $\integ/ \ell^r\integ$, $\integ_\ell$, 
  $\rat_\ell$, or $\mathbb Q_\ell/\mathbb Z_\ell$
  or that $K = \cx$ and $\Lambda$ is one of $\mathbb Z$, $\mathbb Q$ or $\mathbb Q/\mathbb Z$.
  Then there are natural inclusions
  $$
  {\widetilde \coniveau}^\nu H^i(X_{\bar K},\Lambda)\subseteq   {\coniveau'}^\nu H^i(X_{\bar K},\Lambda)\subseteq \coniveau^\nu H^i(X_{\bar
   K},\Lambda).
  $$
  In case $(\nu,i) = (n,2n)$ or $(n-1,2n-1)$,    the first inclusion is an equality.
  Moreover, assuming $\Lambda$ is either $\rat_\ell$
or $\rat$, if $K$ is perfect 
 then the second inclusion is an equality, and if Grothendieck's Lefschetz standard conjecture holds then the first inclusion is an equality.
 \end{pro}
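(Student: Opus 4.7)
The plan is to treat the two general inclusions first, and then to address the three equality statements in turn.

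For the inclusion $\coniveau'^\nu \subseteq \coniveau^\nu$: given a correspondence $\Gamma \in \operatorname{CH}_{d_X-\nu}(Z\times_{\bar K} X_{\bar K})$, the image $W := p_X(|\Gamma|) \subseteq X_{\bar K}$ of the support under the second projection satisfies $\dim W \leq d_X - \nu$, i.e.\ $\operatorname{codim}_{X_{\bar K}} W \geq \nu$. By the projection formula, the correspondence action $\Gamma_* = p_{X,*}\circ([\Gamma]\cup -)\circ p_Z^*$ factors through the local cohomology group $H^i_W(X_{\bar K},\Lambda) \to H^i(X_{\bar K},\Lambda)$, which places the image in $\coniveau^\nu H^i$ by the defining expression~\eqref{eq:Nker}. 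For the inclusion $\widetilde\coniveau^\nu \subseteq \coniveau'^\nu$, I would first establish the paper's parenthetical remark that the sum defining $\widetilde\coniveau^\nu$ can be restricted to $Z$ of pure dimension $i-2\nu$. Given $Z$ of dimension strictly greater than $i-2\nu$, choose a smooth ample hyperplane section $j\colon Z'\hookrightarrow Z$ by Bertini (the field $\bar K$ is infinite); weak Lefschetz then provides surjectivity of $j_*\colon H_{i-2\nu}(Z',\Lambda(\nu-i)) \to H_{i-2\nu}(Z,\Lambda(\nu-i))$, and the projection formula yields the identity $\Gamma_*\circ j_* = ((j\times\operatorname{id}_X)^*\Gamma)_*$ with $(j\times\operatorname{id}_X)^*\Gamma\in\operatorname{CH}^{d_X-\nu}(Z'\times_{\bar K} X_{\bar K})$, so iterating reduces to $d_Z = i-2\nu$. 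For such $Z$, Poincaré duality on $Z$ identifies $H_{i-2\nu}(Z,\Lambda(\nu-i)) \simeq H^{i-2\nu}(Z,\Lambda(-\nu))$, and Poincaré duality on the product $Z\times_{\bar K} X_{\bar K}$ allows one to re-express $\Gamma$ as a cycle whose dimension-action formula on cohomology matches its codimension-action formula on homology, placing the image in $\coniveau'^\nu$.

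For the three equalities: when $(\nu,i)=(n,2n)$ or $(n-1,2n-1)$, one has $i-2\nu\in\{0,1\}$, and the Poincaré-duality identifications on a $0$- or $1$-dimensional smooth projective $Z$ are unambiguous and reversible; running the argument above backwards yields $\coniveau'^\nu \subseteq \widetilde\coniveau^\nu$. For the equality $\coniveau'^\nu = \coniveau^\nu$ with $\mathbb Q_\ell$- or $\mathbb Q$-coefficients and $K$ perfect: given $\alpha \in \coniveau^\nu H^i(X_{\bar K}, \Lambda)$ supported on an irreducible closed subset $Z\subseteq X_{\bar K}$ of codimension $\nu$, apply de~Jong's theorem on alterations over $\bar K$ (available because $K$, hence $\bar K$, is perfect) to obtain a smooth projective variety $\tilde Z/\bar K$ with a proper generically finite morphism $\pi\colon\tilde Z\to Z$ of generic degree $\delta$; the graph of the composition $\tilde Z\xrightarrow{\pi} Z\hookrightarrow X_{\bar K}$ is a cycle in $\operatorname{CH}_{d_X-\nu}(\tilde Z\times_{\bar K} X_{\bar K})$ whose action, after division by $\delta\in\Lambda^{\times}$, recovers $\alpha$ and exhibits it as an element of $\coniveau'^\nu$. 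Finally, under Grothendieck's Lefschetz standard conjecture, the Lefschetz $*$-operator on any smooth projective variety is induced by an algebraic correspondence, so all the Poincaré-duality isomorphisms used in the first-inclusion argument can be realized by algebraic correspondences, yielding $\coniveau'^\nu \subseteq \widetilde\coniveau^\nu$ without restriction on $(\nu,i)$.

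The main technical hurdle I anticipate is the careful bookkeeping of Tate twists and of the dimension/codimension swap induced by Poincaré duality on $Z\times_{\bar K} X_{\bar K}$ in the first-inclusion argument; this is what ensures that a codim-$(d_X-\nu)$ correspondence acting on homology of $Z$ really does realize its image inside $\coniveau'^\nu$, which is formulated in terms of dim-$(d_X-\nu)$ correspondences acting on cohomology. The role of rational coefficients in the equality $\coniveau'^\nu = \coniveau^\nu$ is essential precisely because the degree $\delta$ of the alteration must be inverted, which is why the statement restricts to $\Lambda\in\{\mathbb Q_\ell,\mathbb Q\}$.
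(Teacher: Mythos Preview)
Your treatment of the two inclusions and of the Lefschetz-standard-conjecture case is essentially the paper's argument. There are, however, two genuine gaps in the equality statements.

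\textbf{The equality for $(\nu,i)\in\{(n,2n),(n-1,2n-1)\}$.} Your reduction of $\widetilde{\coniveau}^\nu$ to varieties $Z$ of dimension $i-2\nu$ is fine, but you cannot ``run the argument backwards'' for $\coniveau'^\nu$: the weak Lefschetz theorem gives \emph{injectivity} of $j^*:H^{i-2\nu}(Z)\to H^{i-2\nu}(Z')$ for a hyperplane section, not surjectivity of a push-forward, so there is no parallel reduction of $\coniveau'^\nu$ to low-dimensional $Z$. The paper instead keeps $Z$ arbitrary and produces, for each $Z$, an explicit auxiliary variety $Z'$ and a correspondence $L\in\chow^{i-2\nu}(Z'\times_{\bar K}Z)$ inducing an isomorphism $L_*:H_{i-2\nu}(Z')\stackrel{\sim}{\to}H^{i-2\nu}(Z)$; then $\Gamma_*\circ L_*$ exhibits the image in $\widetilde{\coniveau}^\nu$. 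For $i-2\nu=0$ one takes $Z'=Z$ with $L=[Z\times Z]$; for $i-2\nu=1$ one takes $Z'=\operatorname{Pic}^0_{Z/\bar K}$ with $L$ the universal line bundle, giving $H_1(\operatorname{Pic}^0_Z)=T_\ell\operatorname{Pic}^0_Z=H^1(Z,\Lambda(1))$. This is the missing content.

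\textbf{The equality $\coniveau'^\nu=\coniveau^\nu$ over perfect $K$ with rational coefficients.} Your claim that the graph of $\tilde Z\to Z\hookrightarrow X_{\bar K}$ ``recovers $\alpha$ after division by $\delta$'' is where the argument breaks. The class $\alpha$ lies in the image of $\iota_*:H_{2d_X-i}(Z,\rat_\ell)\to H^i(X_{\bar K},\rat_\ell)$, and you need the push-forward $\pi_*:H_{2d_X-i}(\tilde Z,\rat_\ell)\to H_{2d_X-i}(Z,\rat_\ell)$ to cover the relevant classes. Since $Z$ is singular there is no projection formula $\pi_*\pi^*=\delta\cdot\operatorname{id}$ (pullback on homology is not even defined), so division by $\delta$ is not available. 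The paper's argument, following Jannsen, uses the theory of weights: one reduces to $K$ finitely generated over its prime field; then $H^i(X_{\bar K},\rat_\ell)$ is pure of weight $i$, so $\iota_*H_{2d_X-i}(Z)=\iota_*W_iH_{2d_X-i}(Z)$, and Jannsen's result \cite[Rem.~7.7]{jannsenthesis} shows that $\pi_*$ surjects onto $W_iH_{2d_X-i}(Z)$. The weight filtration is the essential input you are missing; without it the alteration alone does not suffice.
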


 \begin{proof} The containment ${\widetilde \coniveau}^\nu H^i(X_{\bar K},\Lambda)\subseteq \coniveau'^\nu H^i(X_{\bar
 		K},\Lambda)$ is explained in \cite[\S 1.1]{Vial2} in the case $K\subseteq \cx$ and $\Lambda = \rat$. The same argument applies here and we spell it out for the sake of completeness.
 	 First, note that up to replacing $Z$ with $Z\times_{\bar K} \mathbb P_{\bar K}^n$, 
 	 we can assume $\dim Z \geq i-2\nu$. 
 	 Second, if $\iota:Y\hookrightarrow Z$ is a smooth linear intersection of $Z$ of dimension $i-2\nu$, then the push-forward $\iota_*:H_{i-2\nu}( Y,\Lambda) \to H_{i-2\nu}( Z,\Lambda)$ is surjective by the Lefschetz hyperplane theorem (\emph{e.g.}, \cite[Exp.~XIV, Cor.~3.3]{SGA4-3}),
 	  and the image of $\Gamma_*:H_{i-2\nu}(Z,\Lambda(i-\nu)) \to H^{i}(X_{\bar K},\Lambda)$ coincides thus with the image of $\Gamma_*\circ \iota_*$. Therefore, 
 	one may restrict the sum in~\eqref{eq:Ntilde} to those smooth projective varieties $Z$ over $\bar K$ of pure dimension~$i-2\nu$.
 	In particular, since $H_{i-2\nu}( Z,\Lambda) = H^{i-2\nu}( Z,\Lambda(i-2\nu))$ when $\dim Z = i-2\nu$, we get the asserted containment.

As outlined in the argument in \cite[Prop.~1.1]{Vial2},
a sufficient condition for the first inclusion to be an equality is the following\,: if for all smooth projective varieties $Z$ over $\bar K$  there exists a smooth projective variety $Z'$ over $\bar K$ and a correspondence $L \in \operatorname{CH}^{i-2\nu}(Z'\times_{\bar K} Z)$
  inducing an isomorphism $L_* : H_{i-2\nu}(Z',\Lambda(i-\nu)) \stackrel{\simeq}{\longrightarrow} H^{i-2\nu}(Z,\Lambda(\nu))$, then the image of $\Gamma_* : H^{i-2\nu}( Z,\Lambda(-\nu)) \to H^{i}(X_{\bar
	K},\Lambda)$ coincides with the image of $\Gamma_*\circ L_*$ and it follows that the containment 
${\widetilde \coniveau}^\nu H^i(X_{\bar K},\Lambda)\subseteq \coniveau'^\nu H^i(X_{\bar
	K},\Lambda)$ is an equality.
Now, in case $(\nu,i) = (n,2n)$, 
  the class of $Z\times_{\bar{K}} Z$ induces an isomorphism
 $H_0(Z,\Lambda) \stackrel{\simeq}{\to} H^0(Z,\Lambda)$, while in case
 $(\nu,i) = (n-1,2n-1)$,
using the identification of
torsion line bundles and \'etale covers, the universal line bundle
$\mathcal{L}$ on $\operatorname{Pic}^0_{Z/\bar K} \times_{\bar K} Z$ induces
natural identifications
$H_1(\operatorname{Pic}^0_{Z/\bar K},\mathbb Z_\ell) =
H^{2g-1}(\operatorname{Pic}^0_{Z/\bar K},\mathbb Z_\ell(g))=
T_\ell \operatorname{Pic}^0_{Z/\bar K} = H^1(Z,\mathbb
Z_\ell(1))$.
                	In case $\Lambda = \rat$ or $\rat_{\ell}$, a correspondence $L$ as above exists with $Z'=Z$ for all $Z$ and all $(\nu,i)$  provided Grothendieck's standard conjecture holds. 
 	\medskip
 	
 	We now turn to the containment ${\coniveau'}^\nu H^i(X_{\bar K},\Lambda)\subseteq \coniveau^\nu H^i(X_{\bar K},\Lambda)$. Let $\Gamma \in \operatorname{CH}_{d_X-\nu}(Z\times_{\bar K}X_{\bar K})$ be a correspondence with  $Z$  a smooth projective variety over $\bar K$. By refined intersection, the image of $\Gamma_*$ is supported on the closed subscheme $\bar Z := p_{X_{\bar K}}(\Gamma)$ of dimension $\leq d_X-\nu$, where $p_{X_{\bar K}} : \Gamma \to X_{\bar K}$ is the natural projection. In other words, the composition 
        \begin{equation}
          \label{eq:N'N}
 \xymatrix{H^{i-2\nu}( Z,\Lambda(-\nu)) \ar[r]^{\qquad \Gamma_*} & H^{i}(X_{\bar
 	K},\Lambda) \ar[r] & H^{i}(X_{\bar K}\smallsetminus \bar Z,\Lambda)
 }
        \end{equation}
  vanishes,
thereby  giving the asserted containment.

  For the statement of equality when $\Lambda$ denotes $\rat$ or $\mathbb Q_\ell$ and when $K$ is perfect,
 	since Jannsen \cite{jannsenseattle} only asserts this for fields of characteristic $0$,
 as de Jong's 
 results were not available at the time,
 here we
 reproduce the argument of \cite[p.265--6]{jannsenseattle} to show how the
 argument can be extended to fields of positive characteristic. 
 Consider a closed embedding $\iota: Z\hookrightarrow X_{\bar K}$ with $\dim Z = d_X-\nu$ and use the theory of alterations to produce a diagram
  $$\xymatrix{f: Z' \ar@{-{>>}}[r]^\pi & Z \ar@{^{(}->}[r]^\iota&X_{\bar K}}$$
  with $ Z'$ smooth of pure dimension $d_X-\nu$. We get a  commutative diagram (using $\ell$-adic
                homology)
  $$
  \xymatrix{
   H^{i-2\nu}(Z',\rat_\ell(-\nu)) \ar[r] \ar[d]_{\simeq} \ar@/^2pc/[rr]^{f_*}&
   H^i_{Z}(X_{\bar K},\rat_\ell) \ar[r] \ar[d] & H^i(X_{\bar K},\rat_\ell)
   \ar[d]_{\simeq}\\
   H_{2d_X-i}( Z',\rat_\ell(d_X)) \ar[r]^{\pi_*}& H_{2d_X-i}(Z,\rat_\ell(d_X))
   \ar[r]^{\iota_*}& H_{2d_X-i}(X_{\bar K},\rat_\ell(d_X))
  }
  $$
  and we then argue using weights (see \cite[\S 6]{jannsenthesis}).
  To utilize weights, we must assume that $K$ is finitely generated over its
  prime field\,; however, since all the varieties are of finite type, they are all
  defined over a base field $K'\subseteq \bar K$ that is finitely generated over its
  prime field, and we may work over $K'$, and then base change to $\bar K$.  In other
  words, we may assume
that $K$ is finitely generated over its prime
  field and that $X$, $Z$, $\Gamma$ and $\pi$ are defined over $K$.
  Now, since $H^i(X_{\bar K},\mathbb Q_\ell)$ is pure of weight $i$, the image of
  $\iota_*$ equals the image of $W_iH_{2d_X-i}(Z_{\bar K},\mathbb Q_\ell(d_X))$.  On
  the other hand, it is shown in \cite[Rem.~7.7]{jannsenthesis} that  $H_{2d_X-i}(
  Z'_{\bar K},\mathbb Q_\ell(d_X))$ surjects onto this space \emph{via}~$\pi_*$.
 \end{proof}

 \subsection{$p$-adic coniveau filtrations}\label{SS:pcon}
If $\characteristic(K)=p>0$ and if $K$ is perfect, we also allow $\Lambda$ to variously denote $\ww_n(K)$, $\ww(K)$ or $\kk(K)$, in which case $H^\bullet(-,\Lambda)$ denotes a crystalline cohomology group.  (See \S \ref{S:p-adicConv} for our notations concerning $p$-adic cohomology theories in characteristic $p$.)
One defines 
$$
\coniveau^\nu H^i(X_{\bar K},\Lambda) \ \ \text{ and } \ \ \coniveau'^\nu H^i(X_{\bar K},\Lambda)
$$ 
exactly as in~\eqref{eq:N} (note that in this setting \eqref{eq:Nker} is not well-behaved) and~\eqref{eq:N'}, respectively.  Since we will only need homology for smooth projective varieties $X$, we simply define $H_i(X,\Lambda) = H^{2d_X-i}(X,\Lambda(d_X))$.  With this notation we then define the correspondence niveau filtration $\widetilde{\coniveau}$ exactly as in~\eqref{eq:Ntilde}.

In contrast to the crystalline cohomology groups, the groups $H^i(X_{\bar K},\rat_p)$ no longer have a useful theory of weights or Poincar\'e duality.  Since $H^{i}(X_{\bar K},\integ_p)$ can be recovered as $(H^{i}(X/\ww))^F$, the suitably-defined $F$-invariants of $H^{i}(X/\ww)$ \cite[\S I.3]{gros85}, we simply \emph{define} the $p$-adic coniveau filtrations $\mathcal N = \widetilde{\coniveau}, \coniveau, \coniveau'$ by
\begin{align*}
  {\mathcal N}^{\nu} H^{i}(X_{\bar K},\integ_p) &= ({\mathcal N}^{\nu}
  H^{i}(X/\ww))^F\\
  {\mathcal N}^{\nu} H^{i}(X_{\bar K},\integ_p/p^r) &= ({\mathcal N}^{\nu}
  H^{i}(X/\ww_r))^F \\
  {\mathcal N}^\nu H^i(X_{\bar K}, \rat_p/\integ_p) &= \varinjlim {\mathcal N}^\nu H^i(X_{\bar K}, \integ_p/p^r).
\end{align*}
Then Proposition~\ref{P:Jannsen} holds in this context, too\,:

\begin{projannsenbis}
    Let $X$ be a smooth projective variety over a perfect field  $K$. 
  Suppose that $\Lambda$ is one of $\integ/l^r\integ$, $\integ_l$, 
  $\rat_l$, or $\mathbb Q_l/\mathbb Z_l$, or that $\characteristic(K)>p$ and $\Lambda$ is one of $\ww_r(K)$, $\ww(K)$ or $\kk(K)$, 
  or that $K = \cx$ and $\Lambda$ is one of $\mathbb Z$, $\mathbb Q$ or $\mathbb Q/\mathbb Z$.
  Then there are natural inclusions
  $$
  {\widetilde \coniveau}^\nu H^i(X_{\bar K},\Lambda)\subseteq   {\coniveau'}^\nu H^i(X_{\bar K},\Lambda)\subseteq \coniveau^\nu H^i(X_{\bar
   K},\Lambda).
  $$
    In case $(\nu,i) = (n,2n)$ or $(n-1,2n-1)$,
   the first inclusion is an equality.
  If $\Lambda$ is a field of characteristic zero, then the second inclusion is an equality, and if in addition Grothendieck's Lefschetz standard conjecture holds, then the first inclusion is an equality.
\end{projannsenbis}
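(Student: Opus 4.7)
The proof will largely parallel that of Proposition~\ref{P:Jannsen}, with the key new observation being that for $\Lambda \in \{\integ_p, \rat_p, \rat_p/\integ_p\}$ the filtrations on $H^i(X_{\bar K},\Lambda)$ are, by the very definitions in \S\ref{SS:pcon}, obtained as $F$-invariants of the corresponding filtrations on crystalline cohomology. Since taking $F$-invariants preserves inclusions and equalities of submodules, it suffices to establish all the assertions at the level of crystalline coefficients $\ww_r(K)$, $\ww(K)$, $\kk(K)$ (while the $\ell$-adic cases, including $\ell \ne p$, are already contained in Proposition~\ref{P:Jannsen}).

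For the first inclusion $\widetilde{\coniveau}^\nu \subseteq \coniveau'^\nu$, I would repeat the two-step reduction used in Proposition~\ref{P:Jannsen}: replace $Z$ by $Z\times_{\bar K}\mathbb P^n_{\bar K}$ to ensure $\dim Z \ge i-2\nu$, and then cut down by a smooth linear section $\iota:Y\hookrightarrow Z$ of dimension $i-2\nu$. Surjectivity of $\iota_*$ on homology in the appropriate degree is the only input needed, and this is the weak Lefschetz theorem, available in crystalline cohomology (Berthelot). For the equality case $(\nu,i)=(n,2n)$ the diagonal class of $Z$ still produces the isomorphism $H_0(Z,\Lambda)\simeq H^0(Z,\Lambda)$. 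For $(\nu,i)=(n-1,2n-1)$ one needs the universal line bundle on $\operatorname{Pic}^0_{Z/\bar K}\times_{\bar K}Z$ to induce the canonical isomorphism $H_1(\operatorname{Pic}^0_{Z/\bar K},\Lambda)\simeq H^1(Z,\Lambda(1))$; in crystalline cohomology this is provided by the Dieudonn\'e-theoretic description of $H^1_{\mathrm{cris}}$ of an abelian variety together with the universal property of $\operatorname{Pic}^0$.

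The second inclusion $\coniveau'^\nu \subseteq \coniveau^\nu$ proceeds identically: for a correspondence $\Gamma \in \operatorname{CH}_{d_X-\nu}(Z\times_{\bar K} X_{\bar K})$ with image $\bar Z := p_{X_{\bar K}}(\Gamma)$, the composition \eqref{eq:N'N} vanishes because the image of $\Gamma_*$ is supported on $\bar Z$. This uses only functoriality of push-forwards, the cycle class map, and the localization sequence $H^i_{\bar Z}\to H^i(X)\to H^i(X\smallsetminus \bar Z)$, all of which are available for crystalline cohomology.

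The remaining and most delicate point is the equality $\coniveau'^\nu = \coniveau^\nu$ for $\Lambda$ a field of characteristic zero, now covering $\Lambda = \kk(K)$ (and via $F$-invariants also $\rat_p$). I would follow Jannsen's strategy: spread out $X$, $\iota:Z\hookrightarrow X_{\bar K}$ and a de Jong alteration $\pi:Z'\twoheadrightarrow Z$ with $Z'$ smooth of pure dimension $d_X-\nu$ to a subfield $k_0\subset K$ finitely generated over $\ff_p$, form the analogous diagram in crystalline homology and cohomology, and conclude from weights that $\pi_*$ surjects onto $W_i H_{2d_X-i}(Z_{\bar k_0},\kk(K)(d_X))$, which equals the image of $\iota_*$ since $H^i(X_{\bar K},\kk(K))$ is pure of weight $i$. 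The main obstacle is to have a usable theory of weights for crystalline (equivalently rigid) cohomology in this setup; this is supplied by work of Chiarellotto--Le\,Stum and Kedlaya for varieties over finite fields, applied after spreading out. Once this is in place, the remainder of Jannsen's argument carries through verbatim, and for the last assertion Grothendieck's Lefschetz standard conjecture gives the required correspondence $L$ with $Z'=Z$, yielding the equality of $\widetilde{\coniveau}^\nu$ and $\coniveau'^\nu$ exactly as in Proposition~\ref{P:Jannsen}.
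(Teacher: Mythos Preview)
Your proposal is correct and follows essentially the same approach as the paper: reduce the $p$-adic coefficients $\integ_p/p^r$, $\integ_p$, $\rat_p$, $\rat_p/\integ_p$ to the crystalline coefficients $\ww_r(K)$, $\ww(K)$, $\kk(K)$ by taking $F$-invariants, and then rerun the proof of Proposition~\ref{P:Jannsen} using that rigid cohomology has Poincar\'e duality and a good theory of weights.

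There is one imprecision worth flagging. For the inclusion $\coniveau'^\nu \subseteq \coniveau^\nu$ you invoke the localization sequence $H^i_{\bar Z}(X)\to H^i(X)\to H^i(X\smallsetminus \bar Z)$ in crystalline cohomology. But as noted in \S\ref{SS:pcon}, the description~\eqref{eq:Nker} via restriction to the open complement is not well-behaved for crystalline coefficients; the filtration $\coniveau^\nu$ is defined there only via~\eqref{eq:N}, i.e., as the image of $H^i_Z(X)\to H^i(X)$. The paper's remedy is simply to replace the second map in~\eqref{eq:N'N} by the quotient map $H^i(X,\Lambda)\to H^i(X,\Lambda)/H^i_{\bar Z}(X,\Lambda)$, which makes sense in crystalline cohomology and gives the desired vanishing directly, without ever forming $H^i(X\smallsetminus \bar Z)$. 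With this adjustment your argument goes through exactly as you outline.
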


\begin{proof}
  It only remains to prove the assertions for the various $p$-adic coefficients in characteristic $p>0$. If $\Lambda = \ww_n(K)$, $\ww(K)$ or $\kk(K)$, the argument is identical\,; the key point in the case $\Lambda = \kk(K)$ is that like \'etale cohomology, rigid cohomology has a good theory of weights and Poincar\'e duality. (The second map in~\eqref{eq:N'N} simply needs to be replaced with $H^i(X,\Lambda) \to H^i(X,\Lambda)/H^i_Z(X,\Lambda)$, which makes sense in crystalline cohomology.) The results for $\Lambda = \integ_p/p^n$, $\integ_p$, $\rat_p$ or $\rat_p/\integ_p$ follow by taking $F$-invariants.
\end{proof}

 \section{The image of the $l$-adic Bloch map and the coniveau filtration}\label{S:Proof-P-3.1}

Again, for brevity, in this section, the ring of coefficients $\Lambda$ denotes either $\integ_l$, $\rat_l$, or $\mathbb Q_l/\mathbb Z_l$.     
 The subscript $\tau_\Lambda$ on $ H^\bullet(-,\Lambda)_{\tau_\Lambda}$  indicates that when $\Lambda =\mathbb Z_l$, we take the quotient by the torsion subgroup.  
For each of these $\Lambda$ and for $M= \operatorname{CH}^n(X_{\bar K})$ or  $\operatorname{A}^n(X_{\bar K})$, we have the corresponding groups $M_\Lambda$\,:   $M_{\mathbb Z_l} := T_l M$, 
$M_{\mathbb Q_l} := V_l M$ and $M_{\mathbb Q_l/\mathbb Z_l}:= M[l^\infty]$.

\medskip
  
  We now consider the $l$-adic Bloch map constructed by Suwa~\cite{Suwa} in the case $l \neq \characteristic(K)$ and by Gros--Suwa~\cite{grossuwaAJ} in the case $l = \characteristic(K)$\,; see the appendix for a review of these maps, especially  \S \ref{SS:DSuwa} and  \S\ref{SS:pbloch}.
  Our main result  in this section extends a result of Suwa~\cite[Prop.~5.2]{Suwa}, originally stated for $\Lambda = \rat_\ell$ with $\ell \neq \characteristic(K)$, and gives a preliminary description of the image of the $l$-adic Bloch map 
  restricted to
  the Tate module of algebraically trivial cycles in terms of the correspondence coniveau filtration~\eqref{eq:N'}\,:

  \begin{pro}[{\cite[Prop.~5.2]{Suwa}}]\label{P:ImBloch-Pre}
  	Let $X$ be a smooth projective variety over a perfect field $K$, and let $l$ be a prime number. The image of the composition
  	\begin{equation}\label{E:CoNiTh3'}
  	\xymatrix{
  		\operatorname{A}^n(X_{\bar K})_\Lambda \ar@{^(->}[r]&
  		\operatorname{CH}^n(X_{\bar K})_\Lambda  \ar[r]^<>(0.5){\lambda^n}&H^{2n-1}(X_{\bar
  			K},\Lambda(n))_{\tau_\Lambda}\\
  	}
  	\end{equation}
       	contains  ${\widetilde \coniveau}^{n-1} H^{2n-1}(X_{\bar K},\Lambda(n))_{\tau_\Lambda}$, with equality if $\Lambda=\mathbb Q_l/\mathbb Z_l$ or $\mathbb Q_l$.  
   	Recall that the subscript $\tau_\Lambda$    indicates that when $\Lambda =\mathbb Z_l$, we take the quotient by the torsion subgroup.  
  \end{pro}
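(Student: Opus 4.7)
My plan is to establish the containment for all $\Lambda$ as a direct consequence of the compatibility of the $l$-adic Bloch map with correspondences, and then to prove the reverse inclusion for $\Lambda = \mathbb{Q}_l/\mathbb{Z}_l$ and $\mathbb{Q}_l$ by showing that every algebraically trivial cycle class in the relevant torsion group arises from a \emph{torsion} divisor on a suitable auxiliary curve. Throughout, the key bridge is that for a smooth projective curve $Z/\bar K$, the first $l$-adic Bloch map induces a canonical isomorphism $\lambda_Z^1 \colon \operatorname{Pic}^0(Z)_\Lambda \xrightarrow{\simeq} H^1(Z,\Lambda(1))$, which is a Kummer-type computation reviewed in \S\ref{S:properties} and the appendix.

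For the containment, I would apply Proposition~\ref{P:Jannsen} to reduce to correspondences from smooth projective curves: since $(\nu,i) = (n-1,2n-1)$, the group $\widetilde{\coniveau}^{n-1}H^{2n-1}(X_{\bar K},\Lambda(n))_{\tau_\Lambda}$ is generated by the images of $\Gamma_* \colon H^1(Z,\Lambda(1)) \to H^{2n-1}(X_{\bar K},\Lambda(n))_{\tau_\Lambda}$ as $Z$ ranges over smooth projective curves over $\bar K$ and $\Gamma$ ranges over $\operatorname{CH}^{d_X-n+1}(Z\times_{\bar K}X_{\bar K})$. Combining the isomorphism $\lambda_Z^1$ with the functoriality identity $\lambda_X^n \circ \Gamma_* = \Gamma_* \circ \lambda_Z^1$ (also reviewed in \S\ref{S:properties}), any element of $\widetilde{\coniveau}^{n-1}H^{2n-1}(X_{\bar K},\Lambda(n))_{\tau_\Lambda}$ can be written as $\lambda_X^n(\Gamma_*(\beta))$ for some $\beta \in \operatorname{Pic}^0(Z)_\Lambda$. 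Since $\Gamma_*$ sends $\operatorname{Pic}^0(Z)$ into $\operatorname{A}^n(X_{\bar K})$, this shows the desired containment.

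For the reverse inclusion when $\Lambda = \mathbb{Q}_l/\mathbb{Z}_l$, I would take $\alpha \in \operatorname{A}^n(X_{\bar K})[l^\nu]$ and use algebraic triviality, together with a hyperplane-section argument on the parameter variety, to write $\alpha = \Gamma_*(t_1 - t_0)$ for some smooth projective curve $Z/\bar K$, correspondence $\Gamma \in \operatorname{CH}^n(Z\times_{\bar K}X_{\bar K})$, and points $t_0, t_1 \in Z(\bar K)$. The main obstacle is to upgrade the (generally non-torsion) preimage $t_1 - t_0 \in \operatorname{Pic}^0(Z)$ to an $l^\infty$-torsion preimage $\beta \in \operatorname{Pic}^0(Z)[l^\infty]$ with $\Gamma_*(\beta) = \alpha$. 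The homomorphism $\Gamma_* \colon J := \operatorname{Pic}^0_Z(\bar K) \to \operatorname{CH}^n(X_{\bar K})$ factors, up to the finite component group of $\ker \Gamma_*$, through the abelian variety quotient $J/(\ker\Gamma_*)^\circ$, whose $\bar K$-points are divisible. Exploiting divisibility of both $J(\bar K)$ and $(\ker\Gamma_*)^\circ(\bar K)$, and possibly enlarging $(Z,\Gamma)$ to absorb the finite component-group obstruction (for instance by replacing $Z$ with a suitable cover or disjoint union), one arranges that $l^\nu(t_1 - t_0) \in l^\nu \ker(\Gamma_*)$, so that $\beta := (t_1 - t_0) - k$ is $l^\nu$-torsion for an appropriate $k \in \ker\Gamma_*$ with $l^\nu k = l^\nu(t_1 - t_0)$. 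The compatibility identity then yields $\lambda^n(\alpha) = \Gamma_*(\lambda_Z^1(\beta)) \in \widetilde{\coniveau}^{n-1}H^{2n-1}(X_{\bar K},\mathbb{Q}_l/\mathbb{Z}_l(n))$, proving equality.

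The case $\Lambda = \mathbb{Q}_l$ then follows by applying $T_l(-) \otimes_{\mathbb{Z}_l} \mathbb{Q}_l$ to the $\mathbb{Q}_l/\mathbb{Z}_l$ equality: tensoring with $\mathbb{Q}_l$ annihilates finite torsion, so the finite component-group obstructions disappear automatically and the lifting becomes straightforward in this coefficient ring. The hardest step is the torsion-lifting argument above, which is exactly what prevents the analogous statement from yielding equality in the integral case $\Lambda = \mathbb{Z}_l$, where the statement gives only the containment.
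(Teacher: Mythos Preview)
Your containment argument is correct and matches the paper's. The gap is in your torsion-lifting step for $\Lambda = \mathbb{Q}_l/\mathbb{Z}_l$. You invoke ``the finite component group of $\ker\Gamma_*$'' and ``the abelian variety quotient $J/(\ker\Gamma_*)^\circ$'', but $\Gamma_*\colon \operatorname{Pic}^0_Z(\bar K) \to \operatorname{A}^n(X_{\bar K})$ lands in a Chow group, not in an abelian variety, so $\ker\Gamma_*$ is only an abstract subgroup of $J(\bar K)$ with no a~priori algebraic structure; in particular there is no reason for it to be divisible, and $(\ker\Gamma_*)^\circ$ has no meaning. The obstruction to producing $\beta \in J[l^\nu]$ with $\Gamma_*\beta = \alpha$ lives in $\ker\Gamma_*/l^\nu\ker\Gamma_*$, which need not vanish, and your proposal to ``enlarge $(Z,\Gamma)$'' to absorb it is not made precise. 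Example~\ref{E:Suwa-3.2-ell} already shows that even the weaker conclusion one actually needs can fail at level~$\nu$ for a fixed $(Z,\Gamma)$.

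The paper's route is different: it invokes Suwa's Lemma~\ref{L:Suwa-3.2}, whose proof (sketched in Remark~\ref{R:S3.2p}) is a spread-and-specialize argument reducing to a finite base field, where $J(\bar K)$ is entirely torsion and the lift is immediate. That lemma only yields $\lambda^n(\Gamma_*\gamma) = \lambda^n(\alpha)$ for some $\gamma \in J[l^\mu]$ with possibly $\mu > \nu$ --- strictly weaker than $\Gamma_*\gamma = \alpha$, but sufficient for the proposition. Your $\mathbb{Q}_l$ sketch is also too quick: the functor $T_l$ does not commute with taking images, so one cannot simply ``apply $T_l(-)\otimes_{\mathbb{Z}_l}\mathbb{Q}_l$'' to the $\mathbb{Q}_l/\mathbb{Z}_l$ equality. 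The paper first reduces to a single pair $(Z,\Gamma)$ using that an increasing chain of divisible $l$-primary subgroups of a finite-corank group stabilizes, and then uses Lemma~\ref{L:push-pull-QQ} to show that the inclusion $\Gamma_* T_l H_1(Z,\rat_l/\integ_l) \subseteq T_l \Gamma_* H_1(Z,\rat_l/\integ_l)$ has torsion cokernel.
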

  
  \begin{rem}
Note that in the case where $\Lambda = \mathbb Q_l/\mathbb Z_l$, this implies that ${\widetilde \coniveau}^{n-1} H^{2n-1}(X_{\bar K},\mathbb Q_l/\mathbb Z_l(n))\subseteq  H^{2n-1}(X_{\bar K},\mathbb Z_l(n))\otimes _{\mathbb Z_l} \mathbb Q_l/\mathbb Z_l \subseteq  H^{2n-1}(X_{\bar K},\mathbb Q_l/\mathbb Z_l(n))$\,; see \S \ref{S:propertiesA}.    \end{rem}

  We split the proof of Proposition~\ref{P:ImBloch-Pre} in two, depending on whether the prime $l$ is invertible in~$K$.
  Key to the proof is the following lemma of Suwa \cite[Lem.~3.2]{Suwa} originally stated for primes $l \neq \characteristic(K)$\,; see  Remark~\ref{R:S3.2p} below, where we verify that Suwa's argument works even when $l = \characteristic(K)$.

  \begin{lem}[{\cite[Lem.~3.2]{Suwa}}]\label{L:Suwa-3.2}
  	Let $C$ be a smooth
  	projective irreducible curve  (resp.~abelian variety) over a field $K$ and let $\Gamma\in
  	\operatorname{CH}^n(C\times_K X)$. 
  	Let $\alpha\in \operatorname{A}^n(X_{\bar K})[l^\nu]$, 
  	and assume there exists  $\beta\in \operatorname{A}_0(C_{\bar
  		K})$ such that $\Gamma_*\beta = \alpha$.  Then there exists $\gamma\in
  	\operatorname{A}_0(C_{\bar K})[l^\mu]$ for some  $\mu\ge \nu$
  	such that $\lambda^n(\Gamma_*\gamma)= \lambda^n(\alpha)$. \qed
  \end{lem}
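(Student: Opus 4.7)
My plan is to find $\gamma \in \operatorname{A}_0(C_{\bar K})[l^\mu]$ satisfying the stronger conclusion $\Gamma_*\gamma = \alpha$, from which $\lambda^n(\Gamma_*\gamma) = \lambda^n(\alpha)$ follows at once. The strategy exploits divisibility of $\operatorname{A}_0(C_{\bar K})$ — which in the curve case is $J(C)(\bar K)$ by Abel's theorem — together with a structural description of the kernel $K := \ker\bigl(\Gamma_* : \operatorname{A}_0(C_{\bar K}) \to \operatorname{A}^n(X_{\bar K})\bigr)$. The hypothesis $l^\nu\alpha = 0$ immediately gives $l^\nu\beta \in K$.

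The structural input I need is that $K$ is (the $\bar K$-points of) a closed subgroup scheme of $J(C)$\,: the map $\Gamma_*$ is induced by the algebraic family $c \mapsto \Gamma|_{\{c\} \times X}$, and the vanishing in $\operatorname{CH}^n(X_{\bar K})$ of a family of cycle classes cuts out a closed algebraic subset of $J(C)$. Once this is established, the identity component $K^0 \subseteq K$ is an abelian subvariety, so $K^0(\bar K)$ is $l$-divisible, and the component group $F := K/K^0$ is finite, killed by some $l^e$. Set $\mu := \nu + e$. Because $K^0(\bar K)$ is divisible (hence injective as a $\mathbb Z$-module), the extension $0 \to K^0(\bar K) \to K(\bar K) \to F \to 0$ splits, and we may write $l^\nu\beta = k_0 + f$ with $k_0 \in K^0(\bar K)$ and $f \in F$. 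Multiplication by $l^{\mu - \nu}$ kills $f$, yielding
\[
l^\mu \beta \;=\; l^{\mu-\nu} k_0 \;\in\; K^0(\bar K)\,;
\]
divisibility of $K^0(\bar K)$ then lets us pick $k \in K^0(\bar K) \subseteq K$ with $l^\mu k = l^\mu\beta$. Set $\gamma := \beta - k$. Then $l^\mu\gamma = 0$ and $\Gamma_*\gamma = \Gamma_*\beta - 0 = \alpha$, as desired.

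The case of $C$ an abelian variety proceeds similarly after identifying the relevant divisible subgroup of $\operatorname{A}_0(C_{\bar K})$. The main obstacle is the structural claim that $K$ is cut out in $J(C)$ by an algebraic condition, so that the decomposition $K = K^0 + F$ with $K^0$ an abelian subvariety and $F$ finite applies\,; this should follow from the algebraicity of rational equivalence in algebraic families of cycles, but requires some care. Alternatively, when available, one may invoke the algebraic representative $\operatorname{Ab}^n_{X/K}$ to factor $\Gamma_*$ through a morphism of abelian varieties, whose kernel is automatically a subgroup scheme of the desired form. Once this structural point is in place, the rest of the argument is formal divisibility bookkeeping valid whenever $l$ is invertible in $K$\,; the case $l = \characteristic K$ proceeds by the analogous reasoning with $p$-divisible groups, treated separately in the appendix.
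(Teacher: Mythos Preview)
Your structural claim---that $K := \ker\bigl(\Gamma_*: J(C)(\bar K) \to \operatorname{A}^n(X_{\bar K})\bigr)$ is the group of $\bar K$-points of a closed subgroup scheme of $J(C)$---is a genuine gap, and it is not known in general. Rational triviality in an algebraic family of cycles is \emph{not} a closed condition on the base; what one knows (after Ro\u{\i}tman) is that $K$ is a countable union of translates of an abelian subvariety, which is far weaker than what you need. Your fallback via the algebraic representative does not close the gap either: factoring $\Gamma_*$ through $\phi^n: \operatorname{A}^n(X_{\bar K}) \to \operatorname{Ab}^n_{X/K}(\bar K)$ gives an algebraic kernel for the composite $\psi: J(C) \to \operatorname{Ab}^n_{X/K}$, but $\ker\psi \supsetneq K$ in general (since $\phi^n$ is not injective for $n \neq 1, d_X$), and working with $\ker\psi$ only yields $\phi^n(\Gamma_*\gamma) = \phi^n(\alpha)$, which does not imply $\lambda^n(\Gamma_*\gamma) = \lambda^n(\alpha)$. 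Moreover, algebraic representatives are not known to exist for arbitrary $n$, while the lemma is stated and used in that generality.

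The paper's argument (following Suwa) sidesteps this entirely by a reduction to finite fields. Over a finite field, $\operatorname{A}_0(C_{\bar K}) = \operatorname{Pic}^0_{C}(\bar K)$ is a \emph{torsion} group; writing $\operatorname{ord}(\beta) = l^m M$ with $\gcd(M,l)=1$ and choosing $N$ with $NM \equiv 1 \pmod{l^\nu}$, one sets $\gamma := NM\beta$, which has $l$-power order and satisfies $\Gamma_*\gamma = NM\alpha = \alpha$. For arbitrary $K$ one spreads out over a base $S$ of finite type over the prime field, specializes to a closed point, applies the finite-field trick, and then lifts the resulting torsion class back to the generic fiber using that $\operatorname{Pic}^0_{C/S}[l^\mu]$ is finite flat over $S$. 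No structural information about $\ker\Gamma_*$ is required.
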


  \begin{exa}\label{E:Suwa-3.2-ell}
  	In Lemma \ref{L:Suwa-3.2}, it may be that  one must take $\mu >\nu$.  For instance, take $X=C=E$ to be an elliptic curve over $K$, let $\Gamma=\Gamma_f$ be the graph of the multiplication by $\ell$ map $f:C\to X$, and let $\alpha\in \operatorname{A}^1(X_{\bar K})[\ell]$.  Then there does not exist $\gamma\in
  	\operatorname{A}^1(C_{\bar K})[\ell]$ 
  	such that $\lambda^1(\Gamma_*\gamma)= \lambda^1(\alpha)$.
  \end{exa}

\subsection{The image of the $\ell$-adic Bloch map} Here we review the argument of Suwa~\cite{Suwa} yielding the proof of Proposition~\ref{P:ImBloch-Pre} in the case $\Lambda = \rat_{\ell}$ and extend it to other rings of coefficients.

\begin{proof}[Proof of Proposition~\ref{P:ImBloch-Pre}, prime-to-$\characteristic(K)$]
    Suppose $l =\ell\not = \characteristic(K)$.  
  	 Consider the diagram
  \begin{equation}\label{E:con-com-diag}
  \xymatrix{
   \displaystyle \bigoplus_{\Gamma:Z\vdash X_{\bar K}}
   \operatorname{A}_0(Z)_\Lambda  \ar@{->}[d]^{\Gamma_*} \ar[r]^<>(0.5)= &
   \displaystyle \bigoplus_{\Gamma:Z\vdash X_{\bar K}}
   \operatorname{CH}_0(Z)_\Lambda \ar[d]^{\Gamma_*}
   \ar[r]^<>(0.5){\lambda_0}_{\simeq\ \ }&\displaystyle\bigoplus_{\Gamma:Z\vdash X_{\bar K}} H_1(Z,\Lambda)_{\tau_\Lambda} \ar[d]^{\Gamma_*}\\
   \operatorname{A}^n(X_{\bar K})_\Lambda \ar@{^(->}[r]&
   \operatorname{CH}^n(X_{\bar K})_\Lambda \ar[r]^<>(0.5){\lambda^n}&H^{2n-1}(X_{\bar
    K},\Lambda (n))_{\tau_\Lambda} \\
  }
  \end{equation}
  where the direct sums are over all smooth projective varieties $Z$ over $\bar K$ and all correspondences $\Gamma \in \operatorname{CH}^{n}(Z\times_{\bar K}X_{\bar K})$.
  The image of the right vertical arrow  is by definition
  ${\widetilde \coniveau}^{n-1}H^{2n-1}(X_{\bar K},\Lambda  (n))_{\tau_\Lambda} $.
   We conclude from commutativity of the diagram that the image of the bottom row of~\eqref{E:con-com-diag} contains
  ${{\widetilde \coniveau}}^{n-1}H^{2n-1}(X_{\bar K},\Lambda  (n))$.
  
We now show that the inclusion is an equality in case $\Lambda = \rat_{\ell}/\integ_{\ell}$.  
By Lemma~\ref{L:Suwa-3.2}, for any $\alpha\in
  \operatorname{A}^n(X_{\bar K})[\ell^\infty]$, there exists an element $ \gamma\in
  \bigoplus_{\Gamma:Z\vdash X} \operatorname{A}_0(Z_{\bar K})[\ell^\infty]$ such that
  $\lambda^n(\Gamma_*\gamma)=\lambda^n(\alpha)$.
It readily follows from a diagram chase that the image of the bottom row of~\eqref{E:con-com-diag} is contained in   ${\widetilde \coniveau}^{n-1}H^{2n-1}(X_{\bar K},\mathbb Q_\ell/\mathbb Z_\ell(n))$.

Finally we show equality in the case $\Lambda = \mathbb Q_\ell$. 
  Taking Tate modules in~\eqref{E:con-com-diag} with $\mathbb Q_\ell/\mathbb Z_\ell$-coefficients, and using the previous case, we have that the image of the bottom row of~\eqref{E:con-com-diag} with $\mathbb Z_\ell$-coefficients is
   $T_\ell  {{\widetilde \coniveau}}^{n-1} H^{2n-1}(X,\rat_\ell /\integ_\ell (n))$. 
Since $H^{1}( Z,\mathbb Q_\ell/\mathbb Z_\ell(1))$ is a divisible group for any smooth projective variety $Z$ over $\bar K$,
and since an increasing chain of divisible abelian $\ell$-torsion subgroups of a finite corank abelian $\ell$-torsion group is stationary \cite[Lem.~1.2]{Suwa},  then by adding connected components to $Z$, one can conclude
  there exist a smooth projective curve $Z$ over $\bar K$ and a correspondence $\Gamma \in \operatorname{CH}^n(Z\times_{\bar K}X_{\bar K})$ such that 
 \begin{align*}
 {{\widetilde \coniveau}}^{n-1} H^{2n-1}(X_{\bar K},\mathbb Z_\ell(n)) &= 
 \operatorname{im} \Big( \Gamma_*: H^{1}( Z,\mathbb Z_\ell(1)) \to H^{2n-1}(X_{\bar
  K},\mathbb Z_\ell (n))\Big)\\
 {{\widetilde \coniveau}}^{n-1} H^{2n-1}(X_{\bar K},\mathbb Q_\ell/\mathbb Z_\ell(n)) &= 
 \operatorname{im} \Big( \Gamma_*: H^{1}( Z,\mathbb Q_\ell/\mathbb Z_\ell(1)) \to H^{2n-1}(X_{\bar
  K},\mathbb Q_\ell/\mathbb Z_\ell(n))\Big).
\end{align*}
 Then the image of 
 the bottom row of~\eqref{E:con-com-diag} with $\mathbb Z_\ell$-coefficients is  
\begin{align*}
T_\ell  {{\widetilde \coniveau}}^{n-1} H^{2n-1}(X,\rat_\ell /\integ_\ell (n)) &= T_\ell \Gamma_* H_1(Z,
\rat_\ell/\integ_\ell) \\
&\supseteq \Gamma_* T_\ell H_1(Z,\rat_\ell/\integ_\ell) \\
&= \Gamma_* H_1(Z,\mathbb Z_\ell) \\
&= {{\widetilde \coniveau}}^{n-1} H^{2n-1}(X_{\bar K},\mathbb Z_\ell(n))_\tau .
\end{align*}
From Lemma \ref{L:push-pull-QQ},
the inclusion above has torsion cokernel.  This gives the assertion with $\mathbb Q_\ell$-coefficients.
  \end{proof}

\subsection{The image of the $p$-adic Bloch map}

Now let $K$ be a perfect field of characteristic $p>0$.  Here we secure Proposition~\ref{P:ImBloch-Pre} for $p$-adic cohomology groups.

\begin{rem}\label{R:S3.2p}
Lemma \ref{L:Suwa-3.2} (\cite[Lem.~3.2]{Suwa}) holds as well for $p$-power
torsion.  The proof is essentially identical.  
Briefly, suppose $K$ is finite\,; then $\A^1(C_{\bar K}) = \pic^0_{C/K}(\bar
K)$ is a torsion group.  Write the order of $\beta$ as $p^m M$ with
$M$ relatively prime to $p$, and choose $N$ with $NM \equiv 1 \bmod
p^\nu$.  Then $\gamma := NM\beta$ has order $p^m$, and $\Gamma_*\gamma
= NM\alpha = \alpha$.
To account for arbitrary $K$, we may assume that $K$ is the perfection
of a field $K_0$ which is finitely generated over $\ff_p$, and then spread
out the data $X$, $\Gamma$, $C$, $\alpha$ and $\beta$ to an irreducible scheme $S$
of finite type over $\ff_p$, with function field $K_0$\,; let $k$ be the
algebraic closure of $\ff_p$ in $K_0$.  Consider the
finite flat group scheme $G := \pic^0_{C/S}[p^\nu] \to S$, and let $Q
\in S$ be a closed point.  Because the residue field of $Q$ is finite,
possibly after base-change by a finite extension of $k$, there exists
some $\gamma_Q \in G_Q(k) = \A^1(C_{Q})$ such that
$\Gamma_{Q,*}\gamma_Q = \alpha_Q$.  Possibly after replacing $S$ with
an open neighborhood of $Q$, there exists a finite surjective $T \to
S$ such that $G_T \to T$ admits a section $\gamma\in G(T)$ passing through
some pre-image of $\gamma_Q$ in $G_Q \times_S T$ \cite[14.5.10]{egaIV3}\,; 
the generic fiber of $\gamma$ is the sought-for class.
\end{rem}

\begin{proof}[Proof of Proposition~\ref{P:ImBloch-Pre}, $\characteristic(K)$-torsion]
 The assertion of the inclusion in Proposition~\ref{P:ImBloch-Pre} with $\Lambda = \mathbb Q_p/\mathbb Z_p$ (for arbitrary $n$) follows immediately from diagram~\eqref{E:con-com-diag}.  For the opposite inclusion, the argument is identical.
For the inclusion of Proposition~\ref{P:ImBloch-Pre} with $\mathbb Z_p$-coefficients, one uses diagram~\eqref{E:con-com-diag} with $\Lambda= \mathbb W$, and the fact that taking $F$-invariants commutes with push-forward \cite[Cor.~I.3.2.7]{gros85}.  Tensoring with $\mathbb Q_p$ gives the inclusion with $\mathbb Q_p$-coefficients.  One then argues identically that the inclusion with $\mathbb Z_p$-coefficients has torsion quotient.   Indeed, taking Tate modules of 
\eqref{E:con-com-diag} with $\mathbb Q_p/\mathbb Z_p$-coefficients, one sees that the image of the bottom row of~\eqref{E:con-com-diag} with $\mathbb Z_p$-coefficients is ${{\widetilde \coniveau}}^{n-1} H^{2n-1}(X_{\bar K},\rat_p/\integ_p(n))$.
 As before, since $H^{1}( Z,\mathbb Q_p/\mathbb Z_p(1))$ is a divisible group for any smooth projective variety $Z$ over $\bar K$,
	and since an increasing chain of divisible abelian $p$-torsion subgroups of a finite corank abelian $p$-torsion group is stationary \cite[Lem.~1.2]{Suwa},  then by adding connected components to $Z$, one can conclude 
 there is a 
smooth surface $Z/\bar K$, possibly disconnected but of finite type, equipped
with a morphism $f:Z \to X_{\bar K}$ such that $f_* H^1(Z,\rat_p/\integ_p(1)) =
{{\widetilde \coniveau}}^{n-1} H^{2n-1}(X_{\bar K},\rat_p/\integ_p(n))$ and $f_* H^1(Z/\ww(1)) =
{{\widetilde \coniveau}}^{n-1} H^{2n-1}(X_{\bar K}/\ww(n))$.  We then have 
\begin{align*}
T_p {{\widetilde \coniveau}}^{n-1} H^{2n-1}(X_{\bar K},\rat_p/\integ_p(n)) &= T_p f_* H^1(Z,
\rat_p/\integ_p(1)) \\
&\supseteq f_* T_p H^1(Z,\rat_p/\integ_p(1)) \\
&= f_* H^1(Z,\integ_p(1)) \\
&= f_* (H^1(Z/\ww(1))^F) \\
&= (f_* H^1(Z/\ww(1)))^F\text{ by \cite[Cor.~I.3.2.7]{gros85}}\\
&= ({{\widetilde \coniveau}}^{n-1} H^{2n-1}(X_{\bar K}/\ww(n)))^F = {{\widetilde \coniveau}}^{n-1} H^{2n-1}(X_{\bar K},\integ_p(n))_\tau.
\end{align*}
Since the inclusion has torsion cokernel (see Lemma \ref{L:push-pull-QQ} and Remark~\ref{R:L:p-p-QQp}), we are done.
\end{proof}

\section{Decomposition of the diagonal, algebraic representatives, and miniversal cycles}

In \cite{ACMVdiag} we consider decomposition of the diagonal and algebraic representatives in detail.  These ideas also come into play in the proofs of Theorems \ref{T:main}, \ref{T:co=H} and \ref{T:model}, and so we review these notions in this section.  We refer the reader to~\cite{ACMVdiag} for more details.

 \subsection{Decomposition of the diagonal}\label{SS:dec} 

  The aim of this subsection is to fix the notation for decomposition of the diagonal, and to recall the existence of decompositions of the diagonal for various flavors of rational varieties.

 \smallskip  
Let $K$ be a field and let $\ell$ be a prime not equal to $\operatorname{char}K$.
  Let $R$ be a commutative ring, let $X$ be a smooth projective variety over a field $K$ and let $W_1$ and $W_2$ be two closed subschemes of $X$ not containing any component of $X$ .
 A cycle class $Z \in \operatorname{CH}^{d_X}(X\times_KX)\otimes_{\integ} R$
  is said to admit a \emph{decomposition of  type $(W_1,W_2)$} 
 if
  \begin{equation*}\label{E:DefDcp}
  Z =Z_1+Z_2\in \operatorname{CH}^{d_X}(X\times _K X)\otimes_\integ R ,
  \end{equation*}
  where $Z_1\in \operatorname{CH}^{d_X}(X\times _K X)\otimes_\integ R$ is supported on  $W_1\times_KX$ and $Z_2\in \operatorname{CH}^{d_X}(X\times _K X)\otimes_\integ R$ is supported on $X\times_K W_2$. 
If $Z = \Delta_X$ and if one can choose $W_2$ with $\dim W_2 = 0$, we simply say that $\Delta_X\in \operatorname{CH}^d(X\times_KX)\otimes R$ has a Chow decomposition.
\medskip

  \begin{dfn}[Universal support of $\operatorname{CH}_0\otimes R$]
  	\label{D:unisupport}
  	Let $X$ be a smooth projective variety over a field~$K$. We say that $\operatorname{CH}_0(X)\otimes R$ is \emph{universally supported} in dimension $d$ if there exists a closed subscheme $W_2\subseteq X$ of dimension $\leq d$ such that $\operatorname{CH}_0(X)\otimes R$ is universally supported on $W_2$, \emph{i.e.}, if the push-forward map $\chow_0((W_2)_L)\otimes_{\integ} R \to \chow_0(X_L)\otimes_{\integ} R$ is surjective for all field extensions $L/K$.
  \end{dfn}

  The following proposition is classical and goes back to Bloch and Srinivas~\cite{BS}, and relates decomposition of the diagonal to the universal support of $\operatorname{CH}_0$\,:

  \begin{pro}[{Bloch--Srinivas \cite{BS}}]\label{P:BS}
The diagonal $\Delta_X \in \operatorname{CH}^{d_X}(X\times_KX)\otimes_{\integ} R$ of a smooth projective variety $X$ over $K$ admits a decomposition of type $(W_1,W_2)$
  if and only if
 $\chow_0(X)\otimes_{\integ} R$ is universally supported on $W_2$.
 In particular, taking $R=\integ$, the diagonal $\Delta_X \in \operatorname{CH}^{d_X}(X\times_K X)$ of a smooth projective,
  stably rational, variety $X$ admits an decomposition of type $(W_1, P)$ for any choice of
   $K$-point $P\in X(K)$.
   \qed
    \end{pro}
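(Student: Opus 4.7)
The plan is to invoke the classical Bloch--Srinivas argument, which rests on the excision sequence for codimension-$d_X$ cycles
$$\operatorname{CH}^{d_X}(W_1\times_K X)\otimes R \ \longrightarrow\ \operatorname{CH}^{d_X}(X\times_K X)\otimes R \ \longrightarrow\ \operatorname{CH}^{d_X}(U\times_K X)\otimes R\ \longrightarrow\ 0$$
(with $U = X\smallsetminus W_1$), combined with the limit identification
$$\varinjlim_{U\subseteq X}\operatorname{CH}^{d_X}(U\times_K X)\otimes R \ =\ \operatorname{CH}_0(X_{K(X)})\otimes R$$
obtained by flat pullback over dense opens $U\subseteq X$\,; under this identification, the diagonal class $\Delta_X$ maps to the class $[\xi_X]$ of the canonical $K(X)$-point of $X_{K(X)}$.

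For the implication ``$\operatorname{CH}_0\otimes R$ universally supported on $W_2$ $\Rightarrow$ $\Delta_X$ admits a decomposition'', I would apply the hypothesis at $L = K(X)$\,: the class $[\xi_X]$ then lies in the image of $\operatorname{CH}_0(W_{2,K(X)})\otimes R$, and by the limit identification this equality lifts to an equality in $\operatorname{CH}^{d_X}(U\times_K X)\otimes R$ over some dense open $U\subseteq X$, between $\Delta_X|_{U\times X}$ and a cycle class supported on $U\times W_2$.  Setting $W_1 := X\smallsetminus U$ and applying the excision sequence to this discrepancy on $X\times X$ then produces a cycle $Z_1$ supported on $W_1\times X$, yielding the desired decomposition $\Delta_X = Z_1+Z_2$.

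For the reverse implication, given $\Delta_X = Z_1 + Z_2$, restricting to $U\times X$ annihilates $Z_1$, so passing to the generic fiber of the first projection yields $[\xi_X] = (Z_2)_\xi \in \operatorname{CH}_0(X_{K(X)})\otimes R$ supported on $W_{2,K(X)}$.  For an arbitrary extension $L/K$ and 0-cycle $\alpha\in\operatorname{CH}_0(X_L)\otimes R$, base-change gives $\alpha = (Z_{1,L})_*\alpha + (Z_{2,L})_*\alpha$, with $(Z_{2,L})_*\alpha$ lying in the image of $\operatorname{CH}_0(W_{2,L})\otimes R$, while the $Z_{1,L}$-term is handled via refined Gysin along the relevant closed points of $X_L$, using that $Z_{1,L}$ is supported on $W_{1,L}\times X_L$ with $W_{1,L}\subsetneq X_L$.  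The principal technical point is controlling $(Z_{1,L})_*\alpha$ when the support of $\alpha$ meets $W_{1,L}$\,: one reduces via Fulton's refined intersection formalism and a specialization argument back to the already-established generic-point equality, concluding that $\alpha$ lies in the image of $\operatorname{CH}_0(W_{2,L})\otimes R$.

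The stably rational particular case is then immediate: if $X$ is stably rational with a $K$-point $P$, the projective bundle formula combined with the birational equivalence $X\times_K \mathbb{P}^m_K \sim_{\text{bir}} \mathbb{P}^{d_X+m}_K$ yields $\operatorname{CH}_0(X_L)\simeq \integ$ generated by $[P_L]$ for every field extension $L/K$\,; hence $\operatorname{CH}_0(X)$ is universally supported on the $0$-dimensional subscheme $\{P\}$, and the first implication above produces a decomposition of type $(W_1, P)$ for the chosen $K$-point.
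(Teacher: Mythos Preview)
Your outline is correct and is exactly the classical Bloch--Srinivas argument; the paper does not give its own proof here (the statement is followed by $\qed$ and attributed to \cite{BS}), so there is nothing further to compare against.

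One small remark on the reverse implication: your handling of $(Z_{1,L})_*\alpha$ via ``refined Gysin and a specialization argument back to the generic-point equality'' is more elaborate than necessary.  The standard route is simply the moving lemma for $0$-cycles: since $W_{1,L}$ contains no component of $X_L$, any $0$-cycle $\alpha$ on $X_L$ is rationally equivalent to one whose support avoids $W_{1,L}$, and for such a representative $(Z_{1,L})_*\alpha = 0$ immediately because the Gysin fiber of $Z_{1,L}$ over any closed point off $W_{1,L}$ vanishes.  This dispenses with the specialization step entirely.
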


\begin{rem}\label{R:nishimura} The existence of a $K$-point on a stably rational variety over $K$ is
   ensured by the Lang--Nishimura theorem \cite{nishimura55}\,; see
   \cite[Prop.~A.6]{reichsteinyoussin} for a modern treatment.
 \end{rem}

\begin{rem}[Varieties admitting decompositions of the diagonal] \label{R:stablyrat}
	Let $X$ and $Y$ be smooth projective varieties over $K$ of respective dimension $d_Y \leq d_X$. If $X$ and $Y$ are \emph{stably rationally equivalent}, \emph{i.e.}, if there exists a nonnegative integer $n$ such that $Y\times_K \mathbb{P}_K^n$ is birational to $X\times_K \mathbb{P}_K^{n+d_X-d_Y}$, then $\operatorname{CH}_0(X)$ is universally supported in dimension $d_Y$. In particular, if $X$ is stably rational, then $\chow_0(X)$ is universally supported in dimension $0$, and in fact on a point by the Lang--Nishimura theorem.  Moreover, if $X$ is only assumed to be geometrically  rationally chain connected, then $\chow_0(X)\otimes_{\mathbb Z}{\mathbb Q}$ is universally supported in dimension $0$ (see \emph{e.g.}~\cite[Rem.~2.8]{ACMVdiag}).  
\end{rem}

\begin{nota}[Decomposition of the diagonal and alterations]  \label{notations}
	Let $X$ be a pure-dimensional smooth projective variety over a perfect field $K$ of characteristic  exponent $p$.
Suppose we have a cycle class $Z_1 + Z_2$ in $\chow^{d_X}(X\times_K X)$ with $Z_1$ supported on $W_1\times_K X$ and $Z_2$ supported on $X\times_K W_2$ with  $W_1$ and $W_2$ two closed subschemes of $X$ not containing any component of $X$ such that $\dim W_1 \le n_1$ and $\dim W_2\le n_2$. By \cite{temkin17}, there exist alterations $\widetilde{W}_1 \to W_1$ and $\widetilde{W}_2 \to W_2$  of degree some power of $p$ such that $\widetilde{W}_1$ and $\widetilde{W}_2$ are smooth projective over $K$. The cycle classes $Z_1$ and $Z_2$, seen as self-correspondences on $X$, factor up to inverting $p$ through $\widetilde{W}_1$ and $\widetilde{W}_2$, respectively.
Precisely, there exists a nonnegative integer $e$ such that
 $$p^eZ_1  = r_1\circ s_1 \qquad \text{and} \qquad p^eZ_2 = r_2 \circ s_2 \qquad  \text{in} \ \chow^{d_X}(X\times_K X)$$
  for some $s_1 \in \chow^{d_X}(X\times_K \widetilde{W}_1)$,  $r_1 \in
  \chow_{d_X}(\widetilde{W}_1\times_K X)$,  $s_2 \in
  \chow_{d_X}(X\times_K \widetilde{W}_2)$ and $r_2 \in
  \chow^{d_X}(\widetilde{W}_2\times_K X)$. Note that by replacing each
  component of $\widetilde{W}_1$ and $\widetilde{W}_2$ with a product
  with projective space of an appropriate dimension, we may assume
  that  $\widetilde{W}_1$ and $\widetilde{W}_2$ are of pure dimension
  $n_1$ and $n_2$, respectively. We refer to \cite[\S3.1]{ACMVdiag}
  for more details. 
Since resolution of singularities exists for threefolds over a perfect field~\cite{CPRes2}, if each $\dim W_i \le 3$, then we may take
$e=0$. 
\end{nota}

 \subsection{Surjective regular homomorphisms and algebraic representatives}\label{SS:AlgRep} 
The aim of this subsection is to fix notation for algebraic representatives.  
 We start by reviewing the definition of an algebraic representative
 (\emph{i.e.},  \cite[Def.~1.6.1]{murre83} or \cite[2.5]{samuelequivalence}).
 Let $X$ be a smooth projective variety over a perfect field $K$ and let $n$ be a nonnegative integer.
 For a smooth separated scheme $T$ of finite type over $K$, we define $\mathscr{A}_{X/K}^n(T)$ to be the abelian group consisting of those cycle classes $Z\in \operatorname{CH}^n(T\times _K X)$ such that for every $t\in T(\bar K)$ the Gysin fiber $Z_t$ is algebraically trivial. For $Z\in\mathscr{A}_{X_{\bar K}/\bar K}^n(T)$ denote by $w_{Z}:T(\bar K)\to \operatorname{A}^n(X_{\bar K})$ the map defined by $w_Z(t)=Z_t$.  
 
 Given an abelian variety $A/\bar K$, a \emph{regular homomorphism} (in codimension $n$)  $$\xymatrix{\phi:\operatorname{A}^n(X_{\bar K})\ar[r] & A(\bar K)}$$ is a homomorphism of groups such that for every $Z\in\mathscr{A}_{X/K}^n(T)$ the composition
 $$
 \xymatrix{
  T(\bar K) \ar[r]^{w_Z}& \operatorname{A}^n(X_{\bar K}) \ar[r]^\phi& A(\bar K)
 }
 $$
 is induced by a morphism of varieties $\psi_Z: T_{\bar K} \to A$.  An \emph{algebraic representative} (in codimension~$n$) is a regular homomorphism 
 $$
 \phi^n_{X_{\bar K}}:\operatorname{A}^n(X_{\bar K})\longrightarrow \operatorname{Ab}^n_{X_{\bar K}/\bar K}(\bar K)
 $$ 
 that is initial among all regular homomorphisms. 
   For $n=1$, an algebraic representative is given by $(\operatorname{Pic}^0_{X_{\bar K}/\bar K})_{\operatorname{red}}$ together with the Abel--Jacobi map.  For $n=d_X$, an algebraic representative is given by the Albanese variety and the Albanese map.
   For $n=2$, it is a result of Murre \cite[Thm.~A]{murre83} that there exists an algebraic representative for $X_{\bar K}$, which in the case $K=\mathbb C$ is the algebraic intermediate Jacobian $J^3_a(X)$\,; \emph{i.e.}, the image of the  Abel--Jacobi map restricted to algebraically trivial cycle classes.

 The main result of \cite{ACMVdcg} (see also \cite{ACMVfunctor}) is that if there exists an algebraic representative  $\phi^n_{X_{\bar K}}:\operatorname{A}^n(X_{\bar K})\to \operatorname{Ab}^n_{X_{\bar K}/\bar K}(\bar K)$, then  $\operatorname{Ab}^n_{X_{\bar K}/\bar K}$ admits a canonical model over $K$, denoted $ \operatorname{Ab}^n_{X_{K}/K}$, such that $\phi^n_{X_{\bar K}}$ is $\operatorname{Gal}(\bar K/K)$-equivariant and such  that for any  $Z\in\mathscr{A}_{X_{K}/K}^n(T)$ the morphism $\psi_{Z_{\bar K}}:T_{\bar K}\to A_{\bar K}$ descends to a morphism $\psi_Z:T\to A$ of $K$-schemes.  In particular, the algebraic representative $\operatorname{Ab}^2_{X_{\bar K}/\bar K}$ of \cite{murre83} admits a canonical model over $K$, denoted $ \operatorname{Ab}^2_{X_{K}/K}$.  In the case $K\subseteq \mathbb C$, the abelian variety $\operatorname{Ab}^2_{X/K}$  is the distinguished model $J^3_{a,X/K}$ of the algebraic intermediate Jacobian, as defined in  \cite{ACMVdmij}. \medskip

We include the following lemma for clarity\,; we also note that it is clear from the definitions that an algebraic representative $ \phi^n_{X_{\bar K}}:\operatorname{A}^n(X_{\bar K})\to \operatorname{Ab}^n_{X_{\bar K}/\bar K}(\bar K)$ is a \emph{surjective} regular homomorphism.

\begin{lem}\label{L:SRHom-facts}
Let $\phi:\operatorname{A}^n(X_{\bar K})\to A(\bar K)$ be a surjective regular homomorphism.  

\begin{enumerate}
\item Let $l$ be prime. Then\,:
\begin{enumerate}
\item   $\phi[l^\infty]: \operatorname{A}^n(X_{\bar
  K})[l^\infty]\to A[l^\infty]$ is surjective. 
\item The following are equivalent\,:
\begin{enumerate}
\item $\phi[l^\infty]: \operatorname{A}^n(X_{\bar
  K})[l^\infty]\to A[l^\infty]$ is an isomorphism.
\item $\phi[l^\infty]: \operatorname{A}^n(X_{\bar
  K})[l^\infty]\to A[l^\infty]$ is an inclusion.
\item $\phi[l^\nu]: \operatorname{A}^n(X_{\bar K})[l^\nu]\to
  A[l^\nu]$ is an inclusion for all natural numbers $\nu$.
  \item $\phi[l]: \operatorname{A}^n(X_{\bar K})[l] \to
    A[l]$
    is an inclusion.
\end{enumerate}

\item If any of the equivalent conditions in (1)(b) hold, then $T_l \phi: T_l \operatorname{A}^n(X_{\bar K})\to T_l A$ is an isomorphism.
\end{enumerate}

\item There exists a natural number $e$ (independent of $l$) such
  that for all natural numbers $\nu$ the image of the map
  $\phi[l^{\nu+e}]: \operatorname{A}^n(X_{\bar K})[l^{\nu+e}]\to
  A[l^{\nu+e}]$ contains $A[l^\nu]\subseteq A[l^{\nu+e}]$.

\item For all but finitely many primes $l$ the map $\phi[l^\nu]:
  \operatorname{A}^n(X_{\bar K})[l^\nu]\to A[l^\nu]$ is surjective.
\end{enumerate}
\end{lem}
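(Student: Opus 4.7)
The plan is to reduce every part of the lemma to properties of a single surjective morphism of abelian varieties $f : J_C \to A$ built from the data of $\phi$. Concretely, using the surjectivity of $\phi$ together with the fact that every algebraically trivial cycle lies in the image of $Z_*$ for some correspondence $Z \in \mathscr{A}^n_{X_{\bar K}/\bar K}(C_i)$ with $C_i$ a smooth pointed curve, one picks finitely many such correspondences $Z_i$ whose $\phi$-images generate $A$, assembles them into a single correspondence $Z \in \operatorname{CH}^n(C \times_{\bar K} X_{\bar K})$ on the disjoint union $C = \coprod_i C_i$, and uses the regular homomorphism hypothesis together with the universal property of Jacobians to see that $f := \phi \circ Z_*$ is induced by a surjective morphism of abelian varieties $J_C \to A$. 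The kernel $K$ of $f$ sits (at the level of $\bar K$-points) in a short exact sequence $0 \to K^0 \to K \to \pi_0(K) \to 0$ with $K^0$ an abelian variety and $\pi_0(K)$ finite.

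The key input is that for every prime $l$, both $J_C(\bar K)$ and $K^0(\bar K)$ are $l$-divisible. The snake lemma applied to multiplication by $l^\nu$ on $0 \to K \to J_C \to A \to 0$ identifies the cokernel of $f[l^\nu]$ with $K/l^\nu K$, which is in turn a quotient of $\pi_0(K)$ (via a further snake lemma in which $K^0/l^\nu K^0 = 0$). Write $l^{e_l}$ for the $l$-part of $|\pi_0(K)|$. Given $a \in A[l^\nu]$, lift $a$ to $b \in J_C(\bar K)$; then $l^\nu b \in K(\bar K)$ has image in $\pi_0(K)$ killed by $l^{e_l}$, so $l^{\nu+e_l} b \in K^0(\bar K)$, and $l$-divisibility of $K^0$ lets us write $l^{\nu+e_l} b = l^{\nu+e_l} b'$ with $b' \in K^0$; then $b - b' \in J_C[l^{\nu+e_l}]$ is a preimage of $a$. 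Pushing through $Z_*$ gives an $l^\infty$-torsion preimage of $a$ in $\operatorname{A}^n(X_{\bar K})$, proving (1)(a). Setting $e := \max_l e_l$, which is bounded by $\log_2 |\pi_0(K)|$, yields (2); and since $e_l = 0$ for all primes $l \nmid |\pi_0(K)|$, the same argument gives (3).

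For (1)(b) the implications (i) $\Rightarrow$ (ii) $\Rightarrow$ (iii) $\Rightarrow$ (iv) are immediate. For (iv) $\Rightarrow$ (i), one inducts on $\nu$: if $\alpha \in \operatorname{A}^n(X_{\bar K})[l^{\nu+1}]$ satisfies $\phi(\alpha) = 0$, then $\phi(l^\nu \alpha) = 0$ with $l^\nu \alpha \in \operatorname{A}^n(X_{\bar K})[l]$, so $l^\nu \alpha = 0$ by (iv), which places $\alpha$ in $\operatorname{A}^n(X_{\bar K})[l^\nu]$ and reduces to the inductive hypothesis. Combined with surjectivity from (1)(a) this proves (i), and (1)(c) is then formal since $T_l M = \varprojlim_\nu M[l^\nu]$ and the inverse limit of isomorphisms is an isomorphism. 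The only substantive step in the whole argument is the construction of $f : J_C \to A$ at the outset: one must verify that the image of $\prod_i J_{C_i} \to A$ is all of $A$, which holds because that image is an abelian subvariety of $A$ whose $\bar K$-points span $A(\bar K)$ as an abstract group.
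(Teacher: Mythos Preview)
Your proof is correct, and it is more self-contained than the paper's.  The paper dispatches (1)(a) and (2) by citing \cite[Rem.~3.3]{ACMVdmij} (noting that the argument there goes through for arbitrary torsion in arbitrary characteristic), deduces (3) from (2), and handles (1)(b) and (1)(c) by the same elementary reasoning you give.  The reference being invoked is essentially the miniversal-cycle argument: a miniversal cycle $\Gamma \in \mathscr A^n_{X/K}(A)$ of degree $r$ gives, via Beauville's identification $A(\bar K)[l^\infty] \simeq \operatorname{A}_0(A_{\bar K})[l^\infty]$, a map $A[l^\infty] \to \operatorname{A}^n(X_{\bar K})[l^\infty]$ whose composite with $\phi$ is multiplication by $r$, and one reads off (1)(a), (2), (3) from the $l$-divisibility of $A[l^\infty]$ with $e = \max_l v_l(r)$.

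Your route instead builds a surjective morphism of abelian varieties $f: \prod_i J_{C_i} \to A$ directly from curves parametrizing algebraically trivial cycles, and extracts the same conclusions from the structure of $\ker f$.  This is in fact close to how the existence of miniversal cycles is proved in Murre's original work, so the two arguments are cousins; yours has the advantage of not needing Beauville's result or the miniversal-cycle formalism as input, at the cost of a slightly longer setup.  One small point worth making explicit in your write-up: the reason finitely many $(C_i, Z_i)$ suffice is that each $\operatorname{im}(g_i) \subseteq A$ is an abelian subvariety and an increasing chain of abelian subvarieties of $A$ stabilizes by dimension, so the sum over all $(C,Z)$ is already achieved by a finite subcollection.
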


\begin{proof}
(1)(a) is \cite[Rem.~3.3]{ACMVdmij}.  (Even though it is only claimed
  there for $K$ of characteristic zero, the arguments of the
  references cited there are valid for arbitrary torsion in arbitrary
  characteristic.)  The equivalence of (1)(b)(i) and (1)(b)(ii) is obvious.   
To show the equivalence of (1)(b)(ii) and (1)(b)(iii) we argue as follows.  
For a group $G$, there is an inclusion $G[l^\nu]\subseteq
G[l^{\nu+1}]$, so that in our situation, we have
$\operatorname{A}^n(X_{\bar K})[l^\nu]\subseteq
\operatorname{A}^n(X_{\bar K})[l^\infty]$ and $A[l^\nu]\subseteq
A[l^\infty]$.  The equivalence of (1)(b)(ii) and (1)(b)(iii) then
follows by a diagram chase\,; the equivalence of (1)(b)(iii) and
(1)(b)(iv) is elementary.   One obtains (1)(c) by applying the Tate module to the isomorphism (1)(b)(i). Item (3) follows from Item (2), which in turn is  \cite[Rem.~3.3]{ACMVdmij}.
\end{proof}

\begin{rem}
It is worth noting that if there exists a regular homomorphism $\phi:\operatorname{A}^n(X_{\bar K})\to A(\bar K)$ such that $\phi[\ell^\infty]$ is injective, then there is an algebraic representative in codimension-$n$\,; this follows directly from Saito's criterion (\cite[Prop.~2.1]{murre83}).
\end{rem}

\subsection{Miniversal cycles and miniversal cycles of minimal degree}\label{SS:mini}
Let 
$X$ be a smooth projective variety over a field $K$ and let $\phi:\operatorname{A}^n(X_{\bar K}) \to A(\bar K)$ be a regular homomorphism. A \emph{miniversal cycle} for $\phi$ is a cycle $Z\in\mathscr{A}_{X_{K}/K}^n(A)$ such that the homomorphism $\psi_Z: A \to A$ is given by multiplication by $r$ for some natural number~$r$, which we call the \emph{degree} of the cycle. A miniversal cycle is called \emph{universal} if $\psi_Z : A \to A$ is given by the identity. In case $\phi$ is an algebraic representative for codimension-$n$ cycles on $X$, we call a universal cycle for $\phi$ a universal cycle in codimension-$n$ for~$X$.  Clearly there is a minimal $r$ such that there exists a miniversal cycle of degree $r$\,; taking linear combinations of miniversal cycles,  one can see this minimum is achieved by the GCD of all of the degrees of miniversal cycles.

If $K$ is algebraically closed, it is a classical and crucial fact~\cite[1.6.2 \& 1.6.3]{murre83} that a miniversal cycle exists  if and only if $\phi$ is surjective\,; this also holds without any restrictions on the field~$K$ by~\cite[Lem.~4.7]{ACMVfunctor}. In particular, since an algebraic representative is always a surjective regular homomorphism  \cite[Prop.~5.1]{ACMVfunctor}, it always admits a miniversal cycle. However, the existence of a universal cycle is restrictive.  For example, both over the complex numbers \cite{voisinUniv} and over a field of characteristic at least three \cite{ACMVdiag}, the standard desingularization of the very general double quartic solid with 7 nodes does not admit a universal cycle in codimension-$2$.

 \subsection{Decomposition of the diagonal and algebraic representatives}\label{SS:Murre}

We now recall a result due to \cite{murre83} and \cite{BS}\,:

\begin{pro}[{Murre \cite{murre83}, Bloch--Srinivas \cite{BS}}]\label{P:phi}
	Let $X$ be a smooth projective variety over a perfect field $K$ of characteristic exponent $p$.
	\begin{enumerate}
		\item If $\operatorname{char}(K)=0$, then
		\begin{equation}\label{E:Murre0Tors}
		\phi^2_{X_{\bar K}}[\ell^\infty] :  \operatorname{A}^2(X_{\bar K})[\ell^\infty]
		\longrightarrow \mathrm{Ab}^2_{X/K}[\ell^\infty](\bar K).
		\end{equation}
		is an isomorphism of $\operatorname{Gal}(K)$-modules for all prime numbers $\ell$.
		
		\item   Assume that the diagonal $\Delta_{X_{\bar K}}\in
		\operatorname{CH}^{d_X}(X_{\bar K}\times_{\bar K} X_{\bar K})\otimes \rat$  admits a
		decomposition
		of type $(W_1,W_2)$ with  $\dim W_2\le 1$. Then
		$$ \phi^2_{X_{\bar K}} : \operatorname{A}^2(X_{\bar K})
		\longrightarrow \mathrm{Ab}^2_{X/K}(\bar K)$$ is an isomorphism of $\operatorname{Gal}(K)$-modules.
		
		\item   Assume that the diagonal $N\Delta_{X_{\bar K}}\in
		\operatorname{CH}^{d_X}(X_{\bar K}\times_{\bar K} X_{\bar K})$  admits a
		decomposition
		of type $(W_1,W_2)$ with $\dim W_2\le 2$ for some positive integer $N$. Then
		 			$$V_l \phi^2_{X_{\bar K}} : V_l \operatorname{A}^2(X_{\bar K})
		\longrightarrow V_l \mathrm{Ab}^2_{X/K}$$ is an isomorphism of $\operatorname{Gal}(K)$-modules for all primes $l$, and 
		
		$$T_\ell \phi^2_{X_{\bar K}} : T_\ell \operatorname{A}^2(X_{\bar K})
		\longrightarrow T_\ell \mathrm{Ab}^2_{X/K}$$ is an isomorphism of $\operatorname{Gal}(K)$-modules for all prime numbers $\ell$ not dividing
		$Np$.
		
		\item  In the setting of (3), further assume that $p\ge 2$,  resolution of singularities holds in dimensions $< d_X$,  and $p\nmid N$. Then
		\[
		T_p \phi^2_{X_{\bar K}}: T_p \A^2(X_{\bar K}) \longrightarrow T_p \Ab^2_{X/K}
		\]
		is an isomorphism of $\gal(K)$-modules.
	\end{enumerate}
\end{pro}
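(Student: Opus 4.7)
The plan is to deduce the four parts from the decomposition-of-the-diagonal technique of Bloch--Srinivas combined with Murre's universal property of algebraic representatives. Part (1) is a direct appeal to \cite[Thm.~10.3]{murre83}: in characteristic zero Murre establishes precisely that $\phi^2_{X_{\bar K}}[\ell^\infty]$ is bijective for every prime $\ell$, and nothing more is needed.

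For parts (2) and (3), the common mechanism is as follows. Given a decomposition $M\Delta_{X_{\bar K}} = Z_1 + Z_2$ in $\chow^{d_X}(X_{\bar K}\times_{\bar K} X_{\bar K})$ of the appropriate type (with $M=1$ after tensoring with $\rat$ in (2), and $M=N$ integrally in (3)), I would apply Notation~\ref{notations} to produce smooth projective alterations $\widetilde W_i\to W_i$ of $p$-power degree together with factorizations $p^eZ_i = r_i\circ s_i$. For $\alpha\in\operatorname{A}^2(X_{\bar K})$, the identity $(p^eM)\alpha = Z_{1,*}\alpha + Z_{2,*}\alpha$ combined with the Bloch--Srinivas observation (that $Z_{1,*}\alpha$ vanishes at the generic point of $X_{\bar K}$ when $\alpha$ is algebraically trivial) shows that $(p^eM)\alpha$ lies in the image of $r_{2,*}\colon \chow^2(\widetilde W_2)\to \chow^2(X_{\bar K})$. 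In case (2) the bound $\dim\widetilde W_2\le 1$ forces the lift into $\operatorname{A}^1(\widetilde W_2)=\operatorname{Pic}^0(\widetilde W_2)(\bar K)$; in case (3) the bound $\dim\widetilde W_2\le 2$ places it in $\operatorname{A}^2(\widetilde W_2)=\operatorname{Alb}(\widetilde W_2)(\bar K)$. The universal property of $\operatorname{Ab}^2_{X/K}$ then factors $\phi^2_{X_{\bar K}}$ through a surjection from the relevant Jacobian or Albanese variety, and Murre's Roitman-type argument upgrades this surjection to the claimed isomorphism: on $\rat$-coefficients in (2); on $V_l$ for every prime $l$ (since $N$ and $p^e$ are invertible there) in (3); and on $T_\ell$ whenever $\ell\nmid Np$ in (3).

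For part (4), the only obstruction to extending the argument of (3) to $l=p$ is the factor $p^e$ introduced by the alteration. The hypothesis that resolution of singularities holds in dimensions $<d_X$ allows one to replace the alterations by genuine resolutions, yielding $e=0$. Since $p\nmid N$ by assumption, no $p$-power appears in the factorization, and the argument of (3) now produces the integral isomorphism on $T_p$. The main difficulty throughout is the careful tracking of the degrees $N$ and $p^e$ together with the verification that the induced map from $\operatorname{Alb}(\widetilde W_2)$ (or $\operatorname{Pic}^0(\widetilde W_2)$) onto $\operatorname{Ab}^2_{X/K}$ is an \emph{isomorphism}, not merely an isogeny, on the relevant Tate modules; in part (4) this is exactly where the resolution-of-singularities hypothesis becomes indispensable.
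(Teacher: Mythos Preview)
Your overall shape (factor the diagonal through auxiliary varieties and compare $\phi^2_X$ with the Picard/Albanese maps, which are isomorphisms on torsion) matches the paper, but the implementation is confused in a way that breaks the argument. With the conventions of Notation~\ref{notations}, $Z_1$ is supported on $W_1\times_K X$ and $Z_2$ on $X\times_K W_2$; for the action on $\A^2(X_{\bar K})$ it is \emph{not} true that ``$Z_{1,*}\alpha$ vanishes at the generic point when $\alpha$ is algebraically trivial'' (this statement has no clear meaning for a class in $\A^2$, and in any case the vanishing of $Z_1$ on algebraically trivial classes is not what Bloch--Srinivas gives you). Consequently your claim that $(p^e M)\alpha$ lies in the image of a map out of $\chow^2(\widetilde W_2)$ fails: when $\dim\widetilde W_2\le 1$ the group $\chow^2(\widetilde W_2)$ is zero, and for arbitrary $d_X$ there is no map $r_{2,*}\colon\chow^2(\widetilde W_2)\to\chow^2(X_{\bar K})$ of the kind you write. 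You have in effect swapped the roles of $W_1$ and $W_2$.

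The paper does not discard either term. It builds the commutative diagram~\eqref{E:diagfact}, in which the contravariant action of $Np^e\Delta$ on $\A^2(X_{\bar K})$ factors as
\[
\A^2(X_{\bar K}) \xrightarrow{\,r_1^*\oplus r_2^*\,} \A^1(\widetilde W_1)\oplus \A^2(\widetilde W_2) \xrightarrow{\,s_1^*+s_2^*\,} \A^2(X_{\bar K}),
\]
with $\dim\widetilde W_1=d_X-1$ and $\dim\widetilde W_2=2$; the middle vertical $\phi^1_{\widetilde W_1}\oplus\phi^2_{\widetilde W_2}$ lands in $\operatorname{Pic}^0_{\widetilde W_1}\oplus\operatorname{Alb}_{\widetilde W_2}$ and is an isomorphism on \emph{all} torsion (Abel--Jacobi and Roitman). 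A direct diagram chase then shows $\ker\phi^2_X$ is killed by $Np^e$ on torsion, giving injectivity of $\phi^2_X[l^\infty]$ for $l\nmid Np^e$; surjectivity is automatic for an algebraic representative (Lemma~\ref{L:SRHom-facts}(1)(a)). For the $V_l$-statement for \emph{all} $l$ one applies $T_l$ and uses that $T_l\A^2(X_{\bar K})$ is finite free over $\integ_l$ (via the injectivity of $T_l\lambda^2$, Proposition~\ref{P:M-9.2}) before inverting $Np^e$. Your paragraph on (4) is correct in spirit: resolution of singularities lets one take $e=0$, and then the same chase works at $l=p$.
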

 \begin{proof} 
 	First recall from \cite{ACMVdcg} that $\phi^2_{X_{\bar K}}$ is $\operatorname{Gal}(K)$-equivariant. Item (2) is \cite[Thm.~1(i)]{BS}, while 
Item (1) reduces via \cite{ACMVdcg} to the case $K=\cx$, which is covered by \cite[Thm.~10.3]{murre83}.

  We now prove Item (3) and assume that $\operatorname{char}(K) = p >0$.
With Notation~\ref{notations},
 we have a commutative diagram with composition of horizontal arrows being multiplication by $Np^e$\,:
 \begin{equation}\label{E:diagfact}
\xymatrix{ \operatorname{A}^2(X_{\bar K}) \ar[r]^{r_1^*\oplus r_2^*\qquad } \ar[d]^{\phi^2_X} & \operatorname{A}^1(\widetilde{W}_1) \oplus  \operatorname{A}^2(\widetilde{W}_2) \ar[r]^{\qquad s_1^*+s_2^*} \ar[d]^{\phi^1_{\widetilde{W}_1}\oplus \phi^2_{\widetilde{W}_2}}&
	\operatorname{A}^2(X_{\bar K}) \ar[d]^{\phi^2_X} \\
	\mathrm{Ab}^2_{X/K}(\bar K)  \ar[r] & \operatorname{Pic}^0_{\widetilde{W}_1}(\bar K) \oplus  \operatorname{Alb}_{\widetilde{W}_2}(\bar K) \ar[r] & 	\mathrm{Ab}^2_{X/K}(\bar K).
}
 \end{equation}
where the bottom horizontal arrows are the $\bar K$-homomorphisms induced by the universal property of algebraic representatives. (Note that since $\dim \widetilde{W}_2=2$ we have identified $\phi^2_{\widetilde{W}_2}$ with the Albanese map.) Since $\phi^1$ is an isomorphism and since the Albanese morphism is an isomorphism on torsion by Rojtman~\cite{bloch79,grossuwaAJ,milne82}, a simple diagram chase establishes that $\phi^2_X : \operatorname{A}^2(X_{\bar K})
\to \mathrm{Ab}^2_{X}(\bar K)$ is an isomorphism on prime-to-$Np^e$ torsion. It follows that $\phi^2_X$ is an isomorphism on $l$-primary torsion for all primes $l$ not dividing $Np^e$. The statement about $T_\ell \phi^2_{X_{\bar K}}$ then ensues by passing to the inverse limit. Alternately, since the middle vertical arrow is an isomorphism on torsion, it is an isomorphism on Tate modules. By applying $T_l$ to \eqref{E:diagfact}, and since $T_l\operatorname{A}^2(X_{\bar K})$ and $T_l	\mathrm{Ab}^2_{X/K}$ are finite free $\integ_l$-modules ($T_l \lambda^2 : T_l\operatorname{A}^2(X_{\bar K}) \hookrightarrow T_l	\mathrm{Ab}^2_{X/K}$ is injective, Proposition~\ref{P:M-9.2}), we directly see that $T_l\phi^2_{{X_{\bar K}}}$ is an isomorphism for $l\nmid Np$ and we also see, after tensoring with $\rat_l$ that $V_l\phi^2_{{X_{\bar K}}}$ is an isomorphism for all primes $l$.

For (4), it suffices to observe that, if $W_1$ and $W_2$ admit resolutions of singularities (which is the case if $d_X\leq 4$ by \cite{CPRes2}), then we may take $e=0$ above.
 \end{proof}
 
  \begin{rem}
By rigidity, the same results in Proposition~\ref{P:phi} hold if the separable closure $\bar K$ is replaced with the algebraic closure $\bar K^a$.  
\end{rem}
 
 \begin{pro}[{Decomposition of the diagonal and miniversal cycle classes}] 
 	\label{P:decmini}
 	Let $X$ be a smooth projective variety over a field $K$ that is either finite or algebraically closed,
 	and let $N$ be a natural number.
Assume that  $N\Delta_{X_{\bar K}}\in
\operatorname{CH}^{d_X}(X\times_{K} X)$  admits a
decomposition
of type $(W_1,W_2)$ with $\dim W_2\le 1$.  Then $\operatorname{Ab}^2_{X/K}$ admits a miniversal cycle of degree $p^eN$ for some nonnegative integer $e$ that may be chosen to be zero if $\dim X \leq 4$. 

In particular, if $\dim X\leq 4$ and if $\operatorname{CH}_0(X)$ is universally supported in dimension~1, then $\operatorname{Ab}^2_{X/K}$ admits a universal cycle.
\end{pro}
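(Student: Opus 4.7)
First, applying Notation~\ref{notations} to the decomposition $N\Delta_{X_{\bar K}} = Z_1 + Z_2$ produces smooth projective alterations $\pi_i\colon \widetilde W_i \to W_i$ of pure dimensions $n_1 \leq d_X - 1$ and $n_2 \leq 1$, correspondences $s_1,r_1,s_2,r_2$ as listed there, and a nonnegative integer $e$ (which may be taken to be $0$ when $\dim X \leq 4$, by resolution of singularities for threefolds) such that $p^e N \Delta_X = r_1 \circ s_1 + r_2 \circ s_2$ in $\chow^{d_X}(X\times_K X)$. A codimension count shows that the pull-back $r_2^\ast\colon \chow^2(X_{\bar K}) \to \chow^2(\widetilde W_2)$ lands in the zero group whenever $\dim \widetilde W_2 \leq 1$, so the $\widetilde W_2$-piece acts trivially on $\operatorname{A}^2(X_{\bar K})$ as soon as $d_X \geq 3$; consequently $p^e N \cdot \operatorname{id}_{\operatorname{A}^2(X_{\bar K})} = s_1^\ast \circ r_1^\ast$ factors through $\operatorname{A}^1(\widetilde W_1) = \operatorname{Pic}^0_{\widetilde W_1/K}(\bar K)$. (The case $d_X = 2$ is classical, as then $\operatorname{Ab}^2_{X/K}=\operatorname{Alb}_{X/K}$ admits the tautological universal Albanese cycle.)

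Next, because $K$ is either finite or algebraically closed, \cite[\S\ref{SS:UnivCodim1}]{ACMVfunctor} supplies a Poincar\'e class $\mathcal P_1 \in \chow^1(\operatorname{Pic}^0_{\widetilde W_1/K} \times_K \widetilde W_1)$, normalized to vanish along $\{0\}\times_K \widetilde W_1$. The plan is to form the ``relative $s_1^\ast$'' of $\mathcal P_1$, namely
$$\Gamma := (p_{13})_\ast \bigl(p_{12}^\ast \mathcal P_1 \cdot p_{23}^\ast {}^{\mathsf t}s_1\bigr) \ \in \ \chow^2(\operatorname{Pic}^0_{\widetilde W_1/K} \times_K X),$$
where ${}^{\mathsf t}s_1 \in \chow_{d_X}(\widetilde W_1 \times_K X)$ is the transpose of $s_1$ and the $p_{ij}$ denote projections from $\operatorname{Pic}^0_{\widetilde W_1/K}\times_K \widetilde W_1 \times_K X$. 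A codimension count (using $\dim \widetilde W_1 = d_X - 1$) confirms $\Gamma$ has codimension~$2$, and its fibers over $\operatorname{Pic}^0_{\widetilde W_1/K}$, being $s_1^\ast$ applied to the algebraically trivial family $\mathcal P_1$, are themselves algebraically trivial. Hence $\Gamma \in \mathscr A^2_{X/K}(\operatorname{Pic}^0_{\widetilde W_1/K})$, and by the universal property of $\operatorname{Ab}^2_{X/K}$ it induces a $K$-morphism $\psi_\Gamma\colon \operatorname{Pic}^0_{\widetilde W_1/K} \to \operatorname{Ab}^2_{X/K}$ whose effect on $\bar K$-points is $t \mapsto \phi^2_X(s_1^\ast(t))$.

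Finally, the $\operatorname{Gal}(K)$-equivariant regular homomorphism $\phi^1_{\widetilde W_1} \circ r_1^\ast \colon \operatorname{A}^2(X_{\bar K}) \to \operatorname{Pic}^0_{\widetilde W_1/K}(\bar K)$ factors, by the universal property of $\operatorname{Ab}^2_{X/K}$, through a unique $K$-morphism $u\colon \operatorname{Ab}^2_{X/K} \to \operatorname{Pic}^0_{\widetilde W_1/K}$; the identity $s_1^\ast \circ r_1^\ast = p^e N \cdot \operatorname{id}_{\operatorname{A}^2(X_{\bar K})}$ combined with the surjectivity of $\phi^2_X$ then yields $\psi_\Gamma \circ u = [p^e N]_{\operatorname{Ab}^2_{X/K}}$. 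The sought-for cycle is $Z := (u \times_K \operatorname{id}_X)^\ast \Gamma \in \mathscr A^2_{X/K}(\operatorname{Ab}^2_{X/K})$, for which $\psi_Z = \psi_\Gamma \circ u = [p^e N]$, so $Z$ is a miniversal cycle of degree $p^e N$. Taking $e = 0$ when $\dim X \leq 4$, and further $N = 1$ when $\chow_0(X)$ is universally supported in dimension~$1$ (\emph{via} Proposition~\ref{P:BS}), gives the ``in particular'' clause. The main subtlety will be the identification $\psi_\Gamma \circ u = [p^e N]$ as morphisms of abelian varieties, which reduces via rigidity to checking equality on $\bar K$-points, where it follows from the analogue of diagram~\eqref{E:diagfact} adapted to $\dim W_2 \le 1$ together with the vanishing of the $\widetilde W_2$-contribution on $\operatorname{A}^2$ recorded in the first paragraph.
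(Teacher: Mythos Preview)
Your proof is correct and follows essentially the same strategy as the paper: use Notation~\ref{notations} to factor $p^eN\Delta_X$ through $\widetilde W_1$ (the $\widetilde W_2$-piece vanishing on $\operatorname{A}^2$ for dimension reasons), then build the miniversal cycle as the composition of the induced map $r_1^*:\operatorname{Ab}^2_{X/K}\to(\operatorname{Pic}^0_{\widetilde W_1/K})_{\mathrm{red}}$, the universal divisor on $\operatorname{Pic}^0_{\widetilde W_1/K}\times_K\widetilde W_1$, and the correspondence $s_1^*$. The paper expresses this more tersely as $Z=s_1^*\circ\widetilde{\mathcal D}\circ r_1^*$ read off from the evident commutative diagram, whereas you spell out the intermediate cycle $\Gamma$ and the pullback along $u$ explicitly, but the construction and verification that $\psi_Z=[p^eN]$ are the same.
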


 \begin{proof} Similarly to \eqref{E:diagfact}, we have with Notation~\ref{notations} a commutative diagram with composition of horizontal arrows being multiplication by $Np^e$ (where, due to resolution of singularities in dimensions $<4$, $e$ can be chosen to be zero if $\dim X\leq 4$)\,:
 	\begin{equation*}
 	\xymatrix{ \operatorname{A}^2(X_{\bar K}) \ar[r]^{r_1^* } \ar[d]^{\phi^2_X} & \operatorname{A}^1(\widetilde{W}_1) \ar[r]^{ s_1^*} \ar[d]^{\phi^1_{\widetilde{W}_1}}&
 		\operatorname{A}^2(X_{\bar K}) \ar[d]^{\phi^2_X} \\
 		\mathrm{Ab}^2_{X/K}(\bar K)  \ar[r]^{r_1^* } & \operatorname{Pic}^0_{\widetilde{W}_1}(\bar K)  \ar[r]^{ s_1^*} & 	\mathrm{Ab}^2_{X/K}(\bar K),
 	}
 	\end{equation*}
 	where $r_1^* : \operatorname{Ab}^2_{X/K} \to (\operatorname{Pic}^0_{\widetilde
 		W_1/K})_{\operatorname{red}}$ and $s_1^* : (\operatorname{Pic}^0_{\widetilde
 		W_1/K})_{\operatorname{red}} \to  \operatorname{Ab}^2_{X/K}$ denote the $K$-homomorphisms induced by the correspondences $r_1$ and $s_1$.
 		Since we are assuming $K$ to be either finite or algebraic closed, the Abel--Jacobi map $\phi^1_{\widetilde{W}_1}$ admits 
 		a
 	universal divisor
 	$\widetilde {\mathcal D} \in
 	\mathscr A^1_{\widetilde W_1/K}((\operatorname{Pic}^0_{\widetilde
 		W_1/K})_{\operatorname{red}})$ (see, \emph{e.g.}, \cite[\S 7.1]{ACMVfunctor}), meaning that the
 	induced morphism $$\psi_{\widetilde{\mathcal D}} : \ (\operatorname{Pic}^0_{\widetilde
 		W_1/K})_{\operatorname{red}}\to (\operatorname{Pic}^0_{\widetilde
 		W_1/K})_{\operatorname{red}}$$ is the identity. 
 	It is then clear that the homomorphism associated to
 	the cycle class
 	$$
 	Z:=s_1^*\circ
 	\widetilde {\mathcal
 		D} \circ r_1^* \quad \in \mathscr A^n_{X/K}(\operatorname{Ab}^n_{X/K}).
 	$$
 is given by
 	$Np^e\operatorname{Id}_{\operatorname{Ab}^2_{X/K}}$.
\end{proof}

\section{Miniversal cycles and the image of the second $l$-adic Bloch map}\label{SS:proofThm1}

In this section, we prove our main Theorem~\ref{T:mainBody}. As an immediate consequence of the existence of an algebraic representative for codimension-2 cycles (see \S \ref{SS:AlgRep}), we obtain proofs of Theorem~\ref{T:main} and of Theorem~\ref{T:main-uni}.
We start with a lemma that parallels Lemma~\ref{L:Suwa-3.2}.

\begin{lem} \label{L:mainBody}
	Let $X$ be a smooth projective variety over an algebraically closed field $K=\bar K^a=\bar K$ and let $l$ be a prime. 
Let $ \phi:\operatorname{A}^n(X_{\bar K})\to A(\bar K)$, be a surjective regular homomorphism, 
and 	let  $\Gamma \in \mathscr A^{n}_{X/K}(A)$ be a miniversal cycle of degree $r$.
If 
	$$
	\phi [l^\infty]: \operatorname{A}^n(X_{\bar K})[l^\infty]\to A[l^\infty]
	$$
	is an inclusion,
then
	\begin{equation*} 
l^{v_l(r)}  \cdot T_l\operatorname{A}^n(X_{\bar K}) \subseteq \operatorname{im}\big(\Gamma_* : 	T_l \operatorname{A}_0(A) \to T_l \operatorname{A}^n(X_{\bar K})\big),
	\end{equation*}
	where   $v_l(r)$ is the $l$-adic valuation of $r$.
\end{lem}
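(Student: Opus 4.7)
The plan is to exploit the tautological relation $\phi\circ w_\Gamma = \psi_\Gamma = [r]$ supplied by the definition of a miniversal cycle, and transport it to Tate modules via the Albanese sum map. First, I would observe that by Lemma~\ref{L:SRHom-facts}(1)(b)--(c), the hypothesis that $\phi[l^\infty]$ is an inclusion upgrades to the statement that $T_l\phi\colon T_l\operatorname{A}^n(X_{\bar K})\to T_l A$ is an isomorphism of $\integ_l$-modules, and this is all I shall need of the hypothesis.

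Next, let $\sigma\colon\operatorname{A}_0(A)\to A(\bar K)$ denote the Albanese sum homomorphism. By Rojtman's theorem (Proposition~\ref{P:Rojtman}), $\sigma$ is an isomorphism on $l$-primary torsion, so that $T_l\sigma$ is an isomorphism. The key identity I would establish is
\[
\phi\circ\Gamma_* \;=\; r\cdot\sigma \colon \operatorname{A}_0(A)\longrightarrow A(\bar K).
\]
Since both sides are group homomorphisms and since $\operatorname{A}_0(A)$ is generated by classes of the form $[a]-[0]$ with $a\in A(\bar K)$ (any two points of the abelian variety $A$ being algebraically equivalent), it suffices to verify the identity on such generators; and there
\[
(\phi\circ\Gamma_*)\bigl([a]-[0]\bigr) \;=\; \phi(\Gamma_a-\Gamma_0) \;=\; \phi(w_\Gamma(a))-\phi(w_\Gamma(0)) \;=\; ra-0 \;=\; r\cdot\sigma\bigl([a]-[0]\bigr),
\]
by the defining property of a miniversal cycle of degree $r$ together with the fact that $\phi\circ w_\Gamma=\psi_\Gamma=[r]$ is a group homomorphism.

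Taking Tate modules then gives $T_l\phi\circ T_l\Gamma_* = r\cdot T_l\sigma$, and since $T_l\phi$ and $T_l\sigma$ are both isomorphisms of $\integ_l$-modules, applying $(T_l\phi)^{-1}$ yields
\[
\operatorname{im}\bigl(T_l\Gamma_*\bigr) \;=\; (T_l\phi)^{-1}\bigl(r\cdot T_l A\bigr) \;=\; r\cdot T_l\operatorname{A}^n(X_{\bar K}) \;=\; l^{v_l(r)}\cdot T_l\operatorname{A}^n(X_{\bar K}),
\]
where the last equality uses that $r/l^{v_l(r)}$ is a unit in $\integ_l$. This proves the lemma, in fact as an equality rather than merely a containment.

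The main obstacle I expect is the verification of the identity $\phi\circ\Gamma_*=r\cdot\sigma$, which requires carefully unwinding the definitions of Gysin fiber and cycle pushforward and recognizing the generating set $\{[a]-[0]\}$ for $\operatorname{A}_0(A)$. Once that identity is secured, the rest reduces to a clean diagram chase in which the hypothesis enters precisely to invert $T_l\phi$, while the degree $r$ of the miniversal cycle controls the index of the image inside $T_l\operatorname{A}^n(X_{\bar K})$.
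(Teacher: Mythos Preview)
Your proof is correct and follows essentially the same strategy as the paper's: both exploit that $\phi\circ\Gamma_*$ acts as multiplication by $r$ after identifying $T_l\operatorname{A}_0(A)$ with $T_lA$, and both use that $T_l\phi$ is an isomorphism under the hypothesis. The only cosmetic difference is that the paper phrases the argument via the set-map $a\mapsto[a]-[0]$ from $A(\bar K)$ into $\operatorname{A}_0(A)$ (citing Beauville for the fact that it is an isomorphism on torsion), whereas you use its inverse, the Albanese sum $\sigma$, and invoke Roitman; your observation that one in fact obtains an equality rather than merely a containment is a correct sharpening.
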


\begin{proof}
 By the definition of a miniversal cycle and its degree, the composition
	$$
	\xymatrix{A(\bar K) \ar[r]&
		\operatorname{A}_0(A) \ar[r]^<>(0.5){\Gamma_*}&  \operatorname{A}^n(X_{\bar K})\ar[r]^<>(0.5){\phi} &  A(\bar K)
	}
	$$ is multiplication by $r$. Here, the map $A(\bar K) \to
	\operatorname{A}_0(A)$ is the map of sets 
	$a\mapsto [a]-[0]$.
	Recall the general fact about abelian varieties, due to Beauville,  that the map of sets $A(\bar K) \to \operatorname{A}_0(A_{\bar K}), a \mapsto [a]-[0]$ is an isomorphism on torsion
	(see \cite[Lem.~3.3]{ACMVdmij} for references, and recall that Beauville's argument works for arbitrary torsion in arbitrary characteristic). Therefore, restricting to $l$-primary torsion, we get a composition of homomorphisms
	$$
	\xymatrix{A[l^\infty] \ar[r]_{\simeq \quad \ } &
		\operatorname{A}_0(A)[l^\infty] \ar[r]^{  \Gamma_*}&  \operatorname{A}^n(X_{\bar K})[l^\infty] \ar[r]^<>(0.5){\phi[l^\infty]}_{\quad \simeq} &   A[l^\infty]
	}
	$$ 
	which is given by multiplication by $r$. Here $\phi [l^\infty]$ is an isomorphism due to
	Lemma~\ref{L:SRHom-facts}.
	Passing to the inverse limit, we obtain a composition of homomorphisms
	$$
	\xymatrix{T_l A \ar[r]_{\simeq \quad \ } &
		T_l \operatorname{A}_0(A) \ar[r]^{  \Gamma_*}& T_l \operatorname{A}^n(X_{\bar K})\ar[r]^<>(0.5){T_l\phi^n}_{\quad \simeq} &  T_l A
	}
	$$ 
which is given by multiplication by $r$. It immediately follows that $l^{v_l(r)}  T_l \operatorname{A}^n(X_{\bar K})$ lies in the image of~$\Gamma_*$.
\end{proof}

 \begin{teo} \label{T:mainBody}
 	  Let $X$ be a smooth projective variety over an algebraically closed field $K=\bar K^a=\bar K$ and let $l$ be a prime. 
 Let $ \phi:\operatorname{A}^n(X_{\bar K})\to A(\bar K)$, be a surjective regular homomorphism, and 
let  $\Gamma \in \mathscr A^{n}_{X/K}(A)$ be a miniversal cycle of  minimal degree $r$ (see~\S \ref{SS:mini}).
Then the morphisms
 \begin{equation*}
\xymatrix{
T_l A \ar[r]^<>(0.5){\Gamma_*}& T_l	\operatorname{A}^n(X_{\bar K})  \ar[r]^<>(0.5){T_l\lambda^n}&H^{2n-1}(X_{\bar
		K},\integ_l(n))_{\tau}
}
\end{equation*}
induce inclusions
$$
 \operatorname{im}(T_l \lambda^n \circ \Gamma_*)\subseteq  {\widetilde \coniveau}^{n-1} H^{2n-1}(X_{\bar K},\mathbb Z_l(n))_\tau 
  \subseteq \operatorname{im}(T_l\lambda^n).
$$

Moreover, if $
	\phi [l^\infty]: \operatorname{A}^n(X_{\bar K})[l^\infty]\to A[l^\infty]
	$
	is an inclusion, then in addition we have 
$$
 l^{v_l(r)} \operatorname{im}(T_l\lambda^n) \subseteq  \operatorname{im}(T_l \lambda^n \circ \Gamma_*).
 $$
In other words, if $\phi[l^\infty]$ is an inclusion, then  $\operatorname{im}(T_l\lambda^n) $ is an extension of $ {\widetilde \coniveau}^{n-1} H^{2n-1}(X_{\bar K},\mathbb Z_l(n))_\tau $ by a finite $l$-primary torsion group killed by multiplication by $l^{v_l(r)}$.  In particular, if $l$ does not divide $r$, then
  \begin{equation*} 
{\widetilde \coniveau}^{n-1} H^{2n-1}(X_{\bar K},\mathbb Z_l(n))_\tau =
  \operatorname{im}(T_l\lambda^n).
\end{equation*}
 \end{teo}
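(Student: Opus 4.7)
My plan is to establish the three claimed containments in succession; each is a short argument drawing on machinery already in place, namely Proposition~\ref{P:ImBloch-Pre}, Lemma~\ref{L:mainBody}, and the compatibility of the Bloch map with correspondences encoded in diagram~\eqref{E:con-com-diag}.

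For the first inclusion $\operatorname{im}(T_l \lambda^n \circ \Gamma_*) \subseteq \widetilde{\coniveau}^{n-1} H^{2n-1}(X_{\bar K}, \integ_l(n))_\tau$, I would specialize the right-hand commutative square of diagram~\eqref{E:con-com-diag} to the single summand indexed by $Z = A$ and the miniversal correspondence $\Gamma \in \operatorname{CH}^n(A \times_K X)$. Combined with Beauville's identification $T_l A \xrightarrow{\sim} T_l \operatorname{A}_0(A)$ (the isomorphism on torsion already invoked in the proof of Lemma~\ref{L:mainBody}), this shows that the composition $T_l A \to T_l \operatorname{A}^n(X_{\bar K}) \to H^{2n-1}(X_{\bar K}, \integ_l(n))_\tau$ agrees with the correspondence-induced map $\Gamma_* \colon H_1(A, \integ_l) \to H^{2n-1}(X_{\bar K}, \integ_l(n))_\tau$, whose image is by construction a summand of $\widetilde{\coniveau}^{n-1} H^{2n-1}(X_{\bar K}, \integ_l(n))_\tau$ according to the identification made in the proof of Proposition~\ref{P:ImBloch-Pre}. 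The second inclusion $\widetilde{\coniveau}^{n-1} H^{2n-1}(X_{\bar K}, \integ_l(n))_\tau \subseteq \operatorname{im}(T_l \lambda^n)$ is then directly the $\Lambda = \integ_l$ case of Proposition~\ref{P:ImBloch-Pre} and requires no further argument.

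Assuming $\phi[l^\infty]$ is injective, the third inclusion $l^{v_l(r)} \operatorname{im}(T_l \lambda^n) \subseteq \operatorname{im}(T_l \lambda^n \circ \Gamma_*)$ follows by post-composing $T_l \lambda^n$ with the containment $l^{v_l(r)} T_l \operatorname{A}^n(X_{\bar K}) \subseteq \Gamma_*\bigl(T_l \operatorname{A}_0(A)\bigr)$ supplied by Lemma~\ref{L:mainBody}. The concluding ``in other words'' assertion then falls out formally: chaining the three inclusions shows that the quotient $\operatorname{im}(T_l \lambda^n)/\widetilde{\coniveau}^{n-1} H^{2n-1}(X_{\bar K}, \integ_l(n))_\tau$ is a subquotient of the finitely generated $\integ_l$-module $H^{2n-1}(X_{\bar K}, \integ_l(n))_\tau$ annihilated by $l^{v_l(r)}$, hence a finite $l$-primary torsion group, and vanishes when $l \nmid r$, yielding equality throughout.

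I do not foresee a substantive obstacle, as each ingredient has been prepared in the earlier sections. The only point requiring attention is verifying in the first step that the composition under scrutiny genuinely factors through the specific correspondence-induced map $\Gamma_* \colon H_1(A, \integ_l) \to H^{2n-1}(X_{\bar K}, \integ_l(n))_\tau$; but this is precisely the content of the right-hand square of~\eqref{E:con-com-diag}, together with Beauville's theorem identifying $T_l A$ with the Tate module of $\operatorname{A}_0(A)$.
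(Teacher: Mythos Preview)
Your proposal is correct and follows essentially the same approach as the paper's own proof: the second inclusion is cited from Proposition~\ref{P:ImBloch-Pre}, the first inclusion is obtained by specializing the commutative diagram~\eqref{E:con-com-diag} (the paper rewrites it as~\eqref{E:con-com-diag2}) to the summand $Z=A$ with correspondence $\Gamma$, and the third inclusion is read off from Lemma~\ref{L:mainBody}. Your explicit mention of Beauville's identification $T_lA \simeq T_l\operatorname{A}_0(A)$ makes precise a step the paper leaves implicit in passing from $T_lA$ in the theorem statement to $T_l\operatorname{A}_0(A_{\bar K})$ in diagram~\eqref{E:con-com-diag2}.
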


\begin{proof}
   The inclusion  ${{\widetilde \coniveau}}^{n-1} H^{2n-1}(X_{\bar K},\mathbb Z_l(n))_\tau \subseteq \operatorname{im}(T_l\lambda^n)$ is Proposition~\ref{P:ImBloch-Pre} (due to Suwa). 

Let us now show $ \operatorname{im}(T_l \lambda^n \circ \Gamma_*)\subseteq  {\widetilde \coniveau}^{n-1} H^{2n-1}(X_{\bar K},\mathbb Z_l(n))_\tau$.
For that purpose, consider the commutative diagram
 \begin{equation}\label{E:con-com-diag2}
\xymatrix{
T_l	\operatorname{A}_0(A_{\bar K})  \ar[rd]_{\Gamma_*} \ar@{^(->}[r] &	\displaystyle \bigoplus_{\Gamma':Z\vdash X}
T_l	\operatorname{A}_0(Z_{\bar K})  \ar@{->}[d]^{\Gamma'_*} \ar[r]^<>(0.5)= &
	\displaystyle \bigoplus_{\Gamma':Z\vdash X}
	T_l \operatorname{CH}_0(Z_{\bar K}) \ar[d]^{\Gamma'_*}
	\ar[r]^<>(0.5){T_l\lambda_0}_{\simeq }&\displaystyle\bigoplus_{\Gamma':Z\vdash X} H_1(Z_{\bar K},\integ_l)_\tau \ar[d]^{\Gamma'_*}\\
&T_l	\operatorname{A}^n(X_{\bar K}) \ar@{^(->}[r]&
T_l	\operatorname{CH}^n(X_{\bar K}) \ar[r]^<>(0.5){T_l\lambda^n}&H^{2n-1}(X_{\bar
		K},\integ_l(n))_{\tau}\\
}
\end{equation}
where the direct sums run through all smooth projective varieties $Z$ over $K$ and all correspondences $\Gamma' \in \operatorname{CH}^{d_X-n+1}(Z\times_K X)$. By definition, the image of the right vertical arrow consists of ${\widetilde \coniveau}^{n-1} H^{2n-1}(X_{\bar K},\mathbb Z_l(n))_\tau $, completing the proof via a diagram chase. 

Finally, under the assumption that $\phi [l^\infty]: \operatorname{A}^n(X_{\bar K})[l^\infty]\to A[l^\infty]$ is an isomorphism, the assertion $ l^{v_l(r)} \operatorname{im}(T_l\lambda^n) \subseteq  \operatorname{im}(T_l \lambda \circ \Gamma_*)$ follows from Lemma \ref{L:mainBody}.
\end{proof}

\begin{proof}[Proof of Theorems \ref{T:main} and \ref{T:main-uni}]
	Recall from \S \ref{SS:AlgRep} that an algebraic representative for codimension-2 cycles $ \phi^2_{X_{\bar K}}:\operatorname{A}^2(X_{\bar K})\to\operatorname{Ab}^2_{X_{\bar K}/\bar K}(\bar K)$ always exists.
	Both Theorems \ref{T:main} and \ref{T:main-uni} are then a special case of Theorem~\ref{T:mainBody}.
\end{proof}

Since 
an algebraic representative always exists for codimension-$2$ cycles (see \S \ref{SS:AlgRep}), in order to prove Theorem~\ref{T:mainBody} unconditionally for the algebraic representative in codimension-$2$, it suffices to  show the standard assumption holds.  The following lemma allows us to reduce to assuming the standard assumption holds for varieties over finite fields\,:

\begin{lem}[Standard assumption and generization] \label{L:StAs-spec}
  Let $S$ be the spectrum of a discrete valuation ring with generic
  point $\eta = \spec K$ and closed point $\circ = \spec \kappa$.  Let
  $X/S$ be a smooth projective scheme, and let $\Gamma \in \mathscr A^2_{X_\eta/K}(\operatorname{Ab}^2_{X_\eta/K})$ be a miniversal cycle of minimal degree $r$.  For all primes $\ell \nmid r\cdot \operatorname{char}(K)$, if $X_\circ$ satisfies the standard assumption at $\ell$ (\emph{i.e.}, $\phi^2_{X_\circ/\kappa}[\ell^\infty]$ is an isomorphism), then $X_\eta$ satisfies the standard assumption at $\ell$ (\emph{i.e.}, $\phi^2_{X_\eta/K}[\ell^\infty]$ is an isomorphism).
\end{lem}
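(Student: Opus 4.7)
My plan is to reformulate the standard assumption at $\ell$ as an equality of two $\integ_\ell$-submodules of $H^3(X_{\bar\eta},\integ_\ell(2))_\tau$ built from the miniversal cycle $\Gamma$, and then transfer this reformulated equality from the closed fiber to the generic fiber using smooth--proper base change (where $\bar\eta = \spec \bar K$ and $\bar\circ = \spec \bar\kappa$ denote geometric points above $\eta$ and $\circ$).

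For the reformulation, $T_\ell \phi^2_{X_\eta/K}$ is always surjective (Lemma~\ref{L:SRHom-facts}(1)(a)), so the standard assumption at $\ell$ is equivalent to $\ker T_\ell \phi^2_{X_\eta/K}=0$. Since $\ell \nmid r$, the map $\iota := r^{-1}\Gamma_*$ is a $\integ_\ell$-linear section of $T_\ell \phi^2_{X_\eta/K}$, giving
\[
T_\ell \operatorname{A}^2(X_{\bar\eta}) \;=\; \ker\bigl(T_\ell \phi^2_{X_\eta/K}\bigr) \,\oplus\, \iota\bigl(T_\ell \operatorname{Ab}^2_{X_\eta/K}\bigr).
\]
Post-composing with the injective Bloch map $T_\ell\lambda^2$ (Proposition~\ref{P:M-9.2}) preserves directness of the sum, and since $r$ acts as a unit on the $\integ_\ell$-module $H^3(X_{\bar\eta},\integ_\ell(2))_\tau$, the maps $T_\ell \lambda^2\circ \iota$ and $T_\ell \lambda^2\circ \Gamma_*$ have the same image there. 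Consequently, the standard assumption at $\ell$ for $X_\eta$ is equivalent to
\[
T_\ell \lambda^2_{X_\eta}\bigl(T_\ell \operatorname{A}^2(X_{\bar\eta})\bigr) \;=\; T_\ell \lambda^2_{X_\eta}\bigl(\Gamma_*\bigl(T_\ell \operatorname{Ab}^2_{X_\eta/K}\bigr)\bigr)
\]
as subgroups of $H^3(X_{\bar\eta},\integ_\ell(2))_\tau$, and an analogous equivalence holds on the closed fiber $X_\circ$ for any miniversal cycle of degree prime to $\ell$.  (This is essentially Theorem~\ref{T:mainBody} read contrapositively.)

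For the transfer, I spread out over $S$.  Using that $X\to S$ is smooth and projective, the functorial construction of $\operatorname{Ab}^2$ from~\cite{ACMVfunctor} (cf.~\S\ref{SS:UnivCodim1}) allows me to extend $\operatorname{Ab}^2_{X_\eta/K}$ to an abelian scheme $\mathcal{A}\to S$ with $\mathcal{A}_\circ \simeq \operatorname{Ab}^2_{X_\circ/\kappa}$ and to extend $\Gamma$ to a cycle $\widetilde\Gamma \in \operatorname{CH}^2(\mathcal{A}\times_S X)$. The restriction $\Gamma_\circ := \widetilde\Gamma|_\circ$ satisfies $\phi^2_{X_\circ/\kappa}\circ \Gamma_{\circ,*} = r\cdot \operatorname{id}$ by specialization from the generic fiber, so it is a miniversal cycle of degree $r$ (not necessarily minimal) for $\phi^2_{X_\circ/\kappa}$. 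Smooth--proper base change then yields, for $\ell \neq \operatorname{char}\kappa$, canonical specialization isomorphisms of Galois modules
\[
H^3(X_{\bar\eta},\integ_\ell(2))_\tau \;\overset{\simeq}{\longrightarrow}\; H^3(X_{\bar\circ},\integ_\ell(2))_\tau, \qquad T_\ell \operatorname{Ab}^2_{X_\eta/K} \;\overset{\simeq}{\longrightarrow}\; T_\ell \operatorname{Ab}^2_{X_\circ/\kappa},
\]
compatible with $\widetilde\Gamma_*$ and with the Bloch map by naturality.

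Putting this together, the specialization isomorphism on $H^3$ carries $T_\ell \lambda^2_{X_\eta}\circ \Gamma_{\eta,*}(T_\ell \operatorname{Ab}^2_{X_\eta/K})$ onto $T_\ell \lambda^2_{X_\circ}\circ \Gamma_{\circ,*}(T_\ell \operatorname{Ab}^2_{X_\circ/\kappa})$, and injects $T_\ell \lambda^2_{X_\eta}(T_\ell \operatorname{A}^2(X_{\bar\eta}))$ into $T_\ell \lambda^2_{X_\circ}(T_\ell \operatorname{A}^2(X_{\bar\circ}))$.  Because $X_\circ$ satisfies the standard assumption at $\ell$ and $\ell \nmid r$, Theorem~\ref{T:mainBody} yields equality of the latter two subgroups; injectivity of the specialization map then pulls this equality back to the generic fiber, and the reformulation from the second paragraph gives the standard assumption at $\ell$ for $X_\eta$.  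The main technical obstacle is the spreading-out step: extending $\operatorname{Ab}^2$ to an abelian scheme over the DVR with the correct special fiber and extending the miniversal cycle compatibly; once this is in hand, the remainder is a clean diagram chase exploiting injectivity of the Bloch map and \'etale smooth--proper base change.
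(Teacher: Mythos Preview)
Your approach shares the paper's core ingredients---spreading $\Ab^2_{X_\eta/K}$ and $\Gamma$ over $S$, and using injectivity of $T_\ell\lambda^2$ (hence of specialization on codimension-$2$ torsion cycle classes)---but the transfer step has a genuine gap. You assume that the abelian scheme $\mathcal A=\Ab^2_{X/S}$ has special fiber $\mathcal A_\circ\simeq\Ab^2_{X_\circ/\kappa}$, so that $\Gamma_\circ$ becomes a miniversal cycle of degree $r$ for $\phi^2_{X_\circ/\kappa}$. But \cite[Thm.~8.3]{ACMVfunctor} only identifies the \emph{generic} fiber $(\Ab^2_{X/S})_\eta\simeq\Ab^2_{X_\eta/\eta}$; no such identification at the closed point is available, and without it $\Gamma_\circ$ is merely a cycle on $\mathcal A_\circ\times_\kappa X_\circ$, so your invocation of Theorem~\ref{T:mainBody} on the closed fiber does not go through. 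The paper's proof circumvents this entirely: it works directly on $\ell^\infty$-torsion and uses only the comparison \emph{morphism} $\psi_{\Gamma_{X/S},\circ}\colon(\Ab^2_{X/S})_\circ\to\Ab^2_{X_\circ/\kappa}$ induced by the specialized cycle, never needing it to be an isomorphism. The resulting diagram has an outer rectangle that fails to commute only by the unit factor~$r$, and injectivity of $\phi^2_{X_\eta}[\ell^\infty]$ follows by a direct chase.

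A smaller issue: your reformulation equates the standard assumption with injectivity of $T_\ell\phi^2$. Lemma~\ref{L:SRHom-facts}(1)(a) gives surjectivity of $\phi[\ell^\infty]$ (not of $T_\ell\phi$), and (1)(c) gives only the implication (standard assumption $\Rightarrow$ $T_\ell\phi$ iso). The converse holds because $\A^2(X_{\bar K})$ is divisible, hence so is $\A^2(X_{\bar K})[\ell^\infty]$, so the passage to Tate modules loses nothing---but you should say this; otherwise your argument only yields that $T_\ell\phi^2_{X_\eta}$ is injective, which is a priori weaker than the standard assumption. Working on $\ell^\infty$-torsion directly, as the paper does, avoids this detour.
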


\begin{proof}
By \cite[Thm.~8.3]{ACMVfunctor}
we have  $(\Ab^2_{X/S})_\eta \iso
  \Ab^2_{X_\eta/\eta}$.  
  Let $\Gamma_{X/S} \in \mathcal A^2_{X/S}(\Ab^2_{X/S})$ be a
  miniversal cycle of minimal degree $r$ induced by the one in the assumption of the lemma (see \cite[Lem.~4.7]{ACMVfunctor}). Its specialization induces a group homomorphism
  $w_{\Gamma_{X/S},\circ}: \Ab^2_{X/S}(\bar \kappa) \to
  \A^2(X_{\circ,\bar\kappa})$, and thus a homomorphism
  $\psi_{\Gamma_{X/S,\circ}}: (\Ab^2_{X/S})_\circ \to \Ab^2_{X_\circ/\circ}$.

 On $\ell$-primary torsion, we have a
  commutative diagram
  \begin{equation}
    \label{D:specializeA2}
  \xymatrix{
    \Ab^2_{X/S}[\ell^\infty](\bar K) \ar@{->}[r]^{\simeq} \ar[d]^{w_{\Gamma_{X/S}}}&
    (\Ab^2_{X/S})_\circ[\ell^\infty](\bar\kappa)
    \ar[d]^{w_{\Gamma_{X/S},\circ}}
    \ar@/^5pc/[dd]^{\psi_{\Gamma_{X/S},\circ}} \\
    \A^2(X_{\bar K})[\ell^\infty] \ar@{^{(}->}[r] & \A^2(X_0)[\ell^\infty]
    \ar@{^{(}->}[d]^{\phi^2_{X_\circ/\bar\kappa}[\ell^\infty]} \\
    & \Ab^2_{X_\circ/\kappa}[\ell^\infty](\bar\kappa)
  }
  \end{equation}
 Both the top and bottom horizontal 
  arrows are the specialization maps\,; the fact that the specialization map on torsion cycle classes is injective in codimension~$2$ follows from the fact, due to  Merkurjev--Suslin~\cite{MS} (see also Propositions~\ref{P:M-9.2} and~\ref{P:M-9.2bis}), that the second Bloch map is an inclusion.  Choose a prime $\ell\ne \operatorname{char}(K)$
  relatively prime to $r$.  Then $w_{\Gamma_{X/S}}$ and  is injective, and by commutativity, this implies $w_{\Gamma_{X/S,\circ}}$ is injective. 
   Then since we assume that $\phi^2_{X_\circ/\kappa}[\ell^\infty]$ is injective, it follows that all arrows in~\eqref{D:specializeA2} are injective.

  We now
  complete diagram~\eqref{D:specializeA2} by introducing a second copy
  of the isogeny $\psi_{\Gamma_{X/S,\circ}}$.  Note that the bottom
  square does \emph{not} commute, and the outer rectangle fails to
  commute by a factor of~$r$\,:
    \begin{equation}
    \label{D:specializeinjectA2}
  \xymatrix{
    \Ab^2_{X/S}[\ell^\infty](\bar K) \ar@{->}[r]^\simeq \ar[d]^{w_{\Gamma_{X/S}}}&
    (\Ab^2_{X/S})_\circ[\ell^\infty](\bar\kappa)
    \ar[d]^{w_{\Gamma_{X/S},\circ}}  \ar@/^5pc/[dd]^{\psi_{\Gamma_{X/S},\circ}} \\
    \ar @{}[dr] |{\not\circlearrowright}
    \A^2(X_{\bar K})[\ell^\infty] \ar@{^{(}->}[r]
    \ar[d]^{\phi^2_{X_\eta/\bar K}} & \A^2(X_0)[\ell^\infty]
    \ar[d]^{\phi_{X_\circ/\bar\kappa}} \\
    (\Ab^2_{X/S})[\ell^\infty](\bar K)  \ar[r] \ar[d]^\iso
    & \Ab^2_{X_\circ/\kappa}[\ell^\infty](\bar\kappa)\\
    \Ab^2_{X/S}[\ell^\infty](\bar\kappa) \ar[ur]_{\psi_{\Gamma_{X/S},\circ}}
  }
    \end{equation}
 If $\ell \nmid r$, then the injectivity of
 $\phi^2_{X_\circ/\bar\kappa}$ implies the injectivity of
 $\phi^2_{X_\eta/\bar K}$.
\end{proof}

\begin{pro}\label{P:finitefield}
 Assume that for any finite field $\mathbb F$ and any smooth
 projective variety $Y$ of dimension $d$ over $\overline {\mathbb F}$ we have that
 for all  primes $\ell\ne \operatorname{char}(\mathbb F)$, 
 $$\phi^2_{Y_{\bar {\mathbb F}}}[\ell^\infty]:
 \operatorname{A}^2(Y_{\bar {\mathbb F}})[\ell^\infty]\to
 \operatorname{Ab}^2_{Y_{\bar {\mathbb F}}/\bar {\mathbb F}}(\bar
              {\mathbb F})[\ell^\infty]$$ is an
              isomorphism\,;  \emph{i.e.}, assume the standard assumption holds for varieties of dimension $d$ over finite fields.
              
Let $X$ be a smooth projective variety of dimension $d$ over an algebraically closed field  $K=\overline K^a=\bar K$, and let $\Gamma\in \mathscr A^2_{X/\bar K}(\operatorname{Ab}^2_{X_{\bar K}/\bar  K})$ be a miniversal cycle of minimal degree $r$.  
Then for all primes $\ell \nmid r\cdot \operatorname{char}(K)$, we have 
  \begin{equation*} 
{\widetilde \coniveau}^{1} H^{3}(X_{\bar K},\mathbb Z_\ell(1))_\tau =
  \operatorname{im}(T_\ell\lambda^2: T_\ell	\operatorname{A}^2(X_{\bar K}) \hookrightarrow H^{3}(X_{\bar
		K},\integ_\ell(2))_{\tau}) .
\end{equation*}
\end{pro}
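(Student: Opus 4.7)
The strategy is to apply Theorem~\ref{T:mainBody} to $\phi^2_{X_{\bar K}/\bar K}$ and $\Gamma$; since $\ell \nmid r$, the conclusion reduces to verifying the standard assumption at $\ell$, namely that $\phi^2_{X_{\bar K}/\bar K}[\ell^\infty]$ is an isomorphism. The case $\characteristic(K)=0$ is handled by Proposition~\ref{P:phi}(1), so I assume $\characteristic(K)=p>0$.

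First, I would spread out. Using that $K$ is the filtered colimit of its finitely generated $\mathbb{F}_p$-subalgebras, choose a smooth, finitely generated $\mathbb{F}_p$-subalgebra $R \subset K$ together with a smooth projective $\mathcal{X}/R$ descending $X$ and, by \cite[Thm.~8.3]{ACMVfunctor}, a relative algebraic representative $\mathcal{A}/R$ whose formation commutes with base change and whose pullback to $\bar K$ recovers $\mathrm{Ab}^2_{X_{\bar K}/\bar K}$; via \cite[Lem.~4.7]{ACMVfunctor} the miniversal cycle $\Gamma$ descends to some $\Gamma_R \in \mathscr{A}^2_{\mathcal{X}/R}(\mathcal{A})$ whose minimal degree on every geometric fiber divides $r$. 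Then pick a closed point $\mathfrak{m} \in \spec R$; its residue field $\kappa$ is finite, and by the hypothesis of the proposition the fiber $\mathcal{X}_{\mathfrak{m}}$ satisfies the standard assumption at $\ell$ for every $\ell \neq p$.

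I would next propagate the standard assumption from $\mathfrak{m}$ up to the generic point of $\spec R$ through a chain of discrete valuation rings. After shrinking $\spec R$ so that it is regular at $\mathfrak{m}$, choose a saturated chain of primes $0 = \mathfrak{p}_0 \subset \mathfrak{p}_1 \subset \cdots \subset \mathfrak{p}_n = \mathfrak{m}$ in $R_{\mathfrak{m}}$. Each quotient-localization $(R_{\mathfrak{m}}/\mathfrak{p}_{i-1})_{\mathfrak{p}_i/\mathfrak{p}_{i-1}}$ is a DVR, and an iterated application of Lemma~\ref{L:StAs-spec} along the chain, using at each stage that the fiberwise minimal degree of $\Gamma_R$ divides $r$ and that $\ell \nmid rp$, transfers the standard assumption at $\ell$ from $\mathcal{X}_{\mathfrak{m}}$ up to $\mathcal{X}_L$, where $L = \operatorname{Frac}(R)$.

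Finally, I would bootstrap the standard assumption from $\mathcal{X}_{\bar L}$ to $X = \mathcal{X}_{\bar K}$. Because $\mathcal{A}$ base-changes compatibly with $\phi^2$, and because the torsion of an abelian variety is invariant under algebraically closed extensions, we have $\mathcal{A}(\bar L)[\ell^\infty] = \mathrm{Ab}^2_{X_{\bar K}/\bar K}(\bar K)[\ell^\infty]$, while the base-change map on $\operatorname{A}^2[\ell^\infty]$ is injective and $\phi^2_{X_{\bar K}/\bar K}[\ell^\infty]$ is surjective by Lemma~\ref{L:SRHom-facts}(1)(a). Injectivity of the latter is then checked cyclewise: for a candidate kernel element $\alpha \in \operatorname{A}^2(X)[\ell^\infty]$, enlarge $R$ to include the field of definition of $\alpha$ and reiterate the spreading-out and propagation arguments. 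The main obstacle of the proof is precisely this last step, which requires carefully reconciling the possibly large transcendence degree of $\bar K$ over $\bar L$ with the universal property of the algebraic representative; once handled, Theorem~\ref{T:mainBody} applies to conclude.
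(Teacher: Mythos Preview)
Your approach matches the paper's: reduce via Theorem~\ref{T:mainBody} to verifying the standard assumption at $\ell$, spread out, and propagate up from a finite-field point through a chain of DVRs using Lemma~\ref{L:StAs-spec}. Two remarks are in order.

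The claim that each $(R_{\mathfrak m}/\mathfrak p_{i-1})_{\mathfrak p_i/\mathfrak p_{i-1}}$ is a DVR fails for an arbitrary saturated chain of primes in a regular local ring: the quotient of a regular ring by a prime need not be regular, so the next localization need not be a DVR. The easy fix is to take each $\mathfrak p_i$ generated by part of a regular system of parameters, or to proceed as the paper does: at each stage localize the current smooth $\mathbb F_p$-algebra at a height-one prime (this is a DVR, with residue field the function field of a divisor), then re-spread $X$, $\Gamma$, and the relative $\operatorname{Ab}^2$ over a new smooth $\mathbb F_p$-algebra with that fraction field before iterating.

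What you flag as the main obstacle is not one. Your element-wise argument already suffices: any $\alpha \in \operatorname{A}^2(X_{\bar K})[\ell^\infty]$ descends to the algebraic closure of some finitely generated subfield $L_\alpha \subset K$ over which $X$ and $\Gamma$ are also defined; running the DVR-chain argument over $L_\alpha$ shows $\phi^2_{X_{\bar L_\alpha}}[\ell^\infty]$ is injective, hence $\alpha = 0$ if it was in the kernel. No delicate reconciliation with the universal property is needed. The paper absorbs this entire step into the single sentence ``we may and do replace $K$ with a field of finite transcendence degree over $\mathbb F_p$.''
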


\begin{proof}
  By Proposition~\ref{P:phi}, we need only treat the case where $K$ has characteristic $p>0$.
  From Theorem~\ref{T:mainBody} we only need to establish that $X$ satisfies the standard assumptions for $\ell \nmid r \cdot \operatorname{char}(K)$.
  Since $X$ is of finite type over $K$, we may and do replace $K$ with
  a field of finite transcendence degree~$n$ over the prime field
  $\ff_p$.  By spreading out and then taking successive hypersurface sections,
  we may conclude from our hypothesis on finite fields and Lemma \ref{L:StAs-spec}.

  We provide details for the sake of completeness.
  Spread $X$ to a smooth scheme over $\spec R$, where $R$ is
  a smooth $\ff_p$-algebra of Krull dimension $n$ with $\operatorname{Frac}(R) = K$.  Let $D\subset \spec(R)$ be a prime divisor with
  generic point $\eta_D$.  Then $D$ defines a discrete valuation on
  $K$, whose valuation ring $R_{\eta_D} \supset R$ has residue field
  isomorphic to the function field of $D$.  By iterating this construction we obtain a
  sequence of ring surjections $R= R_n \twoheadrightarrow R_{n-1} \dots \twoheadrightarrow R_0$
  where $\dim R_j = j$, and for $j\ge 1$ the field $K_j := \operatorname{Frac}(R_j)$ admits a
  discrete valuation with valuation ring $S_j \supset R_j$ and residue field isomorphic to $K_{j-1}$.  By hypothesis, $X\times_{\spec
    R} \spec R_0$ satsifies
  the standard assumption at $\ell$.  By repeated invocation of
  Lemma~\ref{L:StAs-spec} for $X\times_{\spec R} \spec S_j$ with $j = 1, 2, \cdots, n$, we find
  that $X_K$ satisfies the standard assumption at $\ell$.
\end{proof}

\section{Decomposition of the diagonal and the image of the second $\ell$-adic Bloch map}
\label{SS:Bloch}
 
The aim of this section is to establish Theorem~\ref{T:co=H}. First, we have the following proposition that extends \cite[Prop.~2.3(ii)]{BenWittClGr} to the $\ell=p$ case.

\begin{pro}[{\cite[Prop.~2.3(ii)]{BenWittClGr}}]\label{P:lambdaBW}
	Let $X$ be a smooth projective variety over a perfect field $K$.
Assume that $N\Delta_{X_{\bar K}}\in
\operatorname{CH}^{d_X}(X_{\bar K}\times_{\bar K} X_{\bar K})$  admits a
decomposition
of type $(W_1,W_2)$ with $\dim W_2\le 1$.  Then, for all primes $l$, the
 second Bloch map
	 	$$\lambda^2 : \operatorname{A}^2(X_{\bar K})[l^\infty] \longrightarrow
	H^3(X_{\bar K},\integ_l(2))\otimes_{\mathbb Z_l}\mathbb Q_l/\mathbb Z_l$$
and  the 
 second $l$-adic Bloch map
	 	$$T_l \lambda^2 : T_l \operatorname{A}^2(X_{\bar K}) \longrightarrow
	H^3(X_{\bar K},\integ_l(2))_\tau$$
	are isomorphism of $\operatorname{Gal}(K)$-modules.
\end{pro}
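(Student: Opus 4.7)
The plan is to leverage the decomposition of $N\Delta_{X_{\bar K}}$ to sandwich the second Bloch map $\lambda^2$ between multiplications by $Np^e$ and factors through first Bloch maps on Picard varieties of lower-dimensional smooth projective varieties, where the result is classical.

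First, I would apply Notation~\ref{notations} to the decomposition $N\Delta_{X_{\bar K}}=Z_1+Z_2$, obtaining smooth projective $\widetilde W_1$ of pure dimension $d_X-1$ (after possibly multiplying components by $\mathbb{P}$-factors) and $\widetilde W_2$ of pure dimension $\le 1$, together with the factorization
\[
Np^e\,\Delta_{X_{\bar K}}\;=\;s_1^*\circ r_1^* \,+\, s_2^*\circ r_2^* \quad \text{in } \chow^{d_X}(X_{\bar K}\times_{\bar K}X_{\bar K}),
\]
for some $e\ge 0$, which can be taken to be $0$ when $d_X\le 4$ by resolution of singularities \cite{CPRes2}. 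Interpreting these as operators on $\operatorname{A}^2(X_{\bar K})[l^\infty]$ and on $H^3(X_{\bar K},\integ_l(2))\otimes\rat_l/\integ_l$, and using functoriality of the Bloch map under correspondences (Suwa \cite{Suwa} for $l\ne\characteristic(K)$, Gros--Suwa \cite{grossuwaAJ} for $l=\characteristic(K)$), I would set up the commutative diagram
\[
\xymatrix@C=0.6em{
\operatorname{A}^2(X_{\bar K})[l^\infty] \ar[r]^<>(0.5){r_1^*\oplus\, r_2^*} \ar[d]_{\lambda^2} & \operatorname{Pic}^0(\widetilde W_1)[l^\infty]\oplus \operatorname{Pic}^0(\widetilde W_2)[l^\infty] \ar[r]^<>(0.5){s_1^*+\,s_2^*} \ar[d]_{\lambda^1\oplus\, \lambda^1}^{\wr} & \operatorname{A}^2(X_{\bar K})[l^\infty] \ar[d]^{\lambda^2} \\
H^3(X_{\bar K},\integ_l(2))\otimes\rat_l/\integ_l \ar[r] & \displaystyle\bigoplus_{i=1,2} H^1(\widetilde W_i,\integ_l(1))\otimes\rat_l/\integ_l \ar[r] & H^3(X_{\bar K},\integ_l(2))\otimes\rat_l/\integ_l
}
\]
whose horizontal compositions are multiplication by $Np^e$. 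The middle vertical arrow is an isomorphism by Kummer theory (Proposition~\ref{P:Kummer}) when $l\ne\characteristic(K)$, and by the logarithmic Hodge--Witt analogue of Gros--Suwa when $l=\characteristic(K)$.

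With the diagram in place, a diagram chase---exploiting the known injectivity of $\lambda^2[l^\infty]$ due to Merkurjev--Suslin (Propositions~\ref{P:M-9.2} and~\ref{P:M-9.2bis})---shows that $\lambda^2[l^\infty]$ is an isomorphism after inverting $Np$. To remove the restriction and conclude for all primes~$l$, I would invoke the \emph{integral} isomorphism $\phi^2_{X_{\bar K}}:\operatorname{A}^2(X_{\bar K})\stackrel{\sim}{\to}\operatorname{Ab}^2_{X/K}(\bar K)$ from Proposition~\ref{P:phi}(2) (available because $\dim W_2\le 1$) combined with the rank equality $\dim_{\rat_l} H^3(X_{\bar K},\rat_l(2))=2\dim\operatorname{Ab}^2_{X/K}$ forced by the $V_l$-isomorphism of Proposition~\ref{P:phi}(3); these together constrain the torsion cokernel of the injection $T_l\operatorname{Ab}^2_{X/K}\hookrightarrow H^3(X_{\bar K},\integ_l(2))_\tau$ to vanish, which is equivalent to surjectivity of $\lambda^2[l^\infty]$. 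Applying $\Hom(\rat_l/\integ_l,-)$ to the resulting isomorphism then yields the statement for $T_l\lambda^2$.

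The main obstacle, and the reason for a separate extension of \cite[Prop.~2.3(ii)]{BenWittClGr}, is the case $l=\characteristic(K)=p$. One must verify that the Gros--Suwa $p$-adic Bloch map is functorial under arbitrary correspondences (both push-forward and pull-back), and that the diagram above continues to commute when one works in logarithmic Hodge--Witt rather than $\ell$-adic cohomology. The compatibility of $F$-invariants with push-forward \cite[Cor.~I.3.2.7]{gros85} should be the essential technical input enabling the $\ell$-adic chase to run verbatim at $l=p$.
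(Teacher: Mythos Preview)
Your diagram and the factorization through $\widetilde W_1$ are the right starting point (minor correction: the $\widetilde W_2$ contribution vanishes outright, since $r_2^*$ lands in $\operatorname{A}^2(\widetilde W_2)$ with $\dim\widetilde W_2\le 1$, not in $\operatorname{Pic}^0(\widetilde W_2)$; the paper simply drops this term ``for reasons of codimension''). The real problem is your bootstrap to primes dividing $Np$.

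Your claim that the rank equality $\dim_{\rat_l}H^3(X_{\bar K},\rat_l(2))=2\dim\operatorname{Ab}^2_{X/K}$ ``constrains the torsion cokernel of the injection $T_l\operatorname{Ab}^2_{X/K}\hookrightarrow H^3(X_{\bar K},\integ_l(2))_\tau$ to vanish'' is false: an injection $\integ_l^r\hookrightarrow\integ_l^r$ can perfectly well have nonzero finite cokernel (multiplication by $l$, for instance). So as written, your argument only establishes the result for $l\nmid Np^e$, which is exactly what you were trying to improve upon.

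The paper's argument avoids this entirely by a divisibility observation you are missing. The target $H^3(X_{\bar K},\integ_l(2))\otimes_{\integ_l}\rat_l/\integ_l$ is a divisible group, and any quotient of a divisible group is divisible; hence $\operatorname{coker}\lambda^2$ is divisible. Your diagram (with horizontal composites equal to $Np^e$ and middle vertical an isomorphism by Proposition~\ref{P:KummerA}) shows $\operatorname{coker}\lambda^2$ is annihilated by $Np^e$. A divisible group killed by a fixed integer is zero, so $\lambda^2$ is surjective for \emph{every} prime $l$ simultaneously, with no bootstrap needed. Taking Tate modules then gives the $T_l\lambda^2$ statement. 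This is the Benoist--Wittenberg argument; the paper's only addition is checking that the same diagram commutes for the Gros--Suwa map when $l=p$ (Proposition~\ref{P:correspondences}), which you correctly identify as the relevant technical point.
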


\begin{proof}
The following  argument is due to Benoist--Wittenberg  for $l\neq \operatorname{char}(K)$. We check that it holds for $l=\characteristic(K)$ as well.   For $X$ smooth and projective, we have a diagram with exact row~\eqref{E:BlResA}\,:
	\[
	\xymatrix{
		&& \operatorname{CH}^2(X_{\bar K})[l^\infty]\ar@{^{(}->}[d]^{\lambda^2} \ar@{-->}[rd] \\
		0 \ar[r] & H^3(X_{\bar K},\integ_l(2))\otimes \rat_l/\integ_l \ar[r] & H^3(X_{\bar K},\rat_l/\integ_l(2))   \ar[r] & H^4(X_{\bar K},\integ_l(2))
	}
	\]
	where the dashed arrow is, up to sign, the cycle class map (\cite[Cor.~4]{CTSS83}, \cite[Prop.~III.1.16 and Prop.~III.1.21]{grossuwaAJ}).  Since algebraically trivial cycles are homologically trivial, it follows that the image of $\A^2(X_{\bar K})[l^\infty]$ under $\lambda^2$ is contained in $H^3(X_{\bar K},\mathbb Z_l)\otimes_{\mathbb Z_l}\mathbb Q_l/\mathbb Z_{l}\subseteq H^3(X_{\bar K},\mathbb Q_l/\mathbb Z_l)$.  In particular, the cokernel of $\lambda^2 : \A^2(X_{\bar K})[l^\infty] \to H^3(X_{\bar K},\mathbb Z_l)\otimes_{\mathbb Z_l}\mathbb Q_l/\mathbb Z_{l}$ is divisible.
	
	Now suppose $N\Delta_{X_{\bar K}}\in
\operatorname{CH}^{d_X}(X_{\bar K}\times_{\bar K} X_{\bar K})$  admits a
	decomposition
	of type $(W_1,W_2)$ with \linebreak $\dim W_2\le 1$.  
	With Notation~\ref{notations} we obtain by the naturality of the Bloch map (Proposition~\ref{P:correspondences})  a commutative diagram
	\begin{equation}\label{E:diagfact3}
	\xymatrix{
	\operatorname{A}^2(X_{\bar K})[l^\infty] \ar[r]^{r_1^* } \ar[d]^{\lambda^2} & \operatorname{A}^1(\widetilde{W}_1)[l^\infty]  \ar[r]^{ s_1^*} \ar[d]_{\simeq}^{\lambda^1}&
		\operatorname{A}^2(X_{\bar K})[l^\infty] \ar[d]^{\lambda^2} \\
		H^3(X_{\bar K},\integ_l(2))\otimes \rat_{l}/\integ_{l}  \ar[r]^{r_1^*} & H^1(\widetilde{W}_1, \integ_l(1))\otimes \rat_{l}/\integ_{l} \ar[r]^{ s_1^*}   & 	H^3(X_{\bar K},\integ_l(2))\otimes \rat_{l}/\integ_{l}.
	}
	\end{equation}
	Note there is no $\widetilde W_2$ term above for reasons of codimension.  
	The middle vertical arrow in~\eqref{E:diagfact3} is an isomorphism
 	 by
	Proposition~\ref{P:KummerA}, while the
	composition of the horizontal arrows in~\eqref{E:diagfact3}  is multiplication by
	$Np^e$.
	It follows that
	 $\operatorname{coker}\lambda^2$ is torsion, annihilated by $Np^e$, and consequently that this cokernel is trivial, \emph{i.e.}, $\lambda^2 : \A^2(X_{\bar K})[l^\infty] \to H^3(X_{\bar K},\integ_l(2))\otimes_{\integ_l} \rat_l/\integ_l$ is an isomorphism.

	Taking the Tate module of this isomorphism gives the result for the $l$-adic Bloch map, 
	since $H^3(X_{\bar K},\mathbb Z_l)$ is a finitely generated $\mathbb Z_l$-module, and so it is elementary to check that there is an identification $T_l( H^3(X_{\bar K},\mathbb Z_l)\otimes_{\mathbb Z_l}\mathbb Q_l/\mathbb Z_{l})\simeq H^3(X_{\bar K},\mathbb Z_l)_\tau$.
 \end{proof}

The following proposition establishes Theorem~\ref{T:co=H}.

\begin{pro}[{Theorem~\ref{T:co=H}}]\label{P:lambda}
	Let $X$ be a smooth projective variety over a perfect field
	$K$ of characteristic exponent $p$ and let $N$ be a natural number.
	Assume that  $N\Delta_{X_{\bar K}}\in
	\operatorname{CH}^{d_X}(X_{\bar K}\times_{\bar K}X_{\bar K})$  admits a decomposition
	of type $(W_1,W_2)$.
	\begin{enumerate}
		\item Assume $\dim W_2\le
		2$. Suppose $l$ is a prime such that $l\nmid N$, and such that either $l\ne \characteristic(K)$ or  resolution of singularities holds in dimensions $< d_X$.
		Then
		the inclusion 
		${\widetilde \coniveau}^1	H^3(X_{\bar K},\integ_l(2)) \subseteq 	H^3(X_{\bar K},\integ_l(2))$ is an equality,
		the inclusion $T_l \operatorname{A}^2(X_{\bar K})
		\hookrightarrow T_l \operatorname{CH}^2(X_{\bar K})$ is an equality,
		and the
		second $l$-adic Bloch map
		$$T_l \lambda^2 : T_l \operatorname{CH}^2(X_{\bar K}) \longrightarrow
		H^3(X_{\bar K},\integ_l(2))_\tau$$ is an isomorphism of
		$\operatorname{Gal}(K)$-modules. 
		\item Assume $\dim W_2\le
		1$. Let $l$ be any prime.
		Then
		the inclusion $T_l \operatorname{A}^2(X_{\bar K})
		\hookrightarrow T_l \operatorname{CH}^2(X_{\bar K})$ is an equality,
		and the
		second $l$-adic Bloch map
		$$T_l \lambda^2 : T_l \operatorname{CH}^2(X_{\bar K}) \longrightarrow
		H^3(X_{\bar K},\integ_l(2))_\tau$$ is an isomorphism of
		$\operatorname{Gal}(K)$-modules. Moreover, if $l\nmid N$ and if  either $l\ne \characteristic(K)$ or  resolution of singularities holds in dimensions $< d_X$, then $H^3(X_{\bar K},\integ_l)$ is
		torsion-free.
	\end{enumerate} 
\end{pro}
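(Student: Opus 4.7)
The plan is to combine Proposition~\ref{P:lambdaBW}, Theorem~\ref{T:mainBody}, Proposition~\ref{P:ImBloch-Pre}, Proposition~\ref{P:M-9.2}, and Proposition~\ref{P:phi}(3) with the decomposition-of-the-diagonal formalism of Notation~\ref{notations}. Part~(2) essentially reduces to Proposition~\ref{P:lambdaBW} plus Merkurjev--Suslin injectivity, while Part~(1) requires an extension of the diagram argument in Proposition~\ref{P:lambdaBW} to include a column from the surface $\widetilde W_2$ together with Theorem~\ref{T:mainBody}.

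For Part~(2), the hypothesis $\dim W_2 \le 1$ is exactly that of Proposition~\ref{P:lambdaBW}, yielding $T_l\lambda^2 : T_l\operatorname{A}^2(X_{\bar K}) \to H^3(X_{\bar K}, \integ_l(2))_\tau$ an isomorphism of Galois modules for all primes $l$. Composing with the injection $T_l\lambda^2 : T_l\operatorname{CH}^2(X_{\bar K}) \hookrightarrow H^3(X_{\bar K}, \integ_l(2))_\tau$ from Proposition~\ref{P:M-9.2} forces the inclusion $T_l\operatorname{A}^2 \hookrightarrow T_l\operatorname{CH}^2$ to be an equality and $T_l\lambda^2$ on $T_l\operatorname{CH}^2$ to be an isomorphism. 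For the torsion-freeness of $H^3(X_{\bar K}, \integ_l)$ when $l \nmid p^e N$ (i.e., $l \nmid N$ combined with either $l \ne \characteristic(K)$ or resolution of singularities in dim $<d_X$), I apply the decomposition $p^e N \Delta_X = r_1 \circ s_1 + r_2 \circ s_2$ from Notation~\ref{notations}: the $\widetilde W_2$-contribution vanishes in this cohomological degree because $\dim \widetilde W_2 \le 1$ forces $H^3(\widetilde W_2) = 0$, and hence $p^e N \cdot \mathrm{id}_{H^3(X_{\bar K}, \integ_l(2))}$ factors through $H^1(\widetilde W_1, \integ_l(1)) \simeq T_l\operatorname{Pic}^0(\widetilde W_1)$, which is torsion-free by Kummer theory (Proposition~\ref{P:KummerA}). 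Since $p^e N$ is a unit in $\integ_l$ by hypothesis, the torsion subgroup of $H^3(X_{\bar K}, \integ_l(2))$ is killed and hence zero.

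For Part~(1), Proposition~\ref{P:phi}(3) gives $\phi^2_{X_{\bar K}}[l^\infty]$ iso for $l \nmid Np$, and an extension of Proposition~\ref{P:decmini} to the $\dim W_2 \le 2$ setting (using an Abel--Jacobi-style universal cycle on $\operatorname{Alb}(\widetilde W_2) \times \widetilde W_2$ alongside the Poincar\'e line bundle on $\operatorname{Pic}^0(\widetilde W_1) \times \widetilde W_1$) produces a miniversal cycle of degree $p^e N$ for $\operatorname{Ab}^2_{X/K}$. Theorem~\ref{T:mainBody} then yields $\operatorname{im}(T_l\lambda^2|_{T_l\operatorname{A}^2}) = \widetilde{\coniveau}^1 H^3(X_{\bar K}, \integ_l(2))_\tau$ whenever $l \nmid p^e N$. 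The integral equality $\widetilde{\coniveau}^1 H^3(X_{\bar K}, \integ_l(2)) = H^3(X_{\bar K}, \integ_l(2))$ will follow from the decomposition itself: each $(r_i \circ s_i)_*$ factors through the cohomology of the smooth projective $\widetilde W_i$ of dim $\le 2$, producing an image in $\widetilde{\coniveau}^{d_X - \dim \widetilde W_i} H^3 \subseteq \widetilde{\coniveau}^1 H^3$, so $p^e N \cdot H^3 \subseteq \widetilde{\coniveau}^1 H^3$; since $p^e N$ is a unit in $\integ_l$, this yields equality. The remaining identifications $T_l\operatorname{A}^2 = T_l\operatorname{CH}^2$ with $T_l\lambda^2$ iso follow as in Part~(2) via Proposition~\ref{P:M-9.2}. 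The main technical obstacle is twofold: verifying that each correspondence action $(r_i \circ s_i)_*$ yields an image genuinely in the correspondence niveau filtration $\widetilde{\coniveau}^1 H^3(X_{\bar K}, \integ_l(2))$ rather than merely $\coniveau^1 H^3$ (which can differ integrally), requiring careful bookkeeping of the codimensions of $r_i, s_i$ arising from the alterations; and extending Proposition~\ref{P:decmini} to $\dim W_2 \le 2$ using the Abel--Jacobi cycle on the auxiliary surface~$\widetilde W_2$.
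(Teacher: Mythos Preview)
Your treatment of Part~(2) is correct and matches the paper's argument.

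For Part~(1), however, your detour through Theorem~\ref{T:mainBody} and an extension of Proposition~\ref{P:decmini} to the $\dim W_2\le 2$ case is unnecessary, and the latter extension is a genuine extra piece of work you would have to carry out. The paper instead argues exactly as in Proposition~\ref{P:lambdaBW}, but keeping the $\widetilde W_2$-column: one has a commutative diagram
\[
\xymatrix{ T_l\operatorname{A}^2(X_{\bar K}) \ar[r]^-{r_1^*\oplus r_2^*} \ar[d]^{T_l\lambda^2_X} & T_l\operatorname{A}^1(\widetilde{W}_1) \oplus  T_l \operatorname{A}^2(\widetilde{W}_2) \ar[r]^-{s_1^*+s_2^*} \ar[d]_{\simeq}&
	T_l\operatorname{A}^2(X_{\bar K}) \ar[d]^{T_l\lambda^2_X} \\
	H^3(X_{\bar K},\integ_l(2))_\tau  \ar[r] & H^1(\widetilde{W}_1, \integ_l(1))\oplus H^3(\widetilde{W}_2, \integ_l(2))_\tau \ar[r]  & 	H^3(X_{\bar K},\integ_l(2))_\tau
}
\]
with horizontal compositions equal to multiplication by $Np^e$. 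The point you are missing is that because $\dim\widetilde W_2=2$, the map $T_l\lambda^2_{\widetilde W_2}$ is precisely Roitman's isomorphism $T_l\operatorname{A}^2(\widetilde W_2)\simeq T_l\operatorname{Alb}_{\widetilde W_2}\simeq H^3(\widetilde W_2,\integ_l(2))_\tau$ (Proposition~\ref{P:Rojtman}). Together with Proposition~\ref{P:Kummer} for $\widetilde W_1$, the middle vertical arrow is an isomorphism; since $Np^e\in\integ_l^\times$ under the stated hypotheses, a diagram chase gives surjectivity of $T_l\lambda^2_X$ on $T_l\operatorname{A}^2$, and injectivity is Proposition~\ref{P:M-9.2}. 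No miniversal cycle and no appeal to Theorem~\ref{T:mainBody} are required.

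Your worry about whether $(s_i)^*$ genuinely lands in $\widetilde\coniveau^1 H^3$ is well-placed, but your suggested fix via ``codimension bookkeeping'' does not work as stated: for example ${}^t s_2\in\operatorname{CH}_{d_X}(\widetilde W_2\times_{\bar K} X_{\bar K})$, which is not the dimension demanded by either~\eqref{eq:N'} or~\eqref{eq:Ntilde} unless $d_X=3$. The paper handles this by invoking the equality $\widetilde\coniveau^1 H^3=\coniveau'^1 H^3$ from Proposition~\ref{P:Jannsen} (the case $(\nu,i)=(1,3)$), which is exactly the device that absorbs the passage from $H^3(\widetilde W_2)$ to an $H^1$ via the universal line bundle on $\operatorname{Pic}^0_{\widetilde W_2}$.
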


\begin{proof} 
	We first assume that $l=\ell \not = p$.
  	That $T_\ell \lambda^2$ is a morphism of  $\operatorname{Gal}(K)$-modules is Proposition~\ref{P:BlGal} and that $T_\ell \lambda^2$ is injective in general is Proposition~\ref{P:M-9.2}.
	
Concerning item (1), with Notation~\ref{notations} we obtain by the naturality of the $\ell$-adic Bloch map (Proposition~\ref{P:correspondences})  a commutative diagram
	\begin{equation}\label{E:diagfact2}
	\xymatrix{ T_\ell\operatorname{A}^2(X_{\bar K}) \ar[r]^{r_1^*\oplus r_2^*\qquad  \quad } \ar[d]^{T_\ell\lambda^2_X} & T_\ell\operatorname{A}^1(\widetilde{W}_1) \oplus  T_\ell \operatorname{A}^2(\widetilde{W}_2) \ar[r]^{\qquad \quad s_1^*+s_2^*} \ar[d]_{\simeq}^{T_\ell\lambda^1_{\widetilde{W}_1}\oplus T_\ell \lambda^2_{\widetilde{W}_2}}&
		T_\ell	\operatorname{A}^2(X_{\bar K}) \ar[d]^{T_\ell\lambda^2_X} \\
		H^3(X_{\bar K},\integ_\ell(2))_\tau  \ar[r]^{r_1^*\oplus r_2^*\qquad  \qquad} & H^1(\widetilde{W}_1, \integ_\ell(1))\oplus H^3(\widetilde{W}_2, \integ_\ell(2))_\tau \ar[r]^{\qquad \qquad   s_1^*+s_2^*\ \ }   & 	H^3(X_{\bar K},\integ_\ell(2))_\tau.
	}
	\end{equation}
	The middle vertical arrow in~\eqref{E:diagfact2} is an isomorphism by
	Propositions~\ref{P:Kummer} and~\ref{P:Rojtman}, while the
	composition of the horizontal arrows in~\eqref{E:diagfact2}  is multiplication by
	$Np^e$. In particular, the latter are bijective if $\ell$ does
	not divide $Np^e$. 
	A diagram chase then 
	establishes the surjectivity of $T_\ell \lambda^2_X$ restricted to algebraically trivial cycles, and hence the bijectivity of $T_\ell \operatorname{A}^2(X_{\bar K})
	\hookrightarrow T_\ell \operatorname{CH}^2(X_{\bar K})$ and of
	$T_\ell \lambda^2 : T_\ell \operatorname{CH}^2(X_{\bar K}) \longrightarrow
	H^3(X_{\bar K},\integ_\ell(2))_\tau$.
	Finally, since the composition 
	$$\xymatrix{ 
		H^3(X_{\bar K},\integ_\ell(2))  \ar[r]^{r_1^*\oplus r_2^*\qquad  \qquad} & H^1(\widetilde{W}_1, \integ_\ell(1))\oplus H^3(\widetilde{W}_2, \integ_\ell(2)) \ar[r]^{\qquad \qquad   s_1^*+s_2^*\ \ }   & 	H^3(X_{\bar K},\integ_\ell(2))
	}$$
	is multiplication by $Np^e$ and since $\dim \widetilde{W}_2 \leq 2$, we obtain from the equality $\widetilde \coniveau^1H^3 = \coniveau'^1H^3$ of Proposition~\ref{P:Jannsen},  for $\ell \nmid Np^e$, the inclusion   $ 	H^3(X_{\bar K},\integ_\ell(2)) \subseteq {\widetilde \coniveau}^1	H^3(X_{\bar K},\integ_\ell(2))$.
	
	In case (2), by Proposition~\ref{P:lambdaBW}, it suffices to see that $H^3(X_{\bar K},\integ_\ell(2))$ is torsion-free for $\ell$ not dividing $Np^e$. This follows simply from the factorization of the multiplication by $Np^e$ map as
	$$ \xymatrix{
		H^3(X_{\bar K},\integ_\ell(2))  \ar[r]^{r_1^*  } & H^1(\widetilde{W}_1, \integ_\ell(1)) \ar[r]^{   s_1^*}   & 	H^3(X_{\bar K},\integ_\ell(2))
	}$$
	and the fact that  $H^1(\widetilde{W}_1, \integ_\ell(1))$ is torsion-free.

	Now suppose $l = \characteristic(K) = p>0$.  Bearing in mind the properties of the $p$-adic Bloch map summarized in \S \ref{S:properties}, we see that the composition of
	the horizontal arrows in~\eqref{E:diagfact2} is again multiplication
	by $Np^e$.  If resolution of singularities holds in dimension at most
	$d_X-1$, then we may take $e=0$.  Under this hypothesis, if $p\nmid N$, we
	again see that $T_p \lambda^2$ is an isomorphism of
	$\gal(K)$-modules.
\end{proof} 

\begin{rem}
Note that Proposition~\ref{P:lambda}(2), together with Theorem~\ref{T:mainBody}, implies that $\operatorname{CH}_0(X_{\bar K})\otimes \rat$ is universally supported in dimension~1, then the primes $\ell$ for which ${{\widetilde \coniveau}}^{1} H^{3}(X_{\bar K},\mathbb Z_\ell(2))_\tau \subseteq  H^{3}(X_{\bar K},\mathbb Z_\ell(2))_\tau$ might fail to be an equality are the primes dividing the minimal degree of a miniversal cycle. Due to Proposition~\ref{P:decmini}, this is in this case \emph{a priori} finer than the conclusion of Proposition~\ref{P:lambda}(1). 
\end{rem}

 \section{Modeling cohomology via correspondences}\label{S:application}

In this section, we prove Theorems~\ref{T:co=H} and \ref{T:model}. The starting point is that a geometrically rationally chain connected variety (resp.~stably rational variety) has \emph{universally trivial} Chow group of zero-cycles with $\rat$-coefficients (resp.~$\integ$-coefficients)\,; see \S \ref{SS:dec} and specifically Remark~\ref{R:stablyrat}. We then combine the existence of the $\ell$-adic Bloch map with the existence of an  algebraic representative for codimension-2 cycles to establish Proposition~\ref{P:lambda} (which implies Theorem~\ref{T:co=H}) and the main Theorem~\ref{T:coniFil} (which implies Theorem~\ref{T:model}).
Along the way we establish related results concerning the third $\ell$-adic cohomology group of uniruled threefolds (Proposition~\ref{P:easy}).

\subsection{Modeling $\mathbb Q_\ell$-cohomology via correspondences} \label{SS:MazurQproof}
The aim of this section is to show Mazur's Questions~\ref{Q:MazurQ} and~\ref{Q:MazurQMot}, which are  with $\rat_\ell$-coefficients, can be easily answered positively under some assumption on the coniveau of $H^{2n-1}(X_{\bar K},\rat_\ell(n))$.
The following Proposition extends \cite[Thm.~2.1(d)]{ACMVdcg} in the positive characteristic case. Note that it applies to smooth projective geometrically uniruled threefolds.

\begin{pro}\label{P:easy}
	Let $X$ be a smooth projective variety over a perfect field $K$.
	Assume $H^{2n-1}(X_{\bar K},\rat_{\ell_0}(n)) = \coniveau^{n-1}H^{2n-1}(X_{\bar K},\rat_{\ell_0}(n))$ for some prime  $\ell_0\ne \operatorname{char}(K)$  and for some integer $n$ such that $2n-1 \leq d_X$.
	Then there exist an abelian variety $A$ over~$K$ and a cycle class $\Gamma \in\chow^n(A
	\times_K X)$ such that
	the induced morphism
  \begin{equation}
  \label{eq:easy}
  \xymatrix{\Gamma_* : V_l A \ar[r] & H^{2n-1}(X_{\bar K}, \rat_l(n))}
  \end{equation} is an
  isomorphism of $\operatorname{Gal}(K)$-modules for all primes $l$.
  (In particular, we have $H^{2n-1}(X_{\bar K},\rat_l(n)) = \coniveau^{n-1}H^{2n-1}(X_{\bar K},\rat_l(n))$.)

  Moreover, if $K$ has positive characteristic, then $\Gamma$ induces an isomorphism of F-isocrystals
\[
  \xymatrix{\Gamma_* :
  H^{2 d_A-1}(A/\mathbb K)(d_A) \ar[r] & 
  H^{2n-1}(X/\mathbb K)(n)}.
\]
\end{pro}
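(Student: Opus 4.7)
My plan is to reduce the problem to the cohomology of a smooth projective curve via the coniveau hypothesis, then extract $A$ as an isogeny factor of the Jacobian, and finally propagate the isomorphism to all primes. First, I would invoke Proposition~\ref{P:Jannsen} with $\rat_{\ell_0}$-coefficients over the perfect field $K$: the equalities $\widetilde{\coniveau}^{n-1} = {\coniveau'}^{n-1} = \coniveau^{n-1}$ hold in bidegree $(\nu,i)=(n-1,2n-1)$, so the coniveau hypothesis becomes $\widetilde{\coniveau}^{n-1}H^{2n-1}(X_{\bar K},\rat_{\ell_0}(n)) = H^{2n-1}(X_{\bar K},\rat_{\ell_0}(n))$. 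Unpacking the definition of $\widetilde{\coniveau}^{n-1}$ (with the reduction to $Z$ of pure dimension $i-2\nu=1$ from the proof of Proposition~\ref{P:Jannsen}) and descending to $K$ by Galois averaging, I obtain a smooth projective curve $C/K$ and a cycle $\Gamma'\in\chow^n(C\times_K X)$ with $\Gamma'_*\colon H^1(C_{\bar K},\rat_{\ell_0}(1)) \twoheadrightarrow H^{2n-1}(X_{\bar K},\rat_{\ell_0}(n))$ surjective as $\gal(K)$-modules; the hypothesis $2n-1\leq d_X$ ensures the correspondence has non-negative codimension.

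Setting $J=\operatorname{Pic}^0_{C/K}$ and identifying $V_{\ell_0}J\simeq H^1(C_{\bar K},\rat_{\ell_0}(1))$ by Kummer (Proposition~\ref{P:Kummer}), I would compose $\Gamma'$ with a rational multiple of the Poincar\'e divisor class on $J\times_K C$ and clear denominators to produce $\Gamma_1\in\chow^n(J\times_K X)$ with $\Gamma_{1,*}\colon V_{\ell_0}J\twoheadrightarrow H^{2n-1}(X_{\bar K},\rat_{\ell_0}(n))$ surjective. To upgrade this surjection to an isomorphism, I would view $\Gamma_1$ as a morphism of Chow motives $h^1(C) \to h(X)(n-1)$: the full subcategory generated by $h^1$ of smooth projective curves is equivalent (via the Jacobian) to the category of abelian varieties over $K$ up to isogeny, which is $\rat$-linear semisimple by Poincar\'e's complete reducibility theorem. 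The motivic kernel of $\Gamma_1$ in this semisimple subcategory corresponds to an abelian subvariety $B\subseteq J$ up to isogeny, and taking $A$ to be an isogeny complement of $B$ and pulling back an associated idempotent in $\End(J)\otimes\rat$ yields, after clearing denominators, a cycle $\Gamma\in\chow^n(A\times_K X)$ with $\Gamma_*\colon V_{\ell_0}A \xrightarrow{\sim} H^{2n-1}(X_{\bar K},\rat_{\ell_0}(n))$ an isomorphism. (In the cases $n=1$ and $n=d_X$, where the coniveau hypothesis is automatic, one can skip the curve detour and take $A=(\operatorname{Pic}^0_{X/K})_{\operatorname{red}}$ or $\operatorname{Alb}_{X/K}$ directly, by Propositions~\ref{P:Kummer} and~\ref{P:Rojtman}.)

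For the extension to all primes and the F-isocrystal statement, the dimensions $\dim V_\ell A=2\dim A$ and $\dim H^{2n-1}(X_{\bar K},\rat_\ell(n))$ are $\ell$-independent for $\ell\neq\characteristic K$ (the latter by $\ell$-independence of $\ell$-adic Betti numbers for smooth projective varieties), and the rank of the algebraic correspondence $\Gamma_*$ is $\ell$-independent by the semisimple motivic construction of $A$, so the isomorphism at $\ell_0$ propagates to every $\ell\neq\characteristic K$. For $l=p=\characteristic K$, the crystalline cycle class of the same $\Gamma$ induces the F-isocrystal morphism $H^{2d_A-1}(A/\mathbb K)(d_A)\to H^{2n-1}(X/\mathbb K)(n)$, and by the same motivic argument together with the agreement of crystalline Betti numbers this is an isomorphism; taking $F$-invariants then yields the isomorphism for $l=p$. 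The main obstacle is the extraction of the abelian subvariety $B$: a priori the kernel of $\Gamma_{1,*}$ is only a $\gal(K)$-submodule of $V_{\ell_0}J$, and identifying arbitrary such submodules with abelian subvarieties over $K$ would require Tate's conjecture (open in general). The resolution is motivic: one works in the semisimple subcategory of Jacobian motives (equivalent by Deligne to abelian varieties over $K$ up to isogeny), where Poincar\'e's theorem produces $B$ automatically, bypassing Tate.
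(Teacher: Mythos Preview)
Your overall strategy is sound, and the first two steps (reduce to a curve via Proposition~\ref{P:Jannsen}, pass to the Jacobian using a Poincar\'e-type divisor) match the paper's. The gap is in your extraction of the abelian subvariety. You correctly flag that identifying $\ker(\Gamma_{1,*}\colon V_{\ell_0}J\to H^{2n-1})$ with $V_{\ell_0}B$ for some $B\subseteq J$ over $K$ is the crux, but your proposed ``motivic'' fix does not work as stated. Semisimplicity of the subcategory of abelian motives only lets you split morphisms \emph{within} that subcategory; here the target $h(X)(n-1)$ lies outside it, so there is no intrinsic ``motivic kernel'' of $\Gamma_1$ to take. Decomposing $h^1(J)$ into simples does not help either: for a simple summand $M$, the restriction $\Gamma_1|_M$ can be nonzero as a Chow correspondence yet fail to be injective on $V_{\ell_0}$-realizations, and conversely --- this is precisely the gap between rational and homological equivalence that the standard conjectures would close. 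So Poincar\'e's theorem, applied in the subcategory, does not by itself single out the correct $B$.

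The paper closes this gap by manufacturing a morphism that \emph{does} live among abelian varieties. One composes $\gamma_*$ with the Hard Lefschetz isomorphism $L\colon H^{2n-1}\xrightarrow{\sim} H^{2d_X-2n+1}$ (a theorem on $\ell$-adic cohomology) and then with $\gamma^*$; the composite is induced by the honest cycle
\[
{}^t\gamma\circ \Delta_*\bigl(c_1(\mathcal O_X(1))^{d_X-2n+1}\bigr)\circ\gamma\ \in\ \chow^1(B\times_K B)_{\rat},
\]
hence by a $K$-homomorphism $\varphi\colon B\to B^\vee$ with $\ker\varphi_*=\ker\gamma_*$ on $V_{\ell_0}$ (here $\gamma^*$ is injective because it is Poincar\'e-dual to the surjection $\gamma_*$). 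Now Poincar\'e reducibility applies to $\varphi$, not to $\Gamma_1$: take $A$ to be an isogeny complement of $(\ker\varphi)^0_{\mathrm{red}}$ in $B$ and let $\psi\in\Hom(A,B)\otimes\rat$ be the inclusion. This also makes your $\ell$-independence step honest: with $\Gamma=\gamma\circ\psi$, the composite $\psi^\vee\circ\varphi\circ\psi\colon A\to A^\vee$ is an isogeny (trivial $V_{\ell_0}$-kernel), hence $\Gamma_*$ is injective on $V_\ell A$ for every $\ell$, and equality of Betti numbers finishes. Your claim that ``the rank of $\Gamma_*$ is $\ell$-independent by the semisimple motivic construction of $A$'' is circular until $A$ has actually been produced algebraically --- and the Hard Lefschetz plus transpose trick is exactly the missing ingredient.
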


\begin{proof} By Proposition~\ref{P:Jannsen}, there is a
smooth projective variety $W$ over $K$ of dimension $d_X-n+1$ and a $K$-morphism $f : W \to X$ inducing a surjection $$f_* : H^1(W_{\bar K},\rat_{\ell_0}(1)) \twoheadrightarrow H^{2n-1}(X_{\bar K}, \rat_{\ell_0}(n)).$$
Let $Z_{W_{\bar K}} \in \chow^1((\mathrm{Pic}^0_{W_{\bar K}})_{\mathrm{red}} \times_{\bar K} W_{\bar K})$ be the universal divisor on $W_{\bar K}$\,; it induces an isomorphism of $\integ_{\ell_0}$-modules $T_{\ell_0} \mathrm{Pic}^0_{W_{\bar K}} \to H^1(W_{\bar K},\integ_{\ell_0}(1))$. Let $L/K$ be a finite field extension over which $Z_{W_{\bar K}}$ is defined.
 By pushing forward, we obtain a cycle $Z_W \in  \chow^1((\mathrm{Pic}^0_{W})_{\mathrm{red}} \times_{K} W_{K})$ inducing an isomorphism $V_{\ell_0} \mathrm{Pic}^0_{W} \to H^1(W_{\bar K},\rat_{\ell_0}(1))$ of $\operatorname{Gal}(K)$-modules. Let us set $B:= (\mathrm{Pic}^0_{W})_{\mathrm{red}}$.
 Composing $f_*$ with $Z_W$ we obtain a correspondence $\gamma \in \chow^n(B \times_K X)$ inducing a surjection $$\gamma_{*} : V_{\ell_0} B \twoheadrightarrow H^{2n-1}(X_{\bar K}, \rat_{\ell_0}(n))$$ of $\operatorname{Gal}(K)$-modules.
 Consider now the cycle $\Delta_* (c_1(\mathcal{O}_X(1))^{d-2n+1}) \in \chow^{2d-2n+1}(X\times_K X)$, where $\Delta : X \hookrightarrow X\times_K X$ is the diagonal embedding.
 By the Hard Lefschetz Theorem, this cycle induces an isomorphism $L: H^{2n-1}(X_{\bar K}, \rat_{\ell_0}(n)) \to H^{2d-2n+1}(X_{\bar K}, \rat_{\ell_0}(d-n+1))$ of $\operatorname{Gal}(K)$-modules and we obtain a homomorphism
  \begin{equation*}
 \xymatrix{
 V_{\ell_0} B    \ar@{->>}[r]^{\gamma_*\qquad \quad}  &H^{2n-1}(X_{\bar K}, \rat_{\ell_0}(n)) \ar[r]^{L\qquad\ }_{\simeq \qquad\ } & H^{2d-2n+1}(X_{\bar K}, \rat_{\ell_0}(d-n+1)) \ar@{^(->}[r]^{\qquad \qquad \quad  \gamma^*} & (V_{\ell_0} B)^\vee
 }
 \end{equation*}
induced by  the correspondence $\gamma^*\circ L\circ \gamma_* \in \chow^1(B\times_K B)$. In particular, the above homomorphism, which is  $\operatorname{Gal}(K)$-equivariant, is induced by a $K$-homomorphism $\varphi : B\to B^\vee$. It is clear that $\ker \gamma_* = \ker \varphi_*$. By Poincar\'e reducibility, there exist an abelian variety $A$ and $\psi \in \Hom (A,B)\otimes \rat$ such that $\gamma_* \circ \psi_* : V_{\ell_0} A \to  H^{2n-1}(X_{\bar K}, \rat_{\ell_0}(n))$ is an isomorphism.
In addition, there exists an idempotent $\theta \in \Hom(B^\vee,
B^\vee)\otimes \rat$ with image $A$ such that $\theta \circ \varphi =
\varphi$. Setting $\Gamma = \gamma\circ \psi$, it follows from the independence of $\ell$ of the
$\ell$-adic Betti numbers that $\Gamma_* : V_\ell A \to
H^{2n-1}(X_{\bar K}, \rat_\ell(n))$ is an isomorphism \emph{for all}
primes $\ell \ne \operatorname{char}(K)$.

Now suppose $\characteristic(K)=p>0$.  Since $\Gamma^* \circ \Gamma_*: V_\ell A \to V_\ell A$ is an isomorphism, this same cycle induces an
  automorphism of the $F$-isocrystal
  $H^1(A/\mathbb K)$ (\cite{katzmessing}, after
  a spread and specialization argument to reduce to $K$ finite).   Because crystalline and
  $\ell$-adic Betti numbers coincide,
  \[
  \xymatrix{\Gamma_* :
  H^{2 d_A-1}(A/\mathbb K)(d_A) \ar[r] & 
  H^{2n-1}(X/\mathbb K)(n)}
  \]
  is an isomorphism
   of crystals. Taking $F$-invariants shows that \eqref{eq:easy} holds for $l=p$, too.
  \end{proof}
  
\begin{rem}[$2n-1>d_X$]\label{R:easy}
We note that using the hard Lefschetz theorem, with the notation and assumptions of Proposition~\ref{P:easy},  there also exists 
  a cycle class $\Gamma' \in\chow^{d_X-n}(A
	\times_K X)$ such that for all primes $l$
  \begin{equation*}
  \label{eq:easyRem}
  \xymatrix{\Gamma'_* : V_l
	A \ar[r] &H^{2d_X-2n+1}(X_{\bar K},\rat_l(d_X-n+1))}
  \end{equation*} is an
  isomorphism of $\operatorname{Gal}(K)$-modules.
\end{rem}

In case $n=2$ and under the assumption that $V_l \phi^2_{X_{\bar K}} : V_l \operatorname{A}^2(X_{\bar K})
\longrightarrow V_l \mathrm{Ab}^2_{X/K}$ is an isomorphism for all primes $l$,
 one can make Question~\ref{Q:MazurQMot} more precise and ask whether there exists a correspondence $\Gamma \in \operatorname{CH}^2(\operatorname{Ab}^2_{X/K} \times_K X)\otimes \rat$ inducing for all primes $l$ the canonical identifications~\eqref{E:canQ}. We provide a positive answer for geometrically uniruled threefolds:

\begin{pro}\label{P:mazuruniruled}
  Let $X$ be a smooth projective variety over a perfect field $K$ and 
assume $\operatorname{CH}_0(X_{\bar K})\otimes \rat$ is universally supported in dimension 2, \emph{e.g.}\ $X$ is a geometrically uniruled threefold. (In particular, due to Proposition~\ref{P:phi}, $V_l \phi^2_{X_{\bar K}} : V_l \operatorname{A}^2(X_{\bar K})
\longrightarrow V_l \mathrm{Ab}^2_{X/K}$ is an isomorphism for all primes $l$.) 

Then there exists a correspondence $\Gamma \in \operatorname{CH}^2(\operatorname{Ab}^2_{X/K} \times_K X)\otimes \rat$ inducing for all primes $l$ the canonical identifications \eqref{E:canQ}
\begin{equation*} 
\xymatrix{
	V_l \operatorname{Ab}^2_{X/K} \ar@/_1.5pc/[rrrrr]^{\simeq}_{\Gamma_*} \ar[rr]^<>(0.5){(V_l\phi^2_{X_{\bar K}/\bar K})^{-1}}_\simeq &&V_l \operatorname{A}^2(X_{\bar K})\ar@{^(->}[r]&V_l
	\operatorname{CH}^2(X_{\bar K}) \ar@{^(->}[rr]^<>(0.5){V_l\lambda^2}&&H^{3}(X_{\bar
		K},\mathbb Q_l(2)).
}
\end{equation*}
\end{pro}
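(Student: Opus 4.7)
The plan is to construct $\Gamma$ explicitly from a decomposition of the diagonal together with universal cycles on the Picard and Albanese of the supports, following the template of the proof of Proposition~\ref{P:decmini}. First, by Bloch--Srinivas (Proposition~\ref{P:BS}), the hypothesis yields a positive integer $N$ and a decomposition $N\Delta_X=Z_1+Z_2\in\operatorname{CH}^{d_X}(X\times_KX)$ of type $(W_1,W_2)$ with $\dim W_2\le 2$. Applying alterations as in Notation~\ref{notations}, we obtain smooth projective $K$-varieties $\widetilde W_1$ of pure dimension $d_X-1$ and $\widetilde W_2$ of pure dimension~$2$, together with correspondences $s_i,r_i$ such that $p^eN\,\Delta_X=r_1\circ s_1+r_2\circ s_2$ in $\operatorname{CH}^{d_X}(X\times_KX)$. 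The universal property of the algebraic representative, exactly as in diagram~\eqref{E:diagfact}, then produces $K$-morphisms of abelian varieties $\bar r_1:\operatorname{Ab}^2_{X/K}\to(\operatorname{Pic}^0_{\widetilde W_1/K})_{\operatorname{red}}$, $\bar s_1:(\operatorname{Pic}^0_{\widetilde W_1/K})_{\operatorname{red}}\to\operatorname{Ab}^2_{X/K}$, and analogous $\bar r_2,\bar s_2$ through $\operatorname{Alb}_{\widetilde W_2/K}$, satisfying $\bar s_1\bar r_1+\bar s_2\bar r_2=p^eN\cdot\mathrm{id}_{\operatorname{Ab}^2_{X/K}}$.

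Next, since $K$ is perfect, Galois descent from the algebraic closure (where the relevant objects are classical and exist integrally) provides, with $\rat$-coefficients, a universal divisor $\mathcal D_1\in\mathscr A^1_{\widetilde W_1/K}((\operatorname{Pic}^0_{\widetilde W_1/K})_{\operatorname{red}})\otimes\rat$ and a universal $2$-cycle $\mathcal A_2\in\mathscr A^2_{\widetilde W_2/K}(\operatorname{Alb}_{\widetilde W_2/K})\otimes\rat$. Mimicking the construction in the proof of Proposition~\ref{P:decmini}, we then set
\[
\Gamma:=\frac{1}{p^eN}\bigl(s_1\circ\mathcal D_1\circ\bar r_1+s_2\circ\mathcal A_2\circ\bar r_2\bigr)\in\operatorname{CH}^2(\operatorname{Ab}^2_{X/K}\times_K X)\otimes\rat,
\]
where each summand is the composition of correspondences built from the graph of $\bar r_i$, the universal cycle, and $s_i$ viewed as a correspondence from $\widetilde W_i$ to~$X$. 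By the universality of $\mathcal D_1$ and $\mathcal A_2$, the map $\psi_\Gamma:\operatorname{Ab}^2_{X/K}(\bar K)\to\operatorname{A}^2(X_{\bar K})\otimes\rat$ induced by $\Gamma$ satisfies $\phi^2_{X_{\bar K}}\circ\psi_\Gamma=\tfrac{1}{p^eN}(\bar s_1\bar r_1+\bar s_2\bar r_2)=\mathrm{id}_{\operatorname{Ab}^2_{X/K}}$. Since Proposition~\ref{P:phi}(3) implies that $V_l\phi^2_{X_{\bar K}}$ is an isomorphism for every prime~$l$, we deduce $V_l\psi_\Gamma=(V_l\phi^2_{X_{\bar K}})^{-1}$, and then the naturality of the $l$-adic Bloch map under correspondences (Proposition~\ref{P:correspondences} in the appendix) yields $\Gamma_*=V_l\lambda^2\circ V_l\psi_\Gamma=V_l\lambda^2\circ(V_l\phi^2_{X_{\bar K}})^{-1}$, which is precisely the canonical map~\eqref{E:canQ}.

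The principal technical difficulty is securing the universal cycles $\mathcal D_1$ and $\mathcal A_2$ with $\rat$-coefficients over the perfect (but not necessarily finite or algebraically closed) field~$K$; this is exactly where the restriction to $\rat$-coefficients in the statement enters, since over the algebraic closure the Poincar\'e divisor and the universal Albanese cycle both exist integrally and descend after averaging over a suitable finite Galois extension of $K$ and inverting its degree.
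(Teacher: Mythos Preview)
Your argument is essentially correct but takes a considerably more elaborate route than the paper. The paper's proof is two lines: pick any miniversal cycle $Z\in\mathscr A^2_{X/K}(\operatorname{Ab}^2_{X/K})$ of degree $r$ (these always exist because $\phi^2_{X_{\bar K}}$ is surjective; see \S\ref{SS:mini}), and set $\Gamma=\frac{1}{r}Z$. The key identity $Z_*=r\cdot V_l\lambda^2\circ(V_l\phi^2_{X_{\bar K}})^{-1}$ is then quoted from \cite[Cor.~11.8]{ACMVdiag}. Your use of Proposition~\ref{P:correspondences} together with Roitman (Proposition~\ref{P:Rojtman}) to derive this identity from first principles is a nice self-contained alternative to that citation; but your construction of $\Gamma$ itself---going through the decomposition of the diagonal and assembling universal cycles on $(\operatorname{Pic}^0_{\widetilde W_1})_{\mathrm{red}}$ and $\operatorname{Alb}_{\widetilde W_2}$---is unnecessary. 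You are in effect reproving the existence of a miniversal cycle (cf.~Proposition~\ref{P:decmini}), which is already guaranteed in codimension~$2$.

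Two further remarks. First, the descent issue you flag for the universal divisor and the universal Albanese cycle is a genuine technical cost of your approach (the Albanese case in particular requires passing through a curve section and an isogeny splitting before averaging), and it disappears entirely in the paper's approach: miniversal cycles exist over any field $K$ once $\phi^2$ is surjective. Second, you do not explicitly verify that the canonical map~\eqref{E:canQ} is an \emph{isomorphism} onto all of $H^3(X_{\bar K},\mathbb Q_l(2))$; the paper handles this in its first sentence by noting that the decomposition hypothesis forces $\coniveau^1H^3=H^3$ with $\rat_l$-coefficients, and then invokes Proposition~\ref{P:ImBloch-Pre}. The ingredients for this step are present in your setup (the factorization through $H^1(\widetilde W_1)\oplus H^3(\widetilde W_2)$), but you should make the conclusion explicit.
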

\begin{proof} First we note that the assumption that $\operatorname{CH}_0(X_{\bar K})\otimes \rat$ is universally supported in dimension 2 implies that
$\operatorname{im} (V_l \lambda^2) = \coniveau^1H^3(X_{\bar K},\rat_{l}(2))$ for all primes $l$. Together with Proposition~\ref{P:ImBloch-Pre}, this implies that $V_l\lambda^2$ is an isomorphism for all $l$, so that \eqref{E:canQ} is an isomorphism for all $l$. Second, let $Z\in \operatorname{CH}^2(\operatorname{Ab}^2_{X/K} \times_K X)$ be a miniversal cycle and let $r$ denote its degree. Then, by \cite[Cor.~11.8]{ACMVdiag} with $\rat_l$-coefficients, we have that $Z_* = r\cdot V_l\lambda^2 \circ (V_l\phi^2_{X_{\bar K}/\bar K})^{-1}$ for all primes $l$, and it follows that $\Gamma =\frac{1}{r} Z$ induces \eqref{E:canQ} for all primes $l$.
\end{proof}

  \subsection{Modeling $\mathbb Z_\ell$-cohomology via correspondences\,: Theorem~\ref{T:model}}
  \label{SS:correspmodel}
 Combining Proposition~\ref{P:phi} with Proposition~\ref{P:lambda}, we see that,
 under the assumption that $\Delta_{X_{\bar K}}\in
 \operatorname{CH}^{d_X}(X_{\bar K}\times_{\bar K} X_{\bar K})$  admits a
 decomposition
 of type $(W_1,W_2)$ with $\dim W_2\le 2$, we obtain for all primes $\ell \neq \operatorname{char}(K)$ canonical isomorphisms
 \begin{equation}\label{D:correspmodel}
   \begin{CD}
 T_\ell \mathrm{Ab}^2_{X/K} @>{(T_\ell \phi^2_{X_{\bar K}})^{-1}}>\simeq> T_\ell
 \operatorname{A}^2(X_{\bar K}) @>>\simeq>  T_\ell
 \operatorname{CH}^2(X_{\bar K}) @>{T_\ell \lambda^2}>\simeq> H^3(X_{\bar
  K},\integ_\ell(2))_\tau.
 \end{CD}
 \end{equation}
 Under the further assumption that $\dim W_2\le 1$, $H^3(X_{\bar
   K},\integ_\ell(2))$ is torsion-free by Proposition~\ref{P:lambda}
 and the main result below establishes that the isomorphisms~\eqref{D:correspmodel} are induced by a correspondence defined over
 $K$. In view of Proposition~\ref{P:BS} and Remark~\ref{R:stablyrat},
 the theorem below establishes Theorem~\ref{T:model}.

 \begin{teo}[Theorem~\ref{T:model}]\label{T:coniFil}
  Let $X$ be a smooth projective variety over a field $K$ of characteristic exponent~$p$, which is assumed to
  be either finite or algebraically closed.
  Assume that there is a natural number~$N$ such that 
  $N\Delta_{X}\in \operatorname{CH}^{d_X}(X\times_K X)$  admits a decomposition
  of type $(W_1,W_2)$ with  $\dim W_2\le 1$.
   Then there  exists
  a  correspondence $\Gamma\in \operatorname{CH}^{2}(\operatorname{Ab}^2_{X/K}\times _K
  X)$ 
  inducing for all primes $l$ not dividing $Np$  isomorphisms
  \begin{equation}\label{E:model1pf}
  \begin{CD}
  \Gamma_*:\ T_l \operatorname{Ab}^2_{X/K} @>\simeq>> H^{3}(X_{\bar K}, \mathbb Z_l(2))
  \end{CD}
\end{equation}
  of $\operatorname{Gal}(K)$-modules. If $p\ge 2$, if $p\nmid N$,  and if resolution of singularities holds in dimensions $< d_X$, then \eqref{E:model1pf} holds with $l=p$.

  Finally, if $ \operatorname{char}(K)=0$, the correspondence $\Gamma$ induces an
  isomorphism
   \begin{equation}\label{E:model2pf}
  \begin{CD}
  \Gamma_*:\ H_1((\operatorname{Ab}^2_{X/K})_\cx,\integ)\tensor\integ\big[\frac{1}{N}\big]  @>\simeq>> H^{3}(X_{\cx}, \mathbb Z(2))\tensor\integ\big[\frac{1}{N}\big].
  \end{CD}
  \end{equation}
 \end{teo}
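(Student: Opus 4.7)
The plan is to take $\Gamma$ to be a miniversal cycle of minimal degree on $\operatorname{Ab}^2_{X/K}$, as supplied by Proposition~\ref{P:decmini}. Under our decomposition-of-the-diagonal hypothesis with $\dim W_2\le 1$, that proposition yields
$\Gamma\in\mathscr{A}^2_{X/K}(\operatorname{Ab}^2_{X/K})\subseteq\operatorname{CH}^2(\operatorname{Ab}^2_{X/K}\times_K X)$ satisfying $\psi_\Gamma=r\cdot\operatorname{Id}$ for $r:=Np^e$, where $e$ may be taken to be zero whenever resolution of singularities holds in dimensions $<d_X$ (in particular when $\dim X\le 4$). The assumption that $K$ is finite or algebraically closed enters here precisely to guarantee the existence of the universal divisor on $\operatorname{Pic}^0_{\widetilde W_1/K}\times_K\widetilde W_1$ used in the construction.

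Fix a prime $l\nmid Np$, or take $l=p$ in the presence of resolution of singularities in dimensions $<d_X$ with $p\nmid N$. Combining Proposition~\ref{P:phi}(3)--(4) with Proposition~\ref{P:lambda}(2), the canonical composition of~\eqref{D:correspmodel},
\[
\Psi_l\colon T_l\operatorname{Ab}^2_{X/K}\xrightarrow[\simeq]{(T_l\phi^2_{X_{\bar K}/\bar K})^{-1}}T_l\operatorname{A}^2(X_{\bar K})=T_l\operatorname{CH}^2(X_{\bar K})\xrightarrow[\simeq]{T_l\lambda^2}H^3(X_{\bar K},\integ_l(2)),
\]
is an isomorphism of $\operatorname{Gal}(K)$-modules (the $\tau$-quotient is invisible because $H^3(X_{\bar K},\integ_l(2))$ is torsion-free under our hypotheses on $l$, again by Proposition~\ref{P:lambda}(2)). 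The defining identity $\phi^2_{X_{\bar K}/\bar K}\circ w_\Gamma=r\cdot\operatorname{Id}$ of a miniversal cycle of degree $r$ gives, after passage to Tate modules, $T_l w_\Gamma=r\cdot(T_l\phi^2_{X_{\bar K}/\bar K})^{-1}$, so $\Gamma_*=T_l\lambda^2\circ T_l w_\Gamma=r\cdot\Psi_l$. Since $r=Np^e$ is a unit in $\integ_l$ under our assumption on $l$, this realizes $\Gamma_*$ as the desired isomorphism, establishing~\eqref{E:model1pf}.

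For the integral statement~\eqref{E:model2pf} in characteristic zero, the K\"unneth summand of the cycle class of $\Gamma$ in $H^1((\operatorname{Ab}^2_{X/K})_\cx,\integ)\otimes H^3(X_\cx,\integ(2))$ supplies a map $\Gamma_*\colon H_1((\operatorname{Ab}^2_{X/K})_\cx,\integ)\to H^3(X_\cx,\integ(2))$ between finitely generated abelian groups. For each prime $\ell\nmid N$, the comparison isomorphisms $H_1(A_\cx,\integ)\otimes\integ_\ell\simeq T_\ell A$ and $H^3(X_\cx,\integ(2))\otimes\integ_\ell\simeq H^3(X_\cx,\integ_\ell(2))$ identify $\Gamma_*\otimes\integ_\ell$ with the $\ell$-adic isomorphism constructed above. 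The source and target therefore have equal rank, and both $\ker\Gamma_*$ and $\operatorname{coker}\Gamma_*$ are finite and supported on primes dividing $N$, yielding~\eqref{E:model2pf} after inverting $N$.

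The main technical subtlety is the bookkeeping at $l=p$: the resolution-of-singularities hypothesis has to do triple duty, simultaneously guaranteeing $e=0$ in the degree count of Proposition~\ref{P:decmini} (via Notation~\ref{notations}), the isomorphism $T_p\phi^2_{X_{\bar K}/\bar K}$ through Proposition~\ref{P:phi}(4), and the $p$-adic isomorphism $T_p\lambda^2$ together with torsion-freeness of $H^3(X_{\bar K},\integ_p(2))$ through Proposition~\ref{P:lambda}(2). Once these three conditions are aligned, the argument at $l=p$ is entirely parallel to the $\ell\ne p$ case, and the remainder of the proof is formal given the ingredients assembled in the body of the paper.
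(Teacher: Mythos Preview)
Your approach is correct and constructs the same correspondence as the paper—namely the miniversal cycle of degree $Np^e$ built in the proof of Proposition~\ref{P:decmini} as $s_1^*\circ Z_{\widetilde W_1}\circ r_1^*$. The difference lies in how you compute its action on cohomology. The paper unpacks the decomposition of the diagonal into an explicit three-row commutative diagram relating $T_\ell\lambda^2$, $T_\ell\phi^2$, $T_\ell\lambda^1$, and $T_\ell\phi^1$, and uses the fact (cited from Kummer theory and \cite[\S 11]{ACMVdiag}) that the universal divisor $Z_{\widetilde W_1}$ induces exactly $T_\ell\lambda^1\circ(T_\ell\phi^1_{\widetilde W_1})^{-1}$ on Tate modules; the chase then yields $Z_*=Np^e\cdot T_\ell\lambda^2\circ(T_\ell\phi^2_X)^{-1}$ directly.

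You instead invoke the miniversal property abstractly and assert $\Gamma_*=T_l\lambda^2\circ T_l w_\Gamma$. This identity is correct but not free: it requires the compatibility of the Bloch map with correspondences (Proposition~\ref{P:correspondences}), Rojtman's identification of $T_l\lambda^{d_A}$ with the Albanese map (Proposition~\ref{P:Rojtman}), and Beauville's identification $T_lA\simeq T_l\operatorname{A}_0(A)$ on torsion (used in Lemma~\ref{L:mainBody}). The paper itself cites \cite[Cor.~11.8]{ACMVdiag} for the $\rat_l$-version of precisely this identity in Proposition~\ref{P:mazuruniruled}, so it is not treated as immediate; you should either supply the derivation or cite that reference. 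One minor inaccuracy: you call $\Gamma$ a miniversal cycle \emph{of minimal degree} while setting $r=Np^e$; Proposition~\ref{P:decmini} only produces a cycle of degree $Np^e$, not necessarily minimal, so drop ``of minimal degree''.

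Your characteristic-zero argument for~\eqref{E:model2pf}, reducing to the $\ell$-adic isomorphisms via comparison and a finiteness argument on kernel and cokernel, is cleaner than the paper's one-line sketch and entirely correct.
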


 \begin{proof} First we focus on~\eqref{E:model1pf}.
 With Notation~\ref{notations}, we  have the diagram
  \begin{equation}\label{E:diagfact'}
\xymatrix{ \operatorname{A}^2(X_{\bar K}) \ar[r]^{r_1^*\oplus r_2^*\qquad } \ar[d]^{\phi^2_X} & \operatorname{A}^1(\widetilde{W}_1) \oplus  \operatorname{A}^2(\widetilde{W}_2) \ar[r]^{\qquad s_1^*+s_2^*} \ar[d]^{\phi^1_{\widetilde{W}_1}\oplus \phi^2_{\widetilde{W}_2}}&
	\operatorname{A}^2(X_{\bar K}) \ar[d]^{\phi^2_X} \\
	\mathrm{Ab}^2_{X/K}(\bar K)  \ar[r] & \operatorname{Pic}^0_{\widetilde{W}_1}(\bar K) \oplus  \operatorname{Alb}_{\widetilde{W}_2}(\bar K) \ar[r] & 	\mathrm{Ab}^2_{X/K}(\bar K).
}
 \end{equation}
 	Assuming $\dim W_2 \leq 1$, the diagram~\eqref{E:diagfact'} takes the simpler form
$$
\xymatrix{ \operatorname{A}^2(X_{\bar K}) \ar[r]^{r_1^* } \ar[d]^{\phi^2_X} & \operatorname{A}^1(\widetilde{W}_1) \ar[r]^{s_1^*} \ar[d]^{\phi^1_{\widetilde{W}_1}}_{\simeq} &
	\operatorname{A}^2(X_{\bar K}) \ar[d]^{\phi^2_X} \\
	\mathrm{Ab}^2_{X/K}(\bar K)  \ar[r]^{g} & \operatorname{Pic}^0_{\widetilde{W}_1}(\bar K)  \ar[r]^{f} & 	\mathrm{Ab}^2_{X/K}(\bar K)}
$$
where the composition of the horizontal arrows is multiplication by $Np^e$. Note that the homomorphisms $f$ and $g$ are in fact induced by $K$-homomorphisms
$	\mathrm{Ab}^2_{X/K}\to(\operatorname{Pic}^0_{\widetilde{W}_1})_{\mathrm{red}}  $ and $(\operatorname{Pic}^0_{\widetilde{W}_1})_{\mathrm{red}}  \to 	\mathrm{Ab}^2_{X/K}$ by \cite{murre83} in the case $K$ algebraically closed and by  the main result of \cite{ACMVdcg} in the case $K$ perfect.
By Proposition~\ref{P:lambda} with Proposition~\ref{P:phi}, we get for all $\ell$ not dividing $Np$ a commutative diagram
$$
\xymatrix{ H^3(X_{\bar K},\integ_\ell(2)) \ar[r]^{r_1^* }  & H^1(\widetilde{W}_1,\integ_\ell(1)) \ar[r]^{s_1^*}  &
	H^3(X_{\bar K},\integ_\ell(2)) \\
	T_\ell \operatorname{A}^2(X_{\bar K}) \ar[r]^{r_1^* }  \ar[u]_{T_\ell\lambda^2}^{\simeq}& T_\ell \operatorname{A}^1(\widetilde{W}_1) \ar[r]^{s_1^*} \ar[u]_{T_\ell\lambda^1}^{\simeq} &
	T_\ell \operatorname{A}^2(X_{\bar K})\ar[u]_{T_\ell\lambda^2}^{\simeq} \\
	T_\ell	\mathrm{Ab}^2_{X/K}  \ar[r]^{g_*} \ar[u]_{(T_\ell\phi^2_X)^{-1}}^{\simeq}&T_\ell \operatorname{Pic}^0_{\widetilde{W}_1} \ar[r]^{f_*} \ar[u]_{(T_\ell\phi^1_{\widetilde{W}_1})^{-1}}^{\simeq} & T_\ell	\mathrm{Ab}^2_{X/K}   \ar[u]_{(T_\ell\phi^2_X)^{-1}}^{\simeq}
}
$$
where the vertical arrows are isomorphisms and the composition of the horizontal arrows is multiplication by $Np^e$.

Now, the condition
that $K$ be either
finite or algebraically closed ensures that $\widetilde{W}_1$ admits a universal divisor $Z_{\widetilde{W}_1} \in \chow^1( \widetilde{W}_1 \times_K (\operatorname{Pic}^0_{\widetilde{W}_1})_{\mathrm{red}})$, meaning that the $K$-homomorphism $(\operatorname{Pic}^0_{\widetilde{W}_1})_{\mathrm{red}}  \to (\operatorname{Pic}^0_{\widetilde{W}_1})_{\mathrm{red}}$ induced by $Z_{\widetilde{W}_1}$ is the identity. In addition, the homomorphism
$$T_\ell\lambda^1 \circ (T_\ell\phi^1_{\widetilde{W}_1})^{-1} : T_\ell \operatorname{Pic}^0_{\widetilde{W}_1} \to H^1(\widetilde{W}_1,\integ_\ell(1))$$
 coincides with the action of $Z_{\widetilde{W}_1}$\,; this follows from Kummer theory, see \emph{e.g.}~\cite[\S 11]{ACMVdiag}.
We then define a codimension-2 cycle $Z \in \chow^2( X\times_K \mathrm{Ab}^2_{X/K} )$ as the composition $^{t}s_1 \circ {}^tZ_{\widetilde{W}_1} \circ g$. By a simple diagram chase, its induced action
$$Z_* = s_1^* \circ Z_{\widetilde{W}_1}^* \circ g_* : T_\ell	\mathrm{Ab}^2_{X/K} \to H^3(X_{\bar K},\integ_\ell(2))$$
is equal to $Np^e\, T_\ell\lambda^2 \circ (T_\ell\phi^2_{X})^{-1}$, which is an isomorphism.

If $K$ has positive characteristic $p>0$ and if resolution of singularities holds in dimension $< d_X$, then we may take $e=0$.  Consequently, under this hypothesis, the same argument shows that if $p\nmid N$, then \eqref{E:model1pf} is an isomorphism for $l=p$, too.

Finally for the case $K\subseteq \mathbb C$ and~\eqref{E:model2pf}, one uses essentially the same argument, but  with the canonical identification of the cohomology modulo torsion of a smooth complex projective variety with the first homology of the intermediate Jacobian.
 \end{proof}

\section{The image of the $\ell$-adic Bloch map in characteristic $0$}\label{S:char0}

In this  section we show that we can model the third integral cohomology with an abelian variety for any smooth projective variety liftable to a smooth projective rationally chain connected variety in characteristic $0$. We start with the following proposition, which essentially shows that the strategy of the proof of \cite[Thm.~B]{ACMVdcg} works with $\mathbb Z_\ell$-coefficients for rationally chain connected varieties.  
\begin{pro}\label{P:Ch0UniRul}
Let $X$ be a smooth projective variety over a field $K\subseteq \mathbb C$ such  that $\coniveau^1H^3(X_\cx, \rat) = H^3(X_\cx,\rat)$ (\emph{e.g.}, 
 $X$  is geometrically rationally chain connected, or $\dim X=3$ and $X$ is  geometrically uniruled).
 Then for any prime $\ell$ the morphisms
$$
T_\ell\phi^2_{X_{\bar K}/\bar K}:\ T_\ell\operatorname{A}^2(X_{\bar K}) \to  T_\ell\operatorname{Ab}^2_{X/K} \ \ \ \text{ and } \ \ \ 
 T_\ell\lambda^2: T_\ell\operatorname{A}^2(X_{\bar K})\to H^3(X_{\bar K},\mathbb Z_\ell(2))_\tau
$$
are isomorphisms, so that the composition
\begin{equation}\label{E:Ch0UniRul}
\xymatrix{
T_\ell\operatorname{Ab}^2_{X_K /K } \ar[r]^{(T_\ell \phi^2_{X_{\bar K }/\bar K})^{-1}}& T_\ell \operatorname{A}^2(X_{\bar K})\ar[r]^<>(0.5){T_\ell \lambda^2}& H^3(X_{\bar K},\mathbb Z_\ell(2))_\tau
}
\end{equation}
is an isomorphism.
\end{pro}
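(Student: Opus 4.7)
The plan is to reduce to $K = \cx$ by base change and then to identify the composition \eqref{E:Ch0UniRul} with the natural inclusion $T_\ell J^3_a(X_\cx) \hookrightarrow T_\ell J^3(X_\cx) = H^3(X_\cx,\integ_\ell(2))_\tau$. Showing this inclusion is an equality will follow from a saturation argument combined with the rational version of the statement.

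First I would invoke Proposition~\ref{P:phi}(1): in characteristic zero, $\phi^2_{X_{\bar K}/\bar K}[\ell^\infty]$ is an isomorphism, so by Lemma~\ref{L:SRHom-facts}, $T_\ell \phi^2_{X_{\bar K}/\bar K}$ is an isomorphism. Next, $T_\ell \lambda^2$ is always injective by Proposition~\ref{P:M-9.2}, so it remains to show that its image equals $H^3(X_{\bar K},\integ_\ell(2))_\tau$. Because the formation of $\operatorname{Ab}^2$, of the $\ell$-primary torsion $\operatorname{A}^2(X_{\bar K})[\ell^\infty]$ (via the standard assumption in characteristic zero together with the invariance of abelian-variety torsion under extensions of algebraically closed fields), and of $H^3(X_{\bar K},\integ_\ell(2))_\tau$ all commute with the extension $\bar K \hookrightarrow \cx$, I may and do reduce to the case $K = \cx$.

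Over $\cx$, as recalled in \S\ref{SS:AlgRep}, the algebraic representative $\operatorname{Ab}^2_{X/\cx}$ agrees with the algebraic intermediate Jacobian $J^3_a(X_\cx)$, which sits inside the complex torus $J^3(X_\cx)$ as an abelian subvariety. According to the appendix, the Bloch map $\lambda^2$ restricted to algebraically (hence homologically) trivial cycles agrees with the Abel--Jacobi map; taking $T_\ell$ therefore identifies the composition \eqref{E:Ch0UniRul} with the inclusion of lattices
\[
T_\ell J^3_a(X_\cx) \hookrightarrow T_\ell J^3(X_\cx) \ = \ H^3(X_\cx,\integ_\ell(2))_\tau,
\]
where the rightmost equality is the standard identification $H_1(J^3(X_\cx),\integ_\ell) \simeq H^3(X_\cx,\integ_\ell(2))_\tau$ valid for any complex torus of this dimension.

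The last step is to show this inclusion is an equality. Since $J^3_a(X_\cx) \hookrightarrow J^3(X_\cx)$ is a closed immersion of complex tori, the quotient is again a complex torus, so its integral first homology is torsion-free and hence the inclusion $H_1(J^3_a(X_\cx),\integ) \hookrightarrow H_1(J^3(X_\cx),\integ)$ is \emph{saturated}. It thus suffices to verify that the rationalized map $V_\ell J^3_a(X_\cx) \hookrightarrow V_\ell J^3(X_\cx)$ is an equality. This in turn follows from Corollary~\ref{C:mainQ} (whose hypothesis is automatic in characteristic zero), which identifies $V_\ell \operatorname{Ab}^2_{X/\cx}$ with $\coniveau^1 H^3(X_\cx,\rat_\ell(2))$, combined with the hypothesis $\coniveau^1 H^3(X_\cx,\rat) = H^3(X_\cx,\rat)$ and the comparison between Betti and \'etale cohomology in characteristic zero. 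The main technical point supporting this plan is the compatibility between the $\ell$-adic Bloch map and the $\ell$-adic Abel--Jacobi map on homologically trivial cycles, which is recorded in the appendix.
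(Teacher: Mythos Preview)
Your proof is correct and follows essentially the same overall architecture as the paper's: both first dispose of $T_\ell\phi^2$ via Proposition~\ref{P:phi}(1), then reduce to $K=\cx$ and use the identification of $T_\ell\lambda^2$ with $T_\ell AJ$ from the appendix. The only real difference is in how you conclude that $T_\ell J^3_a(X_\cx) = T_\ell J^3(X_\cx)$. The paper simply quotes the classical Hodge-theoretic fact (Voisin, \emph{Hodge Theory and Complex Algebraic Geometry I}, Thm.~12.22) that the coniveau hypothesis $\coniveau^1H^3(X_\cx,\rat)=H^3(X_\cx,\rat)$ forces $J^3_a(X_\cx)=J^3(X_\cx)$ outright, and then the equality of Tate modules is immediate. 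You instead argue in two steps: saturation of $T_\ell J^3_a \hookrightarrow T_\ell J^3$ (using that the quotient torus has torsion-free $H_1$), followed by equality after $\otimes\,\rat_\ell$ via Corollary~\ref{C:mainQ}. Your route is slightly more roundabout but has the virtue of staying internal to the paper's own machinery rather than invoking an external Hodge-theoretic reference; the paper's route is shorter and gives the stronger geometric statement $J^3_a=J^3$ as a byproduct.
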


\begin{proof}
The fact due to Murre that $\phi^2_{X_{\bar K}/\bar K}[\ell^\infty]:\ \operatorname{A}^2(X_{\bar K})[\ell^\infty] \to  \operatorname{Ab}^2_{X/K}[\ell^\infty] $ is an isomorphism was explained in Proposition~\ref{P:phi}(1)\,; now one simply applies Tate modules to get that $T_\ell\phi^2_{X_{\bar K}/\bar K}$ is an isomorphism.  

We now consider the Bloch map. 
The first observation is that $\phi^2_{X_{\mathbb C}/\mathbb C}$ agrees with the Abel--Jacobi map $AJ:\operatorname{A}^2(X_{\mathbb C})\to J^3_a(X_{\mathbb C})$. 
Thus, applying Tate modules to the Abel--Jacobi map, we have that $T_\ell \phi^2_{X_{\mathbb C}/\mathbb C}= T_\ell AJ$, so that, by the above, $T_\ell AJ$ is an isomorphism.
 At the same time, the assumption $\coniveau^1H^3(X_\cx, \rat) = H^3(X_\cx,\rat)$ implies that the Abel--Jacobi map $AJ:\operatorname{A}^2(X_{\mathbb C})\to J^3(X_{\mathbb C})$ to the full intermediate Jacobian  is surjective (\emph{e.g.}, \cite[Thm.~12.22]{voisinI})\,; \emph{i.e.}, $J^3_a(X_{\mathbb C})= J^3(X_{\mathbb C})=H^{1,2}(X_{\mathbb C})/H^3(X_{\mathbb C},\mathbb Z)_\tau$.  We conclude using the fact that $T_\ell AJ= T_\ell\lambda^2$  (see Remark~\ref{R:AJ-ell-adic}).
\end{proof}

\begin{rem} Recall that for a smooth projective variety $X$ the condition $\coniveau^1H^3(X_\cx, \rat) = H^3(X_\cx,\rat)$ is implied by 
the diagonal $\Delta_{X_{\mathbb C}}\in
  	\operatorname{CH}^{d_X}(X_{\mathbb C}\times_{\mathbb C} X_{\mathbb C})_\rat$  admitting  a
  	decomposition
  	of type $(W_1,W_2)$ with $\dim W_2\le 2$.  The diagonal of $X_{\mathbb C}$ has such a decomposition if $X$ is geometrically rationally chain connected, or $\dim X=3$ and $X$ is geometrically uniruled, since in these cases $\operatorname{CH}_0(X_{\bar K})_{\mathbb Q}$ is universally supported on a surface (see Proposition~\ref{P:BS} and Remark~\ref{R:stablyrat}).  
\end{rem}

\begin{rem}
We note here that Proposition~\ref{P:Ch0UniRul} strengthens Theorem~\ref{T:MazurQ} in the case $n=2$ in that it allows for $\mathbb Z_\ell$-coefficients\,; on the other hand, Proposition~\ref{P:Ch0UniRul} is weaker than Theorem~\ref{T:MazurQ} in the sense that it does not provide a correspondence giving the isomorphism~\eqref{E:Ch0UniRul}.
Similarly, Proposition~\ref{P:Ch0UniRul} strengthens Theorem~\ref{T:coniFil} in the case $\operatorname{char}(K)=0$ in the sense that it gives isomorphisms for all primes without assuming a decomposition of the diagonal with $\integ$-coefficients, and it allows for a weaker form of the decomposition (see the previous remark).  
For example, in characteristic $0$, Theorem~\ref{T:coniFil}  implies  \eqref{E:Ch0UniRul} is an isomorphism for all primes if $X$ is geometrically stably rational, whereas  Proposition~\ref{P:Ch0UniRul} implies the same if $X$ is just assumed to be geometrically rationally chain connected (or even a geometrically uniruled threefold).
On the other hand, Proposition~\ref{P:Ch0UniRul} is weaker than Theorem~\ref{T:coniFil} in the sense that it does not provide a correspondence giving the isomorphism~\eqref{E:Ch0UniRul}.
\end{rem}

We now use Proposition~\ref{P:Ch0UniRul} to prove the following result on varieties liftable to characteristic $0$\,:

\begin{cor}\label{C:Ch0UniRul}
 Let $X_\circ$ be a smooth  projective  variety over a field $\kappa$, and suppose that $X_\circ$ lifts to a smooth projective variety
 $X_\eta$ over a field $K$ of characteristic $0$\,;  \emph{i.e.}, there is a DVR with spectrum   $S$,  generic
  point $\eta = \spec K$ with $\operatorname{char}(K)=0$,  closed point $\circ = \spec \kappa$, and 
 a smooth projective scheme $X/S$, with special fiber over $\circ$ equal to $X_\circ$, and generic fiber over $\eta$ equal to  $X_\eta$.
 
 If $\coniveau^1H^3((X_\eta)_\cx, \rat) = H^3((X_\eta)_\cx,\rat)$, \emph{e.g.}, if $X_\eta$ is a geometrically rationally chain connected variety, 
 or a smooth geometrically uniruled threefold,  
 then for any prime $\ell \ne \operatorname{char}(\kappa)$ the morphism
$$
 T_\ell \lambda^2: T_\ell \operatorname{A}^2(X_{\bar \kappa})\to H^3(X_{\bar \kappa},\mathbb Z_\ell(2))_\tau
$$
is an  isomorphism.
          \end{cor}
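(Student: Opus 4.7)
The plan is to transfer the conclusion of Proposition~\ref{P:Ch0UniRul}, which applies directly to $X_\eta$, down to the special fiber $X_\circ$ via smooth--proper base change. Fix a prime $\ell\neq\operatorname{char}(\kappa)$. Since $K$ has characteristic zero, we may choose an embedding $K\hookrightarrow\cx$, and the hypothesis $\coniveau^1H^3((X_\eta)_\cx,\rat)=H^3((X_\eta)_\cx,\rat)$ puts us squarely in the situation of Proposition~\ref{P:Ch0UniRul} applied to $X_\eta$. It yields that
$$T_\ell\lambda^2_\eta:\ T_\ell\operatorname{A}^2(X_{\bar\eta})\ \stackrel{\simeq}{\longrightarrow}\ H^3(X_{\bar\eta},\mathbb Z_\ell(2))_\tau$$
is an isomorphism.

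Next I would set up the specialization square
$$\xymatrix{T_\ell\operatorname{A}^2(X_{\bar\eta})\ar[r]^{\operatorname{sp}}\ar[d]_{T_\ell\lambda^2_\eta}^{\simeq}& T_\ell\operatorname{A}^2(X_{\bar\circ})\ar[d]^{T_\ell\lambda^2_\circ}\\ H^3(X_{\bar\eta},\mathbb Z_\ell(2))_\tau\ar[r]^{\operatorname{sp}}_{\simeq}& H^3(X_{\bar\circ},\mathbb Z_\ell(2))_\tau}$$
The bottom row is the smooth--proper base change isomorphism, valid because $\ell$ is invertible on $S$. The upper horizontal arrow is the specialization on $\ell$-primary torsion cycle classes, obtained by pulling back along the inclusions of the two geometric fibers in $X_{S^{\mathrm{sh}}}$, exactly as in the proof of Lemma~\ref{L:StAs-spec}; commutativity of the square is the naturality of the Bloch map under flat pullback of cycle classes (Proposition~\ref{P:correspondences} in the appendix).

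To conclude, I would run a short diagram chase. The right-hand vertical $T_\ell\lambda^2_\circ$ is always injective by the theorem of Merkurjev--Suslin recalled in Proposition~\ref{P:M-9.2}. Commutativity of the square, combined with the fact that the top-left vertical and bottom arrows are isomorphisms, shows that the image of $T_\ell\lambda^2_\circ$ contains $\operatorname{sp}\circ T_\ell\lambda^2_\eta\bigl(T_\ell\operatorname{A}^2(X_{\bar\eta})\bigr)=H^3(X_{\bar\circ},\mathbb Z_\ell(2))_\tau$. Hence $T_\ell\lambda^2_\circ$ is surjective, and therefore an isomorphism, as required. The only point that needs a little care is the commutativity of the specialization square, but this is standard once one spells out the specialization map on $\operatorname{A}^2[\ell^\infty]$ via the strict henselization and invokes the naturality of the Bloch map together with smooth--proper base change; no ingredients beyond those already developed in the paper are needed.
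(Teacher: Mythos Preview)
Your argument is correct and follows essentially the same approach as the paper: set up the specialization square, invoke Proposition~\ref{P:Ch0UniRul} for the generic fiber and smooth--proper base change on cohomology, and conclude by the injectivity of $T_\ell\lambda^2$ on the special fiber (Proposition~\ref{P:M-9.2}). One minor correction: the commutativity of the square is the appendix proposition on compatibility of the Bloch map with \emph{specialization}, not Proposition~\ref{P:correspondences}.
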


\begin{proof}
We consider the diagram
$$
\xymatrix{
T_\ell \operatorname{A}^2(X_{\bar K}) \ar@{^(->}[r]^{T_\ell \lambda^2} \ar[d]& H^3(X_{\bar K},\mathbb Z_\ell)_\tau \ar[d]^{\simeq}\\
T_\ell \operatorname{A}^2(X_{\bar \kappa })  \ar@{^(->}[r]^{T_\ell \lambda^2} & H^3(X_{\bar \kappa}, \mathbb Z_\ell)_\tau
}
$$
The vertical arrows are specialization maps.  The result holds by commutativity and by Proposition~\ref{P:Ch0UniRul}. 
\end{proof}

 \ifHideApp
\end{document}
\fi

\appendix

 \section{A review of the $l$-adic Bloch map}
 The aim of this appendix is to review the original construction~\cite{bloch79} of the Bloch map on $\ell$-primary torsion and show it in fact yields a direct construction of the $\ell$-adic Bloch map. We also review Suwa's construction~\cite{Suwa} of the $\ell$-adic Bloch map and show it coincides with our direct construction. For future referencing purposes we list in \S \ref{S:properties} the properties of the $\ell$-adic Bloch map that can be directly derived from the corresponding properties of the original Bloch map via Suwa's construction. In addition, the construction of Gros--Suwa~\cite{grossuwaAJ} of the $p$-adic Bloch map is briefly reviewed.  Finally, we study the restriction of the Bloch map to the subgroup of algebraically trivial cycles.

\subsection{Conventions for $\ell$-adic  and $p$-adic cohomology}

\subsubsection{$\ell$-adic cohomology} 

We fix a variety $X$ over a field $K$, and consider the \'etale
cohomology groups of $X_{\bar K}:= X\times_K \bar K$ with values in
prime-to-$\characteristic(K)$ torsion sheaves.  
For each $n$, $r$,  and $\nu$, we use the convention
\begin{align*}
H^n(X_{\bar K},\mathbb Z/\ell^\nu \mathbb Z(r)):= H^n_{\operatorname{\acute{e}t}}(X_{\bar K},\mmu_{\ell^\nu}^{\otimes r}),
\end{align*}
the \'etale cohomology of the \'etale sheaf $\mmu_{\ell^\nu}$ of $\ell^\nu$-roots of unity.  
Note that since 
\'etale cohomology is invariant under purely inseparable extensions, we can replace $\bar K$ with $\bar K^a$ throughout this subsection.

There are maps
$$
H^n(X_{\bar K},\integ/\ell^\nu\mathbb Z(r)) \ra H^n(X_{\bar
	K},\integ/\ell^{\nu+1}\mathbb Z(r))
$$
induced from the natural map $ \integ /\ell^{\nu}\mathbb Z \hookrightarrow
\integ /\ell^{\nu+1} \mathbb Z, [x]\mapsto [\ell x]$, or more precisely, from the natural map $\mmu_{\ell^\nu}\hookrightarrow \mmu_{\ell^{\nu+1}}$, $\zeta \mapsto \zeta^\ell$, 
as well as maps
$$
H^n(X_{\bar K},\integ /\ell^{\nu+1}\mathbb Z(r)) \ra H^n(X_{\bar K},\integ
/\ell^{\nu}\mathbb Z(r))
$$
induced from the natural quotient map $ \integ /\ell^{\nu+1}\mathbb Z
\twoheadrightarrow \integ /\ell^{\nu}  \mathbb Z$, or more precisely the natural map $\mmu_{\ell^{\nu+1}}\twoheadrightarrow \mmu_{\ell^{\nu+1}}/\mmu_\ell \simeq  \mmu_{\ell^\nu}$ 
The $\ell$-adic cohomology groups of $X$ are defined as follows\,:
\begin{align*}
H^n(X_{\bar K},\integ_\ell(r))&:=\varprojlim_{\nu} H^n(X_{\bar K},\integ
/\ell^{\nu}\mathbb Z(r))\\
H^n(X_{\bar K},\mathbb Q_\ell(r))&:= H^n(X_{\bar
	K},\integ_\ell(r))\otimes_{\mathbb Z_\ell}\mathbb Q_\ell.
\end{align*}
The cohomology groups of $X$ with $\ell$-torsion coefficients are defined as follows\,:
\begin{align*}
H^n(X_{\bar K},\mathbb Q_\ell/\integ_\ell(r))&:=\varinjlim_{\nu} H^n(X_{\bar K},\integ
/\ell^{\nu}\mathbb Z(r)).
\end{align*}
 We denote by 
\begin{align*}
\mathbb Z_\ell(r)&:=\varprojlim \mmu_{\ell^\nu}^{\otimes r}\\ 
\mathbb Q_\ell(r)&:=\mathbb Z_\ell(r)\otimes_{\mathbb Z_\ell}\mathbb Q_{\ell}\\
\mathbb Q_\ell/\mathbb Z_\ell(r)&:=\varinjlim \mmu_{\ell^\nu}^{\otimes r}
\end{align*}
and we can obtain the various twists in cohomology as\,:
\begin{align*}
H^n(X_{\bar K},\integ_\ell(r))&= H^n(X_{\bar K},\integ_\ell)\otimes_{\mathbb Z_\ell} \mathbb Z_{\ell}(r)\\
H^n(X_{\bar K},\mathbb Q_\ell(r))&=  H^n(X_{\bar K},\mathbb Q_\ell)\otimes_{\mathbb Q_\ell}\mathbb Q_\ell(r)\\
H^n(X_{\bar K},\mathbb Q_\ell/\mathbb Z_\ell(r))&=  H^n(X_{\bar K},\mathbb Q_\ell/\mathbb Z_\ell)\otimes_{\mathbb Q_\ell/\mathbb Z_\ell}\mathbb Q_\ell/\mathbb Z_\ell(r)
\end{align*} 

For the sake of completeness, we recall the following basic fact and its proof (see also \emph{e.g.}\  \cite[Ch.~III, Rmk.~3.6(d)]{milneetale})\,:

\begin{pro}
	\label{P:direct} Viewing $\mathbb Q_\ell/\mathbb Z_\ell$ as a torsion \'etale sheaf on $X$, 
	there is a natural isomorphism
	\begin{align*}
	H^n_{\operatorname{\acute{e}t}}(X_{\bar K},\rat_\ell/\integ_\ell)\otimes_{\mathbb Q_\ell/\mathbb Z_\ell} \mathbb Q_\ell/\mathbb Z_\ell(r)&= H^n(X_{\bar K},\rat_\ell/\integ_\ell(r)) \ \ (:=\varinjlim_{\nu} H^n(X_{\bar
		K},\integ /\ell^{\nu}\mathbb Z(r))).
	\end{align*}
\end{pro}

\begin{proof}

	The Snake Lemma applied to
	$$
	\xymatrix{
		0 \ar[r]&\mathbb Z_\ell \ar[r]^{\cdot \ell^\nu} \ar@{^(->}[d]& \mathbb Z_\ell
		\ar[r] \ar@{^(->}[d]& \mathbb Z/\ell^\nu\mathbb Z  \ar[r] \ar[d]& 0\\
		0 \ar[r]&\mathbb Q_\ell \ar[r]^{\cdot \ell^\nu}& \mathbb Q_\ell \ar[r]& 0
		\ar[r]& 0\\
	}
	$$
	gives a short exact sequence of \'etale sheaves
	$$
	\xymatrix{
		0 \ar[r] & \mathbb Z/\ell^\nu\mathbb Z \ar[r]&\mathbb Q_\ell/\mathbb Z_\ell
		\ar[r]^{\cdot \ell^\nu}& \mathbb Q_\ell/\mathbb Z_\ell \ar[r]& 0.\\
	}
	$$
	The map on the left can be written explicitly as $[x]\mapsto [x/\ell^\nu]$,
	where we are viewing $\mathbb Z\subseteq \mathbb Z_\ell$ in the natural way.
	This gives a long exact sequence in cohomology
	\begin{equation}\label{E:2.6.1.1}
	\xymatrix{
		\cdots  \ar[r] & H^n(X_{\bar K},\mathbb Z_\ell/\ell^\nu\mathbb Z_\ell)
		\ar[r]&H^n(X_{\bar K},\mathbb Q_\ell/\mathbb Z_\ell) \ar[r]^{\cdot \ell^\nu}&
		H^n(X_{\bar K},\mathbb Q_\ell/\mathbb Z_\ell) \ar[r]& \cdots \\
	}
	\end{equation}
	In fact we have a  diagram
	$$
	\xymatrix{
		0 \ar[r] & \mathbb Z/\ell^\nu\mathbb Z \ar[r] \ar[d]&\mathbb Q_\ell/\mathbb
		Z_\ell \ar[r]^{\cdot \ell^\nu} \ar@{=}[d]& \mathbb Q_\ell/\mathbb Z_\ell \ar[r]
		\ar[d]^{\cdot \ell}& 0\\
		0 \ar[r] & \mathbb Z/\ell^{\nu+1}\mathbb Z \ar[r]&\mathbb Q_\ell/\mathbb Z_\ell
		\ar[r]^{\cdot \ell^{\nu+1}}& \mathbb Q_\ell/\mathbb Z_\ell \ar[r]& 0\\
	}
	$$
	which is commutative since given $[x]\in \mathbb Z/\ell^\nu \mathbb Z$ we have
	$[x/\ell^\nu]=[\ell x / \ell^{\nu+1}]$.   Taking direct limits is exact, and so
	we obtain an exact sequence
	$$
	\xymatrix{
		\cdots \ar[r]& 0 \ar[r] & \displaystyle \varinjlim_\nu H^n(X_{\bar K},\mathbb
		Z_\ell/\ell^\nu\mathbb Z_\ell)  \ar[r]&H^n(X_{\bar K},\mathbb Q_\ell/\mathbb
		Z_\ell) \ar[r]&0  \ar[r]& \cdots ,
	}
	$$
	thereby settling the proposition.
\end{proof}

\subsubsection{$p$-adic cohomology}\label{S:p-adicConv}

For $K$ perfect of characteristic $p>0$, let $\ww(K)$ be the ring of Witt
vectors of $K$, with field of fractions $\kk(K)$.
For $X/K$, we
adopt the standard notation \cite[\S I.3.1]{grossuwaAJ}, \cite[\S 1]{milne86}
\begin{align*}
H^n(X, \integ/p^\nu \mathbb Z(r)) &:=
H^{n-r}_{\operatorname{\acute{e}t}}(X, \ww_\nu \Omega^r_{X,\log})\\
H^n(X_{\bar K},\integ_p(r))&:=\varprojlim_{\nu} H^n(X_{\bar K},\integ
/p^{\nu}\mathbb Z(r))\\
H^n(X_{\bar K},\mathbb Q_p(r))&:= H^n(X_{\bar
	K},\integ_p(r))\otimes_{\mathbb Z_p}\mathbb Q_p\\
H^n(X_{\bar K},\rat_p/\integ_p(r))&:=\varinjlim_{\nu} H^n(X_{\bar K},\integ
/p^{\nu}\mathbb Z(r))
\end{align*}
With
these conventions,   $H^n_{\operatorname{\acute{e}t}}(X_{\bar K}, \rat_p(r)):=\varprojlim_{\nu} H^n_{\operatorname{\acute{e}t}}(X_{\bar K},\mmu_{p^\nu}^{\otimes r})\otimes_{\mathbb Z_p}\mathbb Q_p$ and
$H^n(X, \rat_p(r))$ coincide \cite[(5.2.1)]{illusiedRW}, \cite[Prop.\ 1.15]{milne86}, but if $n>1$ then the corresponding statement for
integral coefficients need not hold.

For $X/K$ smooth and projective, we let $H^n(X,\ww(K)(r)):=H^n_{\operatorname{cris}}(X/\ww(K))(r)$  denote the crystalline cohomology group, and let $H^n(X,\kk(K)(r)):=
H^n(X,\ww(K)(r))\otimes_{\ww(K)}\kk(K)$.  This group may also be
computed as the rigid cohomology group
$H^n_{\operatorname{rig}}(X/\kk(K))(r)$.  Note that if we set $\ww_\nu(K) := \ww(K)/p^\nu \ww(K)$, then $H^n(X,\ww_\nu(K)(r)) = H^n_{\operatorname{cris}}(X/\ww_\nu(K)(r))$.

\subsection{The $\ell$-adic Bloch map}\label{S:construction}

In  \cite{bloch79}, Bloch constructed a map $$\lambda^n :
\chow^n(X_{\bar K^a})[\ell^\infty] \to H^{2n-1}(X_{\bar K^a},\rat_\ell/\integ_\ell(n))$$ for smooth projective varieties over a field $K$.  In this section, we review his construction, showing how it also defines a map 
$T_\ell\lambda^n  :
T_\ell \chow^n(X_{\bar K^a}) \to H^{2n-1}(X_{\bar K^a},\integ_\ell(n))_\tau$ on Tate modules.

\subsubsection{The Abel--Jacobi map on torsion}\label{S:AJ-tors}
We start by recalling the definition of the Abel--Jacobi map on torsion.  This is rather elementary from the definition of the Abel--Jacobi map, but gives some motivation for Bloch's approach to his algebraic construction of the  map, as well as some motivation for our interest in what we call the $\ell^\nu$-Bloch maps (Definition~\ref{D:finiteBlochmap}).  We also explain in Remark~\ref{R:AJ-ell-adic} that the $\ell$-adic Abel--Jacobi map~\eqref{E:an-ell_ad_Bloch} agrees with the $\ell$-adic Bloch map on homologically trivial cycle classes.

For a complex projective variety $X$ one has the Abel--Jacobi map
 $$
\operatorname{CH}^n(X)_{\operatorname{hom}}\stackrel{AJ}{\longrightarrow} J^{2n-1}(X)=F^n\backslash H^{2n-1}(X,\mathbb C)/H^{2n-1}(X,\mathbb Z)_\tau
$$
from the group of homologically trivial cycle classes of codimension-$n$ to the $(2n-1)$-st intermediate Jacobian $J^{2n-1}(X)$. Here, $F^\bullet$ refers to the Hodge filtration on $H^{2n-1}(X,\mathbb C)$\,; in the situation above and in terms of the Hodge decomposition we have $F^n H^{2n-1}(X,\mathbb C) = H^{2n-1,0}(X) \oplus H^{2n-2,1}(X) \oplus \dots \oplus H^{n,n-1}(X)$.
We can identify the torsion
$J^{2n-1}(X)[\ell^\nu]$ as follows.  For any complex torus $A=V/\Lambda$, we have $A[\ell^\nu]=\frac{1}{\ell^\nu}\Lambda/\Lambda$.  If we consider the commutative diagram of short exact sequences, 
$$
\xymatrix{
	0 \ar[r] & \Lambda \ar[r]  \ar[d]^{\cdot \ell^\nu}& \Lambda_{\mathbb Q} \ar[r] \ar[d]^{\cdot \ell^\nu}_\simeq & \Lambda_\mathbb Q/\Lambda \ar[r]  \ar[d]^{\cdot \ell^\nu}& 0\\
	0 \ar[r] & \Lambda \ar[r] & \Lambda_{\mathbb Q}\ar[r] & \Lambda_{\mathbb Q}/\Lambda \ar[r] & \cdots \\
}
$$
the snake lemma gives an identification $A[\ell^\nu]=\Lambda /\ell^\nu\Lambda$.  
In our situation with $A=J^{2n-1}(X)$, we have $\Lambda=H^{2n-1}(X,\mathbb Z)_\tau$.  In other words, $J^{2n-1}(X)[\ell^\nu]=H^{2n-1}(X,\mathbb Z)_\tau/\ell^\nu H^{2n-1}(X,\mathbb Z)_\tau$.  We then consider the diagram
$$
\xymatrix{
	0 \ar[r] & H^{2n-1}(X,\mathbb Z)_{\operatorname{tors}} \ar[r]  \ar[d]^{\cdot \ell^\nu}& H^{2n-1}(X,\mathbb Z) \ar[r] \ar[d]^{\cdot \ell^\nu} &  H^{2n-1}(X,\mathbb Z)_\tau \ar[r]  \ar[d]^{\cdot \ell^\nu}& 0\\
	0 \ar[r] & H^{2n-1}(X,\mathbb Z)_{\operatorname{tors}} \ar[r]  & H^{2n-1}(X,\mathbb Z) \ar[r] &  H^{2n-1}(X,\mathbb Z)_\tau \ar[r]  & 0\\
}
$$
For brevity, we denote $\delta^{an}_{\ell^\nu}= \frac{H^{2n-1}(X,\mathbb Z)_{\operatorname{tors}}}{\ell^\nu H^{2n-1}(X,\mathbb Z)_{\operatorname{tors}}}$ the cokernel of the vertical map on the left.
The snake lemma, and the long exact sequence in cohomology associated to $0\to \mathbb Z\stackrel{\cdot \ell^\nu}{\to} \mathbb Z\to \mathbb Z/\ell^\nu \mathbb Z\to 0$,  together give a diagram
$$
\xymatrix{
	0 \ar[r] & \delta^{an}_{\ell^\nu} \ar[r]  \ar@{=}[d]^{}& \frac{H^{2n-1}(X,\mathbb Z)}{\ell^\nu H^{2n-1}(X,\mathbb Z)} \ar[r] \ar@{^(->}[d]^{} &  \frac{H^{2n-1}(X,\mathbb Z)_\tau}{\ell^\nu H^{2n-1}(X,\mathbb Z)_\tau} \ar[r]  \ar[d]^{}& 0\\
	0 \ar[r] &   \delta^{an}_{\ell^\nu}\ar[r]  & H^{2n-1}(X,\mathbb Z/\ell^\nu \mathbb Z) \ar[r] &  H^{2n-1}(X,\mathbb Z/\ell^\nu \mathbb Z)/\delta^{an}_{\ell^\nu}  \ar[r]  & 0\\
}
$$
Thus we obtain maps

\begin{equation}\label{E:an-ell_nu_Bloch}
\xymatrix{
\operatorname{CH}^n(X)_{\operatorname{hom}}[\ell^\nu] \ar[rr]^{AJ[\ell^\nu]}&& J^{2n-1}(X)[\ell^\nu] \ar[r] & H^{2n-1}(X,\mathbb Z/\ell^\nu \mathbb Z)/\delta^{an}_{\ell^\nu}.
}
\end{equation}

It is easy to see that for sufficiently large $\nu$, we have $\delta^{an}_{\ell^\nu}=H^{2n-1}(X,\mathbb Z)_{\ell\operatorname{-tors}}$, and via the isomorphism $H^{2n-1}(X,\mathbb Z_\ell)= H^{2n-1}(X,\mathbb Z)\otimes_{\mathbb Z}\mathbb Z_\ell$, we have that $H^{2n-1}(X,\mathbb Z)_{\ell\operatorname{-tors}}=H^{2n-1}(X,\mathbb Z_\ell)_{\operatorname{tors}}$.   It follows that $\varprojlim \delta^{an}_{\ell^\nu}=H^{2n-1}(X,\mathbb Z_\ell)_{\operatorname{tors}}$.  

We claim now that $\varprojlim H^{2n-1}(X,\mathbb Z/\ell^\nu \mathbb Z)/\delta_{\ell^\nu}^{an}= H^{2n-1}(X,\mathbb Z_\ell)_\tau$.  For this we consider the short exact sequence $0\to \delta^{an}_{\ell^\nu}\to H^{2n-1}(X,\mathbb Z/\ell^\nu \mathbb Z) \to H^{2n-1}(X,\mathbb Z/\ell^\nu \mathbb Z)/\delta_{\ell^\nu}^{an}\to 0$,  and use the fact that since the $\delta^{an}_{\ell^\nu}$ are finite, we have $\varprojlim^1\delta_{\ell^\nu}^{an}=0$.  

Taking the inverse limit of the maps~\eqref{E:an-ell_nu_Bloch}, we therefore obtain a map

\begin{equation}\label{E:an-ell_ad_Bloch}
\xymatrix{
T_\ell \operatorname{CH}^n(X)_{\operatorname{hom}}\ar[rr]^{T_\ell AJ}&& T_\ell J^{2n-1}(X) \ar[r] & H^{2n-1}(X,\mathbb Z_\ell)_\tau
}
\end{equation}
and then tensoring with $-\otimes_{\mathbb Z_\ell}\mathbb Q_\ell$, we obtain a map

\begin{equation}\label{E:an-ell_ad_BlochV}
\xymatrix{
V_\ell \operatorname{CH}^n(X)_{\operatorname{hom}}\ar[rr]^{V_\ell AJ}&& V_\ell J^{2n-1}(X) \ar[r] & H^{2n-1}(X,\mathbb Q_\ell).
}
\end{equation}

The next claim is that $
\varinjlim \delta_{\ell^\nu}^{an}=0$.  This also follows from the fact that 
for sufficiently large $\nu$, we have $\delta^{an}_{\ell^\nu}=H^{2n-1}(X,\mathbb Z)_{\ell\operatorname{-tors}}$, since the latter group is finite, and is therefore killed by multiplication by $\ell^N$ for some sufficiently large $N$.  As a consequence, taking the direct limit in~\eqref{E:an-ell_nu_Bloch} we obtain a map

\begin{equation}\label{E:an-Bloch}
\xymatrix{
\operatorname{CH}^n(X)_{\operatorname{hom}}[\ell^\infty] \ar[rr]^{AJ[\ell^\infty]}&& J^{2n-1}(X)[\ell^\infty] \ar[r] & H^{2n-1}(X,\mathbb Q_\ell/\mathbb Z_\ell).
}
\end{equation}

\subsubsection{Bloch's preliminaries}  The set-up in \cite{bloch79} is from  the
paper \cite{BlochOgus}.
It is described in \cite{bloch79} in the following way.  One sets $\mathbf
H^q(\mmu_{\ell^\nu}^{\otimes n})$ to be the Zariski sheaf on $X_{\bar K^a}$
associated to the pre-sheaf $U\mapsto
H^q_{\operatorname{\acute{e}t}}(U,\mmu_{\ell^\nu}^{\otimes n})$.    In other
words, this is the derived push-forward of the sheaf $\mmu_{\ell^\nu}^{\otimes
	n}$ on $(X_{\bar K^a})_{\operatorname{\acute{e}t}}$ \emph{via} the morphism of sites
$\pi:(X_{\bar K^a})_{\operatorname{\acute{e}t}}\to (X_{\bar K^a})_{\operatorname{zar}}$, from the \'etale site to the Zariski site\,:
$$
\mathbf H^q(\mmu_{\ell^\nu}^{\otimes n}):=R^q\pi_*\mmu^{\otimes n}_{\ell^\nu}.
$$
Consider the composition of morphisms of sites $(X_{\bar
	K^a})_{\operatorname{\acute{e}t}}\to (X_{\bar K^a})_{\operatorname{zar}}\to
\operatorname{Spec}\bar K^a$.  The Leray spectral sequence is
$$
E^{p,q}_2=H^p_{\operatorname{zar}}(X_{\bar K^a},\mathbf
H^q(\mmu_{\ell^\nu}^{\otimes n}))\implies
H^{p+q}_{\operatorname{\acute{e}t}}(X_{\bar K^a},\mmu_{\ell^\nu}^{\otimes n}).
$$
The main tool is the existence of a particular  flasque resolution of $\mathbf
H^q(\mmu_{\ell^\nu}^{\otimes n})$ \cite[(1.3)]{bloch79},
$$
0\to \mathbf H^q(\mmu_{\ell^\nu}^{\otimes n})\to F^0\to F^1\to \cdots,
$$
which of course computes  $H^p_{\operatorname{zar}}(X_{\bar K^a},\mathbf H^q(\mmu_{\ell^\nu}^{\otimes
	n}))$ in the $p$-th place.
This resolution has two nice properties.  First, it turns out to be easy to read
off from the resolution that $$H_{\operatorname{zar}}^p(X_{\bar K^a},\mathbf
H^q(\mmu_{\ell^\nu}^{\otimes n}))=0$$ for $p>q$, and consequently, from the
shape of the spectral sequence, one obtains so-called boundary maps for the
spectral sequence  \cite[Cor.~1.4]{bloch79}
\begin{equation}\label{E:BM-E1}
H^{n-1}_{\operatorname{zar}}(X_{\bar K^a},\mathbf H^n(\mmu_{\ell^\nu}^{\otimes
	n}))\to H^{2n-1}_{\operatorname{\acute{e}t}}(X_{\bar K^a},\mmu_{\ell^\nu}^{\otimes
	n}).
\end{equation}
Second, the precise description of the flasque resolution in  \cite[(1.3)]{bloch79} shows that the group  $H^{n-1}_{\operatorname{zar}}(X_{\bar K^a}, \mathbf
H^n(\mmu_{\ell^\nu}^{\otimes n}))$ on the left in~\eqref{E:BM-E1} is the
cohomology of the complex \cite[Cor.~1.5]{bloch79}
$$
\bigoplus_{W^{n-2}\subseteq X_{\bar K^a}}H^2_{\operatorname{Gal}}(Q(W),\mmu_{\ell^\nu}^{\otimes 2})\to
\bigoplus_{V^{n-1}\subseteq X_{\bar K^a}} Q(V)^*/Q(V)^{*
	\ell^\nu}\stackrel{\partial_{\ell^\nu}}{\to} \bigoplus_{T^n\subseteq X_{\bar K^a}}\mathbb Z/\ell^\nu \mathbb Z,
$$
where the sums are taken over irreducible subvarieties of the indicated
\emph{codimensions}, and $Q(-)$ indicates the function field.   The map
$\partial_{\ell^\nu}$ is obtained from the standard exact sequence below after
reduction modulo $\ell^\nu$\,:
\begin{equation}\label{E:BM-E2}
\bigoplus_{V^{n-1}\subseteq X_{\bar K^a}}Q(V)^* \stackrel{\partial}{\to}
\bigoplus_{T^n\subseteq X_{\bar K^a}}\mathbb Z\to \operatorname{CH}^n(X_{\bar K^a})\to 0,
\end{equation}
where $\partial$ sends a rational function and to its divisor of zeros
and poles on $V$, and then one pushes forward \emph{via} the inclusion $V\subseteq X_{\bar K^a}$ to cycles on $X_{\bar K^a}$.

In particular, we have surjections
\begin{equation}\label{E:BM-E3}
\ker \partial_{\ell^\nu}\twoheadrightarrow H^{n-1}_{\operatorname{zar}}(X_{\bar K^a}, \mathbf H^n(\mmu_{\ell^\nu}^{\otimes n})).
\end{equation}

\subsubsection{The $\ell^\nu$-Bloch map}\label{S:ellnuBlochMap}

To get the construction started, we simply consider the commutative diagram of
short exact sequences  \emph{of groups} \cite[(2.1)]{bloch79}\,:
\begin{equation}\label{E:BM-3.5}
\xymatrix@R=2em@C=1em{
	0\ar@{->}[r]&\displaystyle \bigoplus_{V^{n-1}\subseteq X_{\bar K^a}} Q(V)^*/\bar
	K^{a*} \ar@{->}[r]^<>(1.0){(-)^{\ell^\nu}}\ar@{->}[d]^{\partial}&\displaystyle
	\bigoplus_{V^{n-1}\subseteq X_{\bar K^a}} Q(V)^*/\bar K^{a*} \ar@{->}[r]
	\ar@{->}[d]^{\partial}& \displaystyle \bigoplus_{V^{n-1}\subseteq X_{\bar K^a}}
	Q(V)^*/Q(V)^{*\ell^\nu}\ar@{->}[r] \ar@{->}[d]^{\partial_{\ell^\nu}}& 0\\
	0\ar@{->}[r]&\displaystyle \bigoplus_{T^{n}\subseteq X_{\bar K^a}} \mathbb Z
	\ar@{->}[r]^{\ell^\nu}&\displaystyle \bigoplus_{T^{n}\subseteq X_{\bar K^a}}
	\mathbb Z \ar@{->}[r] & \displaystyle \bigoplus_{T^{n}\subseteq X_{\bar K^a}}
	\mathbb Z/\ell^\nu \mathbb Z \ar@{->}[r]& 0\\
}
\end{equation}
The snake lemma yields \emph{via}~\eqref{E:BM-E2} the long exact sequence
$$
0\to \ker \partial \stackrel{\ell^\nu}{\to}\ker\partial\to \ker
\partial_{\ell^\nu}\to \operatorname{CH}^n(X_{\bar K^a})\stackrel{\ell^\nu}{\to
}\operatorname{CH}^n(X_{\bar K^a})\to \operatorname{CH}^n(X_{\bar K^a})/\ell^\nu\operatorname{CH}^n(X_{\bar K^a})\to 0.
$$
We obtain a diagram where the top row is a short exact sequence
\cite[(2.2)]{bloch79}\,:
\begin{equation}\label{E:BM-E4}
\xymatrix{
	0\ar@{->}[r]&\displaystyle \left(\frac{\ker \partial}{\ell^\nu \ker \partial}
	\right) \ar@{->}[r] \ar@/_2pc/@{->}[rdd]_{\rho_{\ell^\nu}}  &\ker
	\partial_{\ell^\nu}
	\ar@{->}[r]\ar@{->>}[d]^{\eqref{E:BM-E3}}&\operatorname{CH}^n(X_{\bar K^a})[\ell^\nu]\ar@{->}[r]&0\\
	&&H^{n-1}_{\operatorname{zar}}(X_{\bar K^a},\mathbf H^n(\mmu_{\ell^\nu}^{\otimes
		n}))\ar@{->}[d]^{\eqref{E:BM-E1}}&&\\
	&&H^{2n-1}_{\operatorname{\acute{e}t}}(X_{\bar K^a},\mmu_{\ell^\nu}^{\otimes
		n})&&\\
}
\end{equation}
where $\rho_{\ell^\nu}$ is defined as the indicated composition in the diagram.
In fact, we find it convenient to define $\delta_{\ell^\nu}$ to be the image of
$\rho_{\ell^\nu}$, to obtain the diagram\,:
\begin{equation}\label{E:BM-E4d}
\xymatrix{
	0\ar@{->}[r]&\displaystyle \left( \frac{\ker \partial}{\ell^\nu \ker
		\partial}\right) \ar@{->}[r] \ar@/_2pc/@{->}[rdd]_{\rho_{\ell^\nu}}
	\ar@{->>}[dd]&\ker \partial_{\ell^\nu}
	\ar@{->}[r]\ar@{->>}[d]^{\eqref{E:BM-E3}}&\operatorname{CH}^n(X_{\bar K^a})[\ell^\nu]\ar@{->}[r] \ar[dd]&0\\
	&&H^{n-1}_{\operatorname{zar}}(X_{\bar K^a},\mathbf H^n(\mmu_{\ell^\nu}^{\otimes
		n}))\ar@{->}[d]^{\eqref{E:BM-E1}}&&\\
	0\ar[r]&\delta_{\ell^\nu} \ar[r]&H^{2n-1}_{\operatorname{\acute{e}t}}(X_{\bar K^a},\mmu_{\ell^\nu}^{\otimes n}) \ar[r]
	&H^{2n-1}_{\operatorname{\acute{e}t}}(X_{\bar K^a},\mmu_{\ell^\nu}^{\otimes
		n})/\delta_{\ell^\nu} \ar[r] &0\\
}
\end{equation}

We now give a name to the vertical arrow on the right\,; this map is used tacitly
by Bloch in many places, and it will be convenient for us to give this a name\,:

\begin{dfn}[{The $\ell^\nu$-Bloch map}] \label{D:finiteBlochmap}
	The map
	\[
	\begin{CD}
	\chow^n(X_{\bar K^a})[\ell^\nu] @>{\lambda^n[\ell^\nu]}>>
	H^{2n-1}_{\operatorname{\acute{e}t}}(X_{\bar K^a},\mmu_{\ell^\nu}^{\otimes
		n})/\delta_{\ell^\nu}
	\end{CD}
	\]
	which is the negative of the map
	defined from~\eqref{E:BM-E4d} is  the \emph{$\ell^\nu$-Bloch map in
		codimension $n$}.
\end{dfn}

\begin{rem}
	As explained on \cite[p.112]{bloch79}, the choice of the negative sign is for
	compatibility, in the case $n=1$, with the natural map coming from the Kummer
	sequence, as in Proposition~\ref{P:Kummer}.
\end{rem}

\subsubsection{Bloch's Key Lemma}
Consider as in \cite[(2.3)]{bloch79} the map
\begin{equation}\label{E:BM-rho}
\xymatrix{\rho:\ker \partial\ar[r]& H^{2n-1}_{}(X_{\bar K^a},\mathbb
	Z_\ell(n))}
\end{equation}
defined as the composition
$$
\begin{CD}
\rho: \ker \partial @>>> \varprojlim \left(\ker \partial/\ell^\nu \ker
\partial\right) @>{\varprojlim_\nu \rho_{\ell^\nu}}>>
H^{2n-1}_{}(X_{\bar K^a},\mathbb Z_\ell(n)).
\end{CD}
$$
The following lemma, whose proof uses the Weil conjectures (\emph{via} specialization
to finite fields), is key to constructing the Bloch map on $\chow^n(X_{\bar
	K^a})[\ell^\infty]$ and the $\ell$-adic Bloch map on $T_\ell\chow^n(X_{\bar K^a})$.

\begin{lem}[Bloch's Key Lemma {\cite[Lem.~2.4]{bloch79}}] \label{L:BlochKey}
	The image of
	$\rho$
	is torsion.  \qed
\end{lem}

What is left tacit by Bloch, but is used in his construction of the Bloch map,
is that Lemma~\ref{L:BlochKey} implies\,:

\begin{lem}\label{L:Bl-p112-adic}
	The image of the map
	$$
	\varprojlim \rho_{\ell^\nu}:\varprojlim \left(\ker \partial/\ell^\nu \ker
	\partial\right)\to H^{2n-1}_{}(X_{\bar K^a},\mathbb
	Z_\ell(n))
	$$
	is torsion.
\end{lem}

\begin{proof}
	Since $\ker \partial\subseteq \varprojlim \left(\ker \partial/\ell^\nu \ker
	\partial\right)$ is a dense subset, and  the map  $\varprojlim \rho_{\ell^\nu}$
	on completions is continuous, the image of $\varprojlim \left(\ker
	\partial/\ell^\nu \ker \partial\right)$ (\emph{i.e.}, the image of $\varprojlim
	\rho_{\ell^\nu}$) is contained in the closure of the image of $\ker \partial$
	(\emph{i.e.}, the image of $\rho$)\,; the  image  of the closure of a set is contained in
	the closure of the image.  By Bloch's Key Lemma~\ref{L:BlochKey}, the image of
	$\rho$ is contained in  $\operatorname{Tors}
	H^{2n-1}_{\operatorname{\acute{e}t}}(X_{\bar K^a},\mathbb Z_\ell(n))$.
	Now use that $\operatorname{Tors} H^{2n-1}_{\operatorname{\acute{e}t}}(X_{\bar  K^a},\mathbb Z_\ell(n))\subseteq  H^{2n-1}_{\operatorname{\acute{e}t}}(X_{\bar K^a},\mathbb Z_\ell(n))$ is closed. Indeed, find  $N = \ell^r$ which kills the
	torsion\,; multiplication by $N$ is continuous, and the torsion is the inverse
	image of $0$ under this continuous map.
\end{proof}

\subsubsection{The $\ell$-adic Bloch map}

The $\ell$-adic Bloch map is defined using the inverse limit of
\eqref{E:BM-E4d}. We obtain a diagram
\begin{small}
\begin{equation}\label{E:ell-addBM}
\xymatrix{
	0\ar@{->}[r]&\displaystyle \varprojlim \left(\frac{\ker \partial}{\ell^\nu \ker
		\partial}\right) \ar@{->}[r] \ar@/_2pc/@{->}[rdd]_{\varprojlim_\nu
		\rho_{\ell^\nu}}   \ar@{->}[dd]&\varprojlim \ker \partial_{\ell^\nu}
	\ar@{->}[r]\ar@{->}[d]^{\eqref{E:BM-E3}}&T_\ell \operatorname{CH}^n(X_{\bar K^a})\ar@{->}[r] \ar[dd]&0\\
	&&\varprojlim  H^{n-1}_{\operatorname{zar}}(X_{\bar K^a},\mathbf
	H^n(\mmu_{\ell^\nu}^{\otimes n}))\ar@{->}[d]^{\eqref{E:BM-E1}}&&\\
	0\ar[r]&\varprojlim \delta_{\ell^\nu} \ar[r]
	\ar@{->}[d]&H^{2n-1}_{}(X_{\bar K^a},\mathbb Z_\ell(n))
	\ar[r] \ar@{->}[d] &H^{2n-1}_{}(X_{\bar K^a}, \mathbb
	Z_\ell(n))/ \varprojlim\delta_{\ell^\nu} \ar[r]  \ar@{->}[d] &0\\
	&(\varprojlim \delta_{\ell^\nu})_\tau
	\ar[r]&H^{2n-1}_{}(X_{\bar K^a},\mathbb Z_\ell(n))_\tau
	\ar[r] &H^{2n-1}_{}(X_{\bar K^a}, \mathbb Z_\ell(n))_\tau
	/ (\varprojlim\delta_{\ell^\nu})_\tau \ar[r] &0\\
}
\end{equation}
\end{small}
The top row  remains short exact after taking the inverse limit, since
$\varprojlim^1 \left(\ker \partial/\ell^\nu \ker \partial\right)=0$.  Indeed,
the system $ \left(\ker \partial/\ell^\nu \ker \partial\right)$ is clearly
surjective, by virtue of the fact that the terms are defined by quotients of an
increasing chain of subgroups.  The  $\delta_{\ell^\nu}$, being contained in
$H^{2n-1}_{\operatorname{\acute{e}t}}(X_{\bar K^a},\mmu_{\ell^\nu}^{\otimes n})$,
are finite, and thus the middle row, which is obtained as the inverse limit of
the bottom row of~\eqref{E:BM-E4d}, remains a short exact sequence, as well.  The bottom row is obtained from the middle row by taking the quotient by torsion subgroups in the left and middle entries.

Lemma \ref{L:Bl-p112-adic} and the commutativity of the diagram~\eqref{E:ell-addBM} yield
\begin{equation}\label{E:VPLimtau=0}
(\varprojlim \delta_{\ell^\nu})_\tau=0,
\end{equation}
allowing us to define the $\ell$-adic Bloch map\,:

\begin{dfn}[{$\ell$-adic Bloch map}]\label{D:BM-4}
	The map
	\[
	\begin{CD}
	T_\ell\chow^n(X_{\bar K^a}) @>{T_\ell \lambda^n}>> H^{2n-1}(X_{\bar  K^a},\integ_\ell(n))_\tau,
	\end{CD}
	\]
	which is the negative of the map
	defined from~\eqref{E:ell-addBM} and Lemma \ref{L:Bl-p112-adic}, is defined as
	the \emph{$\ell$-adic Bloch map in codimension $n$}.
\end{dfn}

\begin{rem}[$\ell$-adic Bloch map over the separable closure]\label{R:D:ell-Bloch79Sep}
	Using the fact that \'etale cohomology is invariant under purely inseparable extensions and the fact that the prime-to-$\operatorname{char}(K)$ torsion of Chow groups is also invariant under purely inseparable extensions (see \emph{e.g.}, \cite[Lem.~4.10]{ACMVfunctor}), the $\ell$-adic Bloch map over the algebraic closure  induces a well defined map $T_\ell \lambda^n: T_\ell \chow^n(X_{\bar K}) \to  H^{2n-1}(X_{\bar
		K},\mathbb Z_\ell(n))_\tau$.

\end{rem}

\subsubsection{The Bloch map}
Bloch defines his map by considering the direct limit of~\eqref{E:BM-E4d}\,:

\begin{equation}\label{E:BM-E6}
\xymatrix@C=1em{
	0\ar@{->}[r]&\displaystyle \varinjlim \left( \frac{\ker \partial}{\ell^\nu \ker
		\partial}\right) \ar@{->}[r] \ar@/_2pc/@{->}[rdd]_{\varinjlim \rho_{\ell^\nu}}
	\ar@{->>}[dd]& \varinjlim \ker \partial_{\ell^\nu}
	\ar@{->}[r]\ar@{->>}[d]^{\eqref{E:BM-E3}}&\operatorname{CH}^n(X_{\bar
		K^a})[\ell^\infty]\ar@{->}[r] \ar[dd]&0\\
	&&\varinjlim H^{n-1}_{\operatorname{zar}}(X_{\bar K^a},\mathbf
	H^n(\mmu_{\ell^\nu}^{\otimes n}))\ar@{->}[d]^{\eqref{E:BM-E1}}&&\\
	0\ar[r]&\varinjlim \delta_{\ell^\nu}
	\ar[r]&H^{2n-1}_{}(X_{\bar K^a},\mathbb Q_\ell/\mathbb
	Z_\ell(n)) \ar[r] &H^{2n-1}_{}(X_{\bar K^a},\mathbb
	Q_\ell/\mathbb Z_\ell(n))/ \varinjlim \delta_{\ell^\nu} \ar[r] &0\\
}
\end{equation}
Bloch's observation is that \cite[Lem.~2.4]{bloch79} implies the following\,:

\begin{lem}[{\cite[p.112]{bloch79}}]\label{L:Bl-p112}
	The map
	$$
	\varinjlim \rho_{\ell^\nu} : \varinjlim \left(\ker \partial/\ell^\nu \ker
	\partial\right)\to H^{2n-1}_{}(X_{\bar K^a},\mathbb
	Q_\ell/\mathbb Z_\ell(n))
	$$
	is the zero map.
\end{lem}

\begin{proof} To quote Bloch verbatim, the assertion follows
	from Lemma~\ref{L:BlochKey} using the fact that  the image in
	$H^{2n-1}_{\operatorname{\acute{e}t}}(X_{\bar K^a},\mmu_{\ell^\nu}^{\otimes n})$
	of the torsion in $H^{2n-1}_{}(X_{\bar K^a},\mathbb
	Z_\ell(n))$ is zero in
	$H^{2n-1}_{}(X_{\bar K^a},\mathbb Q_\ell/\mathbb
	Z_\ell(n))$.
	
	In a little more detail,
	we consider the commutative diagram
	$$
	\xymatrix{
		\displaystyle \varprojlim \left( \frac{\ker \partial}{\ell^\nu \ker
			\partial}\right)  \ar[r] \ar[d]^{\varprojlim
			\rho_{\ell^\nu}}&\displaystyle\frac{\ker \partial}{\ell^\nu \ker \partial}
		\ar[r] \ar[d]^{\rho_{\ell^\nu}}& \displaystyle \varinjlim \left( \frac{\ker
			\partial}{\ell^\nu \ker \partial}\right) \ar[d]^{\varinjlim \rho_{\ell^\nu}}\\
		H^{2n-1}_{\operatorname{\acute{e}t}}(X_{\bar K},\mathbb Z_\ell(n))       =
		\varprojlim H^{2n-1}_{\operatorname{\acute{e}t}}(X_{\bar
			K},\mmu_{\ell^\nu}^{\otimes n}) \ar[r]&
		H^{2n-1}_{\operatorname{\acute{e}t}}(X_{\bar K},\mmu_{\ell^\nu}^{\otimes n})
		\ar[r]
		& \varinjlim H^{2n-1}_{\operatorname{\acute{e}t}}(X_{\bar
			K},\mmu_{\ell^\nu}^{\otimes n})
	}
	$$
	where the horizontal maps are the natural maps.  To show the direct limit
	$\varinjlim \rho_{\ell^\nu}$ is the zero map, it suffices to show that for all
	$\nu$,  the image of $\frac{\ker \partial}{\ell^\nu \ker \partial}$ in
	$ \varinjlim H^{2n-1}_{\operatorname{\acute{e}t}}(X_{\bar
		K},\mmu_{\ell^\nu}^{\otimes n})$ is zero.
	To this end, let $\alpha\in \frac{\ker \partial}{\ell^\nu \ker \partial}$.
	As we observed before,   $\frac{\ker \partial}{\ell^\nu \ker \partial}$ forms a
	surjective system,  so we may lift $\alpha$ to $\beta\in \varprojlim \frac{\ker
		\partial}{\ell^\nu \ker \partial}$, and then send $\beta$ to $\gamma\in
	H^{2n-1}_{\operatorname{\acute{e}t}}(X_{\bar K},\mathbb Z_\ell(n))$.  By Lemma
	\ref{L:Bl-p112-adic}, $\gamma$ is torsion.  Now we use that the image of
	torsion, under the composition in the bottom row, is zero.
	
	Since this last assertion is not immediately obvious, we sketch a proof here.
	Let $(\alpha_1,\alpha_2,\dots)$ be a torsion element of
	$H^{2n-1}_{\operatorname{\acute{e}t}}(X_{\bar K},\mathbb Z_\ell)$, say of order
	$\ell^r$.   One can show that in order for  $(\alpha_1,\alpha_2,\dots)$ to be a
	consistent system, and also be $\ell^r$ torsion, one must have a $\mathbb
	Z/\ell^r\mathbb Z$ summand of the group 
	$H^{2n-1}_{\operatorname{\acute{e}t}}(X_{\bar K},\mathbb Z/\ell^{\nu}\mathbb
	Z)$ for all sufficiently large $\nu$, with  $\alpha_\nu\in \mathbb
	Z/\ell^r\mathbb Z\subseteq H^{2n-1}_{\operatorname{\acute{e}t}}(X_{\bar
		K},\mathbb Z/\ell^{\nu}\mathbb Z)$.  Now since
	$\ell^r \alpha_\nu=0$, by definition, this means that in the directed system
	for the direct limit the image of $\alpha_\nu$ in
	$H^{2n-1}_{\operatorname{\acute{e}t}}(X_{\bar K},\mathbb Z/\ell^{\nu+r}\mathbb
	Z)$ is zero (at each step, we multiply by $\ell$).  Thus the image of
	$\alpha_\nu$ in $\varinjlim H^{2n-1}_{\operatorname{\acute{e}t}}(X_{\bar
		K},\mathbb Z/\ell^{\nu}\mathbb Z)$ is zero.
\end{proof}

As a consequence of Lemma \ref{L:Bl-p112}, we can define the Bloch map\,; we also refer to \cite{CT} where a construction of the Bloch map is discussed and to the recent \cite[\S 10]{schreieder} where a construction of the Bloch map that avoids Bloch--Ogus theory~\cite{BlochOgus} is given.

\begin{dfn}[{Bloch map \cite[(2.7)]{bloch79}}]\label{D:Bloch79}
	The map
	\[
	\begin{CD}
	\chow^n(X_{\bar K^a})[\ell^\infty] @>{\lambda^n}>> H^{2n-1}(X_{\bar
		K^a},\rat_\ell/\integ_\ell(n)),
	\end{CD}
	\]
	which is the negative of the map
	defined from~\eqref{E:BM-E6} and Lemma \ref{L:Bl-p112}, is  the
	\emph{Bloch map in codimension $n$}.  In some cases we will write $\lambda^n[\ell^\infty]$ for clarity.
\end{dfn}

\begin{rem}[Bloch map over the separable closure]\label{R:D:Bloch79Sep}
	As in Remark~\ref{R:D:ell-Bloch79Sep}, the $\ell$-adic Bloch map induces a well defined map  $T_\ell \lambda^n: T_\ell \chow^n(X_{\bar K}) \to  H^{2n-1}(X_{\bar
		K},\integ_\ell(n))$.  
\end{rem}

\begin{rem}[Abel--Jacobi map on torsion] \label{R:AJ-tors}
	Bloch shows in \cite[Prop.~3.7]{bloch79} that for a complex projective manifold $X$, the Bloch map (Definition~\ref{D:Bloch79}) agrees with the Abel--Jacobi map on homologically trivial $\ell$-primary torsion~\eqref{E:an-Bloch}.  We explain below, in Remark~\ref{R:AJ-ell-adic}, that  this implies that the $\ell$-adic Bloch map (Definition~\ref{D:BM-4}) agrees with the Abel--Jacobi map on  homologically trivial cycle classes~\eqref{E:an-ell_ad_Bloch}.
\end{rem}

\subsection{Suwa's construction of the $l$-adic Bloch map}  \label{S:Suwa}

In \cite{Suwa}, Suwa has given a construction of the $\ell$-adic Bloch map by simply taking the Tate module associated to the standard Bloch map.  We review the construction here, and show it agrees with the construction given in \S \ref{S:construction}.  This construction is quite convenient in may cases.   Later Gros--Suwa \cite{grossuwaAJ} constructed an extension of the Bloch map to $p$-torsion\,; taking the Tate module gives a $p$-adic Bloch map.  We review this in~\S \ref{SS:pbloch}.  

\subsubsection{Structure of abelian $l$-primary torsion groups} \label{S:struct-l-tors}
Let $M$ be an abelian $l$-primary torsion
group for some prime number $l$.  
The set of divisible elements $M_{\operatorname{div}}$ forms a divisible abelian subgroup, and since divisible abelian groups are injective, we see that $M$ splits as a direct sum $M=M_{\operatorname{div}}\oplus (M/M_{\operatorname{div}})$.  
It is a basic fact (\emph{e.g.}, \cite[Ch. IV]{fuchsIAG})
that every divisible abelian $l$-primary torsion group is a direct sum of factors of the form $\mathbb Q_l/\mathbb Z_l$, so that $M=\left(\bigoplus \mathbb Q_l/\mathbb Z_l \right)\oplus (M/M_{\operatorname{div}})$.  We say that $M$ has \emph{finite corank} if there in an injective homomorphism $M\hookrightarrow (\mathbb Q_l/\mathbb Z_l)^r$ for some integer $r\ge 0$.   It is elementary to show that the following are equivalent\,:
\begin{itemize}
	\item $M$ has finite corank.
	\item $M[l]$ is finite.
	\item $M\simeq (\mathbb Q_l/\mathbb Z_l)^r\oplus A$ for some integer $r\ge 0$ and some finite $l$-primary torsion group $A$.
\end{itemize}

\subsubsection{$\ell$-adic cohomology from cohomology with torsion coefficients}
In this subsection,  we recall a crucial point used in Suwa's construction of the $\ell$-adic Bloch map  (see Proposition~\ref{P:TateQQ/ZZ}).   We start with a general statement about cohomology.

\begin{pro}\label{P:les}
	There is a natural long exact sequence
	\begin{equation}\label{E:P:les}
	\cdots
	\longrightarrow H^{n-1}(X_{\bar K},\mathbb Z_\ell )
	\longrightarrow  H^{n-1}(X_{\bar K},\mathbb Q_\ell) \stackrel{}{\longrightarrow}  H^{n-1}(X_{\bar
		K},\mathbb Q_\ell/\mathbb Z_\ell) \longrightarrow  H^{n}(X_{\bar K},\mathbb Z_\ell) {\longrightarrow}
	\cdots
	\end{equation}
	In particular, if $X$ is proper,
	\begin{equation}\label{E:Finiteness}
	H^{n}(X_{\bar K},\mathbb Q_\ell /\mathbb Z_\ell) \simeq (\mathbb Q_\ell/\mathbb
	Z_\ell)^r \oplus A
	\end{equation}
	for some integer $r$ and some finite $\ell$-primary torsion abelian group $A$.
\end{pro}
\begin{proof}
	Taking the inverse limit over $\mu$ of the long exact sequence associated to the short exact sequence of \'etale sheaves
	\begin{equation}
	\label{E:finitecoeff}
	\begin{CD}
	0 @>>> \mathbb Z/\ell^\mu\mathbb Z @>{\ell^\nu}>> \mathbb Z/\ell^{\mu +\nu}\mathbb Z @>>> \mathbb Z/\ell^\nu\mathbb Z @>>> 0
	\end{CD}
	\end{equation}
	gives a long exact sequence

	\[\xymatrix{\cdots \ar[r] & H^n(X_{\bar K},\integ_\ell) \ar[r]^{\ell^\nu} & H^n(X_{\bar K},\integ_\ell) \ar[r] & H^n(X_{\bar K},\integ/\ell^\nu\integ) \ar[r] & H^{n+1}(X_{\bar K},\integ_\ell) \ar[r] & \cdots}
	\]
	Taking the direct limit over $\nu$ of the system of long exact sequences
	\[\xymatrix{\cdots \ar[r] & H^n(X_{\bar K},\integ_\ell) \ar[r]^{\ell^\nu} \ar@{=}[d]
		& H^n(X_{\bar K},\integ_\ell) \ar[r] \ar[d]^{\ell} & H^n(X_{\bar K},\integ/\ell^\nu\integ) \ar[r] \ar[d]^{\ell}& H^{n+1}(X_{\bar K},\integ_\ell) \ar[r] \ar@{=}[d]& \cdots  \\
		\cdots \ar[r] & H^n(X_{\bar K},\integ_\ell) \ar[r]^{\ell^{\nu+1}}
		& H^n(X_{\bar K},\integ_\ell) \ar[r]  & H^n(X_{\bar K},\integ/\ell^{\nu+1}\integ) \ar[r] & H^{n+1}(X_{\bar K},\integ_\ell) \ar[r] & \cdots}
	\]
	together with Proposition~\ref{P:direct} provides the desired long exact sequence~\eqref{E:P:les}. Alternately, this can be obtained from the short exact sequence of sheaves $0 \to  \mathbb Z_\ell \to  \mathbb Q_\ell \to \mathbb Q_\ell/\mathbb Z_\ell \to 0$ 
	in the pro-\'etale topology\,; see \cite{BhattScholze}.

	The finiteness property~\eqref{E:Finiteness} follows immediately from the finiteness property of $H^n(X_{\bar K},\integ_\ell)$ when $X$ is proper. More elementarily, the finiteness property~\eqref{E:Finiteness} is a consequence of the finiteness of $H_{\operatorname{\acute{e}t}}^n(X_{\bar K},\integ/\ell\integ)$ and the fact (\S\ref{S:struct-l-tors})  that any $\ell$-primary torsion abelian group $M$ such that $M[\ell]$ is finite is isomorphic to  $(\mathbb Q_\ell/\mathbb
	Z_\ell)^r \oplus A$
	for some integer $r$ and some finite abelian group~$A$.
\end{proof}

From the long exact sequence~\eqref{E:2.6.1.1}, we obtain a short exact sequence
\begin{equation}\label{E:2.6.1.2}
\xymatrix{
	0  \ar[r]  & H^{n-1}(X_{\bar K},\mathbb Q_\ell/\mathbb Z_\ell)/\ell^\nu \ar[r]&
	H^n(X_{\bar K},\mathbb Z_\ell/\ell^\nu\mathbb Z_\ell)  \ar[r]&H^n(X_{\bar.
		K},\mathbb Q_\ell/\mathbb Z_\ell)[\ell^\nu]\ar[r]& 0.
}
\end{equation}
Now with the finiteness property~\eqref{E:Finiteness} and the notation therein, $(\mathbb Q_\ell/\mathbb Z_\ell)^r$ is the maximal
divisible subgroup of $H^{n}(X_{\bar K},\mathbb Q_\ell /\mathbb Z_\ell)$, and
$A=H^{n}(X_{\bar K},\mathbb Q_\ell /\mathbb Z_\ell)_{\operatorname{cotors}}$.
As $\mathbb Q_\ell/\mathbb Z_\ell$ is divisible, the $\ell^\nu$-torsion in
$\mathbb Q_\ell/\mathbb Z_\ell$ forms a surjective system.  Thus, since $A$ is
finite, the associated $\varprojlim^1$ for the direct sum vanishes, giving
\cite[(2.6.2)]{Suwa}\,:
\begin{equation}\label{E:2.6.2}
\xymatrix{
	0  \ar[r]  & \displaystyle \varprojlim_\nu \left(H^{n-1}(X_{\bar K},\mathbb
	Q_\ell/\mathbb Z_\ell)/\ell^\nu\right) \ar[r]&  H^n(X_{\bar K},\mathbb Z_\ell)
	\ar[r]&T_\ell H^n(X_{\bar K},\mathbb Q_\ell/\mathbb Z_\ell)\ar[r]& 0.
}
\end{equation}

\begin{pro}\label{P:TateQQ/ZZ}
	Assume  that  $X$ is a proper variety over a field  $K$. Then the  morphism $ H^n(X_{\bar K},\mathbb Z_\ell)
	\to T_\ell H^n(X_{\bar K},\mathbb Q_\ell/\mathbb Z_\ell)$ from~\eqref{E:2.6.2}, obtained as the inverse limit of the morphisms
	$H^n(X_{\bar K},\mathbb Z_\ell/\ell^\nu\mathbb Z_\ell) \to H^n(X_{\bar
		K},\mathbb Q_\ell/\mathbb Z_\ell)[\ell^\nu]$    from~\eqref{E:2.6.1.2},
	factors through the quotient $H^n(X_{\bar K},\mathbb Z_\ell)  \to H^n(X_{\bar K},\mathbb Z_\ell) _\tau$,  giving a natural isomorphism\,:
	\begin{equation}\label{E:TateQQ/ZZ}
	\xymatrix{
		H^{n}_{}(X_{\bar
			K},\mathbb Z_\ell)\ar[rd] \ar[r]   & T_\ell H^{n}_{}(X_{\bar K},\mathbb Q_\ell/\mathbb Z_\ell)  \\
		&H^{n}_{}(X_{\bar
			K},\mathbb Z_\ell)_\tau \ar[u]^\simeq
	}
	\end{equation}
\end{pro}
\begin{proof}
	It is a basic fact (\emph{e.g.}, \cite[Prop.~0.19]{milneADT}) that $T_\ell H^n(X_{\bar
		K},\mathbb Q_\ell/\mathbb Z_\ell)$ is torsion-free.  Thus,   considering~\eqref{E:2.6.2},  we clearly have $
	\displaystyle \varprojlim_\nu \left(H^{n-1}(X_{\bar K},\mathbb Q_\ell/\mathbb
	Z_\ell)/\ell^\nu\right) \supseteq   \operatorname{Tors}H^n(X_{\bar K},\mathbb
	Z_\ell)$.
	
	As asserted in  \cite[(2.6.3)]{Suwa}, we claim we have equality, completing the
	proof.  Specifically\,:
	\begin{align}
	\label{E:(2.6.3)-1}
	\displaystyle \varprojlim_\nu \left(H^{n-1}(X_{\bar K},\mathbb Q_\ell/\mathbb
	Z_\ell)/\ell^\nu\right)&= H^{n-1}(X_{\bar K},\mathbb Q_\ell/\mathbb
	Z_\ell)_{\operatorname{cotors}}\\
	\label{E:(2.6.3)-2}
	&=\operatorname{Tors} H^{n}(X_{\bar K},\mathbb Q_\ell/\mathbb Z_\ell).
	\end{align}
	Here is an explanation of the claim from Michael Spie\ss.
	To prove~\eqref{E:(2.6.3)-1}, we start with~\eqref{E:Finiteness}, that
	$H^{n-1}(X_{\bar K},\mathbb Q_\ell /\mathbb Z_\ell) = (\mathbb Q_\ell/\mathbb
	Z_\ell)^r \oplus A$, with $A$ a  finite $\ell$-primary torsion abelian group $A$.
	Then we consider the short exact sequence
	$$
	0 \to  \ell^\nu H^{n-1}(X_{\bar K},\mathbb Q_\ell/\mathbb Z_\ell) \to
	H^{n-1}(X_{\bar K},\mathbb Q_\ell/\mathbb Z_\ell) \to  H^{n-1}(X_{\bar
		K},\mathbb Q_\ell /\mathbb Z_\ell )/\ell^\nu \to  0.
	$$
	Its limit is
	$$
	0 \to  \varprojlim_\nu \ell^\nu H^{n-1}(X_{\bar K},\mathbb Q_\ell /\mathbb
	Z_\ell ) \to   H^{n-1}(X_{\bar K},\mathbb Q_\ell /\mathbb Z_\ell ) \to
	\varprojlim_\nu
	H^{n-1}(X_{\bar K},\mathbb Q_\ell /\mathbb Z_\ell )/\ell ^\nu  \to 0.
	$$
	To see that this stays short exact in the limit, we argue as above.  More
	precisely, as $\mathbb Q_\ell/\mathbb Z_\ell$ is divisible,   $\ell^\nu (\mathbb
	Q_\ell/\mathbb Z_\ell)$ forms a surjective system.
	We also have that $\ell^\nu A=0$ for $\nu$ sufficiently large.  Thus
	the associated $\varprojlim^1$ for the direct sum vanishes.
	Now, the image of the inclusion is exactly $(\mathbb Q_\ell /\mathbb Z_\ell)^r$,
	so that $\varprojlim
	H^{n-1}(X_{\bar K},\mathbb Q_\ell /\mathbb Z_\ell)/\ell^\nu$ identifies with
	$A$, \emph{i.e.}, with the cotorsion. This completes the proof of~\eqref{E:(2.6.3)-1}.

	For~\eqref{E:(2.6.3)-2}, the long exact sequence
	$$
	\cdots
	\to H^{n-1}(X_{\bar K},\mathbb Z_\ell )
	\to  H^{n-1}(X_{\bar K},\mathbb Q_\ell) \stackrel{i}{\to}  H^{n-1}(X_{\bar
		K},\mathbb Q_\ell/\mathbb Z_\ell) \to  H^{n}(X_{\bar K},\mathbb Z_\ell) {\to}
	H^n(X_{\bar K},\mathbb Q_\ell)\to  \cdots
	$$
	of Proposition~\ref{P:les} provides an identification
	$$
	\operatorname{Tors}H^{n}(X_{\bar K},\mathbb Z_\ell) = H^{n-1}(X_{\bar K},\mathbb
	Q_\ell /\mathbb Z_\ell)/\operatorname{Im}(i).
	$$
	The image of $i$, being the image of a divisible group,  is divisible in
	$H^{n-1}(X_{\bar K},\mathbb Q_\ell/\mathbb Z_\ell)$, and since
	$\operatorname{Tors}H^{n}(X_{\bar K},\mathbb Z_\ell)$ is finite,
	$\operatorname{Im}(i)$ must be the maximal divisible
	subgroup. This means  that we have $H^{n-1}(X_{\bar K},\mathbb Q_\ell/\mathbb
	Z_\ell)/\operatorname{Im}(i) =
	H^{n-1}(X_{\bar K},\mathbb Q_\ell/\mathbb Z_\ell)_{\operatorname{cotors}}$.
\end{proof}

\begin{lem}[Functoriality of~\eqref{E:TateQQ/ZZ}]
	\label{L:functor-can-iso}
	Let $f:X\to Y$ be a morphism of smooth proper varieties over $K$.  Then there are commutative diagrams\,:
	$$
	\xymatrix@C=1em{
		H^{n}(X_{\bar   K},\mathbb Z_\ell (d_Y-d_X))_\tau \ar[r]^<>(0.5)\simeq \ar[d]_{f_*}& T_\ell H^{n}(X_{\bar K},\mathbb Q_\ell /\mathbb Z_\ell(d_X-d_Y)) \ar[d]_{f_*}&&H^{n}(X_{\bar K},\mathbb Z_\ell)_\tau \ar[r]^<>(0.5)\simeq &T_\ell H^{n}(X_{\bar
			K},\mathbb Q_\ell/\mathbb Z_\ell) \\ 
		H^{n+2(d_X-d_Y)}(Y_{\bar   K},\mathbb Z_\ell)_\tau\ar[r]^<>(0.5)\simeq &T_\ell H^{n+2(d_Y-d_X)}(Y_{\bar K},\mathbb Q_\ell/\mathbb Z_\ell)&&H^{n}(Y_{\bar K},\mathbb Z_\ell)_\tau \ar[r]^<>(0.5)\simeq \ar[u]^{f^*}&T_\ell H^{n}(Y_{\bar K},\mathbb Q_\ell/\mathbb Z_\ell) \ar[u]^{f^*}
	}
	$$
	where the horizontal arrows are the isomorphisms from~\eqref{E:TateQQ/ZZ}.    More generally, given a correspondence $\Gamma :X\vdash  Y$, we obtain the corresponding commutative diagrams for $\Gamma_*$ and $\Gamma^*$.  
\end{lem}

\begin{proof}
	The commutativity of the diagrams follows from the definitions. 
\end{proof}

\begin{exa}\label{E:Tell-infty-ne}
	We note that while there are natural inclusions 
	\begin{align}\label{E:push-incl-suwa}
	f_*T_\ell H^n(X_{\bar K},\mathbb Q_\ell/\mathbb Z_\ell)  &\subseteq T_\ell f_*H^n(X_{\bar K},\mathbb Q_\ell/\mathbb Z_\ell),\\ 
	\label{E:pull-incl-suwa}
	f^*T_\ell H^n(Y_{\bar K},\mathbb Q_\ell/\mathbb Z_\ell) &\subseteq   T_\ell f^*H^n(Y_{\bar K},\mathbb Q_\ell/\mathbb Z_\ell),
	\end{align}
	these need not be equalities. 
	For instance, consider the case where $X=Y=E$ is an elliptic curve over $K$, and $f:X\to Y$ is the multiplication by $\ell$ map.  Then the containment $f_*T_\ell H^1(X_{\bar K},\mathbb Q_\ell/\mathbb Z_\ell)  \subseteq T_\ell f_*H^1(X_{\bar K},\mathbb Q_\ell/\mathbb Z_\ell)$ is the containment $(\ell \mathbb Z_\ell)^2\subsetneq (\mathbb Z_\ell)^2$.
	
\end{exa}

In accordance with the example above, the inclusions~\eqref{E:push-incl-suwa} and~\eqref{E:pull-incl-suwa} have torsion co-kernels in the case where $n=1$\,:

\begin{lem}\label{L:push-pull-QQ}
	Let $f:X\to Y$ be a morphism of smooth proper varieties over $K$.  Then we have isomorphisms
	\begin{align*}
	(f_*T_\ell H^1(X_{\bar K},\mathbb Q_\ell/\mathbb Z_\ell))\otimes_{\mathbb Z_\ell}\mathbb Q_\ell  & \stackrel{\simeq}{\longrightarrow} T_\ell f_*H^1(X_{\bar K},\mathbb Q_\ell/\mathbb Z_\ell) \otimes_{\mathbb Z_\ell}\mathbb Q_\ell, \\
	f^*T_\ell H^1(Y_{\bar K},\mathbb Q_\ell/\mathbb Z_\ell) \otimes_{\mathbb Z_\ell}\mathbb Q_\ell &\stackrel{\simeq}{\longrightarrow}    T_\ell f^*H^1(Y_{\bar K},\mathbb Q_\ell/\mathbb Z_\ell) \otimes_{\mathbb Z_\ell}\mathbb Q_\ell .
	\end{align*}
 More generally, given a correspondence $\Gamma :X\vdash  Y$, we obtain the corresponding commutation relations for  $\Gamma_*$ and $\Gamma^*$.  	
\end{lem}

\begin{proof}
	We will establish the first isomorphism regarding the push-forward. The second is similar, as are the cases of correspondences.
	From~\eqref{E:push-incl-suwa}, we only need to show that the morphism is surjective.  
	We consider the morphism $f_*:H^1(X_{\bar K},\mathbb Q_\ell/\mathbb Z_\ell)\to f_*H^1(X_{\bar K},\mathbb Q_\ell/\mathbb Z_\ell)\subseteq  H^{1+2(d_Y-d_X)}(Y_{\bar K},\mathbb Z_\ell/\mathbb Q_\ell)$.  Then, using that $H^1(X_{\bar K},\mathbb Q_\ell/\mathbb Z_\ell)$ is divisible (it is the cohomology of the abelian variety $\operatorname{Pic}^0_{X/K}$), and that the image of a divisible group is divisible,  we apply \cite[Lem.~1.4]{Suwa} that given a surjective homomorphism $\phi:M\to N$ of divisible abelian $\ell$-primary torsion groups, with $N$ of finite corank, the associated map $T_\ell \phi\otimes 1:T_\ell M\otimes_{\mathbb Z_\ell}\mathbb Q_\ell\to T_\ell N\otimes_{\mathbb Z_\ell}\mathbb Q_\ell$ is surjective. 
\end{proof} 

\subsubsection{Suwa's $\ell$-adic Bloch map}\label{SS:DSuwa}

\begin{dfn}[{Suwa's $\ell$-adic Bloch map  \cite[(2.6.5)]{Suwa}}] \label{D:Suwa}
	Assume $X$ is a smooth projective variety over $K$ and $\ell$ is a prime not equal to $\operatorname{char} K$. The \emph{$\ell$-adic Bloch map} for codimension-$n$ cycles is the map
	\[
	\begin{CD}
	T_\ell\chow^n(X_{\bar K}) @>{T_\ell \lambda^n}>> H^{2n-1}(X_{\bar
		K},\integ_\ell(n))_\tau
	\end{CD}
	\]
	obtained by applying $T_\ell$ to the Bloch map
	$ {\lambda^n} : \
	\chow^n(X_{\bar K})[\ell^\infty] \to  H^{2n-1}(X_{\bar
		K},\rat_\ell/\integ_\ell(n))
	$ and making the identification $T_\ell H^{n}_{}(X_{\bar K},\mathbb Q_\ell/\mathbb Z_\ell)=H^{n}_{}(X_{\bar
		K},\mathbb Z_\ell)_\tau$ from Proposition~\ref{P:TateQQ/ZZ}.
\end{dfn}

\subsubsection{The $\ell$-adic Bloch map and Suwa's construction}

Here we show that Suwa's construction of the $\ell$-adic Bloch map agrees with the direct construction.  

\begin{pro}\label{P:comparison}
	Let $X$ be a smooth projective variety over $K$. Then the $\ell$-adic Bloch  maps of Definition~\ref{D:Suwa} and Definition~\ref{D:BM-4} coincide.
\end{pro}

\begin{proof}
	We first observe that we have the following commutative diagram
	\begin{equation*}
	\xymatrix@R=1.5em@C=4em{
		\operatorname{CH}^n(X_{\bar K})[\ell^\nu] \ar[r]^<>(0.5){\lambda^n[\ell^\nu]} \ar[d]  \ar@/_5pc/[dd]& H^{2n-1}(X_{\bar K},\mathbb Z/\ell^\nu \mathbb Z)/\delta_{\ell^\nu} \ar[d] \ar@/^5pc/[dd]\\
		\left(\operatorname{CH}^n(X_{\bar K}\right)[\ell^\infty])[\ell^\nu] \ar[r]^{(\lambda^n[\ell^\infty])[\ell^\nu]} \ar@{^(->}[d]& H^{2n-1}(X_{\bar K},\mathbb Q_\ell/\mathbb Z_\ell)[\ell^\nu] \ar@{^(->}[d] \\
		\operatorname{CH}^n(X_{\bar K})[\ell^\infty] \ar[r]^{\lambda^n[\ell^\infty]}& H^{2n-1}(X_{\bar K},\mathbb Q_\ell/\mathbb Z_\ell)
	}
	\end{equation*}
	The bottom arrow, $\lambda^n[\ell^\infty]=\lambda^n$ is the Bloch map (Definition~\ref{D:Bloch79}), and the bottom square describes  the map induced on the $\ell^\nu$-torsion for the Bloch map.   In any case, the bottom square in the diagram is commutative by definition.
	The top map in the diagram  is the   $\ell^\nu$-Bloch map (Definition~\ref{D:finiteBlochmap}).   The vertical arrows from the top row to the bottom row are the canonical morphisms from the definition of a direct limit.  Thus the outer square is commutative.  It is only left to describe the vertical arrows from the top to the middle row.  These are defined by the fact that $\operatorname{CH}^n(X_{\bar K})[\ell^\nu]$ and $H^i(X_{\bar K},\mathbb Z/\ell^\nu \mathbb Z)/\delta_{\ell^\nu}$ are $\ell^\nu$-torsion groups, so that
	the
	images of the vertical arrows from the top row to the bottom row are contained in the $\ell^\nu$-torsion.
	
	Taking the inverse limit in the top square, we obtain a commutative diagram
	\begin{equation*}
	\xymatrix@R=1.5em@C=4em{
		T_\ell \operatorname{CH}^n(X_{\bar K}) \ar[r]^<>(0.5){T_\ell (\lambda^n[\ell^\nu])} \ar@{=}[d]  & H^i(X_{\bar K},\mathbb Z_\ell)_\tau \ar[d] \\
		T_\ell \operatorname{CH}^n(X_{\bar K})\ar[r]^<>(0.5){T_\ell (\lambda^n[\ell^\infty])} & T_\ell H^i(X_{\bar K},\mathbb Q_\ell/\mathbb Z_\ell)
	}
	\end{equation*}
	where the top row is the definition of $\ell$-adic Bloch map from Definition~\ref{D:BM-4}, while the bottom row is Suwa's definition of the $\ell$-adic Bloch map (Definition~\ref{D:Suwa}).
	The only thing to check is that the vertical arrow in the diagram is the canonical isomorphism from Proposition~\ref{P:TateQQ/ZZ}.  But this is clear from the construction of this isomorphism in Proposition~\ref{P:TateQQ/ZZ} \emph{via} the limit of the maps on the finite levels.
\end{proof}

\begin{rem}[$\ell$-adic Abel--Jacobi map] \label{R:AJ-ell-adic}
	The same argument as in the proof of Proposition~\ref{P:comparison} shows that for a complex projective manifold $X$, the $\ell$-adic Abel--Jacobi map $T_\ell AJ$~\eqref{E:an-ell_ad_Bloch}  is equal to the Tate module of the Abel--Jacobi map on torsion $AJ[\ell^\infty]$~\eqref{E:an-Bloch}\,; \emph{i.e.}, $T_\ell AJ=T_\ell (AJ[\ell^\infty])$.  
	Now using the fact that the Bloch map (Definition~\ref{D:Bloch79}) agrees with the Abel--Jacobi map on homologically trivial $\ell$-primary torsion~\eqref{E:an-Bloch} (\cite[Prop.~3.7]{bloch79}, Remark~\ref{R:AJ-tors}), it follows that the $\ell$-adic Bloch map $T_\ell \lambda^2$ (Definition~\ref{D:BM-4}) agrees with the $\ell$-adic Abel--Jacobi map $T_\ell AJ$  on  homologically trivial cycle classes~\eqref{E:an-ell_ad_Bloch}.
\end{rem}

\subsubsection{Gross--Suwa's $p$-adic Bloch map}
\label{SS:pbloch}

Now suppose that $K$ is a perfect field of characteristic $p>0$, and recall the
notation concerning $p$-adic cohomology groups.  With
these conventions, Gros and Suwa have secured $p$-adic versions of the
results reviewed above.  So, Proposition~\ref{P:direct} holds by
definition\,; the proof of Proposition~\ref{P:les} is valid at $p$, provided one replaces \eqref{E:finitecoeff} with the exact sequence of \'etale sheaves
\[
\xymatrix{
	0 \ar[r] & \ww_\mu \Omega^r_{X,\log} \ar[r] & \ww_{\mu+\nu} \Omega^r_{X,\log} \ar[r] & \ww_\nu \Omega^r_{X,\log} \ar[r] & 0
}
\]
(see also \cite[Prop.~I.4.18]{grossuwaAJ} for~\eqref{E:Finiteness}).  Gros and Suwa construct a group
homomorphism $\lambda^n = \lambda^n_p: \chow^n (X)[p^\infty] \to
H^{2n-1}(X,\rat_p/\integ_p(n))$ \cite[Def.~III.1.25]{grossuwaAJ}, 
and, as
in Definition~\ref{D:Suwa}, by applying the Tate module 
 obtain a map
  \[
\xymatrix{
	T_p \chow^n(X_{\bar K}) \ar[r]^-{T_p \lambda^n} & H^{2n-1}(X_{\bar K},\integ_p(n))_\tau.
}
\]

\begin{rem}\label{R:L:p-p-QQp}
	Moreover, Lemma \ref{L:functor-can-iso} holds for $p$-adic coefficients, as well.  Finally, Lemma \ref{L:push-pull-QQ} holds with $\mathbb Q_p/\mathbb Z_p$-coefficients since, as we have seen, $H^1(X_{\bar K},\rat_p/\integ_p)$ is $p$-divisible of finite corank. 
	
\end{rem}

\subsection{Properties of the Bloch maps}\label{S:properties}

In this section we fix a field $K$.  The aim of this section consists simply, for future reference, in restating known
results due to Bloch \cite{bloch79} concerning the usual Bloch map $\lambda^n :
\operatorname{CH}^n(X_{\bar K})[l^\infty]  \to H^{2n-1}(X_{\bar K},\mathbb
Q_\ell/\mathbb Z_l(n))$ in the setting of the $l$-adic Bloch map
$T_l \lambda^n : T_\ell  \operatorname{CH}^n(X_{\bar K}) \to H^{2n-1}(X_{\bar
	K},\mathbb Z_l(n))_\tau$, as well as the finite level Bloch maps.
All statements for the $l$-adic Bloch map are direct consequences of the fact   that $T_l \lambda^n$ is simply obtained from $\lambda^n$ by applying the Tate
module functor. Alternatively for $\ell\ne \operatorname{char}(K)$, using the direct definition of the $\ell$-adic Bloch map via the $\ell^\nu$-Bloch maps, the proofs in  \cite{bloch79} carry over directly.   In fact, the proofs in \cite{bloch79} regarding the Bloch map go directly through the corresponding assertions about the $\ell^\nu$-Bloch maps.

\begin{pro}[Flat pull back and proper push forward]
	\label{P:pbpf}
	The Bloch map, the
	$l$-adic Bloch, and for all primes $\ell\ne \operatorname{char}(K)$,  the $\ell^\nu$-Bloch maps, are functorial for flat pull back and proper push forward.
\end{pro}

\begin{proof}
	The case of the Bloch map is \cite[Prop.~3.3]{bloch79} for $\ell\ne \operatorname{char}(K)$ and \cite[III.~Prop.~2.3]{grossuwaAJ} in the $p$-adic case. The proposition for the $l$-adic Bloch map can be obtained
	simply obtained by applying $T_l$ to the case of the Bloch map.  For the $\ell^\nu$-Bloch maps, this follows directly from the proof of  \cite[Prop.~3.3]{bloch79}.
\end{proof}

\begin{pro}[Bloch maps and Galois-equivariance] \label{P:BlGal}
	The Bloch map, the
	$l$-adic Bloch map, and for $\ell\ne \operatorname{char}(K)$, the $\ell^\nu$-Bloch maps, are $\operatorname{Aut}(\bar K/K)$-equivariant.
\end{pro}
\begin{proof}  Fix $\ell\ne \operatorname{char}(K)$.  
	Let $L/K$ be a finite Galois extension, and let $X/K$ be a smooth projective variety.  Then for each $\sigma\in \operatorname{Gal}(L/K)$, applying the previous proposition to the morphisms $\sigma:X_L\to X_L$ given by the Galois descent data, shows that each of the various Bloch maps is $\operatorname{Gal}(L/K)$-equivariant.
	The general case follows by passing to the limit over all finite Galois extensions.  For the $p$-adic case, this is \cite[III.~Prop.~2.1]{grossuwaAJ}.
\end{proof}

\begin{pro}[Bloch maps and correspondences]
	\label{P:correspondences}
	The Bloch map, $l$-adic Bloch map, and for all primes $\ell\ne\operatorname{char}(K)$, the $\ell^\nu$-Bloch maps, are compatible with the action of correspondences.
	Precisely, in the case of the $l$-adic Bloch map,
	let $X$ and $Y$  be smooth projective varieties over $K$ and let $\Gamma$ be a
	cycle on $X\times_K Y$ of codimension $\dim Y +n-m$. Then the following natural
	diagram
	\begin{equation*}\label{E:BlCorr}
	\xymatrix{
		T_l \operatorname{CH}^m(Y_{\bar K})   \ar[d]_{T_l
			\lambda^m}\ar[r]^<>(0.5){\Gamma_*}
		& T_l \operatorname{CH}^{n}(X_{\bar K})  \ar[d]_{T_l\lambda^{n}}\\
		H^{2m-1}(Y_{\bar K},\mathbb Z_l(m))_\tau \ar[r]^<>(0.5){\Gamma_*}
		& H^{2n-1}(X_{\bar K},\mathbb Z_l(n))_\tau
	}
	\end{equation*}
	is commutative.
\end{pro}

\begin{proof}
The case of the Bloch map is \cite[Prop.~3.5]{bloch79} for $\ell\ne \operatorname{char}(K)$ and \cite[III.~Prop.~2.9]{grossuwaAJ} for the $p$-adic case. The proposition for the $l$-adic Bloch map is
	simply obtained by applying $T_l$ to the case of the Bloch map.  For the $\ell^\nu$-Bloch maps, this follows directly from the proof of \cite[Prop.~3.5]{bloch79}.
\end{proof}

\begin{pro}[Bloch maps and specialization]
	The Bloch map, $\ell$-adic Bloch map, and $\ell^\nu$-Bloch maps are compatible with specialization. Precisely, in the case of the $\ell$-adic Bloch map, given a local ring
	$R$ with fraction field $K$ and residue field $K_0$, and a smooth projective morphism $\mathscr X \to \spec R$ with generic fiber
	$X$ and special fiber $X_0$, the following diagram

	\begin{equation*}\label{E:BlSpe}
	\xymatrix{
		T_\ell \operatorname{CH}^n(\bar X)   \ar[d]_{T_\ell \lambda^n}\ar[r]
		& T_\ell \operatorname{CH}^{n}(\bar X_0)  \ar[d]_{T_\ell \lambda^{n}}\\
		H^{2n-1}(\bar X,\mathbb Z_\ell(n))_\tau \ar[r]^\simeq
		& H^{2n-1}(\bar X_0,\mathbb Z_\ell(n))_\tau
	}
	\end{equation*}
	is commutative for  $\ell$ prime to $\operatorname{char}(X_0)$. Here, $\bar X$ and $\bar X_0$ denote the base-changes of $X$ and $X_0$ to $\bar K$ and~$\bar K_0$, respectively. The top
	horizontal arrow is obtained by applying $T_\ell$ to the specialization map (\emph{e.g.}, \cite[Ex.~20.3.5]{fulton}), while the bottom horizontal arrow is obtained from smooth
	proper base-change.
\end{pro}
\begin{proof}
	The case of the Bloch map is \cite[Prop.~3.8]{bloch79}. The proposition in the case of the $\ell$-adic Bloch map is
	simply obtained by applying $T_\ell$ to the case of the Bloch map.  For the $\ell^\nu$-Bloch maps, this follows directly from the proof of \cite[Prop.~3.8]{bloch79}.
\end{proof}

\begin{pro}[Bloch maps and Kummer sequence] \label{P:Kummer}
	Let $X$ be a smooth projective
	variety over $K$.
	The $l$-adic Bloch map $$T_l\lambda^1 : T_l
	\operatorname{CH}^1(X_{\bar K}) \to H^1(X_{\bar K},\integ_l(1))$$ is the
	natural isomorphism arising from the Kummer sequence
	$$\begin{CD}
	0 @>>> \integ/l^\nu \integ (1)  @>>>  \mathbb{G}_m  @>{l^\nu}>>
	\mathbb{G}_m @>>> 0
	\end{CD}$$
	and the identification $\operatorname{CH}^1(X_{\bar K}) = H^1(X_{\bar K},
	\mathbb{G}_m)$.
\end{pro}
\begin{proof}
	The case of the Bloch map is \cite[Prop.~3.6]{bloch79} in the case $\ell\ne\operatorname{char}(K)$ and \cite[III.~Prop.~3.1, Cor.~3.2]{grossuwaAJ} in the $p$-adic case. The proposition is
	simply obtained by applying $T_l$ to the case of the Bloch map, and noting
	that $H^1(X_{\bar K},\integ_l(1))$ is torsion-free.  \end{proof}

\begin{pro}[Bloch maps and Albanese morphism, Roitman's theorem]
	\label{P:Rojtman}
	Let $X$ be a smooth projective
	variety of dimension $d$ over $K$\,; if $l = \characteristic(K)$, assume $K$ is perfect. Then the following diagram
  	$$\xymatrix{T_l \operatorname{CH}^d(X_{\bar K}) \ar[r]^{T_l\lambda^d\quad\
		} \ar[rd]_{\operatorname{alb}}
		& H^{2d-1}(X_{\bar K},\integ_l(d))_\tau \ar@{=}[d] \\
		& T_l \operatorname{Alb}_X
	}
	$$ is commutative, where $\operatorname{alb}$ is obtained by applying $T_l$
	to the map $\operatorname{CH}^d(X_{\bar K}) \to  \operatorname{Alb}_X(\bar K)$
	mapping a zero cycle on $X_{\bar K}$ to the sum of the corresponding points on
	the Albanese. Moreover, $T_l\lambda^d$ is an isomorphism.
\end{pro}
\begin{proof}
	The case of the Bloch map is \cite[Prop.~3.9]{bloch79} for $\ell\ne \operatorname{char}(K)$ and \cite[III.~Prop.~3.14, Cor.~3.17]{grossuwaAJ} in the $p$-adic case. The commutativity of the diagram is
	simply obtained by applying $T_l$ to the case of the Bloch map. Finally, that $\mathrm{alb} :T_l \operatorname{CH}^d(X_{\bar K}) \to T_l \operatorname{Alb}_X$ is an isomorphism is due to Roitman \cite[Thm.~4.1]{bloch79}. Note that in \emph{loc.~cit.}~it is stated that $\mathrm{alb} : \operatorname{CH}^d(X_{\bar K}) \to  \operatorname{Alb}_X(\bar K)$ is an isomorphism on torsion for $\bar K$ algebraically closed\,; this implies the needed fact that $\operatorname{CH}^d(X_{\bar K}) \to  \operatorname{Alb}_X(\bar K)$ is an isomorphism on prime-to-$\mathrm{char}(K)$ torsion for $\bar K$ separably closed. Indeed, this follows from the general fact that the prime-to-$\mathrm{char}(K)$ torsion in the Chow group of a scheme of finite type over $K$ is invariant under purely inseparable extensions (see \emph{e.g.}, \cite[Lem.~4.10]{ACMVfunctor}),	and the fact that the prime-to-$\mathrm{char}(K)$ torsion of an abelian variety is invariant under extension of separably closed fields.
\end{proof}

Finally, we have the following $l$-adic analogue of a result of due to
Merkurjev--Suslin \cite{MS}.

\begin{pro}[Injectivity of the second Bloch map]\label{P:M-9.2}
	Let $X$ be a smooth projective variety over $K$\,; if $l = \characteristic(K)$, assume $K$ is perfect. 
	The second $l$-adic Bloch map

	\begin{align*}
	T_l \lambda^2: \ &T_l \operatorname{CH}^2(X_{\bar K})\longrightarrow H^3(X_{\bar
		K},\mathbb Z_l(2))_{\tau}
	\end{align*}
	is injective, as are the second  $\ell^\nu$-Bloch maps
	$\lambda^2[\ell^\nu]:\operatorname{CH}^2(X_{\bar K})[\ell^\nu]\to H^3(X_{\bar K},\mmu_{\ell^\nu}^{\otimes 2})/\delta_{\ell^\nu}$ for $\ell\ne \operatorname{char}(K)$.  
\end{pro}

\begin{proof} The injectivity of the Bloch map $\lambda^2$ in the case $\ell\ne \operatorname{char}(K)$ is due to
	Merkurjev--Suslin \cite{MS}\,; see also
	\cite[Prop.~9.2]{murre83} and  \cite[Prop.~3.1 and
	Rmk.~3.2]{CTR}. For the $p$-adic case this is \cite[III.~Prop.~3.4]{grossuwaAJ}. The injectivity of the second $l$-adic Bloch map follows \emph{via}
	applying $T_l$ to the Bloch map.
 	
For $\ell\ne \operatorname{char}(K)$, the fact that the second $\ell^\nu$-Bloch maps are injective is \cite[Prop.~6.1]{murre83}.  Indeed, we consider diagram \eqref{E:BM-E4d}. 
Using $K$-theoretic techniques, Murre showed that \eqref{E:BM-E4d} can be factored as follows \cite[Rem.~6.3]{murre83}\,:

\begin{equation}\label{E:BM-E4-Murre}
\xymatrix{
	0\ar@{->}[r]&\displaystyle \left( \frac{\ker \partial}{\ell^\nu \ker
		\partial}\right) \ar@{->}[r] 		
	\ar@{->>}[d]&\ker \partial_{\ell^\nu}
	\ar@{->}[r]\ar@{->>}[d]^{}&\operatorname{CH}^n(X_{\bar K^a})[\ell^\nu]\ar@{->}[r] \ar@{=}[d]&0\\
		0\ar@{->}[r]&\displaystyle \ker \alpha_\nu \ar@{->}[r] 
	\ar@{->>}[dd]&H^{n-1}_{\operatorname{zar}}(X_{\bar K^a},\mathcal K_n/\ell^\nu)
	\ar@{->}[r]^{\alpha_\nu}\ar@{->}[d]^{\beta_\nu}_\cong &\operatorname{CH}^n(X_{\bar K^a})[\ell^\nu]\ar@{->}[r] \ar[dd]^{-\lambda^n[\ell^\nu]}&0\\
	&&H^{n-1}_{\operatorname{zar}}(X_{\bar K^a},\mathbf H^n(\mmu_{\ell^\nu}^{\otimes
		n}))\ar@{->}[d]^{\gamma_\nu}&&\\
	0\ar[r]&\delta_{\ell^\nu} \ar[r]&H^{2n-1}_{\operatorname{\acute{e}t}}(X_{\bar K^a},\mmu_{\ell^\nu}^{\otimes n}) \ar[r]
	&H^{2n-1}_{\operatorname{\acute{e}t}}(X_{\bar K^a},\mmu_{\ell^\nu}^{\otimes
		n})/\delta_{\ell^\nu} \ar[r] &0\\
}
\end{equation}
Here  $\mathcal K_n$ is the sheaf on $X_{\bar K^a}$ associated to the pre-sheaf that assigns to each Zariski open subset $U\subseteq X_{\bar K^a}$ the algebraic $\operatorname{K}$-group $\operatorname{K}_n(\mathcal O_{X_{\bar K^a}}(U))$ (see e.g., \cite[\S 4.1]{murre83}).    The maps $\alpha_\nu$ and $\beta_\nu$ are defined in \cite[Rem.~6.3]{murre83}, and the map $\gamma_\nu$ is Murre's notation for the map defined in \eqref{E:BM-E1}.  The fact that $\beta_\nu$ is an isomorphism is explained in \cite[Rem.~6.3]{murre83} using \cite{MS}, and this isomorphism then defines the top vertical map in the center of  \eqref{E:BM-E4-Murre}.  The fact that the top right square of the diagram is commutative comes from the construction of $\alpha_\nu$ in \cite[Rem.~6.3]{murre83}\,; in fact, the commutativity of \eqref{E:BM-E4-Murre} is tacitly asserted in \cite{murre83}, so that the construction of the Bloch map given there agrees with that given in \cite{bloch79}.  The rest of  \eqref{E:BM-E4-Murre}  can be established via a diagram chase.

The key point in the case $n=2$ is that    Murre shows in  \cite[Prop.~6.1, Cor.~5.4(c)]{murre83}  that  $\gamma_\nu$ is an inclusion.  Applying the snake lemma to the bottom half of  \eqref{E:BM-E4-Murre} shows that the second $\ell^\nu$-Bloch maps are injective.  Note that taking the direct limit of the injective $\ell^\nu$-Bloch maps gives Murre's proof that $\lambda^2$ is injective, while taking the inverse limit gives another proof  that $T_\ell\lambda^2$ is injective.  
\end{proof}

\subsection{Restriction of the Bloch map to algebraically trivial cycle classes}\label{S:propertiesA}

Let $X$ be a smooth projective variety over a field $K$. From \eqref{E:P:les}, we have a diagram with exact row 
(see e.g., \cite[(3.33)]{grossuwaAJ} for the $p$-adic case)
\begin{equation}
\label{E:BlResA}
	\xymatrix{
		&& \operatorname{CH}^n(X_{\bar K})[l^\infty]\ar@{->}[d]^{\lambda^n} \ar@{-->}[rd] \\
		0 \ar[r] & H^{2n-1}(X_{\bar K},\integ_l(n))\otimes_{\mathbb Z_l} \rat_l/\integ_l \ar[r] & H^{2n-1}(X_{\bar K},\rat_l/\integ_l(n))   \ar[r] & H^{2n}(X_{\bar K},\integ_l(n))
	}
\end{equation}
	where the dashed arrow is, up to sign, the cycle class map (\cite[Cor.~4]{CTSS83}, \cite[Prop.~III.1.16 and Prop.~III.1.21]{grossuwaAJ}).  Since algebraically trivial cycles are homologically trivial, it follows that the image of $\A^n(X_{\bar K})[l^\infty]$ under $\lambda^n$ is contained in $H^{2n-1}(X_{\bar K},\mathbb Z_l(n))\otimes_{\mathbb Z_l}\mathbb Q_l/\mathbb Z_{l}\subseteq H^{2n-1}(X_{\bar K},\mathbb Q_l/\mathbb Z_l(n))$.  In other words, when we restrict the Bloch map to algebraically trivial cycle classes we obtain a map
\begin{equation}\label{E:BlResA1}
\lambda^n :\operatorname{A}^n(X_{\bar K})[l^\infty] \longrightarrow H^{2n-1}(X_{\bar K},\mathbb Z_l(n))\otimes_{\mathbb Z_l}\mathbb Q_l/\mathbb Z_l\subseteq H^{2n-1}(X_{\bar K},\mathbb Q_l/\mathbb Z_l(n)).
\end{equation}

\begin{pro}[Codimension-$1$] \label{P:KummerA}
	Let $X$ be a smooth projective
	variety over $K$\,; if $l= \characteristic(K)$, assume $K$ is perfect.
	The Bloch map \eqref{E:BlResA1}
	$$\lambda^1 :
	\operatorname{A}^1(X_{\bar K})[l^\infty] \longrightarrow H^1(X_{\bar K},\integ_l(1))\otimes_{\mathbb Z_l}\mathbb Q_l/\mathbb Z_l$$
	is an isomorphism, and taking Tate modules yields an isomorphism
 $$T_l \lambda^1 :
	T_l \operatorname{A}^1(X_{\bar K}) \longrightarrow H^1(X_{\bar K},\integ_l(1)).$$

\end{pro}
\begin{proof}
This follows from 
 Proposition \ref{P:Kummer}.  Indeed, we start with the fact that 
 $\operatorname{CH}^1(X_{\bar K})/\operatorname{A}^1(X_{\bar K})\simeq \operatorname{Pic}_{X/K}(\bar K)/\operatorname{Pic}^0_{X/K}(\bar K)=\operatorname{NS}(X_{\bar K})$  is a finitely generated $\mathbb Z$-module.  From this we can conclude that $T_l \operatorname{A}^1(X_{\bar K})=T_l\operatorname{CH}^1(X_{\bar K})$.  This gives the result for $T_l \lambda^1$.  
Then from the  identification $\operatorname{A}^1(X_{\bar K})=\operatorname{Pic}^0_{X/K}(\bar K)$, the torsion and Tate modules are free of the same rank, and so we have $\operatorname{A}^1(X_{\bar K})[l^\infty]= T_l \operatorname{A}^1(X_{\bar K}) \otimes_{\mathbb Z_l}\mathbb Q_l/\mathbb Z_l$. This gives the result for the Bloch map.  

 \end{proof}

\begin{pro}[Bloch maps and Albanese morphisms, Roitman's theorem]
	\label{P:RojtmanA}
	Let $X$ be a smooth projective
	variety of dimension $d$ over $K$. Then the following diagram
\begin{equation}\label{E:RojtmanA}
\xymatrix{ \operatorname{A}^d(X_{\bar K})[l^\infty]\ar[r]^<>(0.5){\lambda^d} \ar[rrd]_{\operatorname{alb}}
		& H^{2d-1}(X_{\bar K},\mathbb Z_l(d))\otimes _{\mathbb Z_l}\mathbb Q_l/\mathbb Z_l  \ar@{^(->}[r]&H^{2d-1}(X_{\bar K},\mathbb Q_l/\mathbb Z_l(d)) \ar@{=}[d]\\
		&& \operatorname{Alb}_X[l^\infty]
	}
	\end{equation}
is commutative, where $\operatorname{alb}$ is obtained by restricting to $\operatorname{A}^d(X_{\bar K})$  the map $\operatorname{CH}^d(X_{\bar K}) \to  \operatorname{Alb}_X(\bar K)$
	mapping a zero cycle on $X_{\bar K}$ to the sum of the corresponding points on
	the Albanese.   Moreover, $\lambda^d$, as well as the inclusion $H^{2d-1}(X_{\bar K},\mathbb Z_l(d))\otimes _{\mathbb Z_l}\mathbb Q_l/\mathbb Z_l  \hookrightarrow H^{2d-1}(X_{\bar K},\mathbb Q_l/\mathbb Z_l(d)) $,  are isomorphisms.  
	
Taking Tate modules, we obtain a commutative diagram
	$$\xymatrix{T_l \operatorname{A}^d(X_{\bar K}) \ar[r]^{T_l\lambda^d\quad\
		} \ar[rd]_{\operatorname{alb}}
		& H^{2d-1}(X_{\bar K},\integ_l(d))_\tau \ar@{=}[d] \\
		& T_l \operatorname{Alb}_X
	}
	$$	
	Moreover, $T_l\lambda^d$ is an isomorphism.
\end{pro}
\begin{proof}
	This just follows from Proposition \ref{P:Rojtman} and the fact that $\operatorname{A}^d(X_{\bar K})[l^\infty]= \operatorname{CH}^d(X_{\bar K})[l^\infty]$.  Indeed, the commutativity  of \eqref{E:RojtmanA} follows from this and the discussion above.  Then Proposition~\ref{P:Rojtman} and the fact that $\operatorname{A}^d(X_{\bar K})[l^\infty]= \operatorname{CH}^d(X_{\bar K})[l^\infty]$ implies that the composition of the top row is an isomorphism.  This forces $\lambda^d$ to be an isomorphism.  The result for Tate modules follows immediately.
	\end{proof}

Finally, we have the following $l$-adic analogue of a result due to
Merkurjev--Suslin \cite{MS}.

\begin{pro}[Injectivity of the second Bloch map]\label{P:M-9.2bis}
	Let $X$ be a smooth projective variety over $K$.
    	The second Bloch map 
	\begin{align*}
\lambda^2: \operatorname{A}^2(X_{\bar K})[l^\infty]\longrightarrow H^3(X_{\bar
		K},\mathbb Z_l(2))\otimes_{\mathbb Z_l}\mathbb Q_l/\mathbb Z_l,
	\end{align*}		the second $l$-adic Bloch map 
	\begin{align*}
	T_l \lambda^2: \ &T_l \operatorname{A}^2(X_{\bar K})\longrightarrow H^3(X_{\bar
		K},\mathbb Z_l(2))_{\tau},
	\end{align*}
	and for all primes $\ell\ne \operatorname{char}(K)$,   the second  $\ell^\nu$-Bloch maps
	$\lambda^2[\ell^\nu]:\operatorname{A}^2(X_{\bar K})[\ell^\nu]\to H^3(X_{\bar K},\mmu_{\ell^\nu}^{\otimes 2})/\delta_{\ell^\nu}$, are injective. 
\end{pro}

\begin{proof}
This just follows from Proposition~\ref{P:M-9.2} and from the inclusion of $\operatorname{A}^2(X_{\bar K})\subseteq \operatorname{CH}^2(X_{\bar K})$.  
\end{proof}

\bibliographystyle{amsalpha}
\bibliography{DCG}

\end{document}